\def\f12{\frac 1 2}
\def\B{\mathcal{B}}
\def\a{\alpha}
\def\b{\beta}
\def\ga{\gamma}
\def\Ga{\Gamma}
\def\ep{\epsilon}
\def\La{\Lambda}
\def\si{\sigma}
\def\om{\omega}
\def\Om{\Omega}
\def\Lb{\underline{L}}
\def\ab{\underline{\a}}
\def\pa{\partial}
\def\les{\lesssim}
\def\cL{{\mathcal L}}
\def\cM{\mathcal{M}} 
\def\lg{\mathfrak{g}} 
\def\cT{\mathcal{T}} 
\def\cG{\mathbf{G}}
\def\f12{\frac 1 2}
\def\div{\text{div}}
\newcommand{\vol}{\textnormal{vol}}
\newcommand{\lap}{\mbox{$\Delta \mkern-13mu /$\,}}
\newcommand{\D}{\mbox{$D \mkern-13mu /$\,}}
\newcommand{\J}{\mbox{$J \mkern-13mu /$\,}}
\newcommand{\nabb}{\mbox{$\nabla \mkern-13mu /$\,}}
\newtheorem{Thm}{Theorem}[section]
\newtheorem{Prop}{Proposition}[section]
\newtheorem{Lem}{Lemma}[section]
\newtheorem{cor}{Corollary}[section]
\newtheorem{Remark}{Remark}[section]
\theoremstyle{definition}
\begin{document}

\title{On the global dynamics of Yang-Mills-Higgs equations}
\date{}

\author[Donyi Wei]{Dongyi Wei}
\address{School of Mathematical Sciences, Peking University, Beijing, China\\ Beijing, China}
\email{jnwdyi@pku.edu.cn}

\author[Shiwu Yang]{Shiwu Yang}
\address{Beijing International Center for Mathematical Research, Peking University\\ Beijing, China}
\email{shiwuyang@math.pku.edu.cn}

\author{Pin Yu}
\address{Department of Mathematics and Yau Mathematical Sciences Center, Tsinghua University\\ Beijing, China}
\email{yupin@mail.tsinghua.edu.cn}

\maketitle

\begin{abstract}
We study solutions to the Yang-Mills-Higgs equations on the maximal Cauchy development of the data given on a ball of radius $R$ in $\mathbb{R}^3$. The energy of the data could be infinite and  the solution grows at most inverse polynomially in $R-t$ as $t\rightarrow R$. As applications, we derive pointwise decay estimates for Yang-Mills-Higgs fields in the future of a hyperboloid or in the Minkowski space  $\mathbb{R}^{1+3}$ for data bounded in the weighted energy space with weights $|x|^{1+\epsilon}$. Moreover, for the abelian case of Maxwell-Klein-Gordon system, we extend the small data result of Lindblad and Sterbenz \cite{LindbladMKG} to general large data (under same assumptions but without any smallness). The proof is gauge independent and it is based on the framework of Eardley and Moncrief \cite{Moncrief1, Moncrief2} together with the geometric Kirchhoff-Sobolev parametrix constructed by Klainerman and Rodnianski \cite{Kl:paramatrix}. The new ingredient is a class of weighted energy estimates through backward light cones adapted to the initial data.
\end{abstract}

\section{Introduction}

\subsection{Yang-Mills-Higgs equations}
Let $\beta: \cG\rightarrow \mathbf{U}(V)$ be a finite dimensional unitary representation of a compact Lie group $\cG$.  We use $\langle \cdot ,\cdot \rangle_{_V}$ to denote the Hermitian inner product on $V$.  The Lie algebra $\mathfrak{g}$ of $\cG$ is also endowed with a bi-invariant inner product $\langle \cdot , \cdot \rangle_{\lg}$. We use $\mathcal{M}$ to denote a region in the Minkowski spacetime $\mathbb{R}^{1+3}$ with flat metric $(m_{\mu\nu})=\textnormal{diag}(-1, 1, 1, 1)$ in the Cartesian coordinates $(t, x^1, x^2, x^3)$. Let $\mathcal{P}$ be a principal $\cG$-bundle over $\cM$ and $\mathcal{P}\times_\beta V$ be the vector bundle associated to the representation $\beta$. A connection on $\mathcal{P}$ can be realized by a $\lg$-valued $1$-form $A$ on $\cM$. Given a section $\varphi$ of $\mathcal{P}\times_\beta V$, the covariant derivative associated to $A$ acts on $\varphi$ via
\[
D_\mu \varphi=\pa_\mu\varphi+A_{\mu}\cdot \varphi,
\]
where $A_{\mu}\cdot \varphi$ is the induced Lie algebra action of $\beta$.\footnote{We have $A_{\mu} \cdot \phi=\beta_*(A_{\mu})(\phi)$, where $\beta_*:\lg\rightarrow \mathfrak{u}(V)$ is the tangent map of $\beta$.} Similarly for $\lg$-valued function $\phi$, the covariant derivative is defined by
\[
D_\mu \phi=\pa_\mu \phi+[A_\mu, \phi].
\]
The Yang-Mills field $F$, i.e., the curvature of the connection $A$, is the $\lg$-valued $2$-form given by
\begin{equation*}
  F_{\mu\nu}=[D_{\mu},D_{\nu}]=\pa_{\mu}A_{\nu}-\pa_{\nu}A_{\mu}+[A_{\mu}, A_{\nu}].
\end{equation*}
The covariant derivative $D_{\mu}$ is compatible with $\langle \cdot ,\cdot \rangle_{\lg}$ and $\langle \cdot ,\cdot \rangle_{_V}$ in the following sense:
\[
\pa_\mu \langle\varphi_1, \varphi_2\rangle=\langle D_\mu \varphi_1, \varphi_2\rangle+\langle \varphi_1, D_\mu\varphi_2\rangle,
\]
where $\varphi_1$ and $\varphi_2$ are $\lg$-valued sections (such as connection $1$-forms or Yang-Mills fields) or sections of $\mathcal{P}\times_\beta V$.  We will drop the subscript $\lg$ or $V$ for $\langle \cdot ,\cdot \rangle{\lg}$ or  $\langle \cdot ,\cdot \rangle_{_V}$ since there will be no ambiguity in the context.

The Yang-Mills-Higgs theory plays a crucial role in modern physics. The Yang-Mills-Higgs (YMH) equation is a system of equations for the connection field $A$ and the $V$-valued scalar field $\phi$:
\begin{equation}
 \label{eq:YMH}\tag{YMH}
\begin{cases}
D^\nu F_{\nu\mu}=J[\phi]_{\mu}=\langle D_{\mu}\phi, \mathcal{T}\phi\rangle_{_V}+\langle \mathcal{T}\phi, D_{\mu}\phi\rangle_{_V},\\
D^\mu D_\mu\phi=\Box_A\phi=0,
\end{cases}
\end{equation}
where $\cT$ is the linear operator $\cT: V\rightarrow \lg \otimes V $ so that $\langle a\cdot v, w\rangle_{_V}=\langle a, \langle \cT v, w\rangle_{_V}\rangle_{\lg}$ for all $a\in \lg$, $v, w\in V$.
The system (YMH) can be derived as the Euler-Lagrange equations for the action
\[
L[A, \phi]=\int_{\mathbb{R}^{1+3}} \frac{1}{4} \langle F_{\mu\nu}, F^{\mu\nu}\rangle_{\lg}+ \langle D_{\mu}\phi, D^{\mu}\phi \rangle_{_V}  dxdt .
\]
The Yang-Mills-Higgs system is gauge invariant. A change of the gauge can be characterized by a smooth section $b:\cM\rightarrow G$ of $\mathcal{P}$. If we regard the group $G$ as a subgroup of $\mathbf{U}(V)$ (via the representation $\beta$, hence as a matrix group), then the gauge transformation of $b$ sends the connection field $A$, the Yang-Mills field $F$ and the scalar field $\phi$ to
\begin{align*}
  A \mapsto b^{-1}A b+b^{-1}db,\quad  F\mapsto b^{-1}F b,\quad \phi\mapsto \beta(b)\phi.
\end{align*}
 The gauge invariance of (YMH) means that if $(A, \phi)$ solves \eqref{eq:YMH}, then $(b^{-1}A b+b^{-1}db, \beta(b)\phi)$ is also a solution for all $b$.


The Cauchy problem of (YMH) has been studied extensively. In the seminal works \cite{Moncrief1,Moncrief2}, Eardley-Moncrief showed that sufficiently regular initial data lead to the existence of globally in time solutions to \eqref{eq:YMH}. Generalizations and extensions of this classical result are made primarily on relaxing the regularity (or equivalently differentiability) of the initial data.  Goganov-Kapitanski\u{\i} in  \cite{Kapitanskii85:YMH} proved that the solution still exists globally with the physically interested boundary condition $\lim\limits_{|x|\rightarrow\infty}|\phi|=1$. By introducing the celebrated bilinear estimates for null forms, Klainerman-Machedon in \cite{YMkl} constructed global solutions with finite energy data, see different proofs in \cite{sungjinYM}, \cite{Tesfahun15:YM:3D}, \cite{Sigmund16:YM:3D}. This remarkable global existence result could be viewed as a local well-posedness result with rough initial data due to the conservation of energy. However the conservation of energy is not necessary for proving such a global existence result.  Chru\'sciel-Shatah in \cite{Shatah:YM:generalManifold} (also see \cite{Sari16:YM:glob}) extended Eardley-Moncrief's result to any globally hyperbolic four-dimensional Lorentzian manifold.


The equation \eqref{eq:YMH} has a scaling symmetry
\begin{align*}
  A \mapsto \lambda A(\lambda t, \lambda x),\quad \phi \mapsto \lambda \phi(\lambda t, \lambda x),
\end{align*}
which makes it energy subcritical in three space dimension and energy critical in four space dimension.
From the PDE point of view, it is believed that the system is locally well-posedness for data smoother than the critical regularity, which has been confirmed for the abelian case of Maxwell-Klein-Gordon equations (see for example \cite{KriegerMKG4}, \cite{MKGmachedon}, \cite{OhMKG4},  \cite{MKGigor}) or in higher dimensions \cite{Klainerman:YM:4D:H1}. For the general non-abelian case of \eqref{eq:YMH}, the threshold conjecture for the energy critical case in four space dimension has recently been solved by Oh-Tataru in \cite{Oh21:YM:4D}. Local solutions with data below energy space in three space dimension have been constructed in  \cite{Tesfahun15:YM:belowE}, \cite{Sigmund16:YM:3D},  \cite{tao03:YM:temporal:belowE},
\cite{Pecher21:YM:3D:lowr},
\cite{Pecher19:YM:3D:infenergy}. However the regularity required in these works is still not sharp.

Another central topic of the field is to understand the long time dynamics of the solutions to \eqref{eq:YMH}. In the early works
\cite{Glassey79:YMH:energydecay}, \cite{Strauss79:YM:decay} of Glassey-Strauss, they obtained local energy decay for data bounded in the conformal energy class (weighted energy norm with weights $|x|^{2}$).
Sharp pointwise asymptotic decay estimates have been shown for spherically symmetric solutions in
\cite{Pedro:YM:sph:large}, \cite{Sari18:YM:Schwarz}, \cite{YM:spher:Pedro}. These results rely on the conservation of conformal energy, which is a direct consequence of the conformal invariance structure of \eqref{eq:YMH} in $\mathbb{R}^{1+3}$. The conformal mapping method is the geometric way to exploit this conformal invariance. It is widely used to study the long time dynamics of solutions to linear and nonlinear fields
\cite{Christ81:YM}, \cite{ChDNull}. Combining with the classical global existence result of Eardley-Moncrief, it has been shown that solution of \eqref{eq:YMH} in $\mathbb{R}^{1+3}$ asymptotically behaves like linear wave for sufficiently smooth and rapid decaying data \cite{Bruhat82:YM:survey}, \cite{Bruhat83:YM:univcosmos}, \cite{Bruhat82:YM:global:EinsteinU},  \cite{TauG19:YM:decay}. However, the decay assumptions on the initial data, such as the weighted energy with weights $|x|^2$, exclude the existence of charges of the fields, hence exclude the most interesting physical solutions such as solutions with dipole or quadrapole moments
\footnote{
If we formally expand the Yang-Mills field $F$ near the spatial infinity
\begin{align*}
F=F_2+F_3+\ldots+F_k+\ldots,\quad F_k=O(|x|^{-k}),
\end{align*}
then $F_2$, $F_3$ and $F_4$ vanish at any fixed time.
}.

The charges characterize the total strength of the field. It is of particular importance to study the long time dynamics of the fields with nonvanishing charges, which as far as we know are only understood for some special fields (for example \cite{Glassey81:YM:specialsl}, \cite{Glassey83:YM:scattering:special}) or for the abelian case of massless Maxwell-Klein-Gordon (MKG) system. The nonvanishing charges imply that the Yang-Mills fields (or the Maxwell fields) behave like $r^{-2}$ at spatial infinity, which indicates that  the connection field $A$ decays as $r^{-1}$ at any fixed time. This decay rate is critical (as the charge is scaling invariant) and causes substantial difficulty for the asymptotic analysis for the solutions even on the linear level. See \cite{LindbladMKG} for more detailed discussions. However, for the abelian case of MKG, Shu in \cite{Shu} observed that the charge may only affect the asymptotic behavior of the solutions in the exterior region, i.e., the region $\{t+R\leq |x|\}\subset \mathbb{R}^{1+3}$ outside a forward light cone. In other words, the charge is a local property at the spatial infinity and will not be transported to the time infinity along the future null infinity. This has been confirmed by Lindblad-Sterbenz in \cite{LindbladMKG} (also see  \cite{Lydia:MKG:small}, \cite{yangMKG}, \cite{Lindblad19:MKG}, \cite{Kauffman18:MKG}) for the small data case.

A key step toward the long time dynamics of MKG system for general large data was contributed by Yang in \cite{yangILEMKG}, where quantitative energy flux decay has been shown for the solution in the presence of arbitrarily large charge. Based on this treatment for large charge, Yang-Yu in \cite{YangYu:MKG:smooth} proposed a framework to study the long time behaviors of charged scalar fields (as solutions to the MKG system). In view of finite speed of propagation, the full problem can be divided into two steps: the first step is to study the solution in the exterior region, where the charge is large but the chargeless part can be made small (initial data are sufficiently localized). In particular, perturbation argument yields the pointwise decay estimates for the solution. For the second step inside the lightcone where the data are large but without the long range effect of nonzero charges, the asymptotic decay follows by performing a conformal transformation together with Eardley-Moncrief's classical result \cite{Moncrief1,Moncrief2}. However, directly applying Eardley-Moncrief's result requires fast decay of the solution along the hyperboloid used to do the conformal transformation. Consequently the chargeless part of the initial data were assumed to belong to the weighted energy space with weights $|x|^{6+\epsilon}$, which implies that the dipole and quadrapole moments must vanish.

Regarding  the regularity on the initial data used to obtain long time dynamics of charged scalar fields, there is a gap between the small data result in \cite{LindbladMKG} and the large data case in \cite{YangYu:MKG:smooth}. Both works required that the chargeless part of the initial data are bounded in some weighted energy space. However the former one used the weights $|x|^{1+\epsilon}$ while the latter work used the weights $|x|^{6+\epsilon}$.  These weights characterize the decay rate of the initial data at spatial infinity. The regularity required for the small data case is almost sharp in the sense that the weighted energy space with weights $|x|$ is scaling invariant. The main aim of this work is to fill this gap by extending the classical small data result of Lindblad-Sterbenz to general large data, that is, under the same regularity assumptions on the initial data but without any smallness assumption, we derive pointwise decay estimates for solutions of massless MKG system.

As discussed above, the fast decay assumptions on the initial data in \cite{YangYu:MKG:smooth} are necessary in order to apply Eardley-Moncrief's result after conformal transformation for the solutions in the interior region. Slight modification of the argument in \cite{YangYu:MKG:smooth} can be easily adapted to data in weighted energy space with weights $|x|^{1+\epsilon }$ in the exterior region. The best one can expect is that the solution lies in the same weighted energy space on the hyperboloid  which is chosen to be the initial hypersurface for the solutions in the interior region. After conformal compactification, the associated energy could be infinite. It is then equivalent to quantitatively control the growth of the solutions on the maximal Cauchy development of a ball for a class of singular data which may blow up on the boundary. In other words, for the general case of YMH equations, we show that the solution grows at most inverse polynomially for a class of infinite energy data on the initial ball.

As applications, we obtain pointwise decay estimates for YMH fields with data bounded in some weighted energy space on a hyperboloid or on the constant time slice. The data allow the presence of dipole and quadrapole moments. The charges for general Yang-Mills fields are much more complicated (see discussions for example in \cite{Yang52:YM}, \cite{Chrusciel87:charge:YM}). Shu's observation is no longer valid and the nonzero charges may have a long range effect for the asymptotic behavior of the solutions  in the whole space instead of a neighborhood of spatial infinity for the abelian case of MKG. However in our future work we will address the global dynamics of a class of Yang-Mills fields with non vanishing charges while the results in this work will serve as a crucial step. Meanwhile for the abelian case of MKG, combined with the result in \cite{YangYu:MKG:smooth}, we are  able to fill the aforementioned gap between \cite{LindbladMKG} and \cite{YangYu:MKG:smooth}.


\subsection{Main results}
In order to state our main results, let's first introduce some necessary notation. We use the standard spherical coordinates $(t, r,\om)$ on the Minkowski space where $\om$ is a local coordinate system on the unit two sphere. We also use the null coordinates $(u, v,\om)$ where
\[u=\frac{t-r}{2}, \quad v=\frac{t+r}{2}.\]
We introduce the null frame $\{L, \Lb, e_1, e_2\}$ with
\[
L=\pa_v=\pa_t+\pa_r,\quad \Lb=\pa_u=\pa_t-\pa_r
\]
and $\{e_1, e_2\}$ is an orthonormal basis tangent to the sphere of radius $r$.
We use $\D$ to denote the projection of the covariant derivative associated to the connection field $A$ to the sphere with radius $r$. For any $\lg$-valued 2-form $F$,  we can use the null frame to decompose $F$ as follows:
\begin{equation}\label{def: null decomposition of F}
\a_i=F_{Le_i}=F(L,e_i),\ \ \underline{\a}_i=F_{\Lb e_i}=F(\Lb,e_i),\ \ \rho=\f12 F_{\Lb L}=\f12 F(\Lb,L), \ \ \si=F_{e_1 e_2}=F(e_1,e_2),
\end{equation}
where $i=1$ or $2$.

The first result of the paper concerns the growth of the solution with infinite energy. For any fixed constant $R>0$, let $\B_{R}$ be the following part of the initial Cauchy hypersurface
\[
\B_R:=\{(t, x)|t=0, \quad |x|< R\}
\]
and let $\mathcal{J}^+(\B_R)$ be the future maximal Cauchy development (or the future domain of dependence)
\[
\mathcal{J}^{+}(\B_{R}):=\{(t, x)|t+|x|< R, \quad t\geq 0\}.
\]
We consider the Cauchy problem to the  \eqref{eq:YMH} system with smooth initial data $(\bar A, \bar E, \phi_0, \phi_1)$ given on $\B_{R}$, where $\bar A$ is a $\lg$-valued $1$-form, $\bar E$ is a $\lg$-valued vector  field  and $(\phi_0, \phi_1)$ are $V$-valued functions. The initial data set has to satisfy the following constraint equation:
\begin{equation*}
  \div \bar E+[\bar A, \bar E]= J[\phi]_0\big|_{t=0} =\langle \phi_1, \cT \phi_0\rangle +\langle \cT \phi_0, \phi_1\rangle.
\end{equation*}
The fields $\bar A$, $\phi_0$ and $\phi_1$ are the restrictions of $A$, $\phi$ and $D_0\phi$ to $\B_R$. The Yang-Mills field $F$ can be decomposed into the electric field $E$ and the magnetic field $H$ through $F_{0i}(0, x)=E_i(x)$ and $F_{ij}(0, x)=\epsilon_{ijk} H_k(x)$, where $\ep_{ijk}$ is the Euclidean volume element on $\B_{R}$ in the Cartesian  coordinates $(x^1, x^2, x^3)$. The field $\bar E$ is the restriction of $E$ to $\B_R$ and it contains the information of time derivative of $A$.
Moreover, the magnetic field $H$ restricted on $\B_{R}$, noted as $\bar H$, can be determined by $\bar A$ as follows
\[
\bar H= d\bar A+\f12 [\bar A\wedge \bar A] \ \ \Leftrightarrow \ \ \bar H_i=\epsilon_{ijk}\left(\pa_j \bar A_k+\f12 [\bar A_j, \bar A_k]\right),
\]
where $d$ is the exterior differential on $\B_{R}$.

For a constant $\gamma\in (0, 1)$, we define the following gauge invariant weighted energy
\begin{equation}\label{def:E_0 gamma R}
\mathcal{E}_{0, \gamma}^{R}=\int_{\B_R}(R-|x|)^{\gamma}\left(|\a|^2+|D_L\phi|^2\right)+|\rho|^2+|\sigma|^2+|\ab|^2+|D_{\Lb}\phi|^2+|\D\phi_0|^2+|\phi_0|^2 dx.
\end{equation}
We also introduce higher order weighted energies  with $k=1$ and $2$:
\begin{equation}\label{def:E_2 gamma R}
\mathcal{E}_{k, \ga}^{R}=\sum\limits_{l_1+l_2\leq k}\sum\limits_{i, j}\int_{\B_{R}}(|D^{l_1} D_{\Om_{ij}}^{l_2}F|^2+
|D^{l_1+1} D_{\Om_{ij}}^{l_2}\phi|^2)(R-|x|)^{\ga+2l_1}dx+\mathcal{E}_{0, \ga}^{R},
\end{equation}
where the rotation vector fields $\Om_{ij}$ are defined as $\Omega_{ij}=x_i\pa_j-x_j\pa_i$ for $i,j\in \{1,2,3\}$ and $i\neq j$.
The operator $D$ is the full covariant derivatives in Minkowski space (including $D_t$). The norm $|\cdot |$ is defined as $|f|=\sqrt{\langle f, f\rangle}$ for $\lg$-valued or $V$-valued field $f$. In particular, the above norms are gauge invariant. We remark that the norms are uniquely determined by the data $(\bar A, \bar E, \phi_0, \phi_1)$ together with the equation \eqref{eq:YMH} , i.e., the missing time derivatives can be recovered from the the equation.

\medskip

The first main theorem of the paper is as follows.
\begin{Thm}
\label{thm:EM}
For the Cauchy problem to the Yang-Mills-Higgs equations \eqref{eq:YMH} with initial data $(\bar A, \bar E, \phi_0,\\ \phi_1)$ given  on $\B_R$,  we assume that the weighted energy $\mathcal{E}_{2, \ga}^R$ is finite for some $0<\gamma<1$. For all positive constant $\ep$ with $0<\ep<\frac{1-\ga}{2}$, there exists a constant $C$  depending only on $\mathcal{E}_{0, \ga}^R$, $R$, $\ga$ and $\ep$, so that the solution $(\phi, F)$ satisfies the following estimates on $\mathcal{J}^{+}(\B_{R})$:
\begin{align*}
|\phi(t, x)| &\leq C  \sqrt{\mathcal{E}_{1, \ga}^R}(R-t)^{-\frac{1+\ga}{2}},\\
|D\phi(t, x)|+|F(t, x) | &\leq C  \sqrt{\mathcal{E}_{2, \ga}^R}(R-t-|x|)^{-\frac{1+\ga}{2}-\ep}(R-t)^{-1+\ep}.
\end{align*}
\end{Thm}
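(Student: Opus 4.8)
The plan is to combine a gauge-covariant representation of the fields along backward light cones with a new family of weighted energy inequalities, and then to close the resulting integral inequalities by a bootstrap in the spirit of Eardley--Moncrief. Fix a point $p=(t_0,x_0)\in\mathcal{J}^+(\B_R)$ and let $\mathcal{N}^-(p)$ be the truncated backward light cone from $p$ down to $\{t=0\}$, meeting $\B_R$ in a two-sphere $S_p\subset\B_R$ (which is guaranteed since $t_0+|x_0|<R$). Two ingredients drive the argument: (i) a weighted flux estimate bounding the integral of the field quantities over $\mathcal{N}^-(p)$ by the data energy, and (ii) the Kirchhoff--Sobolev parametrix of Klainerman--Rodnianski, which expresses $\phi(p)$, $D\phi(p)$ and $F(p)$ as integrals over $\mathcal{N}^-(p)$ of their respective covariant wave sources plus data terms on $S_p$.

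First I would establish the weighted flux estimate, which is the main new ingredient. Starting from the gauge-invariant stress-energy tensor $T_{\mu\nu}$ of the coupled system (which obeys $D^\mu T_{\mu\nu}=0$ and the dominant energy condition), I would contract it with a multiplier adapted to the weight and integrate the divergence identity over the solid region bounded by $\mathcal{N}^-(p)$ and $\B_R$. The weight $(R-|x|)^\ga$ on the data propagates to a bulk weight that degenerates precisely on the future boundary $\{t+|x|=R\}$ of $\mathcal{J}^+(\B_R)$, and it is carried exactly by the favorable null components $\a=F_{Le_i}$ and $D_L\phi$. Choosing the weight monotone in the correct null direction makes the bulk error term have a good sign, so the divergence theorem yields the weighted flux of $(|\a|^2+|D_L\phi|^2)$, together with the unweighted flux of $\ab,\ro,\si$ and $D_{\Lb}\phi$, through $\mathcal{N}^-(p)$ bounded by $\mathcal{E}_{0,\ga}^R$, uniformly in $p$. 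Commuting \eqref{eq:YMH} with $D$ and with the rotations $\Om_{ij}$ upgrades this to higher-order fluxes controlled by $\mathcal{E}_{1,\ga}^R$ and $\mathcal{E}_{2,\ga}^R$.

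Next I would insert these fluxes into the parametrix. For $\phi$, since $\Box_A\phi=0$, the Kirchhoff--Sobolev formula (using parallel transport of the weight along the generators of $\mathcal{N}^-(p)$) represents $4\pi\,\phi(p)$ as a data integral over $S_p$ plus a cone integral of error terms schematically of the form $F\cdot\phi$ and $F\cdot D\phi$ produced by the connection curvature in the transport operator. The Sobolev part of the parametrix trades an $L^\infty$ bound for $L^2$ cone quantities at the cost of one angular derivative, which is why the $\Om_{ij}$-commuted energy $\mathcal{E}_{1,\ga}^R$ appears. Estimating the data and cone integrals by Cauchy--Schwarz, pairing the fluxes from the previous step against the cone area element weighted by $(R-|x|)^\ga$, produces the factor $(R-t)^{-\frac{1+\ga}{2}}$ and yields the first inequality. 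For $F$ and $D\phi$ I would apply the parametrix to the covariant wave equations $\Box_A F\sim [F,F]+D J[\phi]$ and $\Box_A(D\phi)\sim F\cdot D\phi+\ldots$ obtained from the Bianchi identity and \eqref{eq:YMH}; the quadratic sources are again controlled by Cauchy--Schwarz against the second-order fluxes governed by $\mathcal{E}_{2,\ga}^R$. Here a fraction $\ep$ of decay must be surrendered near the tip and near the boundary $t=R$, which is the origin of the split weight $(R-t-|x|)^{-\frac{1+\ga}{2}-\ep}(R-t)^{-1+\ep}$. Since the sources contain the fields themselves, the estimates do not close in one pass; I would run a continuity/bootstrap argument, assuming the asserted decay with a large constant on a subinterval of $[0,R)$, feeding it into the cone integrals, and recovering the same decay with an improved constant.

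The hard part will be the gauge-covariant control of the connection along the cone at the critical weight $\ga<1$, which corresponds to infinite-energy, slowly decaying data. Taken naively, the most dangerous terms in the cone integrals either lose a derivative or fail to be integrable as the weight degenerates toward $\{t+|x|=R\}$. Their taming rests on the null-structure cancellation underlying the Eardley--Moncrief scheme --- the antisymmetry of $F$ kills the worst contraction --- combined with the fact that the very components $\a$ and $D_L\phi$ responsible for these terms are the ones carrying the extra weight $(R-|x|)^\ga$ in $\mathcal{E}_{0,\ga}^R$. Balancing this cancellation against the slow decay of the data, and absorbing the borderline (logarithmic) behaviour into the $\ep$-loss, is the delicate point that makes the final inequalities close.
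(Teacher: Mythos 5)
Your overall architecture (weighted flux through backward cones plus the Klainerman--Rodnianski parametrix, closed Eardley--Moncrief style) matches the paper, but your step (i) --- the crucial new ingredient --- is misstated in a way that breaks the argument. Through a backward cone $\mathcal{N}^{-}(q)$ the energy flux controls only the components \emph{tangential} to that cone, i.e.\ the tilde-frame quantities $\widetilde{\ab},\widetilde{\rho},\widetilde{\si},D_{\widetilde{\Lb}}\phi,\widetilde{\D}\phi$ relative to the frame centered at $q$; your $\a=F_{Le_i}$ and $D_L\phi$ (or their tilde analogues) never appear in the flux density, so ``the weighted flux of $|\a|^2+|D_L\phi|^2$ through $\mathcal{N}^{-}(p)$'' is not something the divergence theorem produces. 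Worse, the claimed \emph{uniform unweighted} flux bound for the remaining components is false: with only $\mathcal{E}_{0,\ga}^R$ finite, the classical multiplier $X=\pa_t$ gives flux $\les \mathcal{E}_{0,\ga}^R(R-t_0-r_0)^{-\ga}$, blowing up as the vertex approaches $\{t+|x|=R\}$, and since the cone is centered at $x_0\neq 0$ the tilde components mix the weighted and unweighted parts of the data. The paper's Proposition \ref{prop:EF:cone:gamma} (multiplier $X=v_*^\ga L+u_*^\ga\Lb$, the function $W_q$, and Lemma \ref{lem:Wq:lowerBD}) shows the correct uniform statement necessarily carries weights: $v_*^\ga$ on $\widetilde{\rho},\widetilde{\si},\widetilde{\D}(r\phi)$ and the improved, $t$-only weight $(R-t-r_q)^{\ga}$ on $\widetilde{\ab},D_{\widetilde{\Lb}}(r\phi)$. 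That $t$-only improvement is exactly what makes the transport estimates for the parametrix field $h$ integrable along the generators (one needs $\int_0^{\widetilde{r}}(R-(t_0-s)-r_0)^{-\ga}ds\les \widetilde{r}^{1-\ga}$ to get $\int_{S(\widetilde{r})}|\widetilde{r}D_{\widetilde{e}_j}h|^2\les \mathcal{E}_{0,\ga}^R\widetilde{r}^{1-\ga}$); without it your cone integral of $\widetilde{\lap}_A h$ does not close. You also give no mechanism for the zeroth-order Higgs terms: no flux controls $|\phi|^2$ or $r^{-1}|\phi|$ on the cone, and the paper needs a dedicated chain of Hardy-type sphere estimates (Propositions \ref{prop:bd4phi:need} and \ref{prop:bd4phi:phir}) together with the hidden cancellation $r^2J[\phi]=J[r\phi]$ to handle them.

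Two further deviations. First, commuting the nonlinear system with $D$ and $\Om_{ij}$ to obtain ``higher-order fluxes'' is neither done nor needed in the paper, and is not obviously available (the commuted fields satisfy inhomogeneous wave equations, so flux positivity is lost); $\mathcal{E}_{1,\ga}^R$ and $\mathcal{E}_{2,\ga}^R$ enter only through the data term of the parametrix, via free-wave majorants such as $\Box w=0$, $\pa_t w(0,\cdot)=|D\phi|(0,\cdot)$ combined with the weighted Sobolev and Hardy inequalities on $\B_R$ (Section \ref{sec:linear:th}). Second, your closing device --- a continuity/bootstrap ``recovering the same decay with an improved constant'' --- cannot work here because there is no smallness anywhere: feeding a large constant into a quadratic cone integral returns its square, not an improvement. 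The correct mechanism is that, after Cauchy--Schwarz against the \emph{uniform} weighted flux, the inequalities for $M_0(t)=(R-t)^{1+\ga}\sup_x|\phi|^2$ and for $M_1^2$ become \emph{linear} with a weakly singular kernel $s^{-\ga-2\ep}$, and they close unconditionally by the Gronwall variant of Lemma \ref{lem:Gronwall:V}, which is also the source of the exponential dependence of $C$ on $\mathcal{E}_{0,\ga}^R$.
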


This theorem together with the conformal invariance of  \eqref{eq:YMH} (see for example  \cite{Christodoulou:Book:GR1}) leads to the time decay estimates for the Yang-Mills-Higgs field in the future of a hyperboloid.
 Let $R>1$ be a constant (maybe different from the one in the previous  theorem). We define the hyperboloid $\mathcal{H}$ in Minkowski space
\begin{equation*}
\mathcal{H}:=\left\{(t, x)|(t^*)^2-|x|^2=R^{-1} t^* >0\right\}, \quad t^*=t+ R +1. 
\end{equation*}
The future of $\mathcal{H}$ is given by
\begin{align*}
\mathbf{D}:=\left\{(t, x)|(t^*)^2-|x|^2\geq R^{-1} t^*>0\right\}.
\end{align*}
The second main theorem of the paper provides precise asymptotic decay estimates for solutions of \eqref{eq:YMH} in the region $\mathbf{D}$ where the data of the solution is bounded in some weighted energy space on the hyperboloid $\mathcal{H}$.
 Let $\Ga$ be the following set of commutator vector fields
\[
\Ga=\big\{\pa_{t}, \Om_{ij}=x_{i}\pa_{j}-x_j\pa_i, S=t\pa_t+r\pa_r ~\big|~
i,j =1,2,3, i\neq j\big\}.
\]
We regard the hyperboloid $\mathcal{H}$ as the graph of the function
\[t^*=(2R)^{-1}+\sqrt{|x|^2+(2R)^{-2}}.\]
Therefore, we can  parameterize $\mathcal{H}$ by $x\in \mathbb{R}^3$.
For some constant $1<\ga_1<2$, we define the following gauge invariant weighted Sobolev norm
\begin{equation*}
\begin{split}
\mathcal{E}_{k, \ga_1}^{\mathcal{H}}=&\sum\limits_{l\leq k}\sum\limits_{Z\in \Ga}\int_{\mathcal{H}_+}r^{\ga_1}\left(|\a(\mathcal{L}_Z^lF)|^2+r^{-2}|D_L D_Z^l(r\phi)|^2+
r^{-2}|\phi|^2\right)dx +E[D_Z^l\phi, \mathcal{L}_Z^l F](\mathcal{H})\\&+\int_{\mathcal{H}_{-}}\sum_{l\leq k}(|{D}^l{F}|^2+|{D}^{l+1}\phi|^2+|\phi|^2)d{x},
\end{split}
\end{equation*}
where $\mathcal{H}_+=\mathcal{H}\cap\{t\geq 0\}$, $\mathcal{H}_-=\mathcal{H}\cap\{t< 0\}$ and $E[D_Z^l\phi, \mathcal{L}_Z^l F](\mathcal{H})$ is the energy flux of the  fields $D_Z^l \phi$ and  $\mathcal{L}_Z^l F$ through the hypersurface $\mathcal{H}$. For $V$-valued field $f$ and $\lg$-valued $2$-form $G$, the energy flux $E[f, G](\mathcal{H})$ is defined as
\begin{align*}
  E[f, G](\mathcal{H}) =  \int_{\mathcal{H}} & \frac{2R(t^*-r)-1}{2R t^*-1}(|\ab(G)|^2+|D_{\Lb} f|^2)+\frac{2R(t^*+r)-1}{2R t^*-1}(|D_L f|^2+|\a(G)|^2)\\
  &+|\rho(G)|^2+(|\si(G)|^2+|\D \phi|^2)
  dx.
\end{align*}
The $\alpha(G),\ab(G), \rho(G)$ and $\sigma(G)$ are the null components of $G$ defined exactly in the same way as in \eqref{def: null decomposition of F}.
The Lie derivative $\mathcal{L}_Z $ is given by $(\mathcal{L}_ZG)_{\mu\nu}=D_{Z}G_{\mu\nu}+\partial_{\mu}Z^{\delta}G_{\delta\nu}+
\partial_{\nu}Z^{\delta}G_{\mu\delta}. $
\medskip

We have peeling decay estimates for Yang-Mills-Higgs fields in the future of $\mathcal{H}$.
\begin{Thm}
  \label{thm:YMH:hyperboloid}
Let $(F, \phi)$ be a global smooth solution to the Yang-Mills-Higgs equations \eqref{eq:YMH} in Minkowski space. We assume that $\mathcal{E}_{2, \ga_1}^{\mathcal{H}}$ is finite for some constant $1<\ga_1<2$. For all small constant $\ep$ with $0<\ep<10^{-2}(\ga_1-1)$, there exists a constant $C$ depending only on $\mathcal{E}_{0, \ga_1}^{\mathcal{H}}$, $\ep$, $\ga_1$ and $R$, so that $(F, \phi)$ satisfies the decay estimates in $ \mathbf{D} $:
  \begin{equation*}
    \begin{split}
   & |\phi|^2\leq C \mathcal{E}_{2, \ga_1}^{\mathcal{H}} u_+^{1-\ga_1}v_+^{-2},\quad |D_L\phi|^2 \leq C\mathcal{E}_{2, \ga_1}^{\mathcal{H}} u_+^{1-\ga_1}v_+^{-4}, \\
   & |\ab|^2+|D_{\Lb}\phi|^2\leq C \mathcal{E}_{2, \ga_1}^{\mathcal{H}} u_+^{-2\ep-4}v_+^{1-\ga_1+2\ep},\\
&u_+^2(|\D\phi|^2+|\rho|^2+|\si|^2)+v_+^2|\a|^2+|D_L(r\phi)|^2\leq C \mathcal{E}_{2, \ga_1}^{\mathcal{H}}  u_+^{-2\ep}v_+^{-1-\ga_1+2\ep},
    \end{split}
  \end{equation*}
where $u_+=1+\f12|t-|x||$ and $v_+=1+\f12|t+|x||$.
\end{Thm}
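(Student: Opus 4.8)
The plan is to reduce the statement to Theorem \ref{thm:EM} by exploiting the conformal invariance of the massless, potential-free system \eqref{eq:YMH} in $\mathbb{R}^{1+3}$. First I would set up the spacetime inversion adapted to $\mathcal{H}$. Writing $t^*=t+R+1$ and $u^*=\f12(t^*-r)$, $v^*=\f12(t^*+r)$, the defining relation of $\mathcal{H}$ becomes $\frac{1}{u^*}+\frac{1}{v^*}=4R$, while $\mathbf{D}$ is exactly $\{\frac{1}{u^*}+\frac{1}{v^*}\leq 4R,\ u^*,v^*>0\}$; in particular $u^*\geq \frac{1}{4R}$ throughout $\mathbf{D}$. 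Hence the map $\Phi$ given in null coordinates by $\bar u=c/u^*$, $\bar v=c/v^*$ (followed by a time reflection, with $c$ chosen so that $R'=4Rc$), sends $\mathbf{D}$ diffeomorphically onto the future development $\mathcal{J}^+(\B_{R'})$ and carries $\mathcal{H}$ onto the slice $\B_{R'}$. A direct computation gives $\Phi^*\bar m=\Theta^2 m$ with conformal factor $\Theta=c/(u^*v^*)$, and I would record the identities $R'-\bar t=t^*\Theta$ and $R'-\bar t-|\bar x|=2c/v^*$ on the image, together with $u^*\sim u_+$ and $v^*\sim t^*\sim v_+$ on $\mathbf{D}$ (up to constants depending on $R$).

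By conformal invariance of \eqref{eq:YMH}, the pair $(\tilde F,\tilde\phi)$ obtained by pulling $F$ back as a $2$-form and setting $\tilde\phi=\Theta^{-1}\phi$ (the scalar conformal weight in four spacetime dimensions) solves \eqref{eq:YMH} on $\mathcal{J}^+(\B_{R'})$. The central step is then to verify that the data of $(\tilde F,\tilde\phi)$ on $\B_{R'}$ lie in the weighted space of Theorem \ref{thm:EM} with $\ga=2-\ga_1\in(0,1)$, at the relevant orders:
\[
\mathcal{E}_{0,\ga}^{R'}\les \mathcal{E}_{0,\ga_1}^{\mathcal{H}},\qquad \mathcal{E}_{2,\ga}^{R'}\les \mathcal{E}_{2,\ga_1}^{\mathcal{H}}.
\]
Here the weight $(R'-|\bar x|)^{\ga}$ on $\B_{R'}$ corresponds under $\Phi$ to $(2c/v^*)^{\ga}$, which matches $r^{\ga_1}$ on $\mathcal{H}_+$ once the conformal factors in the measure and in the field components are accounted for; the unweighted interior part of $\mathcal{E}_{2,\ga_1}^{\mathcal{H}}$ carried by the bounded piece $\mathcal{H}_-$ controls the data near the center of $\B_{R'}$, which is the image of the tip of $\mathcal{H}$. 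The commuted norms also correspond because the fields $\Ga=\{\pa_t,\Om_{ij},S\}$ are conformal Killing fields of $m$ and, modulo lower order conformal terms, are carried by $\Phi$ into the full covariant derivatives $D$ and the rotations $\Om_{ij}$ used to build $\mathcal{E}_{2,\ga}^{R'}$.

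Granting this comparison, Theorem \ref{thm:EM} applies to $(\tilde F,\tilde\phi)$ and yields $|\tilde\phi|\les (R'-\bar t)^{-(1+\ga)/2}$ together with the companion bound for $|D\tilde\phi|+|\tilde F|$ in terms of $(R'-\bar t-|\bar x|)$ and $(R'-\bar t)$, with constant depending only on $\mathcal{E}_{0,\ga}^{R'}$. Transforming back through $\phi=\Theta\tilde\phi$ and the identities above, the scalar bound becomes
\[
|\phi|^2=\Theta^2|\tilde\phi|^2\les \Theta^{1-\ga}(t^*)^{-(1+\ga)}\les u_+^{1-\ga_1}v_+^{-2},
\]
which is exactly the claimed rate since $\ga=2-\ga_1$ and $(u^*v^*)^{\ga-1}(t^*)^{-(1+\ga)}\sim u_+^{\ga-1}v_+^{-2}$. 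The bounds for $D\phi$ and for the individual null components $\a,\ab,\rho,\si$ follow in the same fashion, distributing the powers of $\Theta$ and of the frame-rescaling factors among the components; the time reflection in $\Phi$ interchanges the roles of $\a$ and $\ab$, which explains why $\ab$ inherits the weaker $v_+$ decay.

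The hard part will be the weighted energy comparison of the second paragraph: one must pin down the precise conformal weight carried by each null component of $F$ and by $D\phi$, including all commuted quantities, and confirm that the fields in $\Ga$ acting on $\mathcal{H}$ generate exactly the derivatives entering $\mathcal{E}_{2,\ga}^{R'}$ up to controllable conformal lower order terms. By contrast, the final redistribution of $\Theta$ into the sharp powers of $u_+$ and $v_+$ is essentially bookkeeping once the dictionary $\Theta=c/(u^*v^*)$, $R'-\bar t=t^*\Theta$, $R'-\bar t-|\bar x|=2c/v^*$ is fixed.
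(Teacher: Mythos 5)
Your proposal follows essentially the same route as the paper: the inversion you describe is, up to normalization, exactly the paper's map $\mathbf{\Phi}$ (in null coordinates $\widetilde u=\frac R2-\frac{1}{4u^*}$, $\widetilde v=\frac R2-\frac{1}{4v^*}$), the conformal pair $(\widetilde F,\Theta^{-1}\phi)=((\mathbf{\Phi}^{-1})^*F,\La\phi)$ is the one the paper uses, the exponent match $\ga=2-\ga_1$ and the dictionary $R-\widetilde t\sim u_+^{-1}$, $R-\widetilde t-|\widetilde x|\sim v_+^{-1}$ are the paper's, and the final step is an application of Theorem \ref{thm:EM} followed by redistribution of conformal weights. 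Your sample computation for $|\phi|^2$ reproduces the stated rate correctly. Two caveats. First, a local but genuine error: the remark that ``the time reflection in $\Phi$ interchanges the roles of $\a$ and $\ab$'' is wrong, and your explicit composition ($\bar u=c/u^*$, $\bar v=c/v^*$ followed by a time reflection) as written produces a negative radius; the inversion $u\mapsto -1/(4u)$ is already increasing in each null variable, so no reflection is needed and no interchange occurs. Indeed your own identities ($R'-\bar t-|\bar x|=2c/v^*$, pairing outgoing with outgoing) are only consistent with the orientation-preserving map, under which $\widetilde\a=\La(t^*+r)^2\a$, $\widetilde\ab=\La(t^*-r)^2\ab$, $\widetilde\rho=\La^2\rho$, $\widetilde\si=\La^2\si$. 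The anisotropy of the final rates has a different source: Theorem \ref{thm:EM} bounds all components of $\widetilde F$ by the same quantity $v_+^{\frac{3-\ga_1}{2}+\ep}u_+^{1-\ep}$, and the weaker $v_+$-decay of $\ab$ comes from dividing by its small conformal prefactor $(t^*-r)^2\sim u_+^2$, not from any swap of null directions.

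Second, the step you defer as ``the hard part'' is indeed where the paper spends its effort (Lemmas \ref{lem1}, \ref{lem2} and Proposition \ref{prop:bd:ID:comp}), and your sketch of it is too loose in one respect: the push-forwards $\mathbf{\Phi}_*\Ga=\{\tilde K,\Om_{ij},\tilde S\}$ consist of a Morawetz-type field, rotations, and a scaling-type field, and these do \emph{not} generate the plain translations $\pa_\mu$ on $\B_R$; their span degenerates exactly like $(R-|x|)$ at the boundary. The paper's Lemma \ref{lem1} turns this degeneration into the estimate $(R-|x|)|D^{k+1}\varphi|\les\sum_{Z}|D^kD_Z\varphi|+\dots$, which is precisely why the norm $\mathcal{E}_{2,\ga}^{R}$ is built with the weights $(R-|x|)^{\ga+2l_1}$ (one extra power of $R-|x|$ per non-rotational derivative); one also needs the exchange $|D_Z G|\les|\mathcal{L}_ZG|+|G|$ and its once-commuted version (Lemma \ref{lem2}), plus the measure bookkeeping $d\widetilde x\les\La^{-4}dx$, $(R-|\widetilde x|)=(t^*+r)^{-1}$, with $\mathcal{H}_-$ covering the inner ball $\B_{R_*}$ as you anticipated. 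So your statement that $\Ga$ maps into ``the full covariant derivatives $D$'' modulo lower-order terms would fail as stated near $\pa\B_{R}$; the correct claim is the weighted one above, and with it your outline matches the paper's proof.
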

\begin{Remark}
The decay rates can be improved for larger $\ga_1$, i.e., for $\gamma_1\geq 2$. The case when $\ga_1>6$ for MKG equations has been studied in \cite{YangYu:MKG:smooth}. This can be generalized to the nonabelian case of Yang-Mills-Higgs equations in the parallel way: Because the data on the initial hypersurface $\mathcal{H}$ decay so fast that the associated solution after the conformal transformation is uniformly bounded in $H^2$, we can directly apply the classical result of Moncrief-Eardley \cite{Moncrief1} or the work of Klainerman-Machedon \cite{YMkl}. For the case when $2\leq \ga_1\leq 6$, the dependence on $\ga_1$ of the decay is also of independent  interest and we do not pursue this here.
\end{Remark}

\begin{Remark}
 The theorem can be formulated as a Cauchy problem with data given on the hyperboloid $\mathcal{H}$. The data consist of the $\lg$-valued $1$-forms $A\big|_{\mathcal{H}}$ and $F(N,\cdot )|_{\mathcal{H}}$ as well as the $V$-valued fields $\phi\big|_{\mathcal{H}}$ and $D_N \phi\big|_{\mathcal{H}}$, where $N$ is the future pointed unit normal of $\mathcal{H}$. The weighted energy norm $\mathcal{E}_{0, \ga_1}^{\mathcal{H}}$ is then uniquely determined by the data together with the  \eqref{eq:YMH} system.
\end{Remark}

\begin{Remark}
The decay rates of the components $|\ab|$, $|\rho|$, $|\si|$, $|D_{\Lb}\phi|$ and $|\D\phi|$ are not sharp. They are almost constant along outgoing null cones (where $u_+$ is a constant) when $\ga_1$ is approaching to $1$. However, based on these rough estimates, we will derive the sharp decay estimates in our future work.
\end{Remark}

\begin{Remark}
The uniform constant $C$ in the estimates grows exponentially in terms of the zeroth order weighted energy $\mathcal{E}_{0, \ga_1}^{\mathcal{H}}$ but linearly on higher order weighted energies.  This was conjectured in \cite{YangYu:MKG:smooth} and was also suggested by the celebrated bilinear estimates of Klainerman and Machedon in \cite{MKGkl}.
\end{Remark}

The third main theorem of the paper concerns the Cauchy problem to the MKG equations in $\mathbb{R}^{1+3}$ with admissible initial data $(\phi_0, \phi_1, E, H)$ on $\{t=0\}$. This can be viewed as a special case of YMH when $\mathbf{G}=\mathbf{U}(1)$, $\lg=\mathfrak{u}(1)=i\mathbb{R}$ and  $V=\mathbb{C}$. The  inner products on $V$ and $\mathfrak{u}(1)$ are given by $\langle a, b\rangle=a\bar b$ and $\langle a, ib\rangle =ab$ respectively. The map $\cT$ can be realized as $\cT(v)=iv$. The total charge is given by
\begin{equation*}
  q_0=\frac{1}{4\pi}\int_{\mathbb{R}^3}\Im(\phi_0\cdot\overline{\phi_1})dx
\end{equation*}
on each time slice and it is a conserved constant. We also define the weighted energy norm for the chargeless part of the initial data:
\begin{align*}
  \mathcal{E}_{l, \ga_1}^{M}=\sum\limits_{k\leq l}\int_{\mathbb{R}^3}&(1+r)^{\ga_1+2k}(|D^{k+1}\phi_0|^2+|D^k \phi_1|^2+|\nabla^k\widetilde{E} |^2+|\nabla^k H|^2)+|\phi_0|^2 dx
\end{align*}
with charge part removed from the electric field $\widetilde{E}=E-\mathbf{1}_{\{|x|\geq 1\}} q_0 r^{-3}x$.

\medskip

We have peeling decay estimates for MKG fields in the Minkowski space.

\begin{Thm} \label{thm:MKG:ga1:rough}
For the Cauchy problem to the MKG equations in Minkowski space with admissible initial data $(\phi_0, \phi_1, E, H)$ on the initial hypersurface $\{t=0\}$,  we assume that the weighted energy $\mathcal{E}_{2, \ga_1}^M$  is finite for some $\ga_1\in (1, 2)$. Then,  the solution is globally in time. Moreover, for all small constant $\ep$ with $0<\ep<10^{-2}(\ga_1-1)$, there exists a constant $C$ depending on $\mathcal{E}_{2, \ga_1}^M$, $\ga_1$ and $\ep$, so that the solution satisfies the following pointwise decay estimates:
  \begin{equation*}
    \begin{split}
   & |\phi|^2\leq C   u_+^{1-\ga_1}v_+^{-2},\quad |D_L\phi|^2 \leq C  u_+^{1-\ga_1}v_+^{-4}, \\
   & |\ab|^2+|D_{\Lb}\phi|^2\leq C  u_+^{-\ep-4}v_+^{1-\ga_1+\ep},\\
&u_+^2(|\D\phi|^2+|\rho-\mathbf{1}_{\{|x|\geq 1+t\}}q_0 r^{-2}|^2+|\si|^2)+v_+^2|\a|^2+|D_L(r\phi)|^2\leq C   u_+^{-\ep}v_+^{-1-\ga_1+\ep},
    \end{split}
  \end{equation*}
where $q_0$ is the total charge.
\end{Thm}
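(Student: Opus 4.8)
\medskip

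The plan is to use finite speed of propagation to divide the future half-space $\{t\geq 0\}$ into an exterior region $\{r\geq t+1\}$, where the charge is present but the chargeless part of the field disperses, and an interior region $\{r<t+1\}$ which (since $R>1$) is contained in the future $\mathbf{D}$ of the hyperboloid $\mathcal{H}$, where Theorem~\ref{thm:YMH:hyperboloid} can be applied. On this split the charge-corrected $\rho$-estimate becomes automatic: the indicator $\mathbf{1}_{\{|x|\geq 1+t\}}$ equals $1$ on the exterior and vanishes on the interior, so the exterior analysis must produce the Coulomb correction $q_0 r^{-2}$ while the interior estimate need only be charge-free.

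First I would record global existence. The finiteness of $\mathcal{E}_{2,\ga_1}^M$ forces the chargeless part of the data to have finite energy, so after subtracting the static Coulomb field $q_0 r^{-3}x$ the finite-energy theory for MKG \cite{MKGmachedon} produces a global (in both time directions) smooth solution; this also justifies evaluating the fields on the lower portion $\mathcal{H}_-\subset\{t<0\}$ of the hyperboloid, which by finite speed of propagation is determined by a compact subset of $\{t=0\}$.

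Next comes the exterior analysis, the analytic heart of the argument. Here one adapts the exterior estimates of Yang--Yu \cite{YangYu:MKG:smooth}, originally run at the weight $|x|^{6+\ep}$, down to the nearly scaling-critical weight $|x|^{\ga_1}$ with $\ga_1\in(1,2)$. Because neither the charge $q_0$ nor the chargeless part is assumed small, no perturbative scheme is available; instead I would apply the $r^p$-weighted energy estimates after commuting the equations with the vector fields $\Ga=\{\pa_t,\Om_{ij},S\}$ and invoke the quantitative energy-flux decay for arbitrarily large charge of Yang \cite{yangILEMKG}. The null decomposition then yields the pointwise decay on $\{r\geq t+1\}$, with the static field $q_0 r^{-2}$ appearing exactly in the $\rho$-component, realizing Shu's principle \cite{Shu} that the charge stays localized near spatial infinity. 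The far part $\mathcal{H}_+$ of the hyperboloid sits inside this exterior region, so the same commuted weighted estimates bound $\int_{\mathcal{H}_+}r^{\ga_1}(\ldots)$ and the flux $E[\,\cdot\,](\mathcal{H})$, while the bounded vertex part $\mathcal{H}_-$ is controlled by local energy. Summing these gives $\mathcal{E}_{2,\ga_1}^{\mathcal{H}}\les \mathcal{E}_{2,\ga_1}^M$.

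Finally, with $\mathcal{E}_{2,\ga_1}^{\mathcal{H}}$ finite, Theorem~\ref{thm:YMH:hyperboloid} delivers the stated decay on all of $\mathbf{D}$, and in particular on the interior $\{r<t+1\}$, where it reduces to the charge-free estimates (with $\ep$ replaced by $\ep/2$ to match exponents). Combining the interior and exterior bounds yields the theorem on $\{t\geq 0\}$. The main obstacle is precisely the exterior step at the weight $\ga_1$ so close to the scaling-critical value: one must extract just enough decay, uniformly in the large charge, to render $\mathcal{E}_{2,\ga_1}^{\mathcal{H}}$ finite with a constant depending only on $\mathcal{E}_{2,\ga_1}^M$, despite the borderline $r^{-1}$ decay of the connection forced by the nonvanishing charge.
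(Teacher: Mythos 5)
Your overall architecture (exterior region first, then interior via the hyperboloid and Theorem \ref{thm:YMH:hyperboloid}, with the charge correction $\mathbf{1}_{\{|x|\geq 1+t\}}q_0r^{-2}$ living only in the exterior) matches the paper, as do the interior step, the treatment of $\mathcal{H}_-$ by local/backward solvability, and the bookkeeping $\mathcal{E}_{2,\ga_1}^{\mathcal{H}}\les \mathcal{E}_{2,\ga_1}^{M}$ as the bridge. But your exterior step contains a genuine gap, and it stems from an incorrect premise: you assert that ``no perturbative scheme is available'' because neither $q_0$ nor the chargeless part is small, and you propose instead a direct large-data $r^p$-weighted vector-field argument on $\{r\geq t+1\}$. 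The paper's point (stated in the strategy section and in the remark following the theorem) is exactly the opposite: since $\mathcal{E}_{2,\ga_1}^{M}$ is \emph{finite}, the chargeless part of the data restricted to $\{|x|\geq R\}$ can be made arbitrarily small in the relevant weighted norms by choosing $R$ large depending on $\mathcal{E}_{2,\ga_1}^{M}$ (tail smallness). The exterior problem on $\{|x|\geq t+R\}$ is therefore a \emph{small-data} stability problem for the chargeless part, with the arbitrarily large charge handled as in \cite{yangILEMKG}, and one runs the exterior stability arguments of \cite{YangYu:MKG:smooth}, \cite{LindbladMKG}, \cite{yangMKG} adapted to the weight $\ga_1\in(1,2)$. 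This is also why the constant $C$ in the theorem depends (exponentially) on the data: $R$ itself is chosen in terms of $\mathcal{E}_{2,\ga_1}^{M}$.

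By fixing the exterior radius at $1$ you forgo this smallness mechanism entirely, and your substitute does not close: \cite{yangILEMKG} yields quantitative \emph{energy-flux} decay for large data, but upgrading this to the pointwise peeling rates and, crucially, to bounds on the second-order $\Ga$-commuted weighted norms on $\mathcal{H}_+$ requires closing a nonlinear bootstrap in which the quadratic interaction terms (e.g.\ $J[\phi]$ in Maxwell's equations and the connection terms in the commuted scalar equation) have no smallness to absorb them; no argument in the cited literature accomplishes this at large data, and if such a direct large-data exterior argument were available it would largely obviate the paper's conformal-compactification machinery. The fix is to split at large $R$ as above, run the small-data exterior stability there, and note that the discrepancy between the cones $\{|x|=t+1\}$ and $\{|x|=t+R\}$ is harmless: in the intermediate region $\{t+1\leq |x|\leq t+R\}$ one has $u_+\les 1+R$, so the interior estimates from Theorem \ref{thm:YMH:hyperboloid} (whose domain $\mathbf{D}$ covers this region) absorb the charge term, since $q_0r^{-2}\les v_+^{-2}\les v_+^{(-1-\ga_1+\ep)/2}$ for $\ga_1<2$. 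With that repair your remaining steps (global existence from finite-energy well-posedness, the reduction of $\mathcal{E}_{2,\ga_1}^{\mathcal{H}}$ to the exterior estimates plus local energy on $\mathcal{H}_-$, and the $\ep$ rescaling when invoking Theorem \ref{thm:YMH:hyperboloid}) are in line with the paper.
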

\begin{Remark}
The solutions possess stronger decay estimates in the exterior region $\{|x|\geq t+1\}$, we refer to \cite{LindbladMKG} and \cite{yangMKG} for more details. However, the decay estimates for the components $\ab$ and $D_{\Lb}\phi$ in the interior region $\{|x|\leq t+1\}$ are not sharp. Based on these rough estimates, we will derive the sharp decay estimates in our future work.
We emphasize here that we do not have restrictions on the size of the $\mathcal{E}_{2, \ga_1}^M$ or $q_0$. We completely removed the smallness assumption that was used in \cite{LindbladMKG}.
\end{Remark}

\begin{Remark}
  The constant $C$ still depends exponentially on $\mathcal{E}_{0, \ga_1}^M$ but polynomially on the higher order energies. The  reason is as follows: under the framework developed in \cite{YangYu:MKG:smooth} we need to choose a large constant $R$ to analyze the solution in the exterior region $\{|x|\geq t+R\}$. The choice of this constant $R$ relies on the size of the initial data $\mathcal{E}_{2, \ga_1}^M$. However, the new method in this paper also works in the exterior region so that the constant $C$ depends linearly on the higher order energy $\mathcal{E}_{2, \ga_1}^M$.
\end{Remark}

The proof for the theorem relies on the stability result of \cite{YangYu:MKG:smooth} in the exterior  region adapted to the data with finite $\mathcal{E}_{2, \ga_1}^M$. Once we have obtained the decay estimates for the solution in the exterior region, the asymptotic decay properties of the solution in the interior region follow immediately from Theorem \ref{thm:YMH:hyperboloid}. We refer the interested reader to \cite{YangYu:MKG:smooth}, \cite{LindbladMKG} and \cite{yangMKG}. For the sake of simplicity, we dot not provide the detailed analysis for the solution in the exterior region since this can be derived exactly in the same manner as in the aforementioned references.

The fourth main theorem of the paper concerns the non-abelian cases of pure Yang-Mills equations in Minkowski space. To study the asymptotic behaviors of the Yang-Mills fields, the above discussions together with Theorem \ref{thm:YMH:hyperboloid} show that it suffices to understand the solution in the exterior region. The main difficulty in this region is the possible long range effect of charges. However, unlike the abelian case for which the charge part of the field is characterized by the total electric charge, the charge part (gauge dependent) of a Yang-Mills field is much more complicated. To our best knowledge, there is no such characterization for the general non-abelian Yang-Mills fields in the literature. Nevertheless, using the vector field method as well as Theorem \ref{thm:YMH:hyperboloid}, we will provide asymptotic decay properties for Yang-Mills fields without charges.

For the Cauchy problem to the Yang-Mills equations with admissible data $(\bar A, \bar E)$ on the initial hypersurface $\{t=0\}$ (or equivalently switching off the scalar field $\phi$ in  \eqref{eq:YMH}), for any constant $\ga_1\in (1, 2)$, we define the following gauge invariant weighted norm:
\begin{align*}
  \mathcal{E}_{l, \ga_1}^{Y}=\sum\limits_{k\leq l}\int_{\mathbb{R}^3}(1+|x|)^{\ga_1+2k}(|\bar D^k  \bar E|^2+|\bar D^k \bar H|^2)dx,
\end{align*}
where $\bar H=d\bar A+\f12 [\bar A\wedge \bar A]$ and $\bar D$ is the covariant derivative associated to the $1$-form $\bar A$ on the initial hypersurface $\{t=0\}\times \mathbb{R}^3$. We have the following decay estimates for Yang-Mills fields.
\begin{Thm}\label{thm:YM:nocharge}
 For the Cauchy problem to the pure Yang-Mills equations with data $(\bar A, \bar E)$ on the initial hypersurface $\{t=0\}$ in Minkowski space $\mathbb{R}^{1+3}$, we assume that the weighted energy norm $\mathcal{E}_{2, \ga_1}^Y$ is finite for some constant $\ga_1\in (1, 2)$. Then the Yang-Mills field $F$ exists globally in time. Moreover,  for all small constant $\ep$ with $0<\ep<10^{-2}(\ga_1-1)$, there exists a constant $C$ depending on $\mathcal{E}_{2, \ga_1}^Y$, $\ga_1$ and $\ep$, so that the Yang-Mills field satisfies the following pointwise decay estimates
  \begin{equation*}
      |\ab|^2\leq C u_+^{-\ep-4}v_+^{1-\ga_1+\ep},\quad   |\a|^2\leq C    u_+^{-\ep}v_+^{-3-\ga_1+\ep},\quad   |\rho|^2+|\si|^2 \leq C    u_+^{-2-\ep}v_+^{-1-\ga_1+\ep}.
  \end{equation*}
\end{Thm}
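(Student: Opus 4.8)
\emph{Reduction to the exterior region.} Setting the Higgs field $\phi\equiv 0$ turns \eqref{eq:YMH} into the pure Yang-Mills system $D^\nu F_{\nu\mu}=0$, so pure Yang-Mills is the special case of \eqref{eq:YMH} to which Theorem \ref{thm:YMH:hyperboloid} applies verbatim. The plan is to split the future of $\{t=0\}$ into the interior $\mathbf{D}$ (the future of a hyperboloid $\mathcal{H}$ anchored at radius $\sim R$) and the complementary exterior region $\{|x|\geq t+R\}$. Inside $\mathbf{D}$ I would simply invoke Theorem \ref{thm:YMH:hyperboloid} with $\phi\equiv 0$: the scalar terms $\D\phi$ and $D_L(r\phi)$ drop out, and applying that theorem with the parameter $\ep/2$ in place of its $\ep$ reproduces exactly the three claimed bounds for $\ab$, $\a$, $\rho$ and $\si$. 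The entire remaining task is therefore (i) to produce a global smooth solution, (ii) to establish the decay estimates in the exterior by the vector field method, and (iii) to bound the hyperboloidal energy $\mathcal{E}_{2,\ga_1}^{\mathcal{H}}$ by the data norm $\mathcal{E}_{2,\ga_1}^Y$ so that Theorem \ref{thm:YMH:hyperboloid} may be applied.

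\emph{Global existence and the role of $\ga_1>1$.} Since $\mathcal{E}_{2,\ga_1}^Y<\infty$ controls the finite energy, a global smooth solution $F$ exists by Eardley-Moncrief \cite{Moncrief1,Moncrief2} (or Klainerman-Machedon \cite{YMkl}). The structural point is that the weight $(1+|x|)^{\ga_1}$ with $\ga_1>1$ excludes the charge: a Coulombic tail $\bar E\sim r^{-2}$ would contribute $\int r^{\ga_1-2}\,dr$, which diverges, so finiteness of $\mathcal{E}_{0,\ga_1}^Y$ forces the flux of $\bar E$ through large spheres to vanish. This is precisely why — although the non-abelian charge admits no clean characterization — one may nonetheless run a charge-free vector field argument in the exterior.

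\emph{Exterior decay by the vector field method.} In the exterior I would contract the conserved Yang-Mills stress-energy tensor $T_{\mu\nu}=\langle F_{\mu\a}, F_\nu{}^{\a}\rangle-\tfrac14 m_{\mu\nu}\langle F_{\a\beta},F^{\a\beta}\rangle$ with multiplier vector fields and commute the equation with $\Ga=\{\pa_t,\Om_{ij},S\}$. After commutation the Lie-modified curvatures $\mathcal{L}_Z^l F$ satisfy covariant Yang-Mills-type systems whose sources are the gauge-covariant quadratic terms $[F,\mathcal{L}_Z^{l'}F]$ with $l'<l$. The field $\pa_t$ propagates the energy, while an $r^p$-weighted multiplier of Dafermos-Rodnianski type — as adapted to charged Maxwell fields by Yang \cite{yangILEMKG} — yields weighted fluxes through outgoing cones whose weights are seeded by the $(1+r)^{\ga_1+2l}$ weights in the data. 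The good null components $\a(\mathcal{L}_Z^l F),\rho,\si$ inherit additional $r$-decay from the Maxwell-type null structure of the Bianchi identities, whereas the worst component $\ab$ carries only the weaker weight, accounting for its slower rate; a weighted Sobolev inequality on the cones (exploiting the rotations $\Om_{ij}$) then converts the resulting hierarchy of fluxes into the claimed pointwise rates $|\ab|^2\les u_+^{-\ep-4}v_+^{1-\ga_1+\ep}$, $|\a|^2\les u_+^{-\ep}v_+^{-3-\ga_1+\ep}$ and $|\rho|^2+|\si|^2\les u_+^{-2-\ep}v_+^{-1-\ga_1+\ep}$ throughout $\{|x|\geq t+R\}$.

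\emph{Closing the argument and the main obstacle.} The same weighted estimates control the fields on the outer portion $\mathcal{H}_+$ of the hyperboloid, and the compact cap $\mathcal{H}_-$ (at $t<0$) is controlled by the global solution over a bounded time interval; together these give $\mathcal{E}_{2,\ga_1}^{\mathcal{H}}\les \mathcal{E}_{2,\ga_1}^Y$ and license Theorem \ref{thm:YMH:hyperboloid} on $\mathbf{D}$. Patching the interior and exterior estimates across $\mathcal{H}$ yields the stated global decay. I expect the main difficulty to be step (ii) for \emph{large} data: without any smallness the gauge-covariant nonlinearities $[A,\cdot]$ and $[F,\cdot]$ cannot be treated perturbatively and must instead be absorbed using the decay extracted from the conserved energy and Morawetz estimates together with the absence of charge, all the while keeping the dependence on the top-order norm $\mathcal{E}_{2,\ga_1}^Y$ only polynomial. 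By contrast the interior step is an immediate corollary of Theorem \ref{thm:YMH:hyperboloid}.
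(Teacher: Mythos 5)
Your overall architecture — interior via Theorem \ref{thm:YMH:hyperboloid} with $\phi\equiv 0$ (your bookkeeping with $\ep/2$ is correct), exterior via a weighted vector field argument adapted from \cite{LindbladMKG}, \cite{yangMKG}, \cite{YangYu:MKG:smooth}, then patching across $\mathcal{H}$ by bounding $\mathcal{E}_{2,\ga_1}^{\mathcal{H}}$ — is exactly the route the paper sketches for this theorem. But your final paragraph contains a genuine misconception that would derail the execution: you treat the exterior step as a \emph{large-data} problem and propose to absorb the nonperturbative commutator nonlinearities ``using the decay extracted from the conserved energy and Morawetz estimates.'' No such large-data exterior decay result is available for nonabelian Yang-Mills, and the paper never attempts one. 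The missing idea is that the exterior is \emph{automatically a small-data problem}: by finite speed of propagation, the solution on $\{|x|\geq t+R\}$ depends only on the data on $\{|x|\geq R\}$, and since $\mathcal{E}_{2,\ga_1}^Y<\infty$ the restriction of this weighted norm to $\{|x|\geq R\}$ tends to zero as $R\to\infty$. Choosing $R$ large depending on $\mathcal{E}_{2,\ga_1}^Y$ therefore places the exterior problem squarely in the small-data perturbative regime of the cited exterior stability results — the paper says this explicitly (``By choosing a sufficiently large radius $R$, we can make the initial data in this region sufficiently small in energy norms,'' and Theorems \ref{thm:MKG:ga1:rough} and \ref{thm:YM:nocharge} follow from Theorem \ref{thm:YMH:hyperboloid} ``combined with an exterior stability result in the scope of small data regime''). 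Your own observation that $\ga_1>1$ excludes a Coulombic tail is precisely what makes this work here: unlike MKG, where the large charge survives in the exterior and required the separate treatment of \cite{yangILEMKG}, for chargeless Yang-Mills the \emph{entire} exterior data are small for large $R$.

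The whole point of the Shu/Yang-Yu decomposition is to confine the large-data difficulty to the interior cone, where it is handled by Theorem \ref{thm:EM} (via conformal compactification); the exterior is deliberately kept perturbative. With this correction, your plan aligns with the paper's proof, and a side effect you should note is that the constant $C$ then depends on the full norm $\mathcal{E}_{2,\ga_1}^Y$ through the choice of $R$, consistent with the remark following Theorem \ref{thm:MKG:ga1:rough}. As written, however, step (ii) of your proposal replaces an available small-data argument with an open problem, so the proof does not close.
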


The theorem extends the classical results of \cite{Bruhat82:YM:global:EinsteinU} 
 and \cite{Bruhat82:YM:survey} (also see recent work \cite{TauG19:YM:decay}). 
Compared to  Theorem \ref{thm:YM:nocharge}, these results required faster fall off of the initial data or quantitatively $\ga_1\geq 2$. The reason for assuming such strong decay is similar to the case in \cite{YangYu:MKG:smooth}. As in the previous discussion, the direct use of Eardley-Moncrief's result combined with the conformal method requires that the data decay sufficiently fast so that the associated field after the conformal transformation is sufficiently regular.

Similar to Theorem \ref{thm:MKG:ga1:rough}, the proof of Theorem \ref{thm:YM:nocharge} relies on an exterior stability result, for which the methods in \cite{YangYu:MKG:smooth}, \cite{LindbladMKG} and \cite{yangMKG} can be adapted. The decay estimates in the interior region then follow from Theorem \ref{thm:YMH:hyperboloid}.

\subsection{Strategy of the proof}

The early work \cite{shu2} of Shu proposed a general framework to study the long time dynamics of solutions to Maxwell-Klein-Gordon equations and Yang-Mills equations. The existence of nonzero charge has a long range effect on the dynamics of the solutions in the exterior region. The exterior region is referred to $\{t+R\leq |x|\}$  and the interior region is $\{t+R\geq |x|\}$, see the figure below. In view of the finite speed of propagation for hyperbolic equations, one can study the solutions in the exterior regions first. The main difficulty lies in the critical decay of Maxwell field due to the presence of nonzero large charge, which has been addressed in \cite{yangILEMKG}. By choosing a sufficiently large radius $R$, we can make the initial data in this region sufficiently small in energy norms. Such small data problem has been studied extensively in \cite{YangYu:MKG:smooth}, \cite{LindbladMKG} and \cite{Lydia:MKG:small}. The full problem is then reduced to the studying of the global dynamics of the solutions in the interior region or equivalently in the future of the hyperboloid $\mathcal{H}$. In particular, Theorem \ref{thm:MKG:ga1:rough} and \ref{thm:YM:nocharge} are consequences of Theorem \ref{thm:YMH:hyperboloid} combined with an exterior stability result in the scope of small data regime.
\begin{center}
\begin{tikzpicture}
  \draw[->] (-5.2,0) -- (5.2,0) coordinate[label = {below:$x$}] (xmax);
  \draw[->] (0,-2) -- (0,4) coordinate[label = {right:$t$}] (ymax);
  \fill[gray!20] (1.4, 0)--(5, 3.6)--(5, 0)--cycle;
  \fill[gray!20] (-1.4, 0)--(-5, 3.6)--(-5, 0)--cycle;
   \fill [black] (-1.4, 0) circle (2pt);
     \fill [black] (1.4, 0) circle (2pt);
     \draw [thick] (1.4,0 ) -- (5,3.6);
      \draw (-1.4,0 ) -- (-4,2.6);
  \draw plot[smooth] coordinates {(-5,3.2) (-1.4,0) (0, -0.5) (1.4, 0) (5, 3.2)};
  \node at (3.6, 1.6) {$\mathcal{H}$};
   \node at (-3.6, 1.6) {$\mathcal{H}_{+}$};
  \node at (-0.3, -0.69) {$\mathcal{H}_{-}$};
   \fill [black] (0, -0.5) circle (2pt);
       \node at (4.6, 1) {exterior region};
\draw [thick] (-1.4,0 ) -- (-5,3.6);
  \node at (-0.1, 2) {interior region};
  \node at (1.4, -0.3) {$R$};
\end{tikzpicture}
\end{center}
The future of the hyperboloid $\mathcal{H}$ can be conformally compactified to the truncated backward light cone $\mathcal{J}^{+}(\B_R)$. By the conformal invariance of the YMH equations, the solutions in the future of $\mathcal{H}$ can be transformed to the solutions of YMH in $\mathcal{J}^{+}(\B_R)$ and verse versa. In particular, Theorem \ref{thm:YMH:hyperboloid} is equivalent to Theorem \ref{thm:EM}. We can make an explicit conformal transformation to map the hyperboloid $\mathcal{H}$ to the spatial ball $\B_R$. Hence, Theorem \ref{thm:YMH:hyperboloid} follows by controlling the weighted energy norm $\mathcal{E}_{2, 2-\ga_1}^R $ on $\B_R$ in terms of the norm $\mathcal{E}_{2, \ga_1}^{\mathcal{H}}$ on the hyperboloid $\mathcal{H}$. The details are provided in section \ref{sec:conformal}.

The focus of the analysis in the paper is the proof for Theorem \ref{thm:EM}. This is a generalization of the classical global existence result of Eardley-Moncrief for a class of singular data in the sense that $\mathcal{E}_{2, \ga}^{R}$ is finite. In particular, in view of the weight $(R-|x|)^\gamma$ appearing in \eqref{def:E_0 gamma R}, the initial data could be singular on the boundary and the energy is allowed to be infinite. The theorem provides precise information on the growth of the solution in the maximal Cauchy development $\mathcal{J}^{+}(\B_R)$. Without loss of generality, we switch off the Higgs field $\phi$ and only discuss the pure Yang-Mills equations to illustrate the strategy.

We first review the ideas of Eardley and Moncrief from \cite{Moncrief2}. The global existence result follows by showing that the $L^\infty$ norm of the Yang-Mills field $F$ does not blow up in finite time. We make use of the covariant wave equations \eqref{eq:EQDphiF}, i.e.,
\[
\Box_A F_{\mu\nu}=2[F_{\mu\ga}, F_{\nu}^{\ \ga}],\quad \Box_A=D^\ga D_{\ga},
\]
and express $F_{\mu\nu}$ through the Kirchhoff formula. In \cite{Moncrief2}, they exploited the Cronstr\"{o}m gauge to express the connection field  $A$ in terms of the curvature $F$. Neglecting the lower order terms, the curvature components $F$ are roughly written as
\begin{equation}\label{eq: rep 0}
F_{\mu\nu}(q) \approx F_{\mu\nu}^{\rm lin}(q)+2\int_{\mathcal{N}^{-}(q)} [F_{\mu\ga}, F_{\nu}^{\ \ga}] \widetilde{r}d\widetilde{r}d\widetilde{\om}.
\end{equation}
For the given point $q=(t_0, x_0)$ in $\mathbb{R}^{1+3}$, $F_{\mu\nu}^{\rm lin}$ is the linear evolution depending only (linearly) on the initial data and $\mathcal{N}^{-}(q)$ is the backward light cone emanating from $q$ and truncated by $\{t=0\}$. We also used $(\widetilde{t}, \widetilde{r}, \widetilde{\om})$ as the polar coordinates centered at $q$ in the formula. For sufficiently regular initial data, the linear evolution $F_{\mu\nu}^{\rm lin}$ is uniformly bounded. To control the nonlinear term, the key observation is that $[F_{\mu\ga}, F_{\nu}^{\ \ga}]$ verifies the classical null condition:
\[
|[F_{\mu\ga}, F_{\nu}^{\ \ga}]|\lesssim |F^{\rm T}| |F|,
\]
where $F^{\rm T}$ is projection of $F$ to the hypersurface $\mathcal{N}^{-}(q)$. Hence, we have
\begin{align*}
\left|\int_{\mathcal{N}^{-}(q)} [F_{\mu\ga}, F_{\nu}^{\ \ga}] \widetilde{r}d\widetilde{r}d\widetilde{\om}\right|\lesssim\left(\int_{\mathcal{N}^{-}(q)} |F^T|^2 d\sigma \right)^{\f12} \left(\int_0^{t_0} \|F\|_{L_x^\infty} ds\right)^{\f12},
\end{align*}
in which $d\sigma$ is the induced submanifold measure on $\mathcal{N}^{-}(q)$. The first term is the standard energy  flux through $\mathcal{N}^{-}(q)$ and it is bounded by the energy estimates. Therefore, we obtain that
\begin{align*}
\|F(t_0, x)\|_{L_x^\infty}^2 \lesssim 1+ \int_0^{t_0}\|F(s, x)\|_{L^\infty_x}^2 ds.
\end{align*}
The global regularity of the Yang-Mills field then  follows from the Gronwall's inequality. To summarize, the two key points of the proof in \cite{Moncrief2} are as follows: a representation formula for the solution via a suitable gauge choice and a uniform energy estimate to bound the nonlinear term.

To adapt the framework to the current setting, firstly we derive a weighted energy estimate
\begin{equation*}
\begin{split}
&\int_{\mathcal{N}^{-}(q)} \left( (R-t-r)^\ga(|\widetilde{\rho}|^2+|\widetilde{\si}|^2) +(R-t-\min\{r, |x_0|\})^{\ga}
|\widetilde{\ab}|^2 \right)\widetilde{r}^2 d\widetilde{u}d\widetilde{\om}
\lesssim \mathcal{E}_{0, \ga}^{R}
\end{split}
\end{equation*}
by the multiplier vector field $X=(R-t-r)^{\gamma}L+(R-t+r)^{\gamma}\Lb$ with $0<\gamma<1$.
Here, $q\in \mathcal{J}^{+}(\B_R)$ and the tilde components are those associated to the coordinates or frames centered at the point $q$. We emphasize that the case $\gamma=0$ corresponds to the classical energy estimate.

Secondly, we represent the solution as a linear evolution plus a nonlinear term, similar to \eqref{eq: rep 0}. Instead of choosing a particular gauge, we use the gauge invariant geometric Kirchoff formula introduced in \cite{Kl:paramatrix}:
\begin{equation}\label{eq: rep 1}
\begin{split}
4\pi\langle\mathbf{J}_{q},  F_{\mu\nu})(q)\rangle=&\underbrace{\int_{\B_{R}}\langle\widetilde{r}^{-1}h\delta(\widetilde{v}), D_0 F_{\mu\nu}\rangle-\langle D_0(\widetilde{r}^{-1}h\delta(\widetilde{v})), F_{\mu\nu}\rangle dx}_{\mathbf{Int}_1} \\ &+\underbrace{\iint_{\mathcal{N}^{-}(q)}\langle\widetilde{\lap}_A h-[\widetilde{\rho}, h], F_{\mu\nu}\rangle\widetilde{r} d\widetilde{r}d\widetilde{\om}}_{\mathbf{Int}_2}-\underbrace{\iint_{\mathcal{N}^{-}(q)}\langle h,  \Box_A F_{\mu\nu}\rangle\widetilde{r}d\widetilde{r}d\widetilde{\om}}_{\mathbf{Int}_3}.
\end{split}
\end{equation}
The operator $\widetilde{\lap}_A$ is the covariant Laplacian associated to  the connection field $A$ on the $2$-sphere of radius $\widetilde{r}$. The test field $h$ represents the $\lg$-valued function so that
\[
D_{\widetilde{\Lb}} h=0,\quad h(q)=\mathbf{J}_{q} \  \textnormal{on} \ \mathcal{N}^{-}(q),
\]
where $\mathbf{J}_{q}$ is an arbitrary element in $\lg$ with $|\mathbf{J}_{q}|=1$.  This gauge invariant method has been used by Ghanem in \cite{Sari16:YM:glob} to give an alternative proof for the result of  Chru\'sciel and Shatah in \cite{Shatah:YM:generalManifold}.

There are three integrals on the right hand side of \eqref{eq: rep 1}. The first one is the linear evolution. It can be bounded by the vector field method, see details in Section \ref{sec:linear:th}.
More precisely, for all $\epsilon>0$, we have
\begin{align*}
|{\mathbf{Int}_1} |\les \sqrt{\mathcal{E}_{2, \ga}^R}(R-t_0-r_0)^{-\frac{1+\ga}{2}-\ep}(R-t_0)^{-1+\ep}.
\end{align*}
To control $\mathbf{Int}_3$ in \eqref{eq: rep 1}, as $|h|\leq 1$ on the cone $\mathcal{N}^{-}(q)$,  we can apply the classical null condition:
\begin{align*}
|\langle h,  \Box_A F_{\mu\nu}\rangle|\leq |F||F^{\rm T}|\les  |F|(|\widetilde{\ab}|+|\widetilde{\rho}|+|\widetilde{\si}|).
\end{align*}
Let
\[M_1(t)=\sup\limits_{|x|\leq R-t} v_*^{\frac{1+\ga}{2}+\ep}u_*^{1-\ep}|F| ,\]
where $v_*=R-t-r$ and $u_*=R-t+r$, we can bound $|F|$ in terms of $M_1$ and bound $F^{\rm T}$ by using the uniform weighted energy estimate.  We also remark that the estimates in Theorem \ref{thm:EM} is then reduced to  showing the uniform boundedness of $M_1(t)$. Since on the cone $\mathcal{N}^{-}(q)$, it holds that
\[
v_*=R-t-r\geq R-t_0-r_0,\quad u_*=R-t+r\geq R-t\geq R-t_0,
\]
we can estimate $\mathbf{Int}_3$ in \eqref{eq: rep 1} as follows:
\begin{equation*}
\begin{split}
|\mathbf{Int}_3 | &\les
 (R-t_0-r_0)^{-\frac{1+\ga}{2}-\ep} (R-t_0)^{\ep-1}  \Big(\int_{0}^{t_0}M_1^2(t_0-\widetilde{r})  \widetilde{r}^{-\ga}d\widetilde{r} \Big)^\f12.
\end{split}
\end{equation*}
It then remains to control $\mathbf{Int}_2$  in \eqref{eq: rep 1}. By commuting angular derivatives with the equation $D_{\widetilde{\Lb}}h=0$, we derive the following transport equations
\begin{align*}
D_{\widetilde{\Lb}}(\widetilde{r} D_{\widetilde{e}_j}h) =[\widetilde{r}\widetilde{\ab}_j ,h],\quad D_{\widetilde{\Lb}}(\widetilde{r}^2\widetilde{\lap}_A h- \widetilde{r}^2[\widetilde{\rho} , h])=2 \widetilde{r}^2[\widetilde{\ab}_i , D^{\widetilde{e}_i}h],
\end{align*}
where $(\widetilde{L}, \widetilde{\Lb}, \widetilde{e}_1, \widetilde{e}_2)$ is the null frame associated to the polar coordinates centered at $q$, see Section \ref{sec:proof of 5.1} for details.
 For $0\leq \widetilde{r}\leq t_0$,  we integrate over the $2$-sphere with radius $\widetilde{r}$ centered at $(t_0-\widetilde{r}, x_0)$. Using the uniform weighted energy estimate to bound $\widetilde{\ab}$, we first derive a $L^2$ estimate for $D_{\widetilde{e}_j}h$ on this $2$-sphere
\begin{align*}
\int_{|\widetilde{\om}|=1}|\widetilde{r} D_{\widetilde{e}_j}h|^2 d\widetilde{\om}&
 \les \int_{\mathcal{N}^{-}(q) } |s\widetilde{\ab}|^2(R-(t_0-s)-r_0)^{\ga} ds \cdot \int_{0}^{\widetilde{r}}(R-(t_0-s)-r_0)^{-\ga}ds d\widetilde{\om} \les \widetilde{r}^{1-\ga},
\end{align*}
which in turn leads to
\begin{align*}
\int_{|\widetilde{\om}|=1}\widetilde{r}^2|\widetilde{\lap}_A h-  [\widetilde{\rho},  h]|d\widetilde{\om}\les \int_{0}^{\widetilde{r}}\int_{|\widetilde{\om }|=1} |D^{\widetilde{e}_j} h||\widetilde{\ab}_j|  s^2dsd\widetilde{\om}\les \left(\int_{0}^{\widetilde{r}}\int_{|\widetilde{\om }|=1}  \sum_{j=1}^2|D^{\widetilde{e}_j} h|^2   s^{2-\ga }dsd\widetilde{\om}\right)^{\f12}\les \widetilde{r}^{1-\ga}.
\end{align*}
We notice that $t_0+r_0\leq R$, $0<\ga<1$. Therefore, we obtain that
\begin{align*}
\left|\iint_{\mathcal{N}^{-}(q)} \langle \widetilde{\lap}_A h-[\widetilde{\rho}, h], F_{\mu\nu} \rangle \widetilde{r} d\widetilde{r}d\widetilde{\om}\right|\les (R-t_0-r_0)^{-\frac{1+\ga}{2}-\ep} (R-t_0)^{\ep-1} \int_0^{t_0} \widetilde{r}^{-\ga} M_1(t_0-\widetilde{r})d\widetilde{r}.
\end{align*}
 In view of the definition for $M_1(t)$ together with the estimates for ${\mathbf{Int}_1}$ and ${\mathbf{Int}_3}$, multiplying both sides of the representation formula \eqref{eq: rep 1} by $ (R-t_0-r_0)^{\frac{1+\ga}{2}+\ep} (R-t_0)^{1-\ep}$,  we then derive that
\begin{align*}
M_1(t_0)\les \sqrt{\mathcal{E}_{2, \ga}^R}+\Big(\int_{0}^{t_0}M_1^2(t_0-\widetilde{r})  \widetilde{r}^{-\ga}d\widetilde{r} \Big)^\f12+  \int_0^{t_0} \widetilde{r}^{-\ga} M_1(t_0-\widetilde{r})d\widetilde{r}.
\end{align*}
The uniform boundedness of  $M_1$  then follows by a type of Gronwall inequality (see Lemma \ref{lem:Gronwall:V}).

\subsection{Outline of the paper}
In Section \ref{sec:notation}, we introduce all the quantities and notation and  we prove a type of Gronwall's inequality mentioned above. In Section \ref{sec:weightedEE}, we use vector field method to establish a uniform weighted energy estimate through backward light cones. As a consequence, we derive necessary bounds for the Higgs field, which could be viewed as a variant of Hardy's inequality. In Section \ref{sec:linear:th}, we control the linear evolution with infinite energy.  In Section \ref{sec:5}, based on the geometric representation formula together with the uniform weighted energy estimates, we obtain pointwise estimates for the solutions and prove Theorem \ref{thm:EM}. The last section is devoted to the proof for Theorem \ref{thm:YMH:hyperboloid} by the conformal method.

\bigskip

\noindent \textbf{Acknowledgments.} S. Yang is supported by the National Science Foundation of China  12171011,  12141102 and  the National Key R\&D Program of China 2021YFA1001700. P. Yu is  supported by the National Science Foundation of China 11825103, 12141102 and MOST-2020YFA0713003.

\section{Preliminaries and notations}
\label{sec:notation}

To facilitate the geometric computations, at any fixed point, we may choose the null frame $\{L, \Lb, e_1, e_2\}$ in such a way that
\begin{equation}
 \label{eq:nullderiv}
 \begin{split}
&\nabla_{e_i} L=r^{-1}e_i, \quad \nabla_{e_i}\Lb=-r^{-1}e_i, \quad \nabla_{e_1} e_2=\nabla_{e_2}e_1=0, \quad \nabla_{e_i}e_i=-r^{-1}\pa_r,
 \end{split}
\end{equation}
where $\nabla$ is the covariant derivatives of the Minkowski metric.

\def\J{\mathcal{J}^{-}}
\def\N{\mathcal{N}^{-}}
For any point $q=(t_0, x_0)\in \mathcal{J}^{+}(\B_R)$, denote $\N(q)$ as the past null cone with vertex $q$, i.e., \begin{align*}
   \mathcal{N}^{-}(q):=\big\{(t, x)\big| t_0-t=|x-x_0|,\ t\geq 0\big\}
 \end{align*}
 and $\mathcal{J}^{-}(q)$ to be the past of the point $q$, i.e., the region surrounded by $\mathcal{N}^{-}(q)$ and $\B_R$. For $r>0$ and $q=(t_0, x_0)\in\mathbb{R}^{1+3}$, denote $B_q(r)$ as the 3-dimensional spatial ball at time $t_0$ with radius $r$ centered at the point $q$, that is,
\begin{align*}
  B_q( r)=\big\{(t,x)\big| t=t_0, |x-x_0|\leq r\big\}.
\end{align*}
 The boundary of $B_q( r)$ is the $2$-sphere $S_q(r)$.

Unless it is specified, we will  fix a point $q=(t_0, x_0)\in \mathcal{J}^{+}(\B_{R})$ and we will also use the translated coordinate system $(\widetilde{t}, \widetilde{x})$ so that it is  centered at the point $q$. More specifically,
\[
\widetilde{t}=t-t_0,\quad \widetilde{x}=x-x_0,\quad \widetilde{r}=|\widetilde{x}|,\quad \widetilde{\om}=\frac{\widetilde{x}}{|\widetilde{x}|},\quad \widetilde{u}=\f12 (\widetilde{t}-\widetilde{r}),\quad \widetilde{v}=\f12(\widetilde{t}+\widetilde{r}).
\]
In particular, we have the associated null frame $\{\widetilde{L}, \widetilde{\Lb}, \widetilde{e}_1, \widetilde{e}_2\}$ which also satisfies the relation \eqref{eq:nullderiv}.

For a solution $(\phi, F)$ to the Yang-Mills-Higgs system \eqref{eq:YMH}, the associated energy momentum tensor is
\begin{align*}
  T[\phi, F]_{\mu\nu}=&\langle F_{\mu \ga}, F_{\nu}^{\ \gamma}\rangle-\frac{1}{4}m_{\mu\nu}\langle F_{\ga \delta}, F^{\ga \delta}\rangle+\langle D_{\mu}\phi, D_{\nu}\phi \rangle+\langle D_{\nu}\phi, D_{\mu}\phi\rangle- m_{\mu\nu}\langle D_\ga \phi, D^\ga\phi\rangle.
\end{align*}
We compute
\begin{align*}
  \pa^\mu T[\phi, F]_{\mu\nu}=&\langle D^\mu F_{\mu\ga}, F_{\nu}^{\ \ga}\rangle+\langle D^\mu\phi, F_{\mu\nu}\cdot \phi\rangle+\langle F_{\mu\nu}\cdot \phi, D^\mu\phi\rangle+\langle \Box_A\phi, D_\nu\phi\rangle+\langle D_\nu\phi, \Box_A\phi\rangle,
\end{align*}
where we have used the Bianchi identity $D_{[\mu}F_{\nu\ga]}=0$. In view of the definition of the map $\cT$, we derive that
\begin{align*}
  \langle D^\mu\phi, F_{\mu\nu}\cdot \phi\rangle+\langle F_{\mu\nu}\cdot \phi, D^\mu\phi\rangle=\left\langle F_{\mu\nu}, \langle \cT \phi, D^\mu\phi\rangle +\langle D^\mu\phi, \cT\phi \rangle \right\rangle=\langle F_{\mu\nu}, J[\phi]^\mu\rangle.
\end{align*}
For solutions of  \eqref{eq:YMH}, we then have
\begin{align*}
  \pa^\mu T[\phi, F]_{\mu\nu}= \langle J[\phi]_{\ga}, F_{\nu}^{\ \ga} \rangle+ \langle F_{\mu\nu}, J[\phi]^\mu \rangle=0.
\end{align*}
For any vector field $X$ and smooth function $\chi$, we define the current $J^{X, \chi}_\mu[\phi, F]$ as follows:
\begin{equation*}
J^{X, \chi}_\mu[\phi, F]=T[\phi,F]_{\mu\nu}X^\nu -
\f12\pa_{\mu}\chi \cdot|\phi|^2 + \f12 \chi\pa_{\mu}|\phi|^2. 
\end{equation*}
We have the following energy identity
\begin{align}
\label{eq:energy:id}
\iint_{\mathcal{J}^{-}(q)}\pa^\mu  J^{X,\chi}_\mu[\phi,F] d\vol =&\iint_{\mathcal{J}^{-}(q)} T[\phi, F]^{\mu\nu}\pi^X_{\mu\nu}+
\chi  \langle D_\mu\phi, D^\mu\phi\rangle -\f12\Box\chi\cdot|\phi|^2 d\vol,
\end{align}
where $\pi_{\mu\nu}^X=\f12 \mathcal{L}_X m_{\mu\nu}$ is the deformation tensor of $X$ and the operator $\Box$ is the wave operator with respect to Minkowski metric.

Throughout the paper, $\ep$  denotes a small positive constant such that
\[ 0<\ep<\min\{\frac{1-\ga}{2}, 10^{-3}(\ga_1-1)\}.\]
We use the convention that $A\les B$ means there is a constant $C$ depending only on $R$ (the radius of the initial ball $\B_R$ and the one used to define the hyperboloid $\mathcal{H}$), $\gamma$, $\gamma_1$, $\ep$ and the  representation $\b: \mathbf{G}\rightarrow U(V)$, so that $A\leq CB$.

\bigskip

We end this section with two lemmas.
For any fixed point $q=(t_0, x_0)\in \mathcal{J}^{+}(\B_R)$, we define the following function:
\[
W_q=\frac{2u_*^\ga v_*^\ga}{(1-\tau)u_*^\ga+(1+\tau)v_*^\ga},\]
where $\gamma \in (0,1)$ and
\[ v_*=R-t-r,\ \ u_*=R-t+r,\ \ \tau=\frac{x\cdot (x-x_0)}{|x||x-x_0|}.\]
Although the function $W_q$ is defined on the entire $\mathcal{J}^{+}(\B_R)$, only its value on the backward light cone $\mathcal{N}^{-}(q)$ is of interest.
We have a trivial lower bound: $$W_q\geq v_*^{\ga}=(R-t-r)^{\ga}. $$
 The first key lemma improves the above lower bound and it plays a crucial role in the sequel.
\begin{Lem}
\label{lem:Wq:lowerBD}
For any point $q=(t_0, x_0)\in \mathcal{J}^{+}(\B_R)$, let $r_q=\min\{r, |x_0|\}$. For all $(t, x)\in \mathcal{N}^{-}(q)$, we have
\begin{equation}\label{eq:Wq:lowerBD}
W_q(t, x)^{-1}\leq 3 (R-t-r_q)^{-\ga}.
\end{equation}
\end{Lem}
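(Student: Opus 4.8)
The plan is to work with the reciprocal
\[
W_q^{-1}=\frac{1-\tau}{2}\,v_*^{-\ga}+\frac{1+\tau}{2}\,u_*^{-\ga},
\]
and to bound it by splitting $\mathcal{N}^{-}(q)$ according to whether $r\le |x_0|$ or $r>|x_0|$. On the piece where $r\le |x_0|$ one has $r_q=r$, hence $R-t-r_q=R-t-r=v_*$, and the trivial lower bound $W_q\ge v_*^\ga$ already gives $W_q^{-1}\le (R-t-r_q)^{-\ga}$, which is stronger than \eqref{eq:Wq:lowerBD}. So all the real work is concentrated on the region $r>|x_0|$, where $r_q=|x_0|$; there I set $B:=R-t-|x_0|=R-t-r_q$ and aim to show $W_q^{-1}\le 3B^{-\ga}$.

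In the region $r>|x_0|$ I would estimate the two summands of $W_q^{-1}$ separately. The second is immediate: since $\tfrac{1+\tau}{2}\le 1$ and $u_*=R-t+r\ge R-t-|x_0|=B$, one gets $\tfrac{1+\tau}{2}u_*^{-\ga}\le B^{-\ga}$. The first summand is the heart of the matter. To treat it I pass to the radial picture centred at $q$: using $\widetilde r=|x-x_0|=t_0-t$ on the cone and the law of cosines in the triangle with vertices $0,x_0,x$, the angle factor becomes $\tau=\frac{r^2+\widetilde r^2-|x_0|^2}{2r\widetilde r}$, so that $1-\tau=\frac{|x_0|^2-(r-\widetilde r)^2}{2r\widetilde r}$. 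I also record $R-t=(R-t_0)+\widetilde r$ and $v_*=(R-t_0)+\widetilde r-r$.

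The key estimate I would establish is $(1-\tau)\,B\le 2v_*$. Factoring $|x_0|^2-(r-\widetilde r)^2=(|x_0|-(r-\widetilde r))(|x_0|+(r-\widetilde r))$ and using the triangle inequality $|x_0|\le r+\widetilde r$ (which gives $|x_0|+(r-\widetilde r)\le 2r$) reduces $1-\tau$ to at most $\frac{|x_0|-(r-\widetilde r)}{\widetilde r}=1-\lambda$, where $\lambda:=\frac{r-|x_0|}{\widetilde r}\in(0,1]$ by the reverse triangle inequality $r-|x_0|\le \widetilde r$. Writing $B=v_*+(r-|x_0|)=v_*+\lambda\widetilde r$ and, crucially, using the defining property $R-t_0>|x_0|$ of $q\in\mathcal{J}^{+}(\B_R)$ to get $v_*=(R-t_0-|x_0|)+(1-\lambda)\widetilde r\ge(1-\lambda)\widetilde r$, I bound $(1-\tau)B\le(1-\lambda)(v_*+\lambda\widetilde r)\le v_*+\lambda(1-\lambda)\widetilde r\le 2v_*$. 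Since $v_*\le B$ and $0<\ga<1$, this yields $\tfrac{1-\tau}{2}v_*^{-\ga}\le \tfrac{v_*}{B}v_*^{-\ga}=(v_*/B)^{1-\ga}B^{-\ga}\le B^{-\ga}$, and adding the two summands gives $W_q^{-1}\le 2B^{-\ga}\le 3B^{-\ga}$.

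The one genuinely delicate step is the inequality $(1-\tau)B\le 2v_*$: this is where the geometry enters, and it is precisely the hypothesis $q\in\mathcal{J}^{+}(\B_R)$ (equivalently $R-t_0>|x_0|$) that forces $v_*\ge(1-\lambda)\widetilde r$ and makes the bound go through. Everything else is elementary triangle-inequality bookkeeping; the single vertex point $\widetilde r=0$ can be excluded by continuity since it contributes nothing to the later integrals over $\mathcal{N}^{-}(q)$.
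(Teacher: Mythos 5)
Your proof is correct; I verified each step, including the two nonobvious ones: the closed form $1-\tau=\frac{r_0^2-(r-\widetilde r)^2}{2r\widetilde r}$ from the law of cosines, and the chain $(1-\tau)B\le(1-\lambda)(v_*+\lambda\widetilde r)\le v_*+\lambda(1-\lambda)\widetilde r\le 2v_*$, whose last link indeed rests on $v_*=(R-t_0-r_0)+(1-\lambda)\widetilde r\ge(1-\lambda)\widetilde r$. The skeleton coincides with the paper's: both write $W_q^{-1}=\frac{1-\tau}{2}v_*^{-\ga}+\frac{1+\tau}{2}u_*^{-\ga}$, dispose of the second summand via $u_*\ge R-t\ge R-t-r_q$, note the case $r\le r_0$ is trivial, and reduce the lemma to bounding $(1-\tau)(R-t-r_0)$ by a multiple of $v_*$. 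Where you genuinely diverge is in how that key inequality is proved. The paper shows $(1-\tau)(R-t-r_0)\le 4(R-t-r)$ by first discarding several trivial cases and then working at the center $x_0$: it introduces the angle $\tau_0=\widetilde{\om}\cdot\om_0$ and the variable $s=\widetilde r+r_0\tau_0$, writes $r^2=r_0^2-\widetilde r^2+2\widetilde r s$, and separately bounds $R-t-r=\frac{(R-t)^2-r^2}{R-t+r}$ from below and $1-\tau=\frac{(1-\tau_0^2)r_0^2}{r(r+s)}$ from above, the hypothesis $q\in\mathcal{J}^+(\B_R)$ entering through $\widetilde r\le R-t-r_0$ and $R-t\ge r$. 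You instead work at the point $x$ itself, compress the angular information into the single parameter $\lambda=(r-r_0)/\widetilde r\in(0,1]$ via $1-\tau\le 1-\lambda$, and get the sharper $(1-\tau)(R-t-r_0)\le 2v_*$ in a few lines with no case analysis; the same hypothesis enters through $v_*\ge(1-\lambda)\widetilde r$. Your route also improves the final constant: the interpolation $\frac{1-\tau}{2}v_*^{-\ga}\le(v_*/B)^{1-\ga}B^{-\ga}\le B^{-\ga}$ gives $W_q^{-1}\le 2B^{-\ga}$, whereas the paper's factor $4$ yields $1+2^{\ga}\le 3$. In short: same decomposition, but a cleaner, more economical derivation of the geometric heart of the lemma, at no loss of generality (the vertex $\widetilde r=0$, where $\tau$ is undefined, is harmless exactly as you say).
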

\begin{proof}
Let $\widetilde{x}$ be the coordinates centered at $x_0$. Therefore, $x=x_0+\widetilde{r}\widetilde{\om}$ and $\widetilde{x}=\widetilde{r}\widetilde{\om}$. Let $x_0=r_0\om_0$ and $\tau_0=\widetilde{\om}\cdot \om_0$. We have the following relations:
\[
r\tau=r_0\tau_0+\widetilde{r},\quad r=|x_0+\widetilde{r}\widetilde{\om}|.
\]
In view of the facts that $|\tau|\leq 1$, $0<\ga<1$ and
\begin{align*}
W_q^{-1}= \f12 (1+\tau)u_*^{-\ga}+\f12(1-\tau)v_*^{-\ga}\leq (R-t-r_q)^{-\ga}+\f12(1-\tau)(R-t-r)^{-\ga},
\end{align*}
to prove \eqref{eq:Wq:lowerBD}, it suffices to show that
\begin{equation}\label{eq:WqBD:0}
(1-\tau)(R-t-r_0)\leq 4( R-t-r).
\end{equation}
This inequality obviously holds if $r_0=0$ (which implies $\tau=1$) or $r\leq r_0$ or $\tau=1$ or $R-t-r\geq \frac{1}{2}(R-t-r_0)$. Hence, we may assume
\[
R-t\geq r\geq \frac{1}{2}(R-t)+\frac{1}{2}r_0,\quad r>r_0>0,\quad \tau<1.
\]
Let $s=\widetilde{r}+r_0\tau_0$, we compute $r^2$ in terms of $s$ as follows:
\begin{align*}
r^2=r_0^2-\widetilde{r}^2+2\widetilde{r}(\widetilde{r}+r_0\tau_0)=r_0^2-\widetilde{r}^2+2\widetilde{r}s.
\end{align*}
The above assumption in particular shows that $s\geq 0$. By $\widetilde{r}\leq R-t-r_0$, we derive\begin{align*}
r^2=(\widetilde{r}+r_0\tau_0)^2+(1-\tau_0^2)r_0^2\leq \max\{(R-t-r_0+\tau_0 r_0)^2+(1-\tau_0^2)r_0^2, r_0^2\}
\end{align*}
Therefore we have
\begin{align*}
R-t-r=\frac{(R-t)^2-r^2}{R-t+r}&\geq \min\left\{\frac{(R-t+r_0)(R-t-r_0)}{2(R-t)}, \frac{2(1-\tau_0)r_0(R-t-r_0)}{2(R-t)}\right\}\\
&\geq \frac{(1-\tau_0)r_0(R-t-r_0)}{2(R-t)}.
\end{align*}
We used the fact that $R-t\geq r\geq  r_0$. On the other hand, we have
\begin{align*}
1-\tau=1-\frac{s}{r}=\frac{r^2-s^2}{r(r+s)}=\frac{|x_0+\widetilde{r}\widetilde{\om}|^2-(\widetilde{r}+r_0\tau_0)^2}{r(r+s)}=\frac{(1-\tau_0^2) r_0^2}{r(r+s)}
\leq \frac{2(1-\tau_0^2)r_0^2}{(R-t)(r+s)}.
\end{align*}
Using the fact that $s\geq 0$, we proceed as follows
\begin{align*}
(1-\tau)(R-t-r_0)\leq \frac{2(1-\tau_0^2)r_0^2(R-t-r_0)}{(R-t)r}\leq \frac{2(1-\tau_0)r_0^2(R-t-r_0)}{(R-t)r_0}\leq 4(R-t-r).
\end{align*}
This yields \eqref{eq:WqBD:0} and completes the proof of the lemma.
 \end{proof}

The second lemma is a variant of the Gronwall's inequality.
\begin{Lem}
\label{lem:Gronwall:V}
Let $f$ be a positive continuous function on $[0, t_0]$. If there exist positive constants $A$, $B$ and $\gamma\in(0, 1)$ so that for all $t\in [0,t_0]$, we have
\begin{align*}
f(t)\leq A+B\int_0^t s^{-\gamma} f(t-s)ds,
\end{align*}
Then, there exists a constant $C$ depending only on $B$, $\gamma$ and $t_0$ so that
\begin{align*}
f(t)\leq C A.
\end{align*}
\end{Lem}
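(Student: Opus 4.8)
The plan is to read the inequality as a fixed-point estimate for the linear, positivity-preserving Volterra operator
\[
Tg(t) := B\int_0^t s^{-\gamma}\, g(t-s)\,ds,
\]
whose kernel $B s^{-\gamma}$ is integrable on $[0,t_0]$ precisely because $\gamma\in(0,1)$. The hypothesis then reads $f \le A + Tf$ on $[0,t_0]$. Since $f$ is positive and the kernel is nonnegative, $T$ is monotone (that is, $0\le g\le h$ implies $Tg\le Th$), so applying $T$ repeatedly and substituting the inequality into itself gives, for every $n\ge 1$,
\[
f \le \sum_{k=0}^{n-1} T^k A + T^n f,
\]
with the convention $T^0 A = A$. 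It therefore suffices to evaluate the iterates $T^k A$ of the constant function $A$ and to show that the remainder $T^n f$ vanishes as $n\to\infty$.

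First I would compute the iterated kernels via the elementary convolution identity
\[
\int_0^t s^{-\gamma}(t-s)^{\beta}\,ds = \mathrm{B}(1-\gamma,\beta+1)\, t^{\,1-\gamma+\beta},\qquad \mathrm{B}(p,q)=\frac{\Gamma(p)\Gamma(q)}{\Gamma(p+q)},
\]
which follows from the substitution $s=t\sigma$ and is legitimate because both arguments $1-\gamma$ and $\beta+1=k(1-\gamma)+1$ are positive. Starting from $A=A t^0$ and iterating, an induction on $k$ gives the closed form
\[
T^k A(t) = A\,\frac{\big(B\,\Gamma(1-\gamma)\big)^k}{\Gamma\!\big(k(1-\gamma)+1\big)}\, t^{\,k(1-\gamma)},
\]
where at each step the numerator $\Gamma\!\big((k-1)(1-\gamma)+1\big)$ produced by the Beta function cancels the Gamma inherited from the previous iterate, leaving a single $\Gamma\!\big(k(1-\gamma)+1\big)$ in the denominator. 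Bounding $t^{\,k(1-\gamma)}\le M^k$ on $[0,t_0]$ with $M:=\max\{1,t_0^{1-\gamma}\}$, the partial sums are dominated by
\[
\sum_{k=0}^{\infty} \frac{\big(B\,\Gamma(1-\gamma)\,M\big)^k}{\Gamma\!\big(k(1-\gamma)+1\big)} =: C,
\]
a Mittag-Leffler-type series which converges to a finite constant depending only on $B$, $\gamma$ and $t_0$. This yields $\sum_{k=0}^{\infty} T^k A \le C A$.

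Finally, since $f$ is continuous on the compact interval $[0,t_0]$ it is bounded, say by $F_0$, and monotonicity together with the same computation gives
\[
T^n f(t) \le T^n F_0(t) = F_0\,\frac{\big(B\,\Gamma(1-\gamma)\big)^n}{\Gamma\!\big(n(1-\gamma)+1\big)}\,t^{\,n(1-\gamma)} \le F_0\,\frac{\big(B\,\Gamma(1-\gamma)\,M\big)^n}{\Gamma\!\big(n(1-\gamma)+1\big)},
\]
which is the $n$-th term of the convergent series above and hence tends to $0$ as $n\to\infty$. Letting $n\to\infty$ in the iterated inequality produces $f\le CA$, as claimed. The one genuine point requiring care is the convergence of the coefficient series, i.e.\ verifying that the factorial-type decay of $1/\Gamma\!\big(k(1-\gamma)+1\big)$ dominates the geometric growth $\big(B\,\Gamma(1-\gamma)\,M\big)^k$; this is exactly where the subcriticality $\gamma<1$ enters, making the kernel integrable and the iterated singularities progressively milder, and it is the main (though standard) obstacle.
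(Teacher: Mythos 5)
Your proof is correct, but it takes a genuinely different route from the paper. You run the classical Neumann-series (Picard iteration) argument for singular Gronwall inequalities, in the style of Henry's lemma: iterate $f\le A+Tf$ to get $f\le\sum_{k<n}T^kA+T^nf$, compute the iterated kernels exactly via the Beta-function identity, and sum the resulting Mittag-Leffler series, killing the remainder $T^nf$ by the boundedness of $f$ on the compact interval. The paper instead argues by a stepwise induction in time: it fixes $\epsilon_0=\left(\frac{1-\gamma}{2B}\right)^{1/(1-\gamma)}$, chosen precisely so that $B\int_0^{\epsilon_0}s^{-\gamma}\,ds=\frac12$, and shows inductively that $f\le 2^lA$ on $[0,l\epsilon_0]$, absorbing the contribution of the most recent (singular) time slab into the supremum $M_k$ with the factor $\frac12$ and bounding the older slabs by the induction hypothesis. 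Your approach buys an explicit and essentially sharp constant $C=E_{1-\gamma}\bigl(B\,\Gamma(1-\gamma)\max\{1,t_0^{1-\gamma}\}\bigr)$ and is the standard, readily generalizable statement; the paper's argument buys elementarity, using nothing beyond $\int_0^{\epsilon_0}s^{-\gamma}\,ds$ and a doubling induction (no Gamma or Beta functions, no series convergence), at the cost of a cruder constant of the form $2^{1+t_0/\epsilon_0}$. The one step you flag but do not carry out, convergence of $\sum_k x^k/\Gamma\bigl(k(1-\gamma)+1\bigr)$, is indeed the only thing to complete, and it is a one-line ratio test: by the Gamma asymptotics, $\Gamma\bigl((k+1)(1-\gamma)+1\bigr)/\Gamma\bigl(k(1-\gamma)+1\bigr)\sim\bigl(k(1-\gamma)\bigr)^{1-\gamma}\to\infty$, so the terms decay super-geometrically; with that line added your argument is complete.
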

\begin{proof}
Let $\epsilon_0=\left(\frac{1-\ga}{2B}\right)^{\frac{1}{1-\ga}}$. For all integer $l \leq  [ \frac{t_0}{\epsilon_0} ]+1$, we prove by induction that, for all $t\leq l\epsilon_0$, we have
\begin{align*}
f(t)\leq 2^{l} A.
\end{align*}
It obviously holds for $l=0$. We assume the inequality is valid for $l\leq k-1$. Let
\begin{align*}
M_k=\sup_{t \in [(k-1)\epsilon_0, t_k=\min\{t_0, k\epsilon_0\}]} f(t).
\end{align*}
By virtue of the induction assumption, we conclude that (by choosing a sequence)
\begin{align*}
M_k &\leq A+\sum\limits_{l=1}^{k-1} B\int_{l\epsilon_0}^{(l+1)\epsilon_0} 2^{k-l} A s^{-\ga}d\ga +B\int_{0}^{\epsilon_0} M_k s^{-\ga}ds\\
&\leq A+\sum\limits_{l=1}^{k-1} 2^{k-l} A B \frac{(l+1)^{1-\ga}-l^{1-\ga}}{1-\ga}\epsilon_0^{1-\ga}+
B M_k \frac{\epsilon_0^{1-\ga}}{1-\ga}\\
&\leq A+\sum\limits_{l=1}^{k-1} 2^{k-l-1 } A     +
\frac{1}{2} M_k.
\end{align*}
\if0
\begin{align*}
M_k &\leq A+\sum\limits_{l=0}^{k-2} B\int_{l\epsilon_0}^{(l+1)\epsilon_0} 2^{l+1} A s^{-\ga}d\ga +B\int_{(k-1)\epsilon_0}^{t_k} M_k s^{-\ga}ds\\
&\leq A+\sum\limits_{l=0}^{k-2} 2^{l+1} A B \frac{(l+1)^{1-\ga}-l^{1-\ga}}{1-\ga}\epsilon_0^{1-\ga}+
B M_k \frac{t_k^{1-\ga}-((k-1)\epsilon_0)^{1-\ga}}{1-\ga}\\
&\leq A+\sum\limits_{l=0}^{k-2} 2^{l } A     +
\frac{1}{2} M_k.
\end{align*}
\fi
In the last step, we used the fact that $(l+1)^{1-\ga}-l^{1-\gamma}\leq 1$ for $l\geq 0$ and $0\leq \ga\leq 1$. Therefore, for $k\leq 1+\frac{t_0}{\epsilon_0}$, we obtain that
\begin{align*}
M_k\leq 2^{k}A.
\end{align*}
This completes the proof of the lemma.
\end{proof}

\section{A uniform weighted energy estimate} \label{sec:weightedEE}

We begin to study the solution to \eqref{eq:YMH} on the truncated backward light cone $\mathcal{J}^{+}(\B_R)$  with smooth admissible initial data $(\bar A, \bar E, \phi_0, \phi_1)$. The data may blow up on the boundary and are not uniformly bounded in $H^2$. Our estimates will capture the precise dependence of the $L^\infty$ norm of the solution on the initial data. This is a generalization of the classical results of Eardley and Moncrief  in \cite{Moncrief1} and \cite{Moncrief2}.  The key is a uniform weighted energy estimate through backward light cones.


When $\ga>0$, since the data may not be bounded in the energy space on $\B_R$, we do not expect the energy flux through the backward light cone
$\mathcal{N}^{-}(q)$ to be uniformly bounded: It may even blow up approaching the point $(R, 0)$.
Nevertheless, we show the boundedness by considering the weighted energy flux.

\begin{Prop}
\label{prop:EF:cone:gamma}
There exists a constant $C$ depending only on $R$, so that for any point $q=(t_0, x_0)\in \mathcal{J}^{+}(\B_R)$, we have the following energy estimate
\begin{equation}
\label{eq:Eflux:Bt}
\int_{B_{(t, x_0)}( t_0-t)}|D\phi|^2+|F|^2 dx\leq C \mathcal{E}_{0, \ga}^{R} (R-t_0-r_0)^{-\ga}
\end{equation}
and the following  uniform weighted energy flux bound
\begin{equation}
\label{eq:comp:v:EF}
\begin{split}
&\int_{\mathcal{N}^{-}(q)}v_*^\ga(|\widetilde{\rho}|^2+|\widetilde{\si}|^2+r^{-2}|\widetilde{\D}(r\phi)|^2) +(R-t-r_q)^{\ga}
(|\widetilde{\ab}|^2+r^{-2}|D_{\widetilde{\Lb}}(r\phi)|^2) \quad \widetilde{r}^2 d\widetilde{u}d\widetilde{\om}\\
&+ \int_{B_{(t, x_0)}( t_0-t)} v_*^\ga (|F|^2+r^{-2}|D(r\phi)|^2)dx \leq C \mathcal{E}_{0, \ga}^{R}
\end{split}
\end{equation}
where $r_0=|x_0|$, $r=|x|$ and $r_q=\min\{r, r_0\}$. The tilde notations are quantities with respect to the coordinates $(\widetilde{t}, \widetilde{x})$ centered at the point $q=(t_0, x_0)$.
\end{Prop}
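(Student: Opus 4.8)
The plan is to establish \eqref{eq:comp:v:EF} by the multiplier method, applying the energy identity \eqref{eq:energy:id} on $\J(q)$ with the future-timelike multiplier $X=v_*^\ga L+u_*^\ga\Lb$ and the auxiliary function $\chi=2r^{-1}(v_*^\ga-u_*^\ga)$. First I would compute the null components of the deformation tensor $\pi^X=\f12\mathcal{L}_X m$. Using \eqref{eq:nullderiv} together with $Lv_*=-2$, $\Lb u_*=-2$ and $Lu_*=\Lb v_*=e_iv_*=e_iu_*=0$, the only nonvanishing components are $\pi^X_{L\Lb}=2\ga(v_*^{\ga-1}+u_*^{\ga-1})$ and $\pi^X_{e_ie_j}=r^{-1}(v_*^\ga-u_*^\ga)\de_{ij}$. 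Contracting $T$ with $\pi^X$ and adding the current contribution $\chi\langle D_\mu\phi,D^\mu\phi\rangle$, the indefinite cross term $\mathrm{Re}\langle D_L\phi,D_\Lb\phi\rangle$ cancels precisely for this $\chi$, and the spacetime integrand on the right of \eqref{eq:energy:id} collapses to
\[
T^{\mu\nu}\pi^X_{\mu\nu}+\chi\langle D_\mu\phi,D^\mu\phi\rangle-\f12\Box\chi\,|\phi|^2=B\big(|\rho|^2+|\si|^2+2|\D\phi|^2\big)-\f12\Box\chi\,|\phi|^2,\quad B:=\ga(v_*^{\ga-1}+u_*^{\ga-1})+r^{-1}(v_*^\ga-u_*^\ga).
\]

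Two elementary facts render this bulk term harmless. Since $\chi=2r^{-1}\big((R-t-r)^\ga-(R-t+r)^\ga\big)$ is a spherically symmetric outgoing-minus-incoming wave, $\Box\chi=0$, so the zeroth order term vanishes identically. Moreover $B\geq0$ whenever $0<\ga<1$: rewriting the desired inequality as $\ga(v_*^{\ga-1}+u_*^{\ga-1})(u_*-v_*)\ge2(u_*^\ga-v_*^\ga)$ and normalizing $v_*=1$, $u_*=s\ge1$, the difference of the two sides vanishes together with its first derivative at $s=1$ and has second derivative $\ga(\ga-1)(\ga-2)s^{\ga-3}(s-1)\ge0$, hence is nonnegative. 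Consequently the full spacetime integrand is nonnegative.

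Next I would invoke the divergence theorem on $\J(q)$, whose boundary is the initial ball in $\{t=0\}$ together with the cone $\N(q)$. As $X$ is future-timelike on $\mathcal{J}^{+}(\B_R)$ while $\pa_t$ and $\widetilde{\Lb}$ are future-causal, the dominant energy condition makes all boundary fluxes nonnegative, and a check of orientations yields, schematically,
\[
\int_{\N(q)}\big(T(X,\widetilde{\Lb})+\chi\text{-terms}\big)\,dS+\iint_{\J(q)}(\text{nonnegative})\,d\vol=\int_{\text{ball}}\big(T(\pa_t,X)+\chi\text{-terms}\big)\,dx.
\]
Dropping the nonnegative bulk bounds the cone flux by the initial flux; since $u_*^\ga=(R+r)^\ga\simeq1$ and $v_*^\ga=(R-|x|)^\ga$ on $\B_R$ and $\chi$ is bounded, the initial flux is $\les\mathcal{E}_{0,\ga}^R$ after a Cauchy--Schwarz on the $\langle\phi_0,D\phi_0\rangle$ cross terms (with the $|\phi_0|^2$ and $|\D\phi_0|^2$ terms in \eqref{def:E_0 gamma R} absorbing the $\chi$-flux). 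To read off the weights I would re-expand $X$ in the frame centered at $q$: with $L=\tfrac{1+\tau}2\widetilde{L}+\tfrac{1-\tau}2\widetilde{\Lb}+N$, $\Lb=\tfrac{1-\tau}2\widetilde{L}+\tfrac{1+\tau}2\widetilde{\Lb}-N$ and $|N|^2=1-\tau^2$, the coefficients of $\widetilde{L},\widetilde{\Lb}$ are $a=\f12[(1+\tau)v_*^\ga+(1-\tau)u_*^\ga]$ and $b=\f12[(1-\tau)v_*^\ga+(1+\tau)u_*^\ga]$, so that $W_q=u_*^\ga v_*^\ga/a$. The flux integrand is a positive-definite quadratic form in $(\widetilde{\rho},\widetilde{\si},\widetilde{\ab})$ and the analogous Higgs quantities; completing the square using $u_*\geq v_*$ gives the weight $v_*^\ga$ on $\widetilde{\rho},\widetilde{\si},\widetilde{\D}(r\phi)$, while completing it the other way and using the identity $4ab-(u_*^\ga-v_*^\ga)^2(1-\tau^2)=4u_*^\ga v_*^\ga$ produces exactly the weight $W_q$ on $\widetilde{\ab},D_{\widetilde{\Lb}}(r\phi)$. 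Lemma \ref{lem:Wq:lowerBD} then replaces $W_q$ by $(R-t-r_q)^\ga$, giving \eqref{eq:comp:v:EF}.

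The slice term in \eqref{eq:comp:v:EF} follows by instead truncating $\J(q)$ at time $t$ and taking the (again nonnegative) flux through $B_{(t,x_0)}(t_0-t)$, where the combination $v_*^\ga T(\pa_t,L)+u_*^\ga T(\pa_t,\Lb)\ge v_*^\ga T(\pa_t,2\pa_t)$ controls $v_*^\ga(|F|^2+r^{-2}|D(r\phi)|^2)$; and \eqref{eq:Eflux:Bt} is then immediate, since $r\le r_0+(t_0-t)$ on that ball forces $v_*\ge R-t_0-r_0$, so $v_*^{-\ga}\le(R-t_0-r_0)^{-\ga}$ may be extracted from the weighted slice integral. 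I expect the delicate step to be the bulk-term analysis: pinning down the unique $\chi$ that simultaneously cancels the $\langle D_L\phi,D_\Lb\phi\rangle$ cross term and satisfies $\Box\chi=0$, and establishing the pointwise nonnegativity $B\ge0$, since only then may the spacetime integral be discarded rather than estimated.
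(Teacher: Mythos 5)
Your multiplier setup reproduces the paper's proof faithfully in its quadratic core: the same $X=v_*^\ga L+u_*^\ga\Lb$ and $\chi=2r^{-1}(v_*^\ga-u_*^\ga)$, the same deformation tensor components, the observation $\Box\chi=0$ (the paper additionally notes the mild singularity at $r=0$ and that $\Box\chi=0$ holds as a locally integrable function, which you should record), and the bulk positivity --- your second-derivative argument at $s=u_*/v_*\geq 1$ is a correct variant of the paper's Hermite--Hadamard-type inequality for the convex function $f'(u)=\ga u^{\ga-1}$. For the curvature components your completion of squares, the identity $4ab-(1-\tau^2)(u_*^\ga-v_*^\ga)^2=4u_*^\ga v_*^\ga$, and the invocation of Lemma \ref{lem:Wq:lowerBD} are exactly the paper's argument. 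The genuine gap is your phrase ``and the analogous Higgs quantities.'' The Higgs part is \emph{not} analogous: the natural flux $T[\phi,F]_{\widetilde{\Lb}\nu}X^\nu$ controls $|D_{\widetilde{\Lb}}\phi|^2$ and $|\widetilde{\D}\phi|^2$, whereas \eqref{eq:comp:v:EF} asserts weights on $r^{-2}|D_{\widetilde{\Lb}}(r\phi)|^2$ and $r^{-2}|\widetilde{\D}(r\phi)|^2$. Rewriting $r^2|D_{\widetilde{\Lb}}\phi|^2=|D_{\widetilde{\Lb}}(r\phi)|^2+\tau^2|\phi|^2+\langle D_{\widetilde{\Lb}}(r\phi),\tau\phi\rangle+\langle\tau\phi,D_{\widetilde{\Lb}}(r\phi)\rangle$ (and similarly for the angular part) produces sign-indefinite lower-order terms in $\phi$, which must be combined with the equally indefinite $\chi$-current flux $-\f12\widetilde{\Lb}\chi\,|\phi|^2+\f12\chi\widetilde{\Lb}|\phi|^2$; note that the dominant energy condition you invoke covers only the quadratic $T(X,\widetilde{\Lb})$ part, not these terms, and $\widetilde{\Lb}|\phi|^2$ is a derivative along the cone generators that cannot be absorbed pointwise by any completion of squares.

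The missing idea is the integral (not pointwise) cancellation that occupies the bulk of the paper's proof: after the change of variables to $r\phi$, the lower-order terms are shown by a lengthy computation to be an exact divergence on $\mathcal{N}^{-}(q)$ --- a sum of $\widetilde{\Om}_{ij}$-derivatives tangent to the spheres and a $\widetilde{\Lb}$-derivative along the generators --- with the residual coefficient of $|\phi|^2$ vanishing identically (the paper verifies $r(\pa_t+\pa_r)u_*^\ga+r(\pa_r-\pa_t)v_*^\ga-\tau r(\pa_t+\pa_r)u_*^\ga-\tau r(\pa_t-\pa_r)v_*^\ga=0$). Integration by parts along the cone then leaves a boundary term on $\mathcal{N}^{-}(q)\cap\B_R$ in \eqref{eq:EST:Nq:comp}, which cancels \emph{exactly} against the divergence term $-\div\big(\om r^{-1}|\phi|^2(u_*^\ga+v_*^\ga)\big)$ arising in the slice flux \eqref{eq:0022}; without identifying this structure the bound on the $r\phi$-quantities (both on the cone and in the slice term of \eqref{eq:comp:v:EF}) does not close. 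A secondary flaw: your derivation of \eqref{eq:Eflux:Bt} from the weighted slice bound works for $|F|^2$ but fails for $|D\phi|^2$, since the slice integral controls only $r^{-2}|D(r\phi)|^2$ and the discrepancy $r^{-1}|\phi|$ is not controlled (the ball $B_{(t,x_0)}(t_0-t)$ is centered at $x_0$ while $r=|x|$, so no clean Hardy inequality applies). The paper proves \eqref{eq:Eflux:Bt} independently and trivially via the classical energy identity with $X=\pa_t$, $\chi=0$, using that $R-|x|\geq R-t_0-r_0$ on $B_{(0,x_0)}(t_0)$ in \eqref{def:E_0 gamma R}; you should do the same rather than route it through \eqref{eq:comp:v:EF}.
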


\begin{Remark}
Given a point $q=(t_0, x_0)$, the weight function $(R-t-r_0)^{\ga}$ depends only on $t$. This specific improvement in the weight will play a crucial role in obtaining the associated Hardy's inequality for the Higgs field $\phi$. The estimate \eqref{eq:Eflux:Bt} will be used to bound $\phi$ near the axis $\big\{r=0\big\}$ where the weighted energy flux bound \eqref{eq:comp:v:EF} losses control on $\phi$ for small $r$.
\end{Remark}

\begin{proof}

The estimate \eqref{eq:Eflux:Bt} is the classical energy estimate. We take $X=\pa_t$ and $\chi=0$ in the energy identity \eqref{eq:energy:id}. Since $\pa_t$ is a Killing vector field,  that is $\pi^{\pa_t}=0$, \eqref{eq:energy:id} yields
\begin{equation}\label{eq: eq 1 in energy estimates}
E[\phi, F]\left(B_{( t', x_0)}( t_0- t')\right)+E[\phi, F]\left(\mathcal{N}^{-}(q)\cap\{t\leq  t'\}\right)=E[\phi, F]\left(B_{(0, x_0)}( t_0)\right).
\end{equation}
Given a  hypersurface $\Sigma \subset \mathbb{R}^{1+3}$, $E[\phi, F](\Sigma)$ denotes the energy flux for the Higgs field $\phi$ and Yang-Mills field $F$ through $\Sigma$. In view of \eqref{def:E_0 gamma R} and the assumption on the initial data, we have
\[
E[\phi, F]\left(B_{(0, x_0)}( t_0)\right)\leq \mathcal{E}_{0, \ga}^R (R-t_0-r_0)^{-\ga}.
\]
Combined with \eqref{eq: eq 1 in energy estimates}, the energy bound \eqref{eq:Eflux:Bt} follows immediately.

To prove the weighted energy flux estimate \eqref{eq:comp:v:EF}, we choose the following multiplier vector field:
\[
X=(R-t-r)^{\gamma} L+(R-t+r)^\gamma \Lb=v_*^\ga L+u_*^\ga \Lb.
\]
We emphasize that $L=\pa_t+\pa_r$ and $\Lb=\pa_t-\pa_r$.  To compute the bulk integral on the right hand side of \eqref{eq:energy:id}, we use the null frame $(L, \Lb, e_1, e_2)$ and we compute
\[
\nabla_{L}X=-2\gamma v_*^{\gamma-1} L,\quad \nabla_{\Lb}X=-2\gamma u_*^{\gamma}\Lb,\quad \nabla_{e_i}X=r^{-1}(v_*^\gamma-u_*^{\gamma}) e_i.
\]
In particular,  the non-vanishing components of $\pi_{\mu\nu}^X$ are listed as follows:
\[
\pi^X_{L\Lb}=2\gamma \left(v_*^{\gamma-1}+u_*^{\gamma-1}\right),\quad \pi^X_{e_i e_i}=r^{-1}(v_*^{\ga}-u_*^{\gamma}).
\]
Therefore, we derive that
\begin{align*}
T[\phi, F]^{\mu\nu}\pi^X_{\mu\nu}
&=\left(\ga(v_*^{\gamma-1}+u_*^{\gamma-1})+r^{-1}(v_*^{\ga}-u_*^{\gamma})\right)(|\rho|^2+|\si|^2+2|\D\phi|^2)-2r^{-1}(v_*^{\ga}-u_*^{\gamma})\langle D^\mu\phi, D_\mu\phi\rangle.
\end{align*}
We choose the following function $\chi$ to guarantee the positivity of the bulk integral:
\[
\chi=2r^{-1}(v_*^{\ga}-u_*^{\gamma}).
\]
Since $\chi$ is  spherically symmetric with respect to the coordinates $(t, x)$, for $r>0$, we can easily derive
\begin{align*}
\Box \chi=-r^{-1}L\Lb (r\chi)=-2r^{-1}L\Lb(v_*^\ga-u_*^\ga)=0.
\end{align*}
We remark that  $\Box \chi$ may be singular at $r=0$. But since the growth is at most $r^{\gamma-3}$, it is locally integrable.
Therefore, $\Box \chi=0$ as locally integrable function. 
We then obtain that
\begin{align*}
T[\phi, F]^{\mu\nu}\pi^X_{\mu\nu}+
\chi  \langle D^\mu\phi, D_\mu\phi\rangle -\f12\Box\chi\cdot|\phi|^2 =\underbrace{\left(\ga(v_*^{\gamma-1}+u_*^{\gamma-1})+r^{-1}(v_*^{\ga}-u_*^{\gamma})\right)}_{\text{coefficient}}(|\rho|^2+|\si|^2+|\D\phi|^2).
\end{align*}
We show that the above coefficient is nonnegative. We notice that $v_*\geq 0$ and $u_*\geq 0$. Let $f(u)=u^{\ga}$ for $u>0$. As $0\leq \ga<1$, the function $f'(u)=\ga u^{\ga-1}$ is convex. Therefore,
\begin{align*}
\frac{f(u_*)-f(v_*)}{u_*-v_*}=\int_{0}^{1}f'(su_*+(1-s)v_*)ds \leq \int_{0}^{1}sf'(u_*)+(1-s)f'(v_*)ds=\frac{f'(u_*)+f'(v_*)}{2}.
\end{align*}
This inequality is equivalent to the positivity of the coefficient.  We then obtain the following energy estimate
\begin{equation}\label{eq:positive:ga:comp}
\begin{split}
\int_{\pa\mathcal{J}^{-}(q)}\pa^\mu J^{X, \chi}_\mu[\phi, F]d\vol\geq 0.
\end{split}
\end{equation}
This boundary integral consists of the contributions from the initial hypersurface $\B_R\cap \mathcal{J}^{-}(q)$ and from the backward light cone $\mathcal{N}^{-}(q)$. The contribution through $\B_R\cap \mathcal{J}^{-}(q)$ can be bounded by the initial data. Since the bulk integral is nonnegative, it yields the control of weighted energy flux through $\mathcal{N}^{-}(q)$. To find the explicit form of the weighted energy flux, we use the coordinates centered at the point $q=(t_0, x_0)$. In this new coordinate, the volume form reads as $
d\vol=d\widetilde{x}d\widetilde{t}=2\widetilde{r}^2 d\widetilde{v}d\widetilde{u}d\widetilde{\om}.
$
The submanifold measure of $\mathcal{N}^{-}(q)$ (characterized by $\{\widetilde{v}=0\}$) reads as $2\widetilde{r}^2d\widetilde{u}d\widetilde{\om}$. Therefore, we have
\begin{align*}
-i_{J^{X,\chi}[\phi, F]}d\vol=J_{\widetilde{\Lb}}^{X,\chi}[\phi, F]\widetilde{r}^2d\widetilde{u}d\widetilde{\om}.
\end{align*}
where the notation $i_Y \eta$ is the contraction of a form $\eta$ with a vector field $Y$.

We first consider the quadratic terms. Since
\begin{align*}
T[\phi, F]_{\widetilde{\Lb}\nu}X^\nu =T[\phi, F]_{\widetilde{\Lb}\widetilde{\Lb}}X^{\widetilde{\Lb}}+T[\phi, F]_{\widetilde{\Lb}\widetilde{L}}X^{\widetilde{L}}+T[\phi, F]_{\widetilde{\Lb}\widetilde{e}_i}X^{\widetilde{e}_i},
\end{align*}
we decompose the vector field $X$ in the null frame $\{\widetilde{L}, \widetilde{\Lb}, \widetilde{e}_1, \widetilde{e}_2\}$. First we have
\begin{align*}
\pa_r=\om \cdot \nabla=\om \cdot \widetilde{\nabla}=\om\cdot \widetilde{\om}\pa_{\widetilde{r}}+ \om\cdot (\widetilde{\nabla}- \widetilde{\om} \pa_{\widetilde{r}}).
\end{align*}
where $\om=\frac{x}{|x|}$ and $\nabla=(\pa_{x^1}, \pa_{x^2}, \pa_{x^3})$. Using the counterparts, we have
\begin{align*}
X
&=\f12 \left(u_*^\ga+v_*^\ga+(v_*^\ga-u_*^\ga)\om\cdot \widetilde{\om}\right)\widetilde{L}+\f12 \left(u_*^\ga+v_*^\ga-(v_*^\ga-u_*^\ga)\om\cdot \widetilde{\om}\right)\widetilde{\Lb}+(v_*^\ga-u_*^\ga)\om\cdot \widetilde{\nabb},
\end{align*}
where $\widetilde{\nabb}=\widetilde{\nabla}- \widetilde{\om} \pa_{\widetilde{r}}$. Let $\tau=\om\cdot \widetilde{\om}$. Thus,
\begin{align*}
2T[\phi, F]_{\widetilde{\Lb}\nu}X^\nu &=\left((1-\tau)v_*^\ga+(1+\tau)u_*^\ga\right)(2|D_{\widetilde{\Lb}}\phi|^2+|\widetilde{\ab}|^2) +\left((1+\tau)v_*^\ga+(1-\tau)u_*^\ga\right)(|\widetilde{\rho}|^2+|\widetilde{\si}|^2+2|\widetilde{\D}\phi|^2)\\
&+2 (v_*^\ga-u_*^\ga)\left(\langle \widetilde{\rho}, F(\widetilde{\Lb}, \ \om\cdot \widetilde{\nabb})\rangle+ \langle \widetilde{\si}, F(\Lb, \ (\om\cdot \widetilde{\nabb})^{\perp} )\rangle+\langle D_{\widetilde{\Lb}}\phi, \ \om\cdot \widetilde{\D}\phi \rangle+\langle \om\cdot \widetilde{\D}\phi, \ D_{\widetilde{\Lb}}\phi\rangle\right).
\end{align*}
The $\perp$ notation is defined as follows:  $Y^{\perp}=a \widetilde{e}_2-b \widetilde{e}_1$ for $Y=a \widetilde{e}_1+b\widetilde{e}_2$.
As $|\tau|\leq 1$, the first term on the right hand side is nonnegative. We notice that $$
\om\cdot \tilde{\nabb}={-\om\cdot (\tilde{\om}\times(\tilde{\om}\times \tilde{\nabla}))}=-(\om\times \tilde{\om})\cdot (\tilde{\om}\times \tilde{\nabla})$$
 and $\om\cdot \widetilde{\nabb}$ is perpendicular to $(\om\cdot \widetilde{\nabb})^{\perp}$ with the same length. By Cauchy-Schwarz inequality,  the Yang-Mills components are bounded as follows:
\begin{align*}
&|\langle \widetilde{\rho}, F(\widetilde{\Lb}, \ \om\cdot \widetilde{\nabb}) \rangle+\langle \widetilde{\si}, F(\Lb, \ (\om\cdot \widetilde{\nabb})^{\perp} )\rangle |\leq \sqrt{|\widetilde{\rho}|^2+|\widetilde{\si}|^2} |\widetilde{\ab}||\om\cdot \widetilde{\nabb}|= \sqrt{1-\tau^2}\sqrt{|\widetilde{\rho}|^2+|\widetilde{\si}|^2}|\widetilde{\ab}|,
\end{align*}
where we used $|\om\times \widetilde{\om}|=\sqrt{1-\tau^2}$. Since $u_*^\ga\geq v_*^\ga$, we can proceed as follows:
\begin{align*}
&\left((1-\tau)v_*^\ga+(1+\tau)u_*^\ga\right)|\widetilde{\ab}|^2+\left((1+\tau)v_*^\ga+(1-\tau)u_*^\ga\right)(|\widetilde{\rho}|^2+|\widetilde{\si}|^2)\\
&\quad +2 (v_*^\ga-u_*^\ga)(\langle \widetilde{\rho}, F(\widetilde{\Lb}, \ \om\cdot \widetilde{\nabb})\rangle+\langle\widetilde{\si}, F(\Lb, \ (\om\cdot \widetilde{\nabb})^{\perp} )\rangle )\\
&\geq \left((1-\tau)v_*^\ga+(1+\tau)u_*^\ga\right)|\widetilde{\ab}|^2+\left((1+\tau)v_*^\ga+(1-\tau)u_*^\ga\right)
(|\widetilde{\rho}|^2+|\widetilde{\si}|^2) -2 (u_*^\ga-v_*^\ga)\sqrt{1-\tau^2}\sqrt{ |\widetilde{\rho}|^2+|\widetilde{\si}|^2}|\widetilde{\ab}|\\
&\geq \frac{2u_*^\ga v_*^\ga}{(1-\tau)u_*^\ga+(1+\tau)v_*^\ga} |\widetilde{\ab}|^2+\frac{2u_*^\ga v_*^\ga}{(1+\tau)u_*^\ga+(1-\tau)v_*^\ga} (|\widetilde{\rho}|^2+|\widetilde{\si}|^2)\\
&\geq  W_q  |\widetilde{\ab}|^2+ v_*^\ga (|\widetilde{\rho}|^2+|\widetilde{\si}|^2).
\end{align*}
The last step relies on the fact that $u_*^\gamma\geq v_*^\gamma$.

The Higgs field $\phi$ satisfies a similar lower bound. However, it has to be estimated together with the lower order terms $-\widetilde{\Lb}\chi \cdot |\phi|^2$
and $\chi \widetilde{\Lb}|\phi|^2$. Since $\widetilde{\Lb}(r)=-\tau$ and $\widetilde{\nabb}(r)=\om-\widetilde{\om}\tau$,
we have
\begin{align*}
-&r^2 { \widetilde{\Lb}\chi}|\phi|^2+r^2\chi \widetilde{\Lb}|\phi|^2=2(r\chi)\langle D_{\widetilde{\Lb}}(r\phi)+\tau \phi, \phi\rangle-
(r\widetilde{\Lb}(r\chi)+\tau r\chi)|\phi|^2,\\
&r^2|D_{\widetilde{\Lb}}\phi|^2
=|D_{\widetilde{\Lb}}(r\phi)|^2+\tau^2|\phi|^2+\langle D_{\widetilde{\Lb}}(r\phi), \tau\phi \rangle+\langle\tau\phi, D_{\widetilde{\Lb}}(r\phi)\rangle,\\
& r^2|\widetilde{\D}\phi|^2
=|\widetilde{\D}(r\phi)|^2+(1-\tau^2)|\phi|^2-\langle(\om-\widetilde{\om}\tau)\widetilde{\D}(r\phi), \phi\rangle-\langle\phi, (\om-\widetilde{\om}\tau)\widetilde{\D}(r\phi)\rangle,\\
& r^2\langle D_{\widetilde{\Lb}}\phi, \om\cdot \widetilde{\D}\phi\rangle=\langle D_{\widetilde{\Lb}}(r\phi), \om \cdot \widetilde{\D}(r\phi)\rangle-\tau(1-\tau^2)|\phi|^2+\langle \phi, \tau\om\cdot \widetilde{\D}(r\phi)\rangle -\langle (1-\tau^2)D_{\widetilde{\Lb}}(r\phi),\phi\rangle.
\end{align*}
Next, we  write the Higgs field terms in $T[\phi, F]_{\widetilde{\Lb}\nu}X^\nu-\f12\widetilde{\Lb}\chi \cdot|\phi|^2+\f12\chi \widetilde{\Lb}|\phi|^2$ in terms of $D_{\widetilde{\Lb}}(r\phi)$ and $\widetilde{\D}(r\phi)$. The quadratic terms can be bounded below by $r^{-2}(W_q  |D_{\widetilde{\Lb}}(r\phi)|^2+ v_*^\ga |\widetilde{\D}(r\phi)|^2)$ for the same argument performed to estimates the quadratic terms of the Yang-Mills field. By the above computations, the lower order terms after multiplying the factor $r^2$ can be written as
\begin{align*}
&\left((1-\tau)v_*^\ga+(1+\tau)u_*^\ga\right)\left(\tau^2|\phi|^2+\langle D_{\widetilde{\Lb}}(r\phi),\tau\phi \rangle+ \langle \tau\phi, D_{\widetilde{\Lb}}(r\phi)\rangle \right)-\f12 r^2\widetilde{\Lb}\chi\cdot |\phi|^2+\f12 r^2\chi\widetilde{\Lb}|\phi|^2\\
&+\left((1+\tau)v_*^\ga+(1-\tau)u_*^\ga\right)\left((1-\tau^2)|\phi|^2-\langle (\om-\widetilde{\om}\tau)\widetilde{\D}(r\phi), \phi \rangle-\langle \phi, (\om-\widetilde{\om}\tau)\widetilde{\D}(r\phi) \rangle\right)\\
&+(v_*^\ga-u_*^\ga)\left(-2\tau(1-\tau^2)|\phi|^2+\langle \phi, \tau\om\cdot \widetilde{\D}(r\phi)-(1-\tau^2)D_{\widetilde{\Lb}}(r\phi)\rangle\right)\\
&+(v_*^\ga-u_*^\ga)\langle \tau\om\cdot \widetilde{\D}(r\phi)-(1-\tau^2)D_{\widetilde{\Lb}}(r\phi),\phi\rangle\\
=&\left(-\f12 r\widetilde{\Lb}(r\chi)+v_*^\ga+u_*^\ga\right)|\phi|^2
+(v_*^\ga+u_*^\ga)\left(\langle(\tau D_{\widetilde{\Lb}}-\om\cdot \widetilde{\D})(r\phi), \phi \rangle+\langle \phi, (\tau D_{\widetilde{\Lb}}
-\om\cdot \widetilde{\D})(r\phi)\rangle\right)\\
=&-r^2\widetilde{r}^{-1} \widetilde{\Om}_{ij}\left(r^{-3}(v_*^\ga+u_*^\ga) \om_j\widetilde{\om}_i |r\phi|^2\right)+\widetilde{r}^{-2}r^2\widetilde{\Lb}\left(r^{-1}\tau\widetilde{r}^2(v_*^\ga+u_*^\ga) |\phi|^2\right)\\
&+\left(-\f12 r\widetilde{\Lb}(r\chi)+v_*^\ga+u_*^\ga\right)|\phi|^2-\widetilde{r}^{-2}r^2\widetilde{\Lb}\left(r^{-3}\tau\widetilde{r}^2(v_*^\ga+u_*^\ga)\right) |r\phi|^2+r^2 \widetilde{r}^{-1}\widetilde{\Om}_{ij}\left(r^{-3}(v_*^\ga+u_*^\ga) \om_j\widetilde{\om}_i\right) |r\phi|^2.
\end{align*}
where $\widetilde{\Om}_{ij}=\widetilde{x}_i\widetilde{\pa}_{j}-\widetilde{x}_j\widetilde{\pa}_{{i}}=
\widetilde{r}(\widetilde{\om}_i\widetilde{\pa}_j-\widetilde{\om}_j\widetilde{\pa}_i)$ are tangential to the sphere with radius $|\widetilde{x}|$.
We show that the last line vanishes. In fact, since $\widetilde{\Lb}=\pa_t-\widetilde{\om}\cdot \nabla$ and
$\widetilde{r}^{-1}\widetilde{\Om}_{ij}=\widetilde{\om}_i\pa_j-\widetilde{\om}_j\pa_i$, we have
\begin{align*}
&\widetilde{r}^{-1}\widetilde{\Om}_{ij}(r^{-3}\om_j\widetilde{\om}_i)=-2r^{-4}(1-2\tau^2)-2\tau \widetilde{r}^{-1}r^{-3},\ \ \widetilde{r}^{-2}r^4\widetilde{\Lb}(r^{-3}\widetilde{r}^2\tau)=4\tau^2-1-2r\widetilde{r}^{-1}\tau.
\end{align*}
We now compute the coefficients of $|\phi|^2$ in the last line of the lower order terms:
\begin{align*}
&-\f12 r\widetilde{\Lb}(r\chi)+v_*^\ga+u_*^\ga-\widetilde{r}^{-2}r^4\widetilde{\Lb}(r^{-3}\tau\widetilde{r}^2(v_*^\ga+u_*^\ga)) +r^4 \widetilde{r}^{-1} \widetilde{\Om}_{ij}(r^{-3}(v_*^\ga+u_*^\ga) \om_j\widetilde{\om}_i) \\
=&-r(\pa_t-\widetilde{\om}\cdot \nabla)(v_*^\ga-u_*^\ga)-r\tau (\pa_t-\widetilde{\om}\cdot \nabla)(u_*^\ga+v_*^\ga)+r(\pa_r-\tau \widetilde{\om}\cdot \nabla)(u_*^\ga+v_*^\ga)\\
&+(u_*^\ga+v_*^\ga)\left(1-(4\tau^2-1-2r\widetilde{r}^{-1}\tau)-2(1-2\tau^2){-}2\tau \widetilde{r}^{-1}r\right)\\
=&r(\pa_t+\pa_r)u_*^\ga+r(\pa_r-\pa_t)v_*^\ga-\tau r(\pa_t+\pa_r)u_*^\ga-\tau r(\pa_t-\pa_r)v_*^\ga=0.
\end{align*}
Hence, the last line of the computations of the lower order terms vanishes. This implies that the lower order terms are in divergence form on the surface $\mathcal{N}^{-}(q)$.


After integration by parts on $\mathcal{N}^{-}(q)$, the previous calculations lead to
\begin{equation}\label{eq:EST:Nq:comp}
\begin{split}
&-\int_{\mathcal{N}^{-}(q)}\pa^\mu J^X_\mu[\phi, F]d\vol+ \int_{\mathcal{N}^{-}(q)\cap \B_R}r^{-1}\tau \widetilde{r}^2 (u_*^\ga+v_*^\ga)|\phi|^2d\widetilde{\om}\\
\geq& \int_{\mathcal{N}^{-}(q)}v_*^\ga \left(|\widetilde{\rho}|^2+|\widetilde{\si}|^2+2r^{-2}|\widetilde{\D}(r\phi)|^2\right) +W_q\left(|\widetilde{\ab}|^2+2r^{-2}|D_{\widetilde{\Lb}}(r\phi)|^2\right) \quad \widetilde{r}^2d\widetilde{u}d\widetilde{\om}.
\end{split}
\end{equation}
The boundary term on $\mathcal{N}^{-}(q)\cap \B_R$ comes from those terms with the $\widetilde{\Lb}$ derivatives in the previous identity for the lower order terms.

We next deal with the boundary integral over $\B_R\cap \mathcal{J}^{-}(q)$. First of all, we compute that
\begin{equation}
\label{eq:0022}
\begin{split}
i_{J^{X, \chi}[\phi, F]}d\vol=& T[\phi, F]_{0 L}X^L+T[\phi, F]_{\Lb 0}X^{\Lb}- \f12 \pa_t\chi |\phi|^2+ \f12\chi \pa_t|\phi|^2\\
=& \f12 v_*^\ga(|\a|^2+|\si|^2+|\rho|^2+2|D_L\phi|^2+2|\D\phi|^2)-\f12\pa_t\chi \cdot |\phi|^2+\f12\chi \pa_t|\phi|^2 \\
&+\f12 u_*^\ga(|\ab|^2+|\si|^2+|\rho|^2+2|D_{\Lb}\phi|^2+2|\D\phi|^2)\\
=&\f12(u_*^\ga+v_*^\ga)(|\rho|^2+|\si|^2+2|\D\phi|^2)+\f12 v_*^\ga(|\a|^2+2r^{-2}|D_L(r\phi)|^2)\\
&+\f12 u_*^\ga(|\ab|^2+2r^{-2}|D_{\Lb}(r\phi)|^2)-\div(\om r^{-1}|\phi|^2(u_*^\ga+v_*^\ga)).
\end{split}
\end{equation}
We compute the divergence term using the the coordinates $\widetilde{x}=x-x_0$. After integration by parts, we have
\begin{align*}
-\int_{\mathcal{J}^{-}(q)\cap \B_R} \div (\om r^{-1}|\phi|^2(u_*^\ga+v_*^\ga))dx&=-\int_{\mathcal{J}^{-}(q)\cap \B_R} \div (\om r^{-1}|\phi|^2(u_*^\ga+v_*^\ga))d\widetilde{x}\\
&=-\int_{\mathcal{N}^{-}(q)\cap \B_R} \widetilde{r}^2 \widetilde{\om} \cdot\om r^{-1}|\phi|^2(u_*^\ga+v_*^\ga)d\widetilde{\om}.
\end{align*}
Since $\tau=\om\cdot \widetilde{\om}$, it cancels the integral over $\mathcal{N}^{-}(q)$ in \eqref{eq:EST:Nq:comp}.

Finally, by the assumption on the initial data, we have
\begin{align*}
&\int_{\mathcal{J}^{-}(q)\cap \B_R}(u_*^\ga+v_*^\ga)(|\rho|^2+|\si|^2+2|\D\phi|^2)+v_*^\ga(|\a|^2+2r^{-2}|D_L(r\phi)|^2)+u_*^\ga(|\ab|^2+2r^{-2}|D_{\Lb}(r\phi)|^2)dx\\
&\leq C_R\int_{\B_R}(R-|x|)^\ga(|\a|^2+|D_L\phi|^2)+(|\rho|^2+|\si|^2+|\D\phi|^2+|\ab|^2+|D_{\Lb}\phi|^2)+|\phi|^2dx\leq C \mathcal{E}_{0, \ga}^R
\end{align*}
where the constant $C$ depends only on $R$ and is independent of the choice of $q$. Combining estimates \eqref{eq:positive:ga:comp}, \eqref{eq:EST:Nq:comp} and the lower bound of $W_q$ in Lemma \ref{lem:Wq:lowerBD}, we obtain the bound for the first integral of \eqref{eq:comp:v:EF}. For the second integral on $B_{(t, x_0)}(t_0-t)$, it suffices to change the region to $\mathcal{J}^{-}(q)\cap ([0,t]\times\mathbb{R}^3)$.
The flux integral on $B_{(t, x_0)}(t_0-t)$ is of the same form as that on the initial hypersurface $\B_R$ (see the above computation \eqref{eq:0022}). As the divergence terms are canceled and $u_*\geq v_*$, we then derive the bound for the second integral in \eqref{eq:comp:v:EF}. This completes the proof.
\end{proof}

The rest of the section is devoted to a uniform bound for the Higgs field $\phi$ on backward light cones. We need the $L^2$ bounds of $\phi$ on spheres $S_{q}( \widetilde{r})$. The next result studies the case when $q=(t_0, 0)$ and it is a consequence of Proposition \ref{prop:EF:cone:gamma}.

\begin{cor}
\label{cor:bd4rphi2:x0}
Let  $|D\phi|^2=\sum\limits_{\mu=0}^{3}|D_\mu\phi|^2$. Then we have
\begin{align}
\label{eq:bd4rphi2:x0}
r\int_{S_{(t, 0)}( r)}|\phi|^2d\om &\les \mathcal{E}_{0,\ga}^{R} (R-t)^{-\ga},\quad \forall r\leq R-t,\\
\label{eq:bd4Dphi:B}
\int_{B_{(t, 0)}( R-t)}v_*^{\ga+\ep}|D\phi|^2 dx &\les \mathcal{E}_{0,\ga}^{R} (R-t)^\ep,\quad  \forall  t\leq R.
\end{align}
\end{cor}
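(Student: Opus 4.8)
The plan is to exploit the special structure that, when $x_0=0$, the translated coordinates coincide with the original ones, so that the backward cones $\N(q)$ emanating from vertices $q=(t_0,0)$ are centered on the time axis and $r_q=\min\{r,r_0\}=0$. Consequently the weight $(R-t-r_q)^\ga$ in the flux estimate \eqref{eq:comp:v:EF} degenerates to the position-independent weight $(R-t)^\ga$, which is exactly what the statement requires.

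For \eqref{eq:bd4rphi2:x0}, given a sphere $S_{(t,0)}(r)$ with $r\le R-t$, I would choose the vertex $q=(t+r,0)$, so that $S_{(t,0)}(r)$ lies on $\N(q)$ at null distance $r$ from the vertex. Since $r\phi$ vanishes at the vertex, integrating the fundamental theorem of calculus along each null generator gives $|r\phi|(r,\widetilde\om)\le\int_0^r|D_{\widetilde\Lb}(s\phi)|\,ds$; a Cauchy--Schwarz split against the weight $(R-t-r+s)^\ga$ followed by integration over $\widetilde\om$ then produces, via \eqref{eq:comp:v:EF},
\[
r^2\int_{S^2}|\phi|^2\,d\widetilde\om \les (R-t)^{1-\ga}\,\mathcal{E}_{0,\ga}^R .
\]
Dividing by $r$ already yields \eqref{eq:bd4rphi2:x0} in the range $r\gtrsim R-t$.

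The remaining small-$r$ range is the crux, exactly as flagged in the remark after Proposition \ref{prop:EF:cone:gamma}: the weighted flux controls $r^{-2}|D_{\widetilde\Lb}(r\phi)|^2$ but loses grip on $\phi$ near the axis. Here I would run a Hardy-type argument off the unweighted energy bound \eqref{eq:Eflux:Bt}. Writing $h(r)=\int_{S^2}|\phi(r\om)|^2\,d\om$, one has $|(h^{1/2})'(r)|\le\big(\int_{S^2}|D_r\phi|^2\,d\om\big)^{1/2}$; integrating from a reference radius $r_1=\f12(R-t)$ (where $h(r_1)$ is controlled by the displayed bound above) down to $r$, and applying Cauchy--Schwarz with $\int_r^{r_1}s^{-2}\,ds\le r^{-1}$ together with $\int_{B_{(t,0)}(r_1)}|D\phi|^2\,dx\les (R-t-r_1)^{-\ga}\mathcal{E}_{0,\ga}^R$ from \eqref{eq:Eflux:Bt}, gives $rh(r)\les (R-t)^{-\ga}\mathcal{E}_{0,\ga}^R$, completing \eqref{eq:bd4rphi2:x0}.

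Finally, \eqref{eq:bd4Dphi:B} follows by a layer-cake argument from \eqref{eq:Eflux:Bt} alone. Choosing vertices $q=(R-v_0,0)$ recasts \eqref{eq:Eflux:Bt} as the superlevel-set bound $\int_{\{v_*\ge v_0\}\cap B_{(t,0)}(R-t)}|D\phi|^2\,dx\les v_0^{-\ga}\mathcal{E}_{0,\ga}^R$ for $0<v_0<R-t$. Writing $v_*^{\ga+\ep}=(\ga+\ep)\int_0^{v_*}v_0^{\ga+\ep-1}\,dv_0$ and applying Fubini reduces the left side of \eqref{eq:bd4Dphi:B} to $(\ga+\ep)\int_0^{R-t}v_0^{\ga+\ep-1}\big(\int_{\{v_*\ge v_0\}}|D\phi|^2\,dx\big)\,dv_0\les \mathcal{E}_{0,\ga}^R\int_0^{R-t}v_0^{\ep-1}\,dv_0\les (R-t)^\ep\,\mathcal{E}_{0,\ga}^R$, where the $\ep$-gain comes precisely from the integrability of $v_0^{\ep-1}$ at the origin. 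The main difficulty throughout is the small-$r$ estimate in \eqref{eq:bd4rphi2:x0}, since it is the only place where the weighted flux is insufficient on its own and must be supplemented by the trace/Hardy argument against the plain energy estimate.
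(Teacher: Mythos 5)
Your proof of \eqref{eq:bd4Dphi:B} is, after the substitution $v_0=R-t-r$, exactly the paper's argument: the same layer-cake differentiation of the weight played against \eqref{eq:Eflux:Bt} with vertices $(t+r,0)$ on the axis. For \eqref{eq:bd4rphi2:x0} you take a genuinely different route, and it is correct. The paper works at the fixed time $t$ and integrates radially in space: since $r\phi$ vanishes at $r=0$, Cauchy--Schwarz against the constant-time ball integral $\int_{B_{(t,0)}(r_0)}v_*^\ga\, r^{-2}|D(r\phi)|^2dx\les\mathcal{E}_{0,\ga}^R$ (the second part of \eqref{eq:comp:v:EF}) together with the elementary inequality $(R-t)^{1-\ga}-(R-t-r_0)^{1-\ga}\le (R-t)^{-\ga}r_0$ gives the claim for every $r_0\le R-t$ in a single step, with no case split. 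You instead integrate the transport of $r\phi$ down the null cone from the vertex $(t+r,0)$ against the cone-flux part of \eqref{eq:comp:v:EF} (correctly noting that $r_q=\min\{r,|x_0|\}=0$ for axis-centered cones, so the improved weight is $(R-t')^\ga$, and that $r=\widetilde r$ there so the $r^{-2}\widetilde r^{\,2}$ factors cancel); with your crude bound $\int_0^r(R-t-r+s)^{-\ga}ds\les(R-t)^{1-\ga}$ this only covers $r\gtrsim R-t$, and you then patch the small-$r$ regime by a trace/Hardy argument off the unweighted energy \eqref{eq:Eflux:Bt}, whose steps all check out: the differentiation of $h^{1/2}$ is the standard gauge-invariant trick of Remark \ref{remark:coWS}, the vertex $(t+r_1,0)$ with $r_1=\frac{1}{2}(R-t)$ lies in $\mathcal{J}^+(\B_R)$, and $h(r_1)\les\mathcal{E}_{0,\ga}^R(R-t)^{-1-\ga}$ is absorbed since $r\le R-t$. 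Worth noting: had you evaluated $\int_0^r(R-t-r+s)^{-\ga}ds=\frac{(R-t)^{1-\ga}-(R-t-r)^{1-\ga}}{1-\ga}\le\frac{(R-t)^{-\ga}r}{1-\ga}$ --- the very inequality the paper uses --- your cone argument alone would yield \eqref{eq:bd4rphi2:x0} for all $r\le R-t$ and the Hardy patch would be superfluous. What your longer version buys is a measure of robustness: it uses only the cone flux and the plain energy identity, not the constant-time weighted ball integral, so it would survive in situations where that piece of \eqref{eq:comp:v:EF} is not available.
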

\begin{proof}
To prove\eqref{eq:bd4Dphi:B}, in view of \eqref{eq:Eflux:Bt} and the fact that $0<\ep<\frac{1-\ga}{2}$, we have
\begin{align*}
\int_{B_{(t, 0)}( R-t)}v_*^{\ga+\ep}|D\phi|^2 dx&=(\ga+\ep)
\int_{0}^{R-t}(R-t-r)^{\ga+\ep-1}\left(\int_{B_{(t,0)}( r)}|D\phi|^2dx \right) dr\\
&\les \mathcal{E}_{0, \ga}^{R}\int_{0}^{R-t}(R-t-r)^{\ep-1}dr\les \mathcal{E}_{0, \ga}^R (R-t)^\ep.
\end{align*}

To prove \eqref{eq:bd4rphi2:x0}, we use  \eqref{eq:comp:v:EF} on $B_{(t, 0)}( r)$ to derive\begin{align*}
\int_{S_{(t, 0)}(r_0)}|r\phi|^2 d\om &\les \int_{B_{(t, 0)}(r_0)}v_*^\ga |D(r\phi)|^2 r^{-2}dx \cdot \int_{0}^{r_0}v_*^{-\ga}dr\\
&\les \mathcal{E}_{0, \ga}^R (R-t)^{1-\ga}-\mathcal{E}_{0, \ga}^R (R-t-r_0)^{1-\ga}\\
&\les \mathcal{E}_{0, \ga}^R (R-t)^{-\ga}r_0.
\end{align*}
This completes the proof of the lemma.
\end{proof}
We will need a uniform bound for the Higgs field $\phi$. Let's first prove several preliminary lemmas:
\begin{Lem}
\label{lem:bd4Rss2}
For all $\widetilde{r}\in (0,R_0]$, we have
\begin{align}
\label{eq:bd4Rss2}
\int_{\widetilde{r}}^{R_0}(R_0-s)^{-\ga}s^{-2}ds \leq \frac{2^{1+\ga}}{1-\ga} R_0^{-\ga}\widetilde{r}^{-1}.
\end{align}
\end{Lem}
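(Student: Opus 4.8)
The plan is to exploit the fact that the two singularities of the integrand are decoupled: the factor $(R_0-s)^{-\ga}$ is singular only near $s=R_0$, but it is integrable there because $0<\ga<1$, whereas the factor $s^{-2}$ is responsible for the dominant $\widetilde{r}^{-1}$ growth only near the lower endpoint $s=\widetilde{r}$. This structure suggests splitting the integral at the midpoint $s=R_0/2$ and, on each piece, freezing whichever factor is harmless at its maximum over that piece. I would organize the argument into the two cases $\widetilde{r}>R_0/2$ and $\widetilde{r}\le R_0/2$.

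I would first dispose of the short case $\widetilde{r}>R_0/2$, where the whole interval $[\widetilde{r},R_0]$ sits in $[R_0/2,R_0]$. There I bound $s^{-2}\le\widetilde{r}^{-2}$, pull it out, and integrate the remaining $(R_0-s)^{-\ga}$ explicitly to get $\frac{1}{1-\ga}(R_0-\widetilde{r})^{1-\ga}\widetilde{r}^{-2}\le\frac{1}{1-\ga}R_0^{1-\ga}\widetilde{r}^{-2}$. The claimed bound then reduces to the elementary inequality $R_0\le 2^{1+\ga}\widetilde{r}$, which follows at once from $\widetilde{r}>R_0/2$ together with $2^{\ga}\ge 1$.

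In the main case $\widetilde{r}\le R_0/2$ I write $\int_{\widetilde{r}}^{R_0}=\int_{\widetilde{r}}^{R_0/2}+\int_{R_0/2}^{R_0}$. On the near piece I use $(R_0-s)^{-\ga}\le(R_0/2)^{-\ga}=2^{\ga}R_0^{-\ga}$ and $\int_{\widetilde{r}}^{R_0/2}s^{-2}\,ds\le\widetilde{r}^{-1}$, which contributes at most $2^{\ga}R_0^{-\ga}\widetilde{r}^{-1}$. On the far piece I use $s^{-2}\le 4R_0^{-2}$ and the exact value $\int_{R_0/2}^{R_0}(R_0-s)^{-\ga}\,ds=\frac{(R_0/2)^{1-\ga}}{1-\ga}$, so this piece is $\frac{2^{1+\ga}}{1-\ga}R_0^{-1-\ga}$; since $\widetilde{r}\le R_0/2$ gives $R_0^{-1}\le\frac12\widetilde{r}^{-1}$, it is at most $\frac{2^{\ga}}{1-\ga}R_0^{-\ga}\widetilde{r}^{-1}$. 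Adding the two pieces yields the total $2^{\ga}\bigl(1+\frac{1}{1-\ga}\bigr)R_0^{-\ga}\widetilde{r}^{-1}=2^{\ga}\frac{2-\ga}{1-\ga}R_0^{-\ga}\widetilde{r}^{-1}$.

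The only delicate point — the place where one must be precise rather than merely generous — is matching the stated constant $\frac{2^{1+\ga}}{1-\ga}$ exactly. The combined near/far estimate produces the constant $2^{\ga}\frac{2-\ga}{1-\ga}$, and the final comparison $2^{\ga}\frac{2-\ga}{1-\ga}\le\frac{2^{1+\ga}}{1-\ga}$ holds precisely because $2-\ga\le 2$ for $\ga>0$. Thus the whole lemma rests on elementary estimates for monotone integrands, and the real work is purely in the bookkeeping: choosing the split point $R_0/2$ and the one-sided bounds so that no factor of $\frac{1}{1-\ga}$ or power of $2$ is wasted.
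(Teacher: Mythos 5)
Your proposal is correct and follows essentially the same route as the paper's own proof: the identical split at $s=R_0/2$ with the two cases $\widetilde{r}\gtrless R_0/2$, the same one-sided bounds on each piece, and the same final constant comparison $2^{\ga}\frac{2-\ga}{1-\ga}\leq \frac{2^{1+\ga}}{1-\ga}$. The only (immaterial) difference is in the short case, where you bound $s^{-2}\leq\widetilde{r}^{-2}$ and close via $R_0\leq 2^{1+\ga}\widetilde{r}$ instead of the paper's $s^{-2}\leq 4R_0^{-2}$ together with $(R_0-\widetilde{r})^{1-\ga}\leq (R_0/2)^{1-\ga}$.
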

\begin{proof}
We consider two cases: $\widetilde{r}\geq \frac{1}{2}R_0$ or $\widetilde{r}< \frac{1}{2}R_0$. If $\widetilde{r}\geq \frac{1}{2}R_0$, we have
\begin{align*}
\int_{\widetilde{r}}^{R_0}(R_0-s)^{-\ga}s^{-2}ds&\leq 4 R_0^{-2}\int_{\widetilde{r}}^{R_0}(R_0-s)^{-\ga} ds=\frac{4}{1-\ga}(R_0-\widetilde{r})^{1-\ga}R_0^{-2}\leq \frac{2^{1+\ga}}{1-\ga}R_0^{-\ga}\widetilde{r}^{-1}.
\end{align*}
If $\widetilde{r}<\f12 R_0$, we have
\begin{align*}
\int_{\widetilde{r}}^{R_0}(R_0-s)^{-\ga}s^{-2}ds&=\left(\int_{\f12 R_0}^{R_0}+\int^{\f12 R_0}_{\widetilde{r}}\right)(R_0-s)^{-\ga}s^{-2}ds\leq 4R_0^{-2}\frac{1}{1-\ga}(R_0-\f12 R_0)^{1-\ga}+(\f12 R_0)^{-\ga}\widetilde{r}^{-1}\\
&\leq \frac{2^{\ga}}{1-\ga}R_0^{-\ga} \widetilde{r}^{-1}+2^{\ga}R_0^{-\ga}\widetilde{r}^{-1}\leq \frac{2^{1+\ga}}{1-\ga}R_0^{-\ga}\widetilde{r}^{-1}.
\end{align*}
This proves the lemma.
\end{proof}
The $L^2$ norm of $\phi$ on $S_{(t, x_0)}(\widetilde{r})$ can be bounded, up to an error term, by those on another sphere.
\begin{Lem}
\label{lem:smalllarge}
For all $\widetilde{r}_1,\widetilde{r}_2\in (0,R-t-r_0]$, we have
\begin{equation}
\label{eq:smalllarge}
\min\{\widetilde{r}_1 ,\widetilde{r}_2\}\int_{S_{(t, x_0)}(\widetilde{r}_1)}|\phi|^2d\widetilde{\om}\les \min\{\widetilde{r}_1,\widetilde{r}_2\} \int_{S_{(t, x_0)}(\widetilde{r}_2)}|\phi|^2d\widetilde{\om}+\mathcal{E}_{0, \ga}^R (R-t-r_0)^{-\ga-\ep}(R-t)^\ep.
\end{equation}
\end{Lem}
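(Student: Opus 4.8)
The plan is to reduce the comparison of the two spherical integrals to a single weighted bound for $\int|D\phi|^2$ on the spatial ball centered at $x_0$, and then to pass between the two spheres by the fundamental theorem of calculus in the radial variable $\widetilde r=|x-x_0|$. Throughout set $R_0:=R-t-r_0$, the largest admissible radius, and write $m=\min\{\widetilde r_1,\widetilde r_2\}$, $M=\max\{\widetilde r_1,\widetilde r_2\}$, so that both radii lie in $[m,M]\subset(0,R_0]$.

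First I would establish the analogue of \eqref{eq:bd4Dphi:B} for a general center $x_0$, namely
\begin{equation}\label{eq:plan:Dphi:x0}
\int_{B_{(t, x_0)}(R_0)} (R_0-\widetilde r)^{\ga+\ep}\,|D\phi|^2\, dx \les \mathcal{E}_{0,\ga}^R\, R_0^{\ep}.
\end{equation}
This follows by the same computation as in the proof of \eqref{eq:bd4Dphi:B}, which only used the energy bound \eqref{eq:Eflux:Bt} and is valid for an arbitrary center: applying \eqref{eq:Eflux:Bt} to the point $q=(t+s,x_0)$ gives $\int_{B_{(t,x_0)}(s)}|D\phi|^2\,dx\les \mathcal{E}_{0,\ga}^R(R_0-s)^{-\ga}$ for every $0<s<R_0$, and then the layer-cake (coarea) identity with weight $(R_0-s)^{\ga+\ep}$, together with $\int_0^{R_0}(R_0-s)^{\ep-1}\,ds\les R_0^\ep$, yields \eqref{eq:plan:Dphi:x0}.

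Next, for each fixed direction $\widetilde\om$ the compatibility of $D$ with the inner product gives the diamagnetic inequality $\big|\pa_s|\phi|\big|\le |D_s\phi|\le |D\phi|$, so that integrating in $s$ between the two radii produces
\[
|\phi|(\widetilde r_1,\widetilde\om)\le |\phi|(\widetilde r_2,\widetilde\om)+\int_m^M |D\phi|(s,\widetilde\om)\,ds.
\]
Squaring, integrating over $\widetilde\om$ and multiplying by $m$ gives $m\int_{S_{(t,x_0)}(\widetilde r_1)}|\phi|^2\,d\widetilde\om\le 2m\int_{S_{(t,x_0)}(\widetilde r_2)}|\phi|^2\,d\widetilde\om+2m\int_{|\widetilde\om|=1}\big(\int_m^M|D\phi|\,ds\big)^2 d\widetilde\om$. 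For the error term I apply Cauchy--Schwarz in $s$ with the splitting weight $(R_0-s)^{\ga+\ep}s^2$,
\[
\Big(\int_m^M |D\phi|\,ds\Big)^2\le \Big(\int_m^M (R_0-s)^{\ga+\ep}s^2|D\phi|^2\,ds\Big)\Big(\int_m^M (R_0-s)^{-\ga-\ep}s^{-2}\,ds\Big).
\]
Integrating the first factor in $\widetilde\om$ and using $s^2\,ds\,d\widetilde\om=dx$ bounds it by the left side of \eqref{eq:plan:Dphi:x0}, hence by $\mathcal{E}_{0,\ga}^R R_0^\ep$; the second factor is controlled by the same computation as Lemma \ref{lem:bd4Rss2} with $\ga$ replaced by $\ga+\ep\in(0,1)$, giving $\les R_0^{-\ga-\ep}m^{-1}$. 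The prefactor $m$ then cancels the $m^{-1}$, so the total error is $\les \mathcal{E}_{0,\ga}^R R_0^{-\ga}$; since $R_0=R-t-r_0\le R-t$ we have $R_0^{-\ga}\le (R-t-r_0)^{-\ga-\ep}(R-t)^\ep$, which is exactly the right-hand side of \eqref{eq:smalllarge}.

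The main obstacle is the coupling near $\widetilde r=0$: a naive Cauchy--Schwarz leaves the non-integrable factor $\int_m^M(R_0-s)^{-\ga-\ep}s^{-2}\,ds$, which diverges like $m^{-1}$, and the whole point of carrying the prefactor $\min\{\widetilde r_1,\widetilde r_2\}=m$ and of the exact power $\widetilde r^{-1}$ on the right of Lemma \ref{lem:bd4Rss2} is precisely to absorb this singularity. A secondary technical point is justifying \eqref{eq:plan:Dphi:x0} up to the degenerate radius $s\to R_0$, where the energy bound $(R_0-s)^{-\ga}$ blows up; this is handled by the weight $(R_0-s)^{\ga+\ep}$, which vanishes fast enough to kill the boundary contribution in the integration by parts.
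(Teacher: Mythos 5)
Your proof is correct and follows essentially the same route as the paper: radial integration between the two spheres via $|\pa_s|\phi||\le |D\phi|$, Cauchy--Schwarz with the splitting weight $(\,\cdot\,)^{\ga+\ep}s^2$, and Lemma \ref{lem:bd4Rss2} with exponent $\ga+\ep\in(0,1)$ to absorb the $m^{-1}$ singularity against the prefactor $\min\{\widetilde r_1,\widetilde r_2\}$. The only (harmless) deviation is that you re-derive an $x_0$-centered analogue of \eqref{eq:bd4Dphi:B} with weight $(R_0-\widetilde r)^{\ga+\ep}$ directly from \eqref{eq:Eflux:Bt}, whereas the paper reuses the origin-centered bound \eqref{eq:bd4Dphi:B} together with the pointwise comparison $v_*=R-t-|x|\ge R_0-s$ along the ray; your variant even yields the marginally stronger error $\mathcal{E}_{0,\ga}^R R_0^{-\ga}$, which implies \eqref{eq:smalllarge} since $R_0\le R-t$.
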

\begin{proof}
By symmetry, we may assume that $\widetilde{r}_1\leq \widetilde{r}_2$.  We first  integrate between $S_{(t, x_0)}(\widetilde{r}_1)$ and $S_{(t, x_0)}(\widetilde{r}_2)$. This leads to
\begin{align*}
|\phi|(t, x_0+\widetilde{r}_1\widetilde{\om})&\leq |\phi|(t, x_0+\widetilde{r}_2\widetilde{\om})+\int_{\widetilde{r}_1}^{\widetilde{r}_2}|\widetilde{D}\phi(t, x_0+s\widetilde{\om})|ds\\
&\leq |\phi|(t, x_0+\widetilde{r}_2\widetilde{\om})+\left(\int_{\widetilde{r}_1}^{\widetilde{r}_2}v_*^{\ga+\ep}|\widetilde{D}\phi|^2 s^2ds\right)^\f12\left(\int_{\widetilde{r}_1}^{\widetilde{r}_2}v_*^{-\ga-\ep}s^{-2}ds\right)^\f12.
\end{align*}
Since $v_*=R-t-r\geq R-t-r_0-s$ and $\widetilde{r}_2\leq R-t-r_0$, we use \eqref{eq:bd4Dphi:B} from Corollary \ref{cor:bd4rphi2:x0} and Lemma \ref{lem:bd4Rss2} to bound the above integrals. Hence,
\begin{align*}
\int_{S_{(t, x_0)}(\widetilde{r}_1)}|\phi|^2d\widetilde{\om}&\les \int_{S_{(t, x_0)}(\widetilde{r}_2)}|\phi|^2d\widetilde{\om}+(R-t-r_0)^{-\ga-\ep} \widetilde{r}_1^{-1}\int_{B_{(t, x_0)}(\widetilde{r}_2)}v_*^{\ga+\ep}|\widetilde{D}\phi|^2  d\widetilde{x}\\
&\les \int_{S_{(t, x_0)}(\widetilde{r}_2)}|\phi|^2d\widetilde{\om}+(R-t-r_0)^{-\ga-\ep}(R-t)^\ep \widetilde{r}_1^{-1 } \mathcal{E}_{0, \ga}^R,
\end{align*}
where $\widetilde{x}=\widetilde{r}\widetilde{\om}$. This proves \eqref{eq:smalllarge}.
\end{proof}
The last lemma proves a rough $L^2$ bound for the Higgs field.
\begin{Lem}
\label{lem:bd4phi2:2}
For all $\widetilde{r}\leq R-t-r_0$, we have
\begin{equation}
\label{eq:bd4phi2:2}
\int_{S_{(t, x_0)}(\widetilde{r})}|\phi|^2d\widetilde{\om}\les \mathcal{E}_{0,\ga}^R \widetilde{r}^{-2} (R-t)^{1-\ga}.
\end{equation}

\end{Lem}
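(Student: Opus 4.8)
The plan is to upgrade the pointwise comparison of Lemma~\ref{lem:smalllarge} into an $L^2$ bound on $S_{(t,x_0)}(\widetilde r)$ by averaging the comparison radius over a dyadic range and then reducing the resulting solid integral of $|\phi|^2$ to spheres centered at the origin, where Corollary~\ref{cor:bd4rphi2:x0} applies. Throughout set $\rho=R-t-r_0$ and recall $\widetilde r\le\rho\le R-t$.

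First I would apply Lemma~\ref{lem:smalllarge} with $\widetilde r_1=\widetilde r$ and a \emph{smaller} radius $\widetilde r_2\in[\widetilde r/2,\widetilde r]$, so that $\min\{\widetilde r_1,\widetilde r_2\}=\widetilde r_2$. Dividing by $\widetilde r_2$ gives
\[
\int_{S_{(t,x_0)}(\widetilde r)}|\phi|^2 d\widetilde\om \lesssim \int_{S_{(t,x_0)}(\widetilde r_2)}|\phi|^2 d\widetilde\om + \mathcal{E}_{0,\ga}^R\,\rho^{-\ga-\ep}(R-t)^{\ep}\,\widetilde r_2^{-1}.
\]
Multiplying by $\widetilde r_2^2$ and integrating $\widetilde r_2$ over $[\widetilde r/2,\widetilde r]$ turns the left side into a constant multiple of $\widetilde r^3\int_{S_{(t,x_0)}(\widetilde r)}|\phi|^2 d\widetilde\om$, while the first term on the right becomes the solid integral $\int_{B_{(t,x_0)}(\widetilde r)}|\phi|^2 d\widetilde x$ over the ball centered at $x_0$, and the error contributes $\mathcal{E}_{0,\ga}^R\rho^{-\ga-\ep}(R-t)^{\ep}\widetilde r^2$.

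The decisive step is to bound $\int_{B_{(t,x_0)}(\widetilde r)}|\phi|^2 d\widetilde x$. Every point of this ball lies at Euclidean distance $r\in[\max\{0,r_0-\widetilde r\},\,r_0+\widetilde r]$ from the origin (and $r_0+\widetilde r\le R-t$), so slicing by origin-centered spheres and inserting the bound $\int_{S_{(t,0)}(r)}|\phi|^2 d\om\lesssim \mathcal{E}_{0,\ga}^R (R-t)^{-\ga}r^{-1}$ from Corollary~\ref{cor:bd4rphi2:x0} yields
\[
\int_{B_{(t,x_0)}(\widetilde r)}|\phi|^2 d\widetilde x \lesssim \mathcal{E}_{0,\ga}^R (R-t)^{-\ga}\int_{\max\{0,r_0-\widetilde r\}}^{r_0+\widetilde r} r\,dr \lesssim \mathcal{E}_{0,\ga}^R (R-t)^{1-\ga}\,\widetilde r,
\]
since the remaining radial integral is $\lesssim (R-t)\widetilde r$ in both cases $\widetilde r\ge r_0$ and $\widetilde r<r_0$. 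Dividing by $\widetilde r^3$ produces the main term $\mathcal{E}_{0,\ga}^R(R-t)^{1-\ga}\widetilde r^{-2}$, which is exactly the claimed bound.

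It remains to absorb the error: after dividing by $\widetilde r^3$ its contribution $\mathcal{E}_{0,\ga}^R\rho^{-\ga-\ep}(R-t)^{\ep}\widetilde r^{-1}$ is dominated by $\mathcal{E}_{0,\ga}^R(R-t)^{1-\ga}\widetilde r^{-2}$ precisely because $\widetilde r\le\rho\le R-t$ and $0<\ep<\frac{1-\ga}{2}$, so that $\widetilde r\,\rho^{-\ga-\ep}(R-t)^{\ga+\ep-1}\le(\rho/(R-t))^{1-\ga-\ep}\le 1$. I expect the main obstacle to be exactly the choice of averaging range: one must keep the comparison radii comparable to $\widetilde r$, for otherwise (averaging over the full admissible range up to $\rho$) the solid integral is controlled only by the full ball $B_{(t,x_0)}(\rho)$, giving $\mathcal{E}_{0,\ga}^R(R-t)^{2-\ga}\rho^{-3}$, which is \emph{not} dominated by the target when $\widetilde r\sim\rho$. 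Keeping the ball radius of size $\widetilde r$ is what converts the linear radial gain $(R-t)\widetilde r$ into the sharp $\widetilde r^{-2}$ factor after dividing by $\widetilde r^3$.
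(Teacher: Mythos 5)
Your proof is correct, and it reorganizes the paper's argument in a genuinely different (and arguably cleaner) way, even though it rests on the same two pillars: the origin-centered sphere bound \eqref{eq:bd4rphi2:x0} converted into a solid integral by slicing, and the comparison Lemma \ref{lem:smalllarge}. The paper works at the fixed top scale $R_0^*=R-t-r_0$: it computes the solid integral over $B_{(t,x_0)}(\tfrac23 R_0^*)$ once, pigeonholes to find a single good sphere $\widetilde r_0\in[\tfrac13 R_0^*,\tfrac23 R_0^*]$, and then needs \emph{two} further applications of Lemma \ref{lem:smalllarge} — one to propagate up to $\widetilde r=R_0^*$ and one to propagate down to small $\widetilde r$ — with a corresponding case split. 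You instead work at the target scale: replace the pigeonhole by an average of the comparison radius over the dyadic annulus $[\widetilde r/2,\widetilde r]$, reduce to the ball $B_{(t,x_0)}(\widetilde r)$, and get the sharp factor $\widetilde r^{-2}$ in one stroke, uniformly in $\widetilde r$ and with no case analysis; your absorption of the error term uses the same inequality $\rho^{1-\ga-\ep}(R-t)^{\ga+\ep-1}\leq 1$ (valid since $\ga+\ep<1$) that the paper invokes. All steps check out: the hypotheses of Lemma \ref{lem:smalllarge} hold since $\widetilde r_2\leq\widetilde r\leq R-t-r_0$, the slicing is legitimate because $r_0+\widetilde r\leq R-t$ keeps \eqref{eq:bd4rphi2:x0} applicable, and the radial integral $\tfrac12\bigl[(r_0+\widetilde r)^2-\max\{0,r_0-\widetilde r\}^2\bigr]\lesssim (R-t)\widetilde r$ is right in both cases. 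One small remark on your closing heuristic: it is more pessimistic than necessary — applying your own sharp radial estimate to the full ball $B_{(t,x_0)}(\rho)$ gives $\mathcal{E}_{0,\ga}^R (R-t)^{1-\ga}\rho^{-1}$ for the solid integral (not $(R-t)^{2-\ga}$), so averaging at the top scale would yield $\mathcal{E}_{0,\ga}^R(R-t)^{1-\ga}\rho^{-2}\leq \mathcal{E}_{0,\ga}^R(R-t)^{1-\ga}\widetilde r^{-2}$ and would in fact also close; but this does not affect the validity of the proof you gave.
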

\begin{proof}
Denote $R_0^*=R-t-r_0$. Now from the bound \eqref{eq:bd4rphi2:x0}, we integrate in $r$ from $\max\{r_0-\frac{2}{3}R_0^*, 0\}$ to $r_0+\frac{2}{3}R_0^*$. We derive that
\begin{align*}
\int_{B_{(t, x_0)}(\frac{2}{3}R_0^*)}|\phi|^2\widetilde{r}^2 d\widetilde{r}d\widetilde{\om}&=\int_{B_{(t, x_0)}(\frac{2}{3}R_0^*)}|\phi|^2r^2 dr d\om\\
&\les \mathcal{E}_{0, \ga}^R\int_{\max\{r_0-\frac{2}{3}R_0^*, 0\}}^{r_0+\frac{2}{3}R_0^*}r (R-t)^{-\ga}dr\\
&\les \mathcal{E}_{0,\ga}^R (|r_0+\frac{2}{3}R_0^*|^2-|\max\{r_0-\frac{2}{3}R_0^*, 0\}|^2) (R-t)^{-\ga}\\
&\les \mathcal{E}_{0, \ga}^R (R-t)^{-\ga} R_0^* \max\{r_0, \frac{2}{3}R_0^*\}\\
&\les \mathcal{E}_{0, \ga}^R R_0^* (R-t)^{1-\ga}.
\end{align*}
Here we notice that $dx=d\widetilde{x}$ and $\widetilde{x}$ are the coordinates centered at the point $x_0$. The last step follows as $\frac{1}{3}(R-t)\leq \max\{r_0, \frac{2}{3}R_0^*\}\leq (R-t)$. Therefore we conclude that there must be some $\widetilde{r}_0\in[\frac{1}{3}R_0^*, \frac{2}{3}R_0^*]$ such that
\[
\int_{S_{(t, x_0)}(\widetilde{r}_0)}|\widetilde{r}_0\phi|^2 d\widetilde{\om}\les \mathcal{E}_{0,\ga}^R (R-t)^{1-\ga}.
\]
Then from the previous lemma \ref{lem:smalllarge}, for all $\widetilde{r}_0<\widetilde{r}_1=R_0^*\leq 3\widetilde{r}_0$, we derive that
\begin{align*}
\widetilde{r}_0^2 \int_{S_{(t, x_0)}(\widetilde{r}_1)}|\phi|^2d\widetilde{\om}&\les \widetilde{r}_0^2 \int_{S_{(t, x_0)}(\widetilde{r}_0)}|\phi|^2d\widetilde{\om}+ \mathcal{E}_{0, \ga}^R (R-t-r_0)^{1-\ga-\ep}(R-t)^\ep\\
&\les \mathcal{E}_{0,\ga}^R (R-t)^{1-\ga}.
\end{align*}
Here note that $\ep$ is chosen such that $\ep+\ga<1$. This in particular means that the Lemma holds for all $\widetilde{r}_0\leq \widetilde{r}\leq R_0^*=R-t-r_0$. For smaller $\widetilde{r}<\widetilde{r}_0$, using the previous Lemma \ref{lem:smalllarge} again, we have
\begin{align*}
  \widetilde{r}\int_{S_{(t, x_0)}(\widetilde{r})}|\phi|^2d\widetilde{\om} &\les \widetilde{r} \int_{S_{(t, x_0)}(\widetilde{r}_0)}|\phi|^2d\widetilde{\om}+\mathcal{E}_{0, \ga} (R-t-r_0)^{-\ga-\ep}(R-t)^\ep\\
  &\les \mathcal{E}_{0,\ga}^R \widetilde{r}_0^{-1}(R-t)^{1-\ga}+\mathcal{E}_{0, \ga}^R (R-t-r_0)^{-\ga-\ep}(R-t)^\ep\\
  &\les \mathcal{E}_{0,\ga}^R \widetilde{r}^{-1}(R-t)^{1-\ga}.
\end{align*}
Therefore the Lemma holds for all $\widetilde{r}\leq R-t-r_0$.
\end{proof}
We prove the necessary uniform (independent of the center $x_0$) bound for the Higgs field.
\begin{Prop}
\label{prop:bd4phi:need}
For any $q=(t_0, x_0)\in\mathcal{J}^+(\B_R)$ and $0\leq \widetilde{r}\leq t_0$, we have
\begin{equation}
\label{eq:bd4phi:need}
\begin{split}
\int_{S_{(t_0-\widetilde{r}, x_0)}(\widetilde{r}) }|\phi|^2 \widetilde{r} d\widetilde{\om}\les \mathcal{E}_{0,\ga}^R \widetilde{r}^{-\ga-\ep}.
\end{split}
\end{equation}
\end{Prop}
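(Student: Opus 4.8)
The plan is to view the spheres $S_{(t_0-\widetilde{r}, x_0)}(\widetilde{r})$, $0\le \widetilde{r}\le t_0$, as the foliation of the backward light cone $\N(q)$ by the level sets of $\widetilde{r}$, so that passing between two such spheres amounts to integrating along the null generators of $\N(q)$, whose covariant derivative is precisely $D_{\widetilde{\Lb}}$. Two facts drive the argument. First, the weighted flux estimate \eqref{eq:comp:v:EF} controls $\int_{\N(q)}(R-t-r_q)^\ga r^{-2}|D_{\widetilde{\Lb}}(r\phi)|^2\,\widetilde{r}^2d\widetilde{r}d\widetilde{\om}\les \mathcal{E}_{0,\ga}^R$, and on the cone $t=t_0-\widetilde{r}$, so that with $R_0^*:=R-t_0-r_0>0$ the weight obeys $(R-t-r_q)^\ga\ge (R_0^*+\widetilde{r})^\ga\ge \widetilde{r}^\ga$ and (as a lower bound) depends only on $\widetilde{r}$. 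Second, each sphere $S_{(t_0-\widetilde{r}, x_0)}(\widetilde{r})$ is itself a constant-time sphere, so Lemma \ref{lem:bd4phi2:2} applies directly and gives the base bound $\int_{S_{(t_0-\widetilde{r}, x_0)}(\widetilde{r})}|\phi|^2 d\widetilde{\om}\les \mathcal{E}_{0,\ga}^R\,\widetilde{r}^{-2}(R-t_0+\widetilde{r})^{1-\ga}$, which already yields \eqref{eq:bd4phi:need} when $\widetilde{r}$ is comparable to $R-t_0$ but loses a factor $\widetilde{r}^{1-\ga-\ep}$ for small $\widetilde{r}$.

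The core step recovers this loss for small $\widetilde{r}$ by integrating the flux along generators. Fixing a direction $\widetilde{\om}$ and using the diamagnetic inequality $|\pa_{\widetilde{r}}|r\phi||\le |D_{\widetilde{\Lb}}(r\phi)|$ along the generators, the fundamental theorem of calculus from a reference radius $\widetilde{r}_1$ down to $\widetilde{r}$ gives $|r\phi|(\widetilde{r},\widetilde{\om})\le |r\phi|(\widetilde{r}_1,\widetilde{\om})+\int_{\widetilde{r}}^{\widetilde{r}_1}|D_{\widetilde{\Lb}}(r\phi)|\,ds$. Squaring, integrating over $\widetilde{\om}$ and applying Cauchy--Schwarz against the flux weight $W(s)=(R_0^*+s)^\ga$, together with the elementary bound $\int_{\widetilde{r}}^{\widetilde{r}_1}W(s)^{-1}s^{-2}ds\les \widetilde{r}^{-1-\ga}$ in the spirit of Lemma \ref{lem:bd4Rss2}, the first factor is absorbed by \eqref{eq:comp:v:EF} and the base term by Lemma \ref{lem:bd4phi2:2}, producing $\int_{S_{(t_0-\widetilde{r},x_0)}(\widetilde{r})}|r\phi|^2 d\widetilde{\om}\les \mathcal{E}_{0,\ga}^R\,\widetilde{r}^{-1-\ga}$. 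On the part of the sphere bounded away from the spatial origin, $r\gtrsim \de$, dividing by $r^2$ converts this into $\widetilde{r}\int |\phi|^2\les \de^{-2}\mathcal{E}_{0,\ga}^R\,\widetilde{r}^{-\ga}$, which is even better than the stated rate.

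The main obstacle is the region near the spatial axis $\{r=0\}$: when $\widetilde{r}\approx r_0$ the sphere $S_{(t_0-\widetilde{r},x_0)}(\widetilde{r})$ nearly meets the origin, the factor $r^{-2}$ degenerates, and the constant $\de^{-2}$ above is not uniform in $x_0$. I would therefore split $S^2$ into the cap $\{r<\de\}$ and its complement, treating the complement by the flux bound just described, and controlling the near-origin cap separately using the \emph{unweighted} energy bound \eqref{eq:Eflux:Bt} on $B_{(t,x_0)}(t_0-t)$ together with the origin-centered sphere bound of Corollary \ref{cor:bd4rphi2:x0} (equivalently a Hardy inequality on the small ball $\{|x|<\de\}$). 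Choosing $\de\sim \widetilde{r}^{\ep/2}$ balances the two contributions and is exactly what trades the clean rate $\widetilde{r}^{-\ga}$ for the stated $\widetilde{r}^{-\ga-\ep}$; this optimization, and keeping the constants genuinely independent of $x_0$, is the delicate point. For small $r_0$ the alternative route through Lemma \ref{lem:smalllarge}, whose error term already carries the factor $(R-t-r_0)^{-\ga-\ep}\le \widetilde{r}^{-\ga-\ep}$, combined with the base bound of Lemma \ref{lem:bd4phi2:2}, gives an independent treatment of the near-axis case and a useful cross-check.
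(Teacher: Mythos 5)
Your toolbox (the weighted flux \eqref{eq:comp:v:EF} along null generators, the base bound of Lemma \ref{lem:bd4phi2:2}, the time-slice comparison of Lemma \ref{lem:smalllarge}) is the right one, but your central near-axis step fails on exponent count, and the route you relegate to a ``cross-check'' is in fact the only one that closes -- it is the paper's proof. Concretely: on $S_{(t_0-\widetilde{r},x_0)}(\widetilde{r})$ the measure satisfies $\widetilde{r}^2\,d\widetilde{\om}=\frac{\widetilde{r}\,r}{r_0}\,dr\,d\varphi$, so on the cap $\{r<\de\}$, even granting the best cap bound compatible with the data (a profile $|\phi|^2\sim \mathcal{E}_{0,\ga}^R(R-t)^{-\ga}r^{-1}$ saturates \eqref{eq:bd4rphi2:x0} and has finite $\mathcal{E}_{0,\ga}^R$), the worst near-axis case $\widetilde{r}\approx r_0$, $R_0=R-t_0-r_0\approx 0$ gives
\[
\widetilde{r}\int_{\{r<\de\}}|\phi|^2\,d\widetilde{\om}\ \sim\ \mathcal{E}_{0,\ga}^R\,\frac{\de}{r_0}\,(R-t_0+\widetilde{r})^{-\ga}\ \sim\ \mathcal{E}_{0,\ga}^R\,\de\,\widetilde{r}^{-1-\ga},
\]
while your complement term is $\de^{-2}\mathcal{E}_{0,\ga}^R\widetilde{r}^{-\ga}$. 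With your choice $\de\sim\widetilde{r}^{\ep/2}$ the cap contributes $\widetilde{r}^{\,\ep/2-1-\ga}$, overshooting the target $\widetilde{r}^{-\ga-\ep}$ by a factor $\widetilde{r}^{-(1-3\ep/2)}$; even the optimal balance $\de\sim\widetilde{r}^{1/3}$ yields only $\widetilde{r}^{-\ga-2/3}$, which is far weaker than $\widetilde{r}^{-\ga-\ep}$ since $\ep$ is small. So no choice of $\de$ closes the argument. On top of this, your cap bound is not actually established: the integral of $|\phi|^2$ over a two-dimensional cap is a codimension-one restriction, which neither the ball energy \eqref{eq:Eflux:Bt} nor the origin-centered sphere bounds of Corollary \ref{cor:bd4rphi2:x0} control without a trace inequality -- and, as the computation above shows, even an optimal trace estimate would not give a better power.

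The correct organization -- the paper's -- splits in $\widetilde{r}$ versus $r_0$, not the sphere itself. If $\widetilde{r}\leq\frac{1}{10}r_0$, then $r\in[\frac12 r_0,\frac32 r_0]$ along the \emph{entire} generator segment, so the flux step with weight $(R-t-r_0)^\ga$ goes through and dividing by $r$ costs only a uniform constant; no cap ever appears. All configurations where the axis matters have $\widetilde{r}\geq\frac{1}{10}r_0$, and there the time-slice comparison of Lemma \ref{lem:smalllarge} with $\widetilde{r}_2=R_0+\widetilde{r}$, combined with Lemma \ref{lem:bd4phi2:2}, already gives the claim: the radial integration in Lemma \ref{lem:smalllarge} emanates from $x_0$ and never divides by $r$, so the axis is invisible, and $R-t_0+\widetilde{r}\leq 11(R_0+\widetilde{r})$ kills the lossy factor $(R_0+\widetilde{r})^{-1}(R-t_0+\widetilde{r})^{1-\ga}$ in the base bound. (The $\ep$ is genuinely spent there, via the weight $v_*^{\ga+\ep}$ in \eqref{eq:bd4Dphi:B}, not via a cap optimization.) Two further points your sketch glosses over: the reference radius for the generator integration must lie on the cone, which is truncated at $t=0$; when $t_0\leq\frac{1}{10}r_0$ one must stop at $\widetilde{r}_0^*=t_0$ and bound the base sphere on $\{t=0\}$ by a separate comparison (Lemma \ref{lem:smalllarge} at $t=0$ against radius $r_0$, plus \eqref{eq:bd4phi2:2}, with a final case split in $r_0$ versus $R$). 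And your global claim $\int_{S(\widetilde{r})}|r\phi|^2\,d\widetilde{\om}\les\mathcal{E}_{0,\ga}^R\widetilde{r}^{-1-\ga}$ inherits from Lemma \ref{lem:bd4phi2:2} a base factor of size $(r_0+\widetilde{r}_1)^2\widetilde{r}_1^{-2}$, unbounded when $r_0\gg R_0$ -- another reason the $|r\phi|$-flux estimate must be confined to the regime $\widetilde{r}\les r_0$, where one divides by $r\approx r_0$ immediately rather than carrying $|r\phi|$ globally.
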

\begin{proof}
Let $r_0=|x_0|$ and $R_0=R-t_0-r_0$. $R_0$ is nonnegative since 
$t_0+|x_0|\leq R$.
In view of Lemma \ref{lem:smalllarge} and the rough bound \eqref{eq:bd4phi2:2}, we have\begin{equation}
\label{eq:smalllargeP2}
\begin{split}
\int_{S_{(t_0-\widetilde{r}, x_0)}(\widetilde{r})}|\phi|^2\widetilde{r}d\widetilde{\om}
&\les \int_{S_{(R_0+t_0-(\widetilde{r}+R_0), x_0)}(R_0+\widetilde{r})}|\phi|^2(R_0+\widetilde{r})d\widetilde{\om}+\mathcal{E}_{0, \ga}^{R} (R_0+\widetilde{r})^{-\ga-\ep} \\
&\les \mathcal{E}_{0, \ga}^{R} \big(\widetilde{r}^{-\ga-\ep}+(R_0+\widetilde{r})^{-1}(R-t_0+\widetilde{r})^{1-\ga}\big).
\end{split}
\end{equation}
If $\widetilde{r}\geq \frac{1}{10}r_0$, as $t_0+r_0\leq R$, we have $R-t_0+\widetilde{r}\leq 11(R-t_0-r_0+\widetilde{r})=11R_0$. Therefore, \eqref{eq:bd4phi:need} holds.

It suffices to assume  $\widetilde{r}\leq \widetilde{r}_0^*=\min\{  t_0, \frac{1}{10} r_0\}$.  For $\widetilde{r}_0\leq \widetilde{r}_0^*$, we integrate along the backward light cone:
\begin{equation}\label{eq:smalllargePP2}
\begin{split}
|r\phi|(t_0-\widetilde{r}_0, x_0+\widetilde{r}_0\widetilde{\om})&\leq |r\phi|(t_0-\widetilde{r}_0^*, x_0+\widetilde{r}_0^*\widetilde{\om})+\int_{\widetilde{r}_0}^{\widetilde{r}_0^*}|\widetilde{D}_{\widetilde{\Lb}}(r\phi)|(t_0-\widetilde{r}, x_0+\widetilde{r} \widetilde{\om})d\widetilde{r}\\
&\les |r\phi|(t_0-\widetilde{r}_0^*, x_0+\widetilde{r}_0^*\widetilde{\om})+\widetilde{r}_0^{-\frac{1+\ga}{2}}\big(\int_{\widetilde{r}_0}^{\widetilde{r}_0^*}(R-t_0+\widetilde{r}-r_0)^{\ga}
|\widetilde{D}_{\widetilde{\Lb}}(r\phi)|^2\widetilde{r}^2 d\widetilde{r}\big)^{\f12}.
\end{split}
\end{equation}
On $\mathcal{N}^{-}(q)\cap\{t_0-\widetilde{r}_0^*\leq t\leq t_0\}$, the radius $r=|x|$ verifies the following bound
\begin{equation*}
\f12 r_0\leq r_0-\widetilde{r}_0^*\leq r_0-\widetilde{r}\leq r\leq r_0+\widetilde{r}\leq r_0+\widetilde{r}_0^*\leq \frac{3}{2}r_0 .
 \end{equation*}
Thus, radius function $r=|x_0+\widetilde{r}\widetilde{\om}|$ is comparable everywhere on the line segment $\big\{(t-\widetilde{r}, x_0+\widetilde{r}\widetilde{\om})\big|\widetilde{r}\in [0, \\ \widetilde{r}_0^* ]\big\}$. Dividing both sides of \eqref{eq:smalllargePP2} by $r$ and using the improved weighted energy estimate \eqref{eq:comp:v:EF}, we have
\begin{align*}
\int_{S_{(t_0-\widetilde{r}_0, x_0)}(\widetilde{r}_0)}|\phi|^2d\widetilde{\om}&\les \int_{S_{(t_0-\widetilde{r}_0^*, x_0)}(\widetilde{r}_0^*)}|\phi|^2d\widetilde{\om}+\widetilde{r}_0^{-1-\ga} \int_{\widetilde{r}_0}^{\widetilde{r}_0^*}\int_{\widetilde{\om}}(R-t-r_0)^{\ga}r^{-2}|\widetilde{D}_{\widetilde{\Lb}}(r\phi)|^2\widetilde{r}^2 d\widetilde{r} d\widetilde{\om}\\
&\les \int_{S_{(t_0-\widetilde{r}_0^*, x_0)}(\widetilde{r}_0^*)}|\phi|^2d\widetilde{\om} +\widetilde{r}_0^{-1-\ga}\mathcal{E}_{0,\ga}^{R}.
\end{align*}

We have two sub-cases: $\widetilde{r}_0^*=\frac{1}{10} r_0$ or $t_0$.

If $\widetilde{r}_0^*=\frac{1}{10} r_0$, then $\frac{1}{10} r_0\leq  t_0$, as \eqref{eq:smalllargeP2} holds for all $\widetilde{r}$, we in particular have
\begin{equation*}
\begin{split}
\int_{S_{(t_0-\widetilde{r}_0^*, x_0)}(\widetilde{r}_0^*)}|\phi|^2\widetilde{r}_0^*d\widetilde{\om}
&\les \mathcal{E}_{0, \ga}^{R} ((\widetilde{r}_0^*)^{-\ga-\ep}+(R-t_0-r_0+\widetilde{r}_0^*)^{-1}(R-t_0+\widetilde{r}_0^*)^{1-\ga})\\
&\les \mathcal{E}_{0, \ga}^{R} ((\widetilde{r}_0^*)^{-\ga-\ep}+(R-t_0+\widetilde{r}_0^*)^{-1}(R-t_0+\widetilde{r}_0^*)^{1-\ga})\les \mathcal{E}_{0, \ga}^{R} (\widetilde{r}_0^*)^{-\ga-\ep}.
\end{split}
\end{equation*}
Using the fact that $\widetilde{r}_0\leq \widetilde{r}_0^*$, the above estimate implies the desired estimate:
\begin{align*}
\int_{S_{(t_0-\widetilde{r}_0, x_0)}(\widetilde{r}_0)}|\phi|^2 d\widetilde{\om}
&\les \mathcal{E}_{0,\ga}^{R} ((\widetilde{r}_0^*)^{-\ga-\ep-1}+\widetilde{r}_0^{-1-\ga})\les \mathcal{E}_{0,\ga}^{R} \widetilde{r}_0^{-\ga-\ep-1}.
\end{align*}

If $\widetilde{r}_0^*= t_0\leq \frac{1}{10}r_0$,
in view of Lemma \ref{lem:smalllarge} and \eqref{eq:bd4phi2:2}, we have
\begin{align*}
t_0\int_{S_{(0, x_0)}(t_0)}|\phi|^2 d\widetilde{\om}&\les t_0 \int_{S_{(0, x_0)}(r_0)}|\phi|^2d\widetilde{\om}+\mathcal{E}_{0, \ga}^R (R-r_0)^{-\ga-\ep}\\
& \les \mathcal{E}_{0, \ga}^R t_0 r_0^{-2}  +\mathcal{E}_{0, \ga}^R (R-r_0)^{-\ga-\ep} .
\end{align*}
As $\widetilde{r}_0\leq \widetilde{r}_0^*=t_0\leq \frac{1}{10}r_0$ and $R-r_0\geq t_0$, the above estimate leads to
\begin{align*}
\int_{S_{(t_0-\widetilde{r}_0, x_0)}(\widetilde{r}_0)}|\phi|^2d\widetilde{\om}
&\les \mathcal{E}_{0,\ga}^{R}(\widetilde{r}_0^{-1-\ga}+r_0^{-2}+t_0^{-\gamma -\ep -1})\les \mathcal{E}_{0,\ga}^{R}(\widetilde{r}_0^{-1-\ga-\ep }+r_0^{-2} ).
\end{align*}
We use the condition $\widetilde{r}_0\leq t_0\leq R$: if $r_0\geq \frac{1}{10} R$, then $r_0^{-2}\les 1\les \widetilde{r}_0^{-1-\ga-\ep }$; if $r_0\leq \frac{1}{10} R$, then $t_0\leq \frac{1}{100}R$. In particular, we have $R_0=R-t_0-r_0\geq \frac{R}{2}$.
In view of  \eqref{eq:smalllargeP2}, we conclude that
\begin{equation*}
\begin{split}
\int_{S_{(t_0-\widetilde{r}, x_0)}(\widetilde{r})}|\phi|^2\widetilde{r}d\widetilde{\om}
 &\les \mathcal{E}_{0, \ga}^{R} \big(\widetilde{r}^{-\ga-\ep}+(R_0+\widetilde{r})^{-1}(R-t_0+\widetilde{r})^{1-\ga}\big)\les \mathcal{E}_{0, \ga}^{R}  \widetilde{r}^{-\ga-\ep}.
\end{split}
\end{equation*}

The above discussions covers all the cases and complete the proof of the proposition.
\end{proof}
This proposition is important in the following sense: The estimate \eqref{eq:bd4phi:need} is uniform with respect to the center $x_0$. Moreover, the right hand side of \eqref{eq:bd4phi:need} is integrable in $\widetilde{r}$ on the interval $[0, t_0]$.

To end the section, we prove another uniform bound for the Higgs field. We will use this bound to compensate the $r$-weight of the Higgs field in the weighted energy estimate \eqref{eq:comp:v:EF} .
\begin{Prop}
\label{prop:bd4phi:phir}
For all $q=(t_0, x_0)\in\mathcal{J}^{+}(\B_R)$ and $0\leq \widetilde{r}\leq t_0$, let $r=|x_0+\widetilde{r}\widetilde{\om}|$, $(\widetilde{t}, \widetilde{r}, \widetilde{\om})$ be the polar coordinates centered at $q$ and $S_{(t_0-\widetilde{r}, x_0)}(\widetilde{r})$ be the sphere with radius $\widetilde{r}$ centered at the point $(t_0-\widetilde{t}, x_0)$. We have the following estimate:
\begin{equation}
\label{eq:bd4phi:phir}
\int_{S_{(t_0-\widetilde{r}, x_0)}(\widetilde{r}) }|\phi|r^{-1}\widetilde{r} d\widetilde{\om}\les \sqrt{\mathcal{E}_{0,\ga}^{R}} \widetilde{r}^{-\frac{1+\ga+\ep}{2}}.
\end{equation}
\end{Prop}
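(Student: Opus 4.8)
The plan is to absorb the singular factor $r^{-1}$ against the sphere bound \eqref{eq:bd4phi:need} already established in Proposition \ref{prop:bd4phi:need}. The natural first move is Cauchy--Schwarz on $S_{(t_0-\widetilde{r},x_0)}(\widetilde{r})$:
\[
\int_{S_{(t_0-\widetilde{r},x_0)}(\widetilde{r})}|\phi|\,r^{-1}\widetilde{r}\,d\widetilde{\om}\leq\Big(\int_{S_{(t_0-\widetilde{r},x_0)}(\widetilde{r})}|\phi|^2\widetilde{r}\,d\widetilde{\om}\Big)^{\f12}\Big(\int_{S_{(t_0-\widetilde{r},x_0)}(\widetilde{r})}r^{-2}\widetilde{r}\,d\widetilde{\om}\Big)^{\f12}.
\]
By \eqref{eq:bd4phi:need} the first factor is $\les\sqrt{\mathcal{E}_{0,\ga}^{R}}\,\widetilde{r}^{-\frac{\ga+\ep}{2}}$, so matters reduce to the purely geometric integral in the second factor. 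Writing $x=x_0+\widetilde{r}\widetilde{\om}$ and $r_0=|x_0|$, an elementary computation (integrating $(r_0^2+\widetilde{r}^2+2r_0\widetilde{r}\tau)^{-1}$ over $\tau\in[-1,1]$) gives
\[
\int_{S_{(t_0-\widetilde{r},x_0)}(\widetilde{r})}r^{-2}\widetilde{r}\,d\widetilde{\om}=\widetilde{r}\int_{|\widetilde{\om}|=1}|x_0+\widetilde{r}\widetilde{\om}|^{-2}d\widetilde{\om}=\frac{2\pi}{r_0}\ln\frac{r_0+\widetilde{r}}{|r_0-\widetilde{r}|}.
\]
To obtain the claimed bound $\widetilde{r}^{-\frac{1+\ga+\ep}{2}}$ it then suffices to know that $\frac{\widetilde{r}}{r_0}\ln\frac{r_0+\widetilde{r}}{|r_0-\widetilde{r}|}\les 1$, and a short check shows this holds in both regimes $\widetilde{r}\leq\f12 r_0$ and $\widetilde{r}\geq 2r_0$. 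Thus the proposition follows immediately from Cauchy--Schwarz as long as $\widetilde{r}$ is not comparable to $r_0$.

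The main obstacle is precisely the regime $\f12 r_0<\widetilde{r}<2r_0$: there the backward cross-sectional sphere passes through (or very near) the spatial origin, $r$ attains arbitrarily small values, and the geometric integral above diverges logarithmically as $\widetilde{r}\to r_0$. In this regime the uniform $L^2$ bound \eqref{eq:bd4phi:need} is too crude near $\{r=0\}$, and one must instead exploit the regularity of $\phi$ at the origin, which is encoded only in the \emph{origin-centered} estimate \eqref{eq:bd4rphi2:x0} of Corollary \ref{cor:bd4rphi2:x0}.

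To organize this, I would split the sphere into $A=\{r\geq\f12\widetilde{r}\}$ and the polar cap $B=\{r<\f12\widetilde{r}\}$. On $A$ one has $r^{-1}\leq 2\widetilde{r}^{-1}$, so
\[
\int_{A}|\phi|\,r^{-1}\widetilde{r}\,d\widetilde{\om}\les\int_{A}|\phi|\,d\widetilde{\om}\les\Big(\int_{S_{(t_0-\widetilde{r},x_0)}(\widetilde{r})}|\phi|^2\,d\widetilde{\om}\Big)^{\f12}=\widetilde{r}^{-\f12}\Big(\int_{S_{(t_0-\widetilde{r},x_0)}(\widetilde{r})}|\phi|^2\widetilde{r}\,d\widetilde{\om}\Big)^{\f12}\les\sqrt{\mathcal{E}_{0,\ga}^{R}}\,\widetilde{r}^{-\frac{1+\ga+\ep}{2}},
\]
again by \eqref{eq:bd4phi:need}; this part works uniformly and requires nothing new. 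The cap $B$ is nonempty only when $r_0$ and $\widetilde{r}$ are comparable, i.e.\ exactly the bad regime above.

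The decisive step is therefore the estimate on $B$. Here the idea is to pass to the spatial polar coordinates $(\rho,\om)$ centered at the true origin, using that on $B$ one has $\rho=r\leq\f12\widetilde{r}\les r_0\leq R-(t_0-\widetilde{r})$, so the hypothesis of \eqref{eq:bd4rphi2:x0} is satisfied on the relevant spheres $S_{(t_0-\widetilde{r},0)}(\rho)$. A coarea change of variables relating the cap surface measure $\widetilde{r}^{-1}\,dA$ to $\rho\,d\rho\,d\varphi$ (with Jacobian $\sim\rho/(r_0\widetilde{r})$) converts the $r^{-1}$-weighted cap integral into an integral over the origin-centered spheres, where the three-dimensional weight $\rho^{-2}\cdot\rho^2$ renders $r^{-1}$ harmless and the factor $r_0\sim\widetilde{r}$ reproduces the correct power of $\widetilde{r}$. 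I expect this to be the hard part: the cap hugs the poles of the origin-centered spheres, so controlling the resulting circle integrals $\int|\phi|^2\,d\varphi$ by the full spherical $L^2$ norms in \eqref{eq:bd4rphi2:x0} is delicate and should be carried out with the angular regularity of $\phi$ supplied by the $r^{-2}|\widetilde{\D}(r\phi)|^2$ term in the weighted energy flux \eqref{eq:comp:v:EF}. Combining the contributions of $A$ and $B$ then yields \eqref{eq:bd4phi:phir}.
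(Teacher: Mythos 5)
Your reduction is sound as far as it goes, and your non-degenerate regime coincides with the paper's first step: Cauchy--Schwarz against \eqref{eq:bd4phi:need}, the explicit computation $\int_{S}r^{-2}d\widetilde{\om}=2\pi(r_0\widetilde{r})^{-1}\ln\frac{r_0+\widetilde{r}}{|r_0-\widetilde{r}|}$, and the observation that this is $\les\widetilde{r}^{-2}$ away from $r_0\sim\widetilde{r}$ are all correct, as is your estimate on the region $A=\{r\geq\f12\widetilde{r}\}$. But the cap $B$ \emph{is} the proposition in the degenerate regime, and you leave it unproved. Your own coarea computation shows why the plan is in trouble: since $\rho\,d\rho=r_0\widetilde{r}\,d\tau$, the cap integral collapses to $r_0^{-1}\int_0^{\widetilde{r}/2}\big(\int_{\gamma_\rho}|\phi|\,d\varphi\big)d\rho$, where $\gamma_\rho=\{|x|=\rho\}\cap S_{(t,x_0)}(\widetilde{r})$ is a circle on the origin-centered sphere $S_{(t,0)}(\rho)$. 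Controlling such circle integrals is a codimension-one trace on that sphere, which requires strictly more than the $L^2(S^2)$ information in \eqref{eq:bd4rphi2:x0} (one needs roughly $H^{1/2+}$ in the angular variables), and none of the established estimates supplies fixed-time, pointwise-in-$\rho$ sphere control of angular derivatives: \eqref{eq:comp:v:EF} and \eqref{eq:bd4Dphi:B} bound only flux and volume integrals, and an attempt to recover circle traces by Cauchy--Schwarz in $\rho$ just reproduces the same $r^{-1}$-weighted cap integral. Worse, as $\rho\downarrow|r_0-\widetilde{r}|$ the circles $\gamma_\rho$ shrink to the pole of $S_{(t,0)}(\rho)$, where any trace constant degenerates. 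So the ``decisive step'' you defer is a genuine gap, not a technicality.

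The paper avoids circle traces altogether. In the regime $\f12\widetilde{r}<r_0\leq\frac{3}{2}\widetilde{r}$ it works at the fixed time $t=t_0-\widetilde{r}$ and integrates $|\phi|r^{-1}$ radially (in the $x_0$-centered radius) from the sphere $S_{(t,x_0)}(\frac{2}{3}r_0)$ --- where the non-degenerate bound \eqref{eq:bdphir1} already applies, since $|r_0-\frac{2}{3}r_0|\geq\frac{1}{3}r_0$ --- out to $S_{(t,x_0)}(\widetilde{r})$. The price is the bulk term $\widetilde{r}^{-2}\int_{B_{(t,0)}(r_0+\widetilde{r})}\big(|D\phi|r^{-1}+r^{-2}|\phi|\big)dx$ over an \emph{origin-centered ball}, and this is the point: in a three-dimensional volume integral the weights $r^{-1}$, $r^{-2}$ are neutralized by the measure $r^2\,dr\,d\om$, so only full-sphere $L^2$ averages are ever needed. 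Concretely, $\int r^{-2}|\phi|\,dx$ is bounded by integrating \eqref{eq:bd4rphi2:x0} in $r$ (giving $\sqrt{\mathcal{E}_{0,\ga}^R}\,\widetilde{r}^{\frac{1-\ga}{2}}$), and $\int|D\phi|r^{-1}dx$ by Cauchy--Schwarz against \eqref{eq:bd4Dphi:B} with weight $v_*^{\ga+\ep}$ (same power, using $r_0\les\widetilde{r}$). To repair your proof, replace the cap/trace analysis on $B$ by this radial-integration argument; the regularity input you correctly identified (derivative control near the origin) is indeed what is used, but through a fundamental-theorem-of-calculus step producing a volume term, rather than through restriction to circles.
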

\begin{proof}
 By H\"older's inequality, we first have
 \begin{align*}
\int_{ S_{(t_0-\widetilde{r}, x_0)}(\widetilde{r}) }|\phi|r^{-1}  d\widetilde{\om}\leq \left(\int_{S_{(t_0-\widetilde{r}, x_0)}(\widetilde{r})}|\phi|^2d\widetilde{\om}\right)^\f12\left(\int_{S_{(t_0-\widetilde{r}, x_0)}(\widetilde{r})}r^{-2}d\widetilde{\om}\right)^\f12 .
\end{align*}
Since $r^2=r_0^2+\widetilde{r}^2+2\widetilde{r}x_0\cdot \widetilde{\om}$, we can compute that
\begin{align*}
I:=\int_{S_{(t_0-\widetilde{r}, x_0)}(\widetilde{r})}r^{-2}d\widetilde{\om}=4\pi \int_{-1}^{1}(r_0^2+\widetilde{r}^2+2r_0\widetilde{r}\tau)^{-1}d\tau=4\pi (r_0\widetilde{r})^{-1}\ln\frac{r_0+\widetilde{r}}{|r_0-\widetilde{r}|}.
\end{align*}
We have two cases: $|r_0-\widetilde{r}|\geq \f12 \widetilde{r}$ or $\f12 \widetilde{r}<r_0\leq \frac{3}{2}\widetilde{r}$.

If $|r_0-\widetilde{r}|\geq \f12 \widetilde{r}$, thus, $(r_0\widetilde{r})^{-1}\ln\frac{r_0+\widetilde{r}}{|r_0-\widetilde{r}|}\les \widetilde{r}^{-2}$. By \eqref{eq:bd4phi:need} { in Proposition} \ref{prop:bd4phi:need}, we have
\begin{align}
\label{eq:bdphir1}
  \int_{ S_{(t_0-\widetilde{r}, x_0)}(\widetilde{r}) }|\phi|r^{-1}  d\widetilde{\om}\les \widetilde{r}^{-1}\sqrt{\mathcal{E}_{0, \ga}^{R}} \widetilde{r}^{-\frac{1+\ga+\ep}{2}}.
\end{align}

If $\f12 \widetilde{r}<r_0\leq \frac{3}{2}\widetilde{r}$, the above approach fails since { the integral $I$ blows up} 
 for $r_0$ close to $\widetilde{r}$. Fix the time $t=t_0-\widetilde{r}$ and the center $x_0$, we shall compare the integral of $r^{-1}|\phi|$ on two spheres centered at $(t, x_0)$ with radii $\frac{2}{3}r_0$ and $\widetilde{r}$ respectively. For all $\frac{2}{3}r_0\leq \widetilde{r}\leq 2r_0 $, we first have
\begin{align*}
  (|\phi|r^{-1})(t, x_0+\widetilde{r}\widetilde{\om})\leq (|\phi|r^{-1})\left(t, x_0+\frac{2}{3}r_0\widetilde{\om}\right)+\int_{\frac{2}{3}r_0}^{\widetilde{r}}(|D\phi|r^{-1}+ r^{-2}|\phi|)(t, x_0+s\widetilde{\om}) ds
\end{align*}
We then integrate over $\widetilde{\om}$ and we derive
\begin{align*}
  \int_{ S_{(t_0-\widetilde{r}, x_0)}(\widetilde{r}) }|\phi|r^{-1}  d\widetilde{\om}\les \underbrace{\int_{ S_{(t_0-\widetilde{r}, x_0)}(\frac{2}{3}r_0) }|\phi|r^{-1}  d\widetilde{\om}}_{I_\mathbf{S}}+\underbrace{ \int_{\frac{2}{3}r_0}^{\widetilde{r}}\int_{ S_{(t_0-\widetilde{r}, x_0)}(s )}  |D\phi|r^{-1}+ r^{-2}|\phi| ds d\widetilde{\om}}_{I_\mathbf{B}}.
\end{align*}
To bound the sphere integral $ I_\mathbf{S}$, since $|r_0-\frac{2}{3}r_0|\geq\frac{1}{3}r_0$, the bound \eqref{eq:bdphir1} for the case $|r_0-\widetilde{r}|\geq \f12 \widetilde{r}$ implies that
\begin{align*}
I_\mathbf{S}&\les r_0^{-1}\sqrt{\mathcal{E}_{0, \ga}^R} r_0^{-\frac{1+\ga+\ep}{2}} .
\end{align*}
 To bound the bulk integral $I_\mathbf{B}$, we notice that $\frac{1}{3}\widetilde{r}\leq \frac{2}{3}r_0\leq s\leq \widetilde{r}\leq 2 r_0$. We proceed in the coordinates $(t, x)$ and we can show that
 \begin{align*}
  I_\mathbf{B}
   &\les \widetilde{r}^{-2} \int_{\frac{2}{3}r_0}^{\widetilde{r}}\int_{ S_{(t_0-\widetilde{r}, x_0)}(s )}  (|D\phi|r^{-1}+ r^{-2}|\phi|)(t, x_0+s\widetilde{\om}) s^2 ds d\widetilde{\om}\\
   &\les \widetilde{r}^{-2}\int_{B_{(t_0-\widetilde{r}, 0)({ r_0+\widetilde{r}})}}  |D\phi|r^{-1}+ r^{-2}|\phi| dx.
 \end{align*}
 By  \eqref{eq:bd4rphi2:x0}, we have
 \begin{align*}
   \int_{B_{(t_0-\widetilde{r}, 0)({ r_0+\widetilde{r}})}} r^{-2}|\phi| dx &=\int_0^{{ r_0+\widetilde{r}}}\int_{S_{(t_0-\widetilde{r}, 0)}(r)}|\phi| d\om dr\les\int_0^{{ r_0+\widetilde{r}}} (\int_{S_{(t_0-\widetilde{r}, 0)}(r)}|\phi|^2 d\om)^{\f12} dr\\
 &\les \sqrt{\mathcal{E}_{0, \ga}^R}\int_0^{{ r_0+\widetilde{r}}} r^{-\f12} (R-t_0+\widetilde{r})^{-\frac{\ga}{2}} dr\les   \sqrt{\mathcal{E}_{0, \ga}^R}  \widetilde{r}^{\frac{1-\ga}{2}}.
 \end{align*}
By \eqref{eq:bd4Dphi:B}, we have
 \begin{align*}
   \int_{B_{(t_0-\widetilde{r}, 0)({ r_0+\widetilde{r}})}}  |D\phi|r^{-1} dx &\les (\int_{B_{(t_0-\widetilde{r}, 0)({ r_0+\widetilde{r}})}}  |D\phi|^2 v_*^{\ga+\ep}dx)^{\f12} (\int_{B_{(t_0-\widetilde{r}, 0)({ r_0+\widetilde{r}})}}  r^{-2}v_*^{-\ga-\ep} dx)^{\f12}\\
   &\les \sqrt{\mathcal{E}_{0, \ga}^R}(R-t_0+\widetilde{r})^{\frac{\ep}{2}} (\int_0^{{ r_0+\widetilde{r}}} (R-t_0+\widetilde{r}-r)^{-\ga-\ep}dr)^{\f12}\\
    &\les \sqrt{\mathcal{E}_{0, \ga}^R}(R-t_0+\widetilde{r})^{\frac{\ep}{2}} ((R-t_0+\widetilde{r})^{1-\ga-\ep}-(R-t_0{ -r_0})^{1-\ga-\ep})^{\f12}\\
    &\les \sqrt{\mathcal{E}_{0, \ga}^R}(R-t_0+\widetilde{r})^{\frac{\ep}{2}} (R-t_0+\widetilde{r})^{\frac{-\ga-\ep}{2}}r_0^{\f12}\les \sqrt{\mathcal{E}_{0, \ga}^R}\widetilde{r}^{\frac{1-\ga}{2}},
 \end{align*}
where we use $v_*=R-(t_0-\widetilde{r})-r$, $r_0\leq \frac{3}{2}\widetilde{r}$, $0<\ep+\ga<1$ and $\ep>0$. Combining the above estimates, for the case $\f12 \widetilde{r}\leq r_0\leq \frac{3}{2}\widetilde{r}$, we conclude
 \begin{align*}
   \int_{ S_{(t_0-\widetilde{r}, x_0)}(\widetilde{r}) }|\phi|r^{-1}  d\widetilde{\om}\les \widetilde{r}^{-1}\sqrt{\mathcal{E}_{0, \ga}^R} \widetilde{r}^{-\frac{1+\ga+\ep}{2}}+\widetilde{r}^{-2}\sqrt{\mathcal{E}_{0, \ga}^R}\widetilde{r}^{\frac{1-\ga}{2}}\les \widetilde{r}^{-1}\sqrt{\mathcal{E}_{0, \ga}^R} \widetilde{r}^{-\frac{1+\ga+\ep}{2}} .
 \end{align*}
This completes the proof of the proposition.
\end{proof}

\section{Linear equation with infinite energy}
\label{sec:linear:th}

Let $w$ be the free wave with data prescribed { on the} initial hypersurface $\B_R:=\{|x|\leq R\}$ as follows:
\begin{equation}
\label{eq:lineareq:com}
\Box w=0,\quad w(0, x)=0,\quad \pa_t w(0, x)=w_1,\quad |x|\leq R.
\end{equation}
For $0<\ga<1$, the initial weighted energy of $w$ is given by
\begin{equation}\label{def: E gamma w B R}
\mathcal{E}_{\ga}[w](\B_{R})=\int_{|x|\leq R}|w|^2(R-|x|)^\ga +\sum\limits_{i,j}|\Om_{ij}w|^2(R-|x|)^\ga+|\nabla w|^2(R-|x|)^{2+\ga}dx.
\end{equation}
We have the following pointwise estimate for $w$:
\begin{Prop}
\label{prop:lineardecay:comp}
Let $w$ be the solution to the Cauchy problem \eqref{eq:lineareq:com}. Then, there exists { a} constant $C$ depending only on $\ga$ in such a way that
\begin{equation}\label{eq:lineardecay:comp}
|w(t, x)|^2\leq C  (R-t)^{-1-\ga} \mathcal{E}_{ \ga}[w_1](\B_R),\quad \forall \ (t, x)\in \mathcal{J}^+(\B_{R}).
\end{equation}
In particular, we emphasize that the constant $C$ does not depend on the radius $R$.
\end{Prop}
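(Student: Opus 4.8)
The plan is to use the explicit Kirchhoff representation of the free wave and to reduce the pointwise bound to a sharp weighted trace inequality on the spheres cut out by backward light cones. Since $w(0,\cdot)=0$ and $\pa_t w(0,\cdot)=w_1$, in three space dimensions
\[
w(t,x)=\frac{t}{4\pi}\int_{|\om|=1} w_1(x+t\om)\,d\om,
\]
and for $(t,x)\in\mathcal{J}^{+}(\B_R)$ every point $y=x+t\om$ of the integration sphere satisfies $|y|\le |x|+t<R$, so the sphere lies in $\B_R$ and $w_1$ is defined on it. Writing $v_*=R-t-|x|>0$ and inserting the weight $(R-|y|)^{\ga}$, Cauchy--Schwarz on the sphere gives
\[
|w(t,x)|^2\les t^2\Big(\int_{|\om|=1}|w_1(x+t\om)|^2(R-|x+t\om|)^{\ga}\,d\om\Big)\Big(\int_{|\om|=1}(R-|x+t\om|)^{-\ga}\,d\om\Big).
\]
I would evaluate the last factor by passing to $s=|y|$, which turns it into $\tfrac{2\pi}{|x|\,t}\int_{|t-|x||}^{t+|x|}(R-s)^{-\ga}s\,ds$; using $0<\ga<1$ together with the concavity of $s\mapsto s^{1-\ga}$ one obtains the clean, $R$-independent bound $\les v_*^{-\ga}$. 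Thus the proposition will follow once one proves the trace inequality
\[
\int_{|y-x|=t}|w_1|^2(R-|y|)^{\ga}\,dS(y)\les v_*^{\ga}(R-t)^{-1-\ga}\,\mathcal{E}_{\ga}[w_1](\B_R),
\]
since then the factor $v_*^{\ga}$ cancels the $v_*^{-\ga}$ above and leaves exactly $(R-t)^{-1-\ga}$.

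To attack the trace inequality I would run the fundamental theorem of calculus in the radial variable $\rho=|y-x|$ from the vertex $x$ to the sphere. Setting $g(\rho)=\int_{|\om|=1}|w_1(x+\rho\om)|^2(R-|x+\rho\om|)^{\ga}\,d\om$ and integrating $\tfrac{d}{d\rho}(\rho^2 g)$ over $[0,t]$ converts the sphere integral into three volume integrals over $B_x(t)\subset\B_R$: a term $\int |w_1|^2(R-|y|)^{\ga}/|y-x|$, a cross term $\int |w_1|\,|\nabla w_1|(R-|y|)^{\ga}$, and the most singular term $\int |w_1|^2(R-|y|)^{\ga-1}$. The cross term is handled by Cauchy--Schwarz after splitting the weight as $(R-|y|)^{\ga}=(R-|y|)^{(\ga-2)/2}(R-|y|)^{(\ga+2)/2}$: the factor carrying $(R-|y|)^{\ga+2}|\nabla w_1|^2$ is controlled \emph{exactly} by the gradient part of $\mathcal{E}_{\ga}[w_1]$, which is why the weight $(R-|x|)^{2+\ga}$ is tuned two powers above the weight on $|w_1|^2$. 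The remaining factor $\int|w_1|^2(R-|y|)^{\ga-2}$ and the singular term $\int|w_1|^2(R-|y|)^{\ga-1}$ I would treat by a one--dimensional Hardy inequality in the distance-to-boundary variable; the structural feature that Hardy raises the derivative weight by exactly two powers dovetails with the same gap in $\mathcal{E}_{\ga}$. The purely tangential contributions that arise are controlled using the rotational energy $\sum_{i,j}|\Om_{ij}w_1|^2(R-|x|)^{\ga}$.

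The hard part will be the sharpness of the trace near the far point of the sphere, i.e. the single point $y$ with $|y|=t+|x|$ lying on the lateral boundary $\{t+|x|=R\}$ of $\mathcal{J}^{+}(\B_R)$, where $R-|y|$ attains its minimum $v_*$. Both the weight $(R-|y|)^{-\ga}$ and the admissible size of the data concentrate there, and estimating every volume integral by the uniform lower bound $R-|y|\ge v_*$ only yields the weaker rate $v_*^{-1-\ga}$ rather than the claimed $(R-t)^{-1-\ga}$ (the two differ precisely in the regime $|x|\sim R-t$). Closing this gap is the crux: one must localize to the small spherical cap near the far point, where only a set of small measure is close to the boundary, and exploit the rotational energy $\sum_{i,j}|\Om_{ij}w_1|^2$ to trade the angular concentration of $w_1$ for the missing power, e.g. via a dyadic decomposition in $R-|y|$ combined with an angular Sobolev inequality on the cap. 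Finally, since all the quantities scale covariantly under $(t,x,R)\mapsto(\la t,\la x,\la R)$, it suffices to prove the estimate for $R=1$, which automatically produces the asserted constant $C$ depending only on $\ga$ and not on $R$.
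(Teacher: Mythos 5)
Your reduction to the trace inequality
\[
\int_{|y-x|=t}|w_1|^2(R-|y|)^{\ga}\,dS(y)\les v_*^{\ga}(R-t)^{-1-\ga}\,\mathcal{E}_{\ga}[w_1](\B_R)
\]
is where the argument breaks: this inequality is \emph{false}, so no amount of cap analysis in your final paragraph can establish it. Take $R=1$ (by your scaling reduction), $t=\f12$, $|x|=\f12-v$ with $v=v_*\to 0$, and let $y_0$ be the far point of the sphere, $|y_0|=1-v$. Let $w_1$ be a smooth bump of height $a$ supported in the pancake $\{y:\ 1-|y|\in[v,2v],\ \angle(y,y_0)\le \sqrt{v}\}$. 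The support has volume $\sim v^2$, with $|\nabla w_1|\les a v^{-1}$ (radial direction) and $|\Om_{ij}w_1|\les a v^{-1/2}$, so
\[
\mathcal{E}_{\ga}[w_1](\B_1)\ \sim\ a^2v^{2+\ga}+a^2v^{1+\ga}+a^2v^{2+\ga}\ \sim\ a^2v^{1+\ga},
\]
and we normalize $a=v^{-(1+\ga)/2}$. On the sphere $S=\{|y-x|=t\}$ one has $1-|y|=v+\tfrac{|x|t}{2|y_0|}\psi^2\,(1+o(1))$ in the sphere angle $\psi$ measured from $y_0$, so the cap $\{\psi\le c\sqrt v\}$ (for a small absolute $c$) lies inside the support and has area $\sim v$, with $(1-|y|)^{\ga}\sim v^{\ga}$ there. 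Hence the left side of your trace inequality is $\gtrsim a^2 v^{1+\ga}\sim 1$, while the right side is $\sim v^{\ga}\to 0$; no constant works. (The proposition itself is not contradicted by this data: here $|w(t,x)|\sim av\sim v^{(1-\ga)/2}$, far below the claimed threshold, which signals that your Cauchy--Schwarz split is lossy rather than that the result is tight here.)

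The loss enters one step earlier: the bound $\int_{|\om|=1}(R-|x+t\om|)^{-\ga}\,d\om\les v_*^{-\ga}$ is true but is saturated only when $t|x|\les R\,v_*$ (e.g.\ $x$ near the origin); in the regime of the example the integral is in fact $O(1)$, smaller than $v_*^{-\ga}$ by exactly the factor $v_*^{\ga}$ by which the trace inequality fails. So the clean factorization into a universal $v_*^{-\ga}$ times a universal trace bound cannot close; any Kirchhoff-based argument must retain the $x$-dependence of the angular weight integral (compare Lemma \ref{lem:bd:vga}, which records precisely this sharp computation, but which the paper uses only for the nonlinear terms). The paper's proof of Proposition \ref{prop:lineardecay:comp} avoids the representation formula altogether: it integrates the standard energy flux identity against $(R-t-r)^{\ga-1}dr$ to propagate the weighted energies of $w$, $\Om_{ij}w$ and $\nabla w$ (with weights $(R-t-|x|)^{\ga}$ and $(R-t-|x|)^{2+\ga}$) to the time slice, gets the weighted $L^2$ bound on $w$ itself from the energy flux through backward light cones, and then applies the weighted Sobolev inequality of Lemma \ref{lem:weightedSob:com} on the ball of radius $R_0=R-t$, whose scaling factor $R_0^{-1-\ga}$ yields the decay rate directly and with a constant independent of $R$. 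If you wish to salvage your route, you would need to prove a combined, $x$-dependent estimate on the product of the two sphere integrals rather than the stated trace inequality.
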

We remark that the solution may { blow} up at the vertex $(R, 0)$ of the cone $\mathcal{J}^{+}(\B_R)$. The linear estimate will be used to control the linear part of the full solution $(\phi, F)$ of the YMH system. Its proof relies on the following weighted versions of Hardy's inequality and Sobolev inequality.
\begin{Lem}
\label{lem:Hardy:com}
Given $0<\ga<1$ and $\ep>0$, we have the following type of Hardy's inequality: there exists a constant $C$ depending only on $\ep$ and $\ga$ so that for all smooth function $\varphi$ defined on $\B_{R_0}$, we have
\begin{equation*}\label{eq:Hardy:com}
\int_{\B_{R_0}}|\varphi|^2 (R_0-|x|)^{-1+\ep}dx\leq C R_0^{\ep -\ga+1} \int_{\B_{R_0}}(|\varphi|^2R_0^{-2}+|\nabla \varphi|^2)(R_0-|x|)^\ga dx.
\end{equation*}
\end{Lem}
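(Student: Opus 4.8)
The plan is to reduce the claimed inequality to a one-dimensional weighted Hardy inequality in the radial variable. Writing $x=r\om$ in polar coordinates, it suffices (since $|\nabla\varphi|^2\geq|\partial_r\varphi|^2$) to control, for each fixed direction $\om$, the radial profile $f(r)=\varphi(r\om)$ and then integrate over the sphere. First I would split the ball into the inner region $\{r\leq R_0/2\}$ and the outer region $\{R_0/2\leq r<R_0\}$. On the inner region the weight $(R_0-|x|)^{-1+\ep}$ is comparable to $R_0^{-1+\ep}$ and $(R_0-|x|)^{\ga}$ is comparable to $R_0^{\ga}$, so the contribution is bounded by $R_0^{-1+\ep}\int_{r\leq R_0/2}|\varphi|^2\,dx$, which matches $R_0^{\ep-\ga+1}R_0^{-2}\int(R_0-|x|)^{\ga}|\varphi|^2\,dx$ and is therefore absorbed by the zeroth order term on the right.

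The analysis of the outer region is the heart of the matter. There $r^2$ is comparable to $R_0^2$ and may be factored out; after the substitution $s=R_0-r$ and writing $h(s)=f(R_0-s)$, the problem becomes the one-dimensional estimate on $[0,L]$ with $L=R_0/2$:
\[
\int_0^{L}|h|^2 s^{-1+\ep}\,ds\les L^{\ep-\ga+1}\int_0^{L}\big(L^{-2}|h|^2+|h'|^2\big)s^{\ga}\,ds.
\]
To prove this I would write $s^{-1+\ep}=\ep^{-1}\tfrac{d}{ds}(s^{\ep})$ and integrate by parts. The boundary term at $s=0$ vanishes because $h$ is smooth and $s^{\ep}\to0$, while the interior boundary term $\ep^{-1}|h(L)|^2 L^{\ep}$ is estimated by an averaged fundamental theorem of calculus, $|h(L)|^2\les L^{-1}\int_{L/2}^L|h|^2\,ds+L\int_{L/2}^L|h'|^2\,ds$, whose two pieces are exactly of the size $L^{\ep-\ga+1}$ times the zeroth order and derivative terms on the right.

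The remaining bulk term, whose integrand is controlled by $2\ep^{-1}|h|\,|h'|\,s^{\ep}$, is handled by Cauchy-Schwarz after splitting $s^{\ep}=s^{(-1+\ep)/2}s^{(1+\ep)/2}$, which produces $\big(\int_0^L|h|^2 s^{-1+\ep}\,ds\big)^{1/2}\big(\int_0^L|h'|^2 s^{1+\ep}\,ds\big)^{1/2}$. The crucial and only delicate point is the mismatch between the weight $s^{1+\ep}$ appearing here and the weight $s^{\ga}$ available on the right: since $0<\ga<1<1+\ep$, on the finite interval one has $s^{1+\ep}\leq L^{1+\ep-\ga}s^{\ga}$, and it is precisely this comparison that generates the factor $L^{1+\ep-\ga}=L^{\ep-\ga+1}$, equivalently $R_0^{\ep-\ga+1}$. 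A Young's inequality then absorbs the resulting $\tfrac12\int_0^L|h|^2 s^{-1+\ep}\,ds$ back into the left-hand side, leaving the claimed one-dimensional bound. Multiplying by $R_0^2\sim r^2$, integrating over the sphere, and adding the inner-region estimate yields the proposition.

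I expect the main obstacle to be purely bookkeeping: keeping track of the powers of $R_0$ so that the length factor from $s^{1+\ep}\leq L^{1+\ep-\ga}s^{\ga}$ and the one from the endpoint term combine to exactly $R_0^{\ep-\ga+1}$, and verifying that $\varphi$ need not vanish on $\partial\B_{R_0}$ is compensated precisely by the zeroth order term $R_0^{-2}|\varphi|^2(R_0-|x|)^{\ga}$ on the right. The hypothesis $\ep>0$ enters only through the integrability of $s^{-1+\ep}$ near $s=0$ and the factor $\ep^{-1}$, while $\ga<1$ enters only through the inequality $1+\ep>\ga$.
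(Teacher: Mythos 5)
Your proof is correct and is essentially the paper's argument: a radial integration by parts of the singular weight against the derivative term, absorption of the resulting half of the left-hand side via Young's inequality, the weight comparison made possible precisely because $\ga<1+\ep$, and a trivial bound on the region where the weight is non-degenerate. The only cosmetic differences are that the paper first rescales to $R_0=1$ and runs a single identity $\frac{d}{dr}\big[(1-r)^{\ep}r^{4}|\varphi|^{2}\big]$ from the origin, the $r^{4}$ factor eliminating any interior boundary term, whereas you localize to the annulus $\{R_0/2\leq |x|<R_0\}$ and control the boundary term at $r=R_0/2$ with an averaged fundamental theorem of calculus.
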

\begin{proof}
We may assume $\varphi(x)=\varphi_1\left(\frac{x}{R_0}\right)$ and this reduces the problem to the case when $R_0=1$. Moreover, since the inequality is linear in $\varphi$, we may assume that
\[
\int_{\B_{1}}(|\varphi|^2+|\nabla \varphi|^2)(1-|x|)^\ga dx=1.
\]
We consider the following inequality
\begin{align*}
&\int_{\om}|\varphi|^2(r\om)(1-r)^{\ep}r^4 d\om +\ep\int_{0}^{r}\int_{\om}|\varphi|^2 (1-s)^{\ep-1}s^4 dsd\om \\
=&\int_{0}^{r}\int_{\om}|\varphi|^2 4(1-s)^{\ep}s^3+2 \varphi \cdot \pa_r\varphi (1-s)^{\ep}s^4  dsd\om\\
\leq& \int_{0}^{r}\int_{\om} 64\ep^{-1}|\varphi|^2 (1-s)^{\ga}s^2+16\ep^{-1}|\nabla\varphi|^2(1-s)^\ga s^2+\f12 \ep |\varphi|^2(1-s)^{2\ep-\ga}s^4   dsd\om.
\end{align*}
In view of the fact that $\ga<1$ and $\ep>0$, we set $r=1$ to derive
\begin{align*}
\ep\int_{0}^{1}\int_{\om}|\varphi|^2 (1-s)^{\ep-1}s^4 dsd\om \leq 160\ep^{-1}.
\end{align*}
On the other hand, we have
\begin{align*}
\int_{|x|\leq \f12}|\varphi|^2(1-|x|)^{\ep-1}dx \leq 2^{1+\ga-\ep}\int_{|x|\leq \f12}|\varphi|^2(1-|x|)^{\ga} dx \leq 2^{1+\ga-\ep}.
\end{align*}
Combining the above two inequalities, we conclude that
\begin{align*}
\int_{|x|\leq 1}|\varphi|^2(1-|x|)^{\ep-1}dx\leq 2^{1+\ga-\ep}+640\ep^{-2}.
\end{align*}
This proves the lemma.
\end{proof}
\begin{Lem}
\label{lem:weightedSob:com}
Given $0<\ga<1$ and $\ep>0$, we have the following type of Sobolev inequality: there exists a constant $C$ depending only on $\ga$ so that for all $R_0>0$, for all smooth function $\varphi$ defined on the ball $\B_{R_0}$ and for all  $x$ such that $|x|\leq R_0$, we have
\begin{equation}
\label{eq:weighteSob:com}
|\varphi|^2(x)\leq C R_0^{-1-\ga}\left(\mathcal{E}_{ \ga}[\nabla\varphi](\B_{R_0})+\int_{\B_{R_0}}|\varphi|^2(R_0-|x|)^{\ga}R_0^{-2}dx\right).
\end{equation}
 
\end{Lem}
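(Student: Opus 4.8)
The plan is to reduce to the case $R_0=1$ by scaling, to split $\B_1$ into an interior region treated by the ordinary Sobolev embedding and a boundary layer, and to handle the boundary layer by an anisotropic weighted estimate that balances the radial against the angular second derivatives and invokes the Hardy inequality of Lemma \ref{lem:Hardy:com}.

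First I would normalise $R_0=1$: writing $\varphi(x)=\varphi_1(x/R_0)$, one checks that every term on the right-hand side of \eqref{eq:weighteSob:com} is homogeneous of degree $R_0^{1+\ga}$ under this rescaling (the rotation fields $\Omega_{ij}$ are scale invariant and the weights $(R_0-|x|)$ transform compatibly), while the left-hand side $|\varphi|^2(x)$ is scale invariant; this is precisely why the factor $R_0^{-1-\ga}$ appears, and it reduces the statement to proving $|\varphi(x)|^2\lesssim\mathcal N$ on $\B_1$, where $\mathcal N:=\int_{\B_1}\big(|\varphi|^2+|\nabla\varphi|^2+\sum_{ij}|\Omega_{ij}\nabla\varphi|^2\big)(1-|x|)^\ga+|\nabla^2\varphi|^2(1-|x|)^{2+\ga}\,dx$. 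On the interior region $|x|\le 1/2$ both weights $(1-|x|)^\ga$ and $(1-|x|)^{2+\ga}$ are bounded above and below, so $\varphi,\nabla\varphi,\nabla^2\varphi\in L^2(\B_{3/4})$ with norms controlled by $\mathcal N$, and $H^2(\B_{3/4})\hookrightarrow C^0(\B_{1/2})$ gives the bound there.

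The substance is the boundary layer $1/2<|x|<1$. For a point $x_*=r_*\omega_*$ I would localise to the spherical shell of radial width $d:=1-r_*$ about $r_*$, on which $(1-|x|)\sim d$, and combine a two-dimensional Sobolev inequality on each sphere with a one-dimensional Sobolev inequality in $r$. The two angular derivatives needed for the sphere embedding are supplied by $\Omega_{ij}\nabla\varphi$ (carrying the good weight $(1-|x|)^\ga$), while the radial regularity is supplied by $\nabla^2\varphi$ (carrying the weight $(1-|x|)^{2+\ga}$); the two weights are chosen precisely so that, after rescaling the shell to unit radial size, the radial and angular contributions enter the anisotropic embedding with matching powers of $d$. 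The zeroth-order contribution, which carries the worst power of $d$, would then be absorbed using the Hardy inequality of Lemma \ref{lem:Hardy:com} with exponent $-1+\ep$, which trades the $L^2$ mass of $\varphi$ against the weighted $L^2$ mass of $\varphi$ and $\nabla\varphi$.

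I expect the main obstacle to be exactly this boundary-layer estimate, for two linked reasons. First, the two-dimensional embedding $H^2(S^2)\hookrightarrow L^\infty(S^2)$ is borderline, so the angular regularity must be used sharply and one cannot afford to replace the top angular derivative $\Omega^2\varphi$ by the radial second derivative with its degenerate weight. Second, one cannot transport the top-order angular quantity $\int_{S^2}|\Omega^2\varphi|^2$ to the radius $r_*$ by a radial fundamental-theorem-of-calculus argument, since $\partial_r\Omega^2\varphi=\Omega^2\partial_r\varphi$ would require a third derivative $\Omega^2\nabla\varphi$ that is absent from $\mathcal N$. This forces the genuinely anisotropic treatment described above, in which $\partial_r$ is only ever applied to quantities carrying at most one rotation field (so that the resulting terms are components of $\Omega_{ij}\nabla\varphi$), while the second radial derivative is taken from the $\nabla^2\varphi$ term. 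Tracking the powers of $d$ through this embedding and checking that they cancel, so that the bound stays uniform as $|x_*|\to 1$, is the delicate point; the precise splitting of the weight $\ga$ on $\varphi,\nabla\varphi,\Omega_{ij}\nabla\varphi$ versus $2+\ga$ on $\nabla^2\varphi$, together with the Hardy inequality for the zeroth-order term, is what makes this cancellation work.
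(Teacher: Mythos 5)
Your scaffolding (reduction to $R_0=1$ by scaling, interior bound on $\{|x|\leq\f12\}$ by the standard $H^2$ embedding, Hardy's inequality from Lemma \ref{lem:Hardy:com} for the zeroth-order term) coincides with the paper's, but the core of your boundary-layer argument has a genuine gap: the ``matching powers of $d$'' you rely on do not hold. Rescale the shell $\{1-|x|\sim d\}$, $d=1-|x_*|$, to unit radial width ($\pa_s=d\,\pa_r$). The norm $\mathcal N$ then gives $\|\Om^2\varphi\|_{L^2}^2\les d^{-1-\ga}$ for the pure-angular second derivative (weight $(1-|x|)^\ga$ on $\Om_{ij}\nabla\varphi$), but $\|\pa_s\Om\varphi\|_{L^2}^2\les d^{1-\ga}$ and $\|\pa_s^2\varphi\|_{L^2}^2\les d^{1-\ga}$ for the mixed and radial ones: the pure-angular term is worse by a factor $d^{-2}$, so the anisotropic embedding does not balance. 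Concretely, your scheme --- in which $\pa_r$ only ever hits quantities carrying at most one rotation --- yields, via one-dimensional Agmon in $r$, uniform control (up to $d^{-\ep}$ from Hardy) of the $H^1(S^2)$ norm of $\varphi$ on \emph{every} sphere, while $H^2(S^2)$ control of size $d^{-1-\ga}$ is available only on an averaged ``good'' sphere and, as you yourself observe, cannot be transported to the sphere through $x_*$ (that would require $\pa_r\Om^2\varphi$). But the borderline embedding is $H^1(S^2)\hookrightarrow L^\infty(S^2)$ --- not $H^2(S^2)\hookrightarrow L^\infty(S^2)$, which is subcritical, contrary to what you wrote --- and interpolating to $H^{1+\delta}(S^2)$ costs a factor $\|\varphi\|_{H^2}^{\delta}$, leaving an unremovable loss $d^{-\delta(1+\ga)}$ for every fixed $\delta>0$; since the Hardy constant blows up as $\ep\to0$, no choice of parameters closes the estimate. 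Thus the $L^2$-based (Hilbert-interpolation) anisotropic route you describe gets stuck exactly at the obstruction you identified, without resolving it.

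The paper escapes by leaving the $L^2$ scale on the spheres. It applies the radial fundamental theorem of calculus to the \emph{nonlinear} sphere quantities $\int_{\om}|\varphi|^4\,d\om$ and $\int_{\om}|\pa_\om\varphi|^p\,d\om$, which only ever consumes $\pa_r\varphi$ and $\pa_r\pa_\om\varphi$ at the good weight $(1-|x|)^{\ga}$ (so it respects your ``one rotation under $\pa_r$'' constraint and never needs a third derivative). Combined with the sphere inequality $\int_{\om}|\varphi|^6\les \int_{\om}(|\varphi|^2+|\pa_\om\varphi|^2)\cdot\int_{\om}|\varphi|^4$ and the Hardy bound with exponent $\ep-1$, $\ep=1-\ga$, this gives a \emph{uniform} bound $\int_{\om}|\varphi|^4\,d\om\les 1$ on all spheres $\f12\leq r\leq 1$, and a mildly growing bound $\int_{\om}|\pa_\om\varphi|^4\,d\om\les (1-r)^{-2\ga}$; interpolating the latter against the Hardy-improved $L^2$ decay of $\pa_\om\varphi$ then yields a uniform $\int_{\om}|\pa_\om\varphi|^{2+\ep'}\,d\om\les 1$ with $\ep'=\frac{1-\ga}{3}$. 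The pointwise bound follows from $W^{1,p}(S^2)\hookrightarrow L^\infty(S^2)$ for $p=2+\ep'>2$, which requires only \emph{one} angular derivative on the fixed sphere and thereby circumvents both the borderline $H^1(S^2)$ embedding and the non-transportable $\Om^2\varphi$ term. Repairing your proposal essentially forces you onto this $L^p$ mechanism (or an equivalent substitute); the purely $L^2$ anisotropic balancing cannot work as stated.
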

\begin{proof}
We may assume $\varphi(x)=\varphi_1\left(\frac{x}{R_0}\right)$ and this reduces the problem to the case when $R_0=1$. It thus suffices to prove the lemma for $R_0=1$. Moreover, since the inequality is linear in $\varphi$, we may renormalize the right hand side to be $1$:
\[
\int_{\B_{1}}(|\varphi|^2+|\nabla\varphi|^2+|\nabla\Om_{ij}\varphi|^2)(1-|x|)^{\ga}+|\nabla^2\varphi|^2(1-|x|)^{2+\ga}dx=1.
\]
First consider the case where $|x|$ is small. We assume that $|x|\leq \frac{1}{2}$. The standard Sobolev inequality gives:
\begin{align*}
|\varphi|^2(x)\leq C \|\varphi\|_{H^2(\B_{\f12})}^2\leq 2^{2+\ga}C,
\end{align*}
where $C$ is a universal constant from the embedding inequality. In the rest of the proof, we will use the constant $C$ to denote a universal constant but it may vary in different places.

If $|x|$ is large, i.e., $|x|\geqslant \frac{1}{2}$, we will take advantage of the angular derivative $\Omega_{ij}$. According to Lemma \ref{lem:Hardy:com}, we have better decay for $\varphi$ and $\pa_\om\varphi$:
\begin{equation}
\label{eq:bd4varphiom}
\int_{|x|\leq 1}(|\varphi|^2+|\pa_\om\varphi|^2)(1-|x|)^{\ep-1}dx\leq C_{\ep, \ga},
\end{equation}
where $C_{\ep, \ga}$ depending only on $\ep$ and $\ga$. The notation $\pa_\om$ denotes the derivatives of the angular momentums $\Om_{ij}$. We use the Poincar\'e inequality on the unit sphere to derive
\begin{align*}
\int_{\om}|\varphi|^6d\om \leq C\int_{\om}|\varphi|^2+|\pa_\om\varphi|^2d\om \cdot \int_{\om}|\varphi|^4d\om.
\end{align*}
For some fixed constant $r_0\in[\frac{1}{3}, \f12]$, for all $r_0\leq r \leq 1$, we integrate between $r_0$ and $r$ to derive
\begin{align*}
\int_{\om}|\varphi|^4(r\om)d\om&=\int_{\om}|\varphi|^4(r_0\om)d\om+4\int_{r_0}^{r}\pa_r\varphi \cdot \varphi^3 d\om dr\\
&\leq \int_{\om}|\varphi|^4(r_0\om)d\om+C\left(\int_{r_0}^{r}\int_{\om}|\pa_r\varphi|^2(1-|x|)^\ga drd\om \right)^\f12\left(\int_{r_0}^{r}\int_{\om}|\varphi|^6(1-|x|)^{-\ga} drd\om \right)^\f12\\
&\leq \int_{\om}|\varphi|^4(r_0\om)d\om+C \sup\limits_{r_0\leq s\leq r}\left(\int_{\om}|\varphi|^4(s\om)d\om\right)^\f12 \left(\int_{r_0}^{r}\int_{\om}(|\varphi|^2+|\pa_\om \varphi|^2)(1-|x|)^{-\ga} drd\om \right)^\f12.
\end{align*}
We set $r_0=\f12$ in the above inequality and set $\ep=1-\ga$ in \eqref{eq:bd4varphiom}. Therefore, for $\f12 \leq r\leq 1$, we have
\[
\int_{\om}|\varphi|^4(r\om)d\om\leq C_\ga,\]
where the constant $C_\ga$ depends only on $\ga$ (coming from the bound of $\varphi$ on the sphere of radius $\f12$).

For $\pa_\om \varphi$, we do not have the same bound due to the lack of $\pa_\om \pa_\om \varphi$. By Sobolev inequality, we first have
\[
\left(\int_{\frac{1}{3}}^{\frac{1}{2}}\int_{\om}|\pa_\om \varphi|^4 r^2 dr d\om\right)^\f12 \leq C\int_{\frac{1}{3}}^{\f12}\int_{\om} |\pa_\om \varphi|^2+|\nabla\pa_\om \varphi|^2 r^2  drd\om\leq C.
\]
In particular, we can choose some $r_0\in[\frac{1}{3}, \frac{1}{2}]$ in such a way that
\[
\int_{\om}|\pa_\om\varphi|^4(r_0\om)d\om \leq C.
\]
We then replace $\varphi$ by $\pa_\om \varphi$ in the previous $L^4$ estimate. This leads to
\begin{align*}
\int_{\om}|\pa_\om\varphi|^4d\om &\leq C+\sup\limits_{r_0\leq s\leq r}\left(\int_{\om}|\pa_\om\varphi|^4(s\om)d\om\right)^\f12 \left(\int_{r_0}^{r}\int_{\om}(|\pa_\om\varphi|^2+|\nabla \pa_\om \varphi|^2)(1-|x|)^{-\ga} drd\om \right)^\f12\\
&\leq C+\sup\limits_{r_0\leq s\leq r}\left(\int_{\om}|\pa_\om\varphi|^4(s\om)d\om\right)^\f12 (1-r)^{-\ga}.
\end{align*}
Hence, for all $r_0\leq r\leq 1$, we have
\begin{align*}
\int_{\om}|\pa_\om \varphi|^4d\om \leq C (1-r)^{-2\ga}.
\end{align*}
For any $2\leq p\leq 4$, similar to the previous step, for $r\geq r_0$, we have
\begin{align*}
\int_{\om}|\pa_\om\varphi|^p(r\om) d\om\leq& \int_{\om}|\pa_\om \varphi|^p(r_0\om) d\om\\
&+C_p\left(\int_{r_0}^r \int_{\om}|\pa_r\pa_\om \varphi|^2(1-|x|)^{\ga} drd\om\right)^\f12\left(\int_{r_0}^{r}\int_{\om }|\pa_\om \varphi|^{2p-2}(1-|x|)^{-\ga}   drd\om \right)^\f12.
\end{align*}
We take $p=2+\ep$, $\ep=\frac{1-\ga}{3}$ and interpolate between $L^2$ and $L^4$. This yields
\begin{align*}
\int_{\om}|\pa_\om\varphi|^p(r\om) d\om &\leq C+C\left(\int_{r_0}^{r}\int_{\om }|\pa_\om \varphi|^{2}(1-|x|)^{\ep-1}   drd\om\right)^{\frac{1-\ep}{2}}\left(\int_{r_0}^{r}\int_{\om }|\pa_\om \varphi|^{4}(1-|x|)^{1+\ep}   drd\om\right)^{\frac{\ep}{2}}\\
&\leq C+C_\ga \left(\int_{r_0}^{r}(1-s)^{1+\ep-2\ga}ds\right)^{\f12\ep}\leq C_\ga,
\end{align*}
where we used $\ga<1$ and the $L^4$ bound on $\pa_\om \varphi$. Since $p=2+\ep>2$, this estimate together with the above uniform $L^4$ bound for $\varphi$ proves \eqref{eq:weighteSob:com}.
\end{proof}
\begin{Remark}
\label{remark:coWS}
The above lemma of weighted type Sobolev inequality holds if  $\varphi$ is a $V$-valued or $\lg$-valued function and $\nabla$ is replaced with
the covariant derivative $D$ associated to a connection $A$. This is because
\[|\pa |\varphi|^2 |=|\langle D\varphi, \varphi\rangle+\langle\varphi, D\varphi\rangle |\leq { 2}|\varphi| |D\varphi|.\]
Hence, $|\pa |\varphi||\leq |D\varphi|$. { Similarly $|X |\varphi||\leq |D_X\varphi|.$}
\end{Remark}
\begin{proof}[Proof of Proposition \ref{prop:lineardecay:comp}]
By linearity, we may assume
\[
\int_{|x|\leq R}(|w_1|^2  +\sum\limits_{i,j}|\Om_{ij}w_1|^2)(R-|x|)^\ga dx=1.
\]
We use $B_{(t_0,x_0)}(\widetilde{r})$ to denote the ball centered at $x_0$ with radius $\widetilde{r}$ in $\{t=t_0\}$. According to the energy estimates for the linear wave equation, we have
\begin{align*}
\int_{B_{(t, 0)}(r)}|\pa w|^2 dx+\int_{r}^{t+r}\int_{\om}|(\pa_u, \nabb) w|^2(t+r-s, r\om) s^2 ds d \om\leq \int_{B_{(0, 0)}(t+r)}|\pa w|^2 dx,
\end{align*}
where $\pa$ denotes for the full derivatives $(\pa_t, \nabla)=(\pa_v,\pa_u, \nabb)$. We multiply both sides of the above inequality by $(R-t-r)^{\ga-1}$ and integrate $r$ from $0$ to $R-t$. This gives
\begin{align*}
\int_{B_{(t, 0)}(R-t)}|\pa w|^2(R-t-|x|)^\ga dx&=\ga \int_{0}^{R-t}(R-t-r)^{\ga-1}\int_{B_{(t, 0)}(r)}|\pa w|^2 dx dr\\
&\leq \ga\int_{0}^{R-t}(R-t-r)^{\ga-1} \int_{s\leq r+t}\int_{\om }|w_1|^2 s^2 dsd\om \\
&=\int_{\B_{R}}|w_1|^2(R-\max\{|x|, t\})^{\ga}dx\leq 1.
\end{align*}
We commute the $\Om_{ij}$ with \eqref{eq:lineareq:com} and we repeat the above process to derive
\[
\int_{B_{(t, 0)}(R-t)}|\pa\Om_{ij} w|^2(R-t-|x|)^\ga dx\leq 1.
\]
Similarly, by commuting $\nabla$ with \eqref{eq:lineareq:com} , we have
\begin{align*}
&\int_{B_{(t, 0)}(R-t)}|\pa \nabla w |^2(R-t-|x|)^{2+\ga}dx \leq \int_{\B_{R}}|\nabla w_1|^2(R-\max\{|x|, t\})^{\ga+2}dx\leq 1.
\end{align*}
To apply Lemma \ref{lem:weightedSob:com} to bound $w(t, x)$ pointwise, it remains to derive the weighted $L^2$ estimate for the solution $w$. It relies on the energy flux through the cone $\mathcal{N}^{-}(t+r, 0)$. Indeed, since $w$ vanishes on $\B_R$, we have
\begin{align*}
\int_{\om}|w|^2(t, r\om) d\om &\leq \int_{\om}\left(\int_{r}^{r+t}|\pa_u w|(t+r-s, s\om)ds\right)^2d\om\\
&\leq \int_{\om} \int_{r}^{r+t}|\pa_u w|^2(t+r-s, s\om)s^2 \cdot r^{-1} ds  d\om\leq r^{-1} \int_{B_{(0, 0)}(t+r)}|w_1|^2dx.
\end{align*}
 We multiply both sides of the above inequality by $(R-t-r)^{\ga-1}$ to derive
\begin{align*}
\int_{0}^{R-t}\int_{\om}|w|^2(t, r\om)(R-t-r)^{\ga-1} rd\om dr \leq \int_{0}^{R-t}(R-t-r)^{\ga-1}\int_{|x|\leq r+t}|w_1|^2 dx dr\leq \ga^{-1}.
\end{align*}
In particular we have
\begin{align*}
\int_{B_{(t, 0)}(R-t)}|w|^2(R-t-r)^{\ga}dx\leq (R-t)^2 \int_{0}^{R-t}\int_{\om}|w|^2(R-t-r)^{\ga-1} rd\om dr\leq \ga^{-1}(R-t)^2.
\end{align*}
Therefore, \eqref{eq:lineardecay:comp} follows immediately from Lemma \ref{lem:weightedSob:com}. This completes the proof of Proposition \ref{prop:lineardecay:comp}.
\end{proof}

\section{The pointwise estimates for the solutions}\label{sec:5}
We will apply the weighted energy estimates from Section \ref{sec:weightedEE} to prove Theorem \ref{thm:EM} in this section. We need to derive wave type equations for $D\phi$ and $F$. For the Higgs field $D\phi$, we simply commute the covariant wave equation with $D$. For the Yang-Mills field $F$, we use the Bianchi identity $D_[\ga F_{\mu\nu]}=0$ together with \eqref{eq:YMH} to derive
\begin{align*}
  D^\ga D_{\ga}F_{\mu\nu}&=-D^{\ga}D_{\mu}F_{\nu\ga}-D^\ga D_{\nu}F_{\ga\mu}=-[D^\ga, D_\mu]F_{\nu\ga}-D_{\mu}D^\ga F_{\nu\ga}-
  [D^\ga, D_\nu]F_{\ga\mu}-D_{\nu}D^\ga F_{\ga\mu}\\
   &=-[F^\ga_{\ \ \mu}, F_{\nu\ga}]-[F^\ga_{\ \ \nu}, F_{\ga\mu}]-D_\nu J[\phi]_{\mu} +D_\mu J[\phi]_{\nu}=2[F_{\mu\ga}, F_{\nu}^{\ \ga}] +D_\mu J[\phi]_{\nu}-D_\nu J[\phi]_{\mu},
\end{align*}
where $J[\phi]_{\mu}=\langle \cT\phi, D_\mu \phi \rangle +\langle  D_\mu\phi, \cT\phi \rangle$. We conclude that
\begin{equation} \label{eq:EQDphiF}
\begin{cases}
\Box_A D\phi=[\Box_A, D]\phi,\\
\Box_A F_{\mu\nu}=2[F_{\mu\ga}, F_{\nu}^{\ \ga}] +D_\mu J[\phi]_{\nu}-D_\nu J[\phi]_{\mu},
\end{cases}
\end{equation}
where $\Box_A=D^\mu D_\mu$. This system is covariant and is independent of the choice of the coordinates system.

The initial data may not be uniformly bounded in $H^2$. This prevents one using the celebrated bilinear estimates of Klainerman-Machedon (see \cite{MKGkl}) to control the nonlinearities. Instead, we rely on a refinement of the approach developed in \cite{Moncrief1} and \cite{Moncrief2} by Eardley and Moncrief. However, the choice of the Cr\"{o}nstrom gauge  in \cite{Moncrief1} and \cite{Moncrief2} may cause a loss of regularity. We instead use the Kirchoff-Sobolev paramatrix for the covariant wave equation introduced by Klainerman and Rodnianski in \cite{Kl:paramatrix}.

We first control the linear evolution with data bounded in the norm $\mathcal{E}_{2, \ga}^{R}$ (see \eqref{def:E_2 gamma R} for the definition).
\begin{Lem}
\label{lem:decay:lin:com}
Let $(\phi, F)$ be a solution to the YMH system on $\mathcal{J}^+(\B_R)$ so that $\mathcal{E}_{2, \ga}^{R}$ is finite. There exists a constant $C$ depending only on $R$, $\ga$ and $\ep$, so that for all $(t_0, x_0)\in\mathcal{J}^+(\B_R)$, we have the following bounds on the linear evolution:
\begin{align}
\label{eq:bd4phi:lin:com}
\int_{\widetilde{\om}}t_0|D\phi|(0, x_0+t_0\widetilde{\om})+|\phi|(0, x_0+t_0\widetilde{\om}) d\widetilde{\om}&\leq C(R-t_0)^{-\frac{1+\ga}{2}}\sqrt{\mathcal{E}_{1, \ga}^{R}(1+\mathcal{E}_{0, \ga}^R)},\\
\label{eq:bd4DphiF:lin:com}
\sum\limits_{k\leq 1}t_0^{k}\int_{\widetilde{\om}}(|D^{k+1}\phi|+ |D^k F|)(0, x_0+t_0\widetilde{\om})d\widetilde{\om} &\leq C { \sqrt{\mathcal{E}_{2, \ga}^R }(1+\mathcal{E}_{0, \ga}^R) }(R-t_0-r_0)^{-\frac{1+\ga}{2}-\ep}(R-t_0)^{-1+\ep}.
\end{align}
\end{Lem}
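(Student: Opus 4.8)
The two bounds are weighted trace estimates for the Cauchy data on $\B_R$: the point $y=x_0+t_0\widetilde{\om}$ ranges over the sphere $\mathcal{N}^{-}(q)\cap\{t=0\}=S_{(0,x_0)}(t_0)\subset\B_R$, so the Yang--Mills--Higgs evolution plays no role and only the weighted Sobolev structure of the data enters (with the missing time derivatives recovered from the equation). The plan is to split the estimate according to the order of the field. For the terms carrying no factor of $t_0$ — namely $|F|$ and $|D\phi|$, the $k=0$ part of \eqref{eq:bd4DphiF:lin:com} and the second term of \eqref{eq:bd4phi:lin:com} — I would use a pointwise bound; for the terms carrying $t_0$, namely $|DF|$ and $|D^2\phi|$, a pointwise bound would cost one derivative more than $\mathcal{E}_{2,\ga}^R$ provides, so I would instead bound the angular $L^2$ integral by a radial trace inequality, the factor $t_0$ compensating for the trace loss.

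For $\varphi\in\{F,D\phi\}$ I would apply the weighted Sobolev inequality Lemma \ref{lem:weightedSob:com}, in its covariant form (Remark \ref{remark:coWS}), on the ball $B_y(\tfrac12(R-|y|))$, on which $(R-|x|)$ is comparable to $R-|y|$; scaling to radius $\simeq R-|y|$ gives
\[
|\varphi|(y)\ \lesssim\ (R-|y|)^{-\frac{3+\ga}{2}}\Big(\int_{B_y}\big(|\varphi|^2(R-|x|)^{\ga}+|D\varphi|^2(R-|x|)^{\ga+2}+|D^2\varphi|^2(R-|x|)^{\ga+4}\big)dx\Big)^{\frac12},
\]
a local piece of $\mathcal{E}_{2,\ga}^R$. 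The passage from the covariant to the scalar Sobolev norm is harmless at first order because $|\nabla|\varphi||\le|D\varphi|$; at second order one commutes covariant derivatives and picks up curvature terms $\sim F\cdot\varphi$, whose weighted $L^2$ norm is controlled by $\mathcal{E}_{0,\ga}^R$-type quantities, and this is the source of the factor $(1+\mathcal{E}_{0,\ga}^R)$.

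The heart of the matter is the angular integration of $(R-|y|)^{-\frac{3+\ga}{2}}$. On $S_{(0,x_0)}(t_0)$ one has $R-|y|\ge R-t_0-r_0$, with equality only at the far cap $\widetilde{\om}\parallel x_0$, where $R-|y|\simeq(R-t_0-r_0)+c\,\tau^2$ with $\tau$ the angle to the axis, while $R-|y|\simeq R-t_0$ away from the cap. Integrating and splitting the singularity with an $\ep$ of H\"older room would then yield the product $(R-t_0-r_0)^{-\frac{1+\ga}{2}-\ep}(R-t_0)^{-1+\ep}$ of \eqref{eq:bd4DphiF:lin:com} (the exponents summing to $-\frac{3+\ga}{2}$, as they must), after the angular integral of the local energies is returned to the global $\mathcal{E}_{2,\ga}^R$ by Fubini. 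For the $t_0$-terms $|DF|,|D^2\phi|$ I would run the radial trace $t_0^2\int_{\widetilde{\om}}|\psi|^2 d\widetilde{\om}\lesssim\int_{B_{(0,x_0)}(t_0)}(\widetilde{r}^{-1}|\psi|^2+|\partial_{\widetilde{r}}\psi||\psi|)d\widetilde{x}$ and control the right side by the summands of $\mathcal{E}_{2,\ga}^R$ with the same far-cap/$\ep$ distribution, so that after Cauchy--Schwarz on the sphere the factor $t_0$ is absorbed.

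The undifferentiated Higgs contribution $\int_{\widetilde{\om}}|\phi|\,d\widetilde{\om}$ in \eqref{eq:bd4phi:lin:com} I would treat separately, using the uniform sphere bounds of Section \ref{sec:weightedEE}: Proposition \ref{prop:bd4phi:need} with $\widetilde{r}=t_0$ (together with Proposition \ref{prop:bd4phi:phir}) gives favorable $t_0$-decay near the vertex, while for small spheres, where these degenerate, the weighted Sobolev bound near the interior point $x_0$ takes over; matching the two regimes produces the clean weight $(R-t_0)^{-\frac{1+\ga}{2}}$ of \eqref{eq:bd4phi:lin:com}. I expect the principal obstacle to be exactly the simultaneous handling of the two degenerate weights: extracting the split $(R-t_0-r_0)^{-\frac{1+\ga}{2}-\ep}(R-t_0)^{-1+\ep}$ with only an $\ep$-loss and with a constant independent of the center $x_0$, while carrying out the second-order covariant-to-scalar reduction without breaking gauge invariance — the latter being where the $(1+\mathcal{E}_{0,\ga}^R)$ dependence is produced.
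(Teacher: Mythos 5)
Your proposal misses the mechanism the paper actually uses, and the route you substitute fails quantitatively. The paper never bounds $|D\phi|$, $|DF|$, $|DD\phi|$ pointwise and then integrates over the sphere: it observes that the spherical means in question are \emph{exactly} values of auxiliary free waves, $t_0\int_{\widetilde{\om}}\psi(0,x_0+t_0\widetilde{\om})\,d\widetilde{\om}=4\pi\,w(t_0,x_0)$ for $\Box w=0$, $w(0,\cdot)=0$, $\pa_t w(0,\cdot)=\psi$, with $\psi=|D\phi|$ for \eqref{eq:bd4phi:lin:com} and $\psi=(R-|x|)^{\frac{1+\ga}{2}+\ep}|DF|$ (resp.\ $(R-|x|)^{\frac{1+\ga}{2}+\ep}|DD\phi|$) for \eqref{eq:bd4DphiF:lin:com}, and then invokes Proposition \ref{prop:lineardecay:comp}. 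Putting the weight $(R-|x|)^{\frac{1+\ga}{2}+\ep}$ \emph{into the data} means the auxiliary wave has weighted energy with exponent $1-2\ep$, so Proposition \ref{prop:lineardecay:comp} returns $(R-t_0)^{-\frac{1+(1-2\ep)}{2}}=(R-t_0)^{-1+\ep}$ uniformly in $x_0$, and the weight is then pulled off the cone using $R-|x_0+t_0\widetilde{\om}|\geq R-t_0-r_0$; that is the entire source of the split $(R-t_0-r_0)^{-\frac{1+\ga}{2}-\ep}(R-t_0)^{-1+\ep}$. Only the zeroth-order terms $|F|,|D\phi|$ are handled pointwise, and there via the \emph{global} Lemma \ref{lem:weightedSob:com} applied to $(R-|x|)^{\frac{1+\ga}{2}+\ep}F$, which gives $|F|(0,x)\lesssim\sqrt{\mathcal{E}_{2,\ga}^R}\,(R-|x|)^{-\frac{1+\ga}{2}-\ep}$ — crucially better than your local-ball bound. (What you get right: this is purely a data estimate, $|\phi|$ is uniformly bounded by the covariant Sobolev inequality of Remark \ref{remark:coWS}, and the factor $(1+\mathcal{E}_{0,\ga}^R)$ does come from the commutators $[D_{\Om_{ij}},D]\sim F$.)

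Concretely, three steps of your plan would fail. First, your scaled Sobolev on $B_y(\frac12(R-|y|))$ yields the exponent $-\frac{3+\ga}{2}$, a loss of $1-\ep$ against the paper's $-\frac{1+\ga}{2}-\ep$: near the boundary the angular directions of $\B_R$ have size $\sim R$, not $\sim R-|y|$, and Lemma \ref{lem:weightedSob:com} exploits exactly this through the $\Om_{ij}$-derivatives and the Hardy inequality of Lemma \ref{lem:Hardy:com}; an isotropic ball discards it. Second, with the lossy exponent the angular integral does \emph{not} produce your claimed split: writing $|y|^2=r_0^2+t_0^2+2r_0t_0\tau$, one computes
\begin{align*}
\int_{\widetilde{\om}}(R-|y|)^{-\frac{3+\ga}{2}}d\widetilde{\om}\ \simeq\ \min\Big\{(R-t_0-r_0)^{-\frac{3+\ga}{2}},\ \tfrac{R}{r_0t_0}(R-t_0-r_0)^{-\frac{1+\ga}{2}}\Big\},
\end{align*}
and for $t_0$ small with $r_0$ close to $R$ this is $\simeq(R-r_0)^{-\frac{3+\ga}{2}}$, strictly worse than the target $(R-t_0-r_0)^{-\frac{1+\ga}{2}-\ep}(R-t_0)^{-1+\ep}\simeq(R-r_0)^{-\frac{1+\ga}{2}-\ep}$; matching the \emph{sum} of the exponents is not enough, the distribution between the two weights matters, and the same defect kills your radial-trace step, where balancing weights in the cross term $\int|D^2F||DF|$ forces the extraction of $\sup(R-|x|)^{-\ga-3}=(R-t_0-r_0)^{-\ga-3}$, again with no $(R-t_0)$-gain. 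Third, and decisively for \eqref{eq:bd4phi:lin:com}: that bound carries only $\mathcal{E}_{1,\ga}^R$, while any pointwise control of $|D\phi|$ costs two further derivatives, i.e.\ $\mathcal{E}_{2,\ga}^R$-level data; moreover its right-hand side $(R-t_0)^{-\frac{1+\ga}{2}}$ is uniform in $x_0$ with no $(R-t_0-r_0)$-singularity, which no integrated pointwise bound of the form $(R-|y|)^{-\sigma}$ can reproduce when $t_0+r_0$ is close to $R$. The $L^2$-flux proof of Proposition \ref{prop:lineardecay:comp}, fed through the exact Kirchhoff representation, is what supplies these uniform weights, and your proposal has no substitute for it.
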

\begin{proof}
For $|\phi(0, x_0+t_0\widetilde{\om})|$ and $r_0=|x_0|$, by the Remark \ref{remark:coWS} and the definition of $\mathcal{E}_{1, \ga}^R$, we have
\[
|\phi|(0, x)\les \sqrt{\mathcal{E}_{1, \ga}^R},
\]
where $|x|\leq R$ and the implicit constant depends only $\ga$, $0<\ep<10^{-2}(1-\ga)$ and $R$. This controls the second integrand in \eqref{eq:bd4phi:lin:com}. For the first { integral} involving $D\phi$, we use the linear wave equation on $\mathcal{J}^+(\B_R)$:
\[
\Box w=0,\quad w(0, x)=0, \quad \pa_t w(0, x)=|D\phi|(0, x).
\]
By the representation formula for free waves, we have
\begin{align*}
4\pi w(t_0, x_0)=t_0\int_{\widetilde{\om}}\pa_t w(0, x_0+t_0\widetilde{\om})d\om=t_0\int_{\widetilde{\om}}|D\phi|(0, x_0+t_0\widetilde{\om})d\widetilde{\om}.
\end{align*}
Therefore to prove \eqref{eq:bd4phi:lin:com}, it suffices to bound $w(t_0, x_0)$. In view of Proposition \ref{prop:lineardecay:comp}, it suffices to bound
$\mathcal{E}_{\ga}[\pa_t w(0, x)](\B_R)$ (see \eqref{def: E gamma w B R}). Indeed, since 
$$ |\pa |D\phi|| \leq  |DD\phi|,\ |\Om_{ij} |D\phi|| \leq  |D_{\Om_{ij}}D\phi|, $$
 we then have
\begin{align*}
\mathcal{E}_{\ga}[\pa_t w(0, x)](\B_R)&=\int_{\B_{R}}(|D \phi|^2+|\Om_{ij}|D\phi||^2)(R-|x|)^\ga+|\nabla |D\phi||^2 (R-|x|)^{2+\ga}dx\\
&\leq  \int_{\B_{R}}(|D \phi|^2+|D_{\Om_{ij}}D\phi|^2)(R-|x|)^\ga+|D D\phi|^2 (R-|x|)^{2+\ga}dx.
\end{align*}
By commuting $D_{\Om_{ij}}$ and $D$, we have $$ |D_{\Om_{ij}}D\phi|^2 \les |DD_{\Om_{ij}}\phi|^2+|F|^2|\phi|^2.$$
Then by the definition of $\mathcal{E}_{1, \ga}^R$ and the above bound for $|\phi(0, x)|$, we derive that
\begin{align*}
\mathcal{E}_{\ga}[\pa_t w(0, x)](\B_R)
&\les  \mathcal{E}_{1, \ga}^R+ \int_{\B_{R}}|F|^2|\phi|^2(R-|x|)^\ga dx\les \mathcal{E}_{1, \ga}^R+ \mathcal{E}_{1, \ga}^R\int_{\B_{R}}|F|^2 (R-|x|)^\ga dx \les \mathcal{E}_{1, \ga}^R(1+ \mathcal{E}_{0, \ga}^R).
\end{align*}
Therefore, Proposition \ref{prop:lineardecay:comp} and Lemma \ref{lem:weightedSob:com} imply that
\[
w(t_0, x_0)=\frac{t_0}{4\pi}\int_{\widetilde{\om}}|D\phi|(0, x_0+t_0\widetilde{\om})d\widetilde{\om}\les \sqrt{\mathcal{E}_{1, \ga}^R(1+\mathcal{E}_{0, \ga}^R)} (R-t_0)^{-\frac{1+\ga}{2}}.
\]
This finishes the estimate in \eqref{eq:bd4phi:lin:com}.

To prove \eqref{eq:bd4DphiF:lin:com}, we start with a pointwise bound for the initial Yang-Mills field $F(0, x)$.  For $|x|<R$, let
\[\varphi(x)=(R-|x|)^{\frac{1+\ga}{2}+\ep}F(0, x).\]
We have
\begin{align*}
&\int_{\B_R}(R-|x|)^{1-2\ep}(|\varphi|^2+|D\varphi|^2+|D D_{\Om_{ij}}\varphi|^2)+|DD\varphi|^2(R-|x|)^{3-2\ep} dx\\
\les& \sum\limits_{ l_1+l_2\leq 2}\sum\limits_{i, j}\int_{\B_{R}}|D^{l_1} D_{\Om_{ij}}^{l_2}F|^2(R-|x|)^{\ga+2l_1} dx+\mathcal{E}_{0, \ga}^R \les \mathcal{E}_{2, \ga}^R .
\end{align*}
Therefore, by Remark \ref{remark:coWS}, we derive that
\[
|F(0, x_0+t_0\widetilde{\om})|\les \sqrt{\mathcal{E}_{2, \ga}^R}(R-|x_0+t_0\widetilde{\om}|)^{-\frac{1+\ga}{2}-\ep}\les \sqrt{\mathcal{E}_{2,\ga}^R}(R-t_0-r_0)^{-\frac{1+\ga}{2}-\ep}.
\]
To bound the integral of $|D F|$, we apply Proposition \ref{prop:lineardecay:comp} to the following free wave equation:
\[
\Box \widetilde{w}=0,\ \widetilde{w}(0, x)=0, \ \pa_t \widetilde{w}(0, x)=(R-|x|)^{\frac{1+\ga}{2}+\ep} |D F(0, x)|,
\]
where  $|x|\leq R$. We consider weighted Sobolev norms for $F$ and we derive that
\begin{align*}
\mathcal{E}_{1-2\ep}[\pa_t \widetilde{w}(0, x)](\B_{R})&\les \sum\limits_{l_1+l_2\leq 1}\sum\limits_{i, j}\int_{|x|\leq R}|D^{l_1} D_{\Om_{ij}}^{l_2}D F|^2 (R-|x|)^{\ga+2l_1+2}dx\\
&\les \mathcal{E}_{2, \ga}^R+ \sum\limits_{i, j}\int_{|x|\leq R}|  [D_{\Om_{ij}},D] F|^2 (R-|x|)^{\ga+2}dx\\
&\les \mathcal{E}_{2, \ga}^R+ \mathcal{E}_{2, \ga}^R\sum\limits_{i, j}\int_{|x|\leq R}|  F|^2 (R-|x|)^{\ga}dx\les \mathcal{E}_{2, \ga}^R(1+\mathcal{E}_{0, \ga}^R).
\end{align*}
By using Proposition \ref{prop:lineardecay:comp} and the representation formula for $\widetilde{w}$, we derive that
\begin{align*}
t_0\int_{\widetilde{\om}}{ (R-|x_0+t_0\widetilde{\om}|)^{\frac{1+\ga}{2}+\ep}}|D F|(0, x_0+t_0\widetilde{\om})d\widetilde{\om} \les \sqrt{\mathcal{E}_{2, \ga}^R(1+\mathcal{E}_{0, \ga}^R)}(R-t_0)^{-1+\ep}.
\end{align*}
The Yang-Mills field part in \eqref{eq:bd4DphiF:lin:com} follows immediately as $|x|=|x_0+t_0\widetilde{\om}|\leq t_0+r_0$.

It remains to prove the Higgs field part  in \eqref{eq:bd4DphiF:lin:com}. It relies on the previous bound on the Yang-Mills field since commuting $D_{\Om_{ij}}$ with $D$ generates terms with the Yang-Mills field. We need the following auxiliary bound:
\begin{equation}
  \label{eq:bd:ID:DFphi}
  \int_{\B_{R}}|DF|^2|\phi|^2(R-|x|)^{\ga+2}dx\les \mathcal{E}_{2, \ga}^R(1+\mathcal{E}_{0, \ga}^R)^{2}.
\end{equation}
We split the integral in \eqref{eq:bd:ID:DFphi} into the two regions $\mathcal{B}_{\rm in}=\{|x|\leq \f12 R\}$ and $\mathcal{B}_{\rm out}=\{|x|\geq \f12 R\}$. For the inner region $\mathcal{B}_{\rm in}$, the weight $(R-|x|)$ has a lower bound and the estimate follows by the standard Sobolev inequality. More precisely, we have
\begin{align*}
  \int_{\mathcal{B}_{\rm in}}|DF|^2|\phi|^2(R-|x|)^{\ga+2}dx &\les \int_{\mathcal{B}_{\rm in}}|DF|^2|\phi|^2 dx
  \les \left(\int_{\mathcal{B}_{\rm in}}|D^2F|^2+|DF|^2 dx\right)\left( \int_{\mathcal{B}_{\rm in}}|D\phi|^2+|\phi|^2 dx\right)\\
  &\les \mathcal{E}_{2, \ga}^R \mathcal{E}_{0, \ga}^R.
\end{align*}
For the outer region $\mathcal{B}_{\rm out}$, \if0 we fix $r_0\in [\frac{1}{3}R, \f12 R]$ so that $
  \int_{\om}|\phi(0, r_0\om)|^2d\om\les \mathcal{E}_{0, \ga}^R$. For all $r\geq \f12 R$, we have
\begin{align*}
  \int_{\om}|\phi(0, r \om)|^2  d\om &\les  \int_{\om}|\phi(0, r_0 \om)|^2  d\om+\int_{r_0}^{r}\int_{\om} |D\phi| |\phi|  ds d\om\\
  &\les \mathcal{E}_{0, \ga}^R+\left(\int_{\B_{R}}   |\phi|^2(R-|x|)^{-\ga}  dx\right)^{\f12} \left(\int_{\B_R}|D\phi|^2(R-|x|)^{\ga}dx\right)^{\f12}\les \mathcal{E}_{0, \ga}^R,
\end{align*}
where we used Lemma \ref{lem:Hardy:com} in the last step. On the other hand,\fi repeating the argument to bound $\int_{\om}|\pa_{\om}\varphi|^p d\om$ in the proof of Lemma \ref{lem:weightedSob:com}, for all $r\geq \f12 R$, $p=2+\epsilon'$, $\epsilon'=\frac{1-\gamma}{3}$ we have
\begin{align*}
  \int_{\om}|\phi(0, r\om)|^pd\om \les (\mathcal{E}_{0, \ga}^R)^{p/2}.
\end{align*}
\if0Therefore, for $1\leq p\leq 2$ and $r\geq \f12 R$, we can interpolate between $L^2$ and $L^4$ to derive
\begin{align*}
  \int_{\om}|\phi(0, r\om)|^{2p}d\om \les (\mathcal{E}_{0, \ga}^R)^{p}(R-r)^{-({ 2p-2})\ga}.
\end{align*}
We fix a $ p\in (1,2)$ so that $\ep -p^{-1}({ 2p-2})\ga\geq 0$.\fi By  Sobolev inequality, we can bound that (here $q=\frac{p}{2}=1+\frac{1-\gamma}{6}$, $q'=\frac{q}{q-1}=1+\frac{6}{1-\gamma}$)
\begin{align*}
  &\int_{\mathcal{B}_{\rm out}} |DF|^2|\phi|^2(R-|x|)^{2+\ga}dx
  \les \int_{\f12R}^{R}s^2(R-s)^{2+\ga }(\int_{\om} |DF|^{2q'}d\om)^{\frac{1}{q'}} (\int_{\om} |\phi|^{2q} d\om )^{\frac{1}{q}}ds\\
 &\les \mathcal{E}_{0, \ga}^R \int_{\f12R}^{R}s^2(R-s)^{2+\ga}\int_{\om} |DF|^{2} +|D_{\Om}DF|^2d\om   ds\\
 &\les \mathcal{E}_{0, \ga}^R \int_{\B_{R}}(R-|x|)^{2+\ga }(|DF|^{2} +|D D_{\Om}F|^2+ |[D, D_{\Om}]F|^2)dx\les \mathcal{E}_{0, \ga}^R \mathcal{E}_{2, \ga}^R{ (1+\mathcal{E}_{0, \ga}^R)}.
\end{align*}
We have used the pointwise bound for $F$ in the last step to bound the commutator.

We turn to the Higgs field part in \eqref{eq:bd4DphiF:lin:com}. To bound $|D\phi(0, x)|$ pointwise,  for $|x|<R$, we define
\[\widetilde{\varphi}(x)=(R-|x|)^{\frac{1+\ga}{2}+\ep}D\phi(0, x).\]
In view of the bounds for $F(0, x)$ and \eqref{eq:bd:ID:DFphi}, we show that
\begin{align*}
&\int_{\B_R}(R-|x|)^{1-2\ep}(|\widetilde{\varphi}|^2+|D\widetilde{\varphi}|^2+|D D_{\Om_{ij}}\widetilde{\varphi}|^2)+|DD\widetilde{\varphi}|^2(R-|x|)^{3-2\ep} dx\\
\les& \sum\limits_{l\leq 1, l_1+l_2\leq 2}\sum\limits_{i, j}\int_{\B_{R}}|D^{l_1+1} D_{\Om_{ij}}^{l_2}\phi|^2(R-|x|)^{\ga+2l_1}+|D^{l}[D_{\Om_{ij}}, D]\phi|^2 (R-|x|)^{\ga+2l}dx+\mathcal{E}_{0, \ga}^R\\
\les& \mathcal{E}_{2, \ga}^R+\int_{\B_{R}}(|DF|^2|\phi|^2+|F|^2|D\phi|^2) (R-|x|)^{\ga+2}+|F|^2|\phi|^2 (R-|x|)^{\ga}dx \les \mathcal{E}_{2, \ga}^R(1+\mathcal{E}_{0, \ga}^R)^{2}.
\end{align*}
Similar to the case for $|F(0, x)|$, the pointwise estimate for $|D\phi(0, x)|$ follows by Lemma \ref{lem:weightedSob:com}.

Finally,  to bound $|DD\phi|$ in \eqref{eq:bd4DphiF:lin:com}, we use the following linear wave equation:
\[
\Box \widetilde{W}=0,\ \widetilde{W}(0, x)=0, \ \pa_t \widetilde{W}(0, x)=(R-|x|)^{\frac{1+\ga}{2}+\ep}|DD\phi(0, x)|, \quad |x|\leq R.
\]
We can proceed as before:
\begin{align*}
\mathcal{E}_{1-2\ep}[\pa_t \widetilde{W}(0, x)](\B_{R})&\les \sum\limits_{l_1+l_2\leq 1}\sum\limits_{i, j}\int_{|x|\leq R}|D^{l_1} D_{\Om_{ij}}^{l_2}DD\phi|^2(R-|x|)^{\ga+2l_1+2}dx\\
&\les \mathcal{E}_{2, \ga}^R+ \sum\limits_{i, j}\int_{|x|\leq R}( |[D_{\Om_{ij}}, D]D\phi|^2+| D [D_{\Om_{ij}}, D]\phi|^2)(R-|x|)^{\ga+ 2}dx\\
&\les \mathcal{E}_{2, \ga}^R+ \sum\limits_{i, j}\int_{|x|\leq R}( |F|^2|D\phi|^2+| D F|^2|\phi|^2+|F|^2|D\phi|^2)(R-|x|)^{\ga+ 2}dx\\
&\les \mathcal{E}_{2, \ga}^R(1+\mathcal{E}_{0, \ga}^R)^{2}.
\end{align*}
Once again, by  Proposition \ref{prop:lineardecay:comp} and the representation formula for $\widetilde{W}$, we derive that
\begin{align*}
t_0\int_{\widetilde{\om}}{ (R-|x_0+t_0\widetilde{\om}|)^{\frac{1+\ga}{2}+\ep}}|DD\phi(0, x_0+t_0\widetilde{\om})| d\widetilde{\om} \les { \sqrt{\mathcal{E}_{2, \ga}^R}(1+\mathcal{E}_{0, \ga}^R)}(R-t_0)^{-1+\ep}.
\end{align*}
This finishes the proof of \eqref{eq:bd4DphiF:lin:com} since $|x|=|x_0+t_0\widetilde{\om}|\leq t_0+r_0$.
\end{proof}

We also need the following Lemma (alternatively see Lemma 5.1 in \cite{yang:NLW:ptdecay:3D}).
\begin{Lem}
\label{lem:bd:vga}
Fix a point $(t_0, x_0)\in \mathcal{J}^+(\B_R)$. For all $0\leq \widetilde{r}\leq t_0$, $t=t_0-\widetilde{r}$, $r=|x_0+\widetilde{r}\widetilde{\om}|$ and $0<\ga <1$,
we have the following estimate:
 
\begin{equation*}
\int_{S_{(t, x_0)}(\widetilde{r})}(R-t-r)^{-\ga}d\widetilde{\om}\les (R-t)^{\ga}(R-t_0+r_0)^{-\ga} \widetilde{r}^{-\ga}.
\end{equation*}
\end{Lem}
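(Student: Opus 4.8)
The plan is to reduce the surface integral to a one–dimensional integral in the polar angle and then estimate it by an elementary but slightly delicate two–regime argument. Writing $x_0=r_0\om_0$ with $\om_0$ a unit vector and setting $\tau_0=\widetilde{\om}\cdot\om_0$, the point on the sphere is $x=x_0+\widetilde{r}\widetilde{\om}$, so that $r^2=|x|^2=r_0^2+\widetilde{r}^2+2r_0\widetilde{r}\tau_0$. Abbreviating $a:=R-t=R-t_0+\widetilde{r}$, the integrand $(R-t-r)^{-\ga}=(a-r)^{-\ga}$ depends on $\widetilde{\om}$ only through $\tau_0$, so integrating out the azimuthal angle gives
\[
\int_{S_{(t,x_0)}(\widetilde{r})}(R-t-r)^{-\ga}\,d\widetilde{\om}=2\pi\int_{-1}^{1}\big(a-r(\tau_0)\big)^{-\ga}\,d\tau_0 .
\]
Since $q\in\mathcal{J}^+(\B_R)$ gives $t_0+r_0\leq R$, we have $r\leq r_0+\widetilde{r}\leq R-t_0+\widetilde{r}=a$, so the integrand is well defined and positive.

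The key step is to replace $a-r$ by a quantity affine in $\tau_0$. Using $r\leq a$ I would bound $a-r\geq (a^2-r^2)/(2a)$ and compute the exact identity $a^2-r^2=bc+2\widetilde{r}(R-t_0-r_0\tau_0)$, where $b:=R-t_0-r_0\geq 0$ and $c:=R-t_0+r_0$, noting $c-b=2r_0$. After the change of variable $p=R-t_0-r_0\tau_0\in[b,c]$ the resulting integral is explicitly computable, and I obtain
\[
\int_{-1}^{1}(a-r)^{-\ga}\,d\tau_0\leq\frac{(2a)^{\ga}}{2(1-\ga)\,r_0\widetilde{r}}\Big[\big(c(b+2\widetilde{r})\big)^{1-\ga}-\big(b(c+2\widetilde{r})\big)^{1-\ga}\Big].
\]
Writing $Y_2=c(b+2\widetilde{r})$ and $Y_1=b(c+2\widetilde{r})$, one checks $Y_2\geq Y_1\geq 0$ and $Y_2-Y_1=2\widetilde{r}(c-b)=4r_0\widetilde{r}$, so the whole matter reduces to showing $\frac{1}{r_0\widetilde{r}}\big(Y_2^{1-\ga}-Y_1^{1-\ga}\big)\les c^{-\ga}\widetilde{r}^{-\ga}$, since this yields exactly the claimed bound $a^{\ga}c^{-\ga}\widetilde{r}^{-\ga}=(R-t)^{\ga}(R-t_0+r_0)^{-\ga}\widetilde{r}^{-\ga}$.

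For the difference $Y_2^{1-\ga}-Y_1^{1-\ga}$ I would use two complementary elementary bounds for the concave increasing function $g(y)=y^{1-\ga}$ with $g(0)=0$: the subadditivity bound $g(Y_2)-g(Y_1)\leq g(Y_2-Y_1)=(4r_0\widetilde{r})^{1-\ga}$, and the mean value bound $g(Y_2)-g(Y_1)\leq g'(Y_1)(Y_2-Y_1)=4(1-\ga)Y_1^{-\ga}r_0\widetilde{r}$. I then split according to the size of $r_0$ relative to $R-t_0$. If $r_0\geq\tfrac12(R-t_0)$ then $c\leq 3r_0$, and the subadditivity bound gives $(r_0\widetilde{r})^{-\ga}\les c^{-\ga}\widetilde{r}^{-\ga}$ as required. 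If $r_0\leq\tfrac12(R-t_0)$ then $b\geq\tfrac12(R-t_0)\geq\tfrac13 c$, so $Y_1=b(c+2\widetilde{r})\geq\tfrac13 c\cdot 2\widetilde{r}=\tfrac23 c\widetilde{r}$, and the mean value bound delivers the factor $c^{-\ga}\widetilde{r}^{-\ga}$. All implicit constants depend only on $\ga$ (through $2^{\ga}$, $3^{\ga}$, $4^{1-\ga}$ and $(1-\ga)^{-1}$).

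The only genuine obstacle is the apparent $1/r_0$ singularity created by the one–dimensional reduction: any single naive estimate of $Y_2^{1-\ga}-Y_1^{1-\ga}$ blows up either as $r_0\to 0$ or as $b=R-t_0-r_0\to 0$, the latter corresponding to $q$ approaching the vertex of the backward cone. The point of the two–regime argument is precisely that the subadditivity bound is harmless when $b$ is small while the mean value bound is harmless when $r_0$ is small, and the two regimes overlap at $r_0\sim R-t_0$, so the combined estimate is uniform. Finally, the degenerate case $r_0=0$ (where the substitution is invalid) is immediate, since then $r\equiv\widetilde{r}$ and $a-r\equiv R-t_0$ is constant, giving $\int_{S}(R-t-r)^{-\ga}\,d\widetilde{\om}=4\pi(R-t_0)^{-\ga}$, which already satisfies the claim because $R-t\geq\widetilde{r}$.
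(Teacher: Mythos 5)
Your proposal is correct and takes essentially the same route as the paper's proof: both bound $(R-t-r)^{-\ga}\les (R-t)^{\ga}\left((R-t)^2-r^2\right)^{-\ga}$, reduce the sphere integral to a one-dimensional integral in $\tau_0$ via $r^2=r_0^2+\widetilde{r}^2+2r_0\widetilde{r}\tau_0$, integrate explicitly to arrive at $(r_0\widetilde{r})^{-1}\bigl(u_*^{1-\ga}(v_*+2\widetilde{r})^{1-\ga}-v_*^{1-\ga}(u_*+2\widetilde{r})^{1-\ga}\bigr)$ with $u_*=R-t_0+r_0$, $v_*=R-t_0-r_0$, and exploit the identity $u_*(v_*+2\widetilde{r})-v_*(u_*+2\widetilde{r})=4r_0\widetilde{r}$. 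The only variation is the final elementary step: the paper disposes of the difference of $(1-\ga)$-powers with the single concavity estimate $Y_2^{1-\ga}-Y_1^{1-\ga}\les (Y_2-Y_1)\bigl(u_*v_*+(u_*+v_*)\widetilde{r}\bigr)^{-\ga}$ followed by $u_*v_*+(u_*+v_*)\widetilde{r}\geq u_*\widetilde{r}$, whereas you reach the same conclusion by splitting into the regimes $r_0\gtrless\tfrac12(R-t_0)$ and using subadditivity and the mean-value bound respectively -- a minor repackaging of the same inequality.
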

\begin{proof}
Let $r_0=|x_0|$, $u_*=R-t_0+r_0$ and $v_*=R-t_0-r_0$. From $t_0+r_0\leq R$, we have $r\leq R-t$. Hence,
\[
(R-t-r)^{-\ga}\les \left((R-t)^2-r^2\right)^{-\ga}(R-t)^{\ga}.
\]
Using $r^2=r_0^2+\widetilde{r}^2+2 \widetilde{r} x_0\cdot \widetilde{\om}$, { we compute} that
\begin{align*}
\int_{|\widetilde{\om}|=1}(R-t-r)^{-\ga}d\widetilde{\om}&\les (R-t)^{\ga} \int_{-1}^{1}((R-t)^2-{ r_0^2}-\widetilde{r}^2-2\widetilde{r}r_0\tau)^{-\ga}d\tau\\
&\les (R-t)^{\ga} (r_0 \widetilde{r})^{-1} (u_*^{1-\ga}(v_*+2\widetilde{r})^{1-\ga}-v_*^{1-\ga}(u_*+2\widetilde{r})^{1-\ga}).
\end{align*}
By definition, we have $u_*(v_*+2\widetilde{r})-v_*(u_*+2\widetilde{r})=4\widetilde{r}r_0$. As $0<\gamma<1$, we derive that
\[u_*^{1-\ga} (v_*+2\widetilde{r})^{1-\ga}-v_*^{1-\ga}(u_*+2\widetilde{r})^{1-\ga}\les  \widetilde{r}r_0 (u_* v_*+(u_*+v_*)\widetilde{r})^{-\ga}.\]
The lemma then follows because $
  (u_* v_*+(u_*+v_*)\widetilde{r})^{-\ga}\les u_*^{-\gamma}\widetilde{r}^{-\gamma}$.
\end{proof}
 
 \subsection{The proof of Theorem \ref{thm:EM}}\label{sec:proof of 5.1}

To use the Kirchoff-Sobolev paramatrix introduced by Klainerman and Rodnianski, we work in the coordinates $(\widetilde{t}, \widetilde{x})$ and with the null frame $(\widetilde{L}, \widetilde{\Lb}, \widetilde{e}_1, \widetilde{e}_2)$ which are centered at the fixed point $q=(t_0, x_0)\in \mathcal{J}^{+}(\B_{R})$. Let $\widetilde{\lap}_A=D^{\widetilde{e}_1}D_{\widetilde{e}_1}+D^{\widetilde{e}_2}D_{\widetilde{e}_2}$ be the projection of the covariant Laplacian on the sphere with constant radius $\widetilde{r}=|\widetilde{x}|$. Given $\mathbf{J}_{q} \in V$, let $h$ be $V$-valued function so that, on $\mathcal{N}^{-}(q)$, we have
\begin{align*}
D_{\widetilde{\Lb}}h=0, \quad h(q)=\mathbf{J}_{q}.
\end{align*}
The paramatrix in Theorem 4.1 in \cite{Kl:paramatrix} gives the following representation formula for $\phi$ as solution of the covariant linear wave equation $\Box_A \phi=0$:
\begin{equation}\label{eq:rep4phi}
\begin{split}
4\pi \langle \mathbf{J}_{q}, \phi(t_0, x_0) \rangle&=\int_{\B_{R}} \langle \widetilde{r}^{-1}h\delta(\widetilde{v}), D_0 \phi\rangle-\langle D_0(\widetilde{r}^{-1}h\delta(\widetilde{v})), \phi \rangle dx +\iint_{\mathcal{N}^{-}(q)}\langle\widetilde{\lap}_A h{-}\widetilde{\rho}\cdot h, \phi\rangle\widetilde{r} d\widetilde{r}d\widetilde{\om}.
\end{split}
\end{equation}
In the above formula, $\delta(\widetilde{v})$ is the Dirac distribution for $\widetilde{v}=\frac{\widetilde{t}+\widetilde{r}}{2}$, $\widetilde{\rho}=\f12 F(\widetilde{\Lb}, \widetilde{L})$ and $\widetilde{\rho}\cdot h$ is through the representation $d\beta:\lg\rightarrow \mathfrak{u}(V)$. Later on, we show that the first integral in \eqref{eq:rep4phi} has an upper bound depending only on the data although the function $h$ depends on $A$ and $q=(t_0, x_0)$. The second integral in \eqref{eq:rep4phi} comes from the connection field $A$. In order to control this integral, we need bounds for $h$ which will also be useful to control $D\phi$. We remark that although $h$ is gauge dependent, the norm $|h|$ is gauge invariant and it is clear that, on the cone $\mathcal{N}^{-}(q)$, we have
\[
|h|\leq |\mathbf{J}_{q}|.
\]
Without loss of generality, we assume $|\mathbf{J}_{q}|=1$. The formula \eqref{eq:rep4phi} depends only the value of $h$ and $\widetilde{\lap}_A h$ on the cone $\mathcal{N}^{-}(q)$. In what follows, it suffices to analyze $h$ on the cone.

Since $D_{\widetilde{\Lb}}h=0$ and $[\widetilde{L}, \widetilde{r}\widetilde{e}_j]=0$, we compute that
\begin{align*}
D_{\widetilde{\Lb}}(\widetilde{r} D_{\widetilde{e}_j}h)=[D_{\widetilde{\Lb}}, D_{\widetilde{r}\widetilde{e}_j}]h=\widetilde{r}\widetilde{\ab}_j \cdot h,
\end{align*}
where $\widetilde{r}\widetilde{e}_j$ is a linear combination of $\widetilde{\Om}_{ij}$ on each point. We can commute another $D_{\widetilde{r} \widetilde{e}_i}$ to derive
\begin{align*}
D_{\widetilde{\Lb}}(\widetilde{r}^2\widetilde{\lap}_A h)=2 \widetilde{r}^2\widetilde{\ab}_i \cdot D^{\widetilde{e}_i}h+\widetilde{r}^2  D^{\widetilde{e}_j} \widetilde{\ab}_j\cdot h.
\end{align*}
We can rewrite \eqref{eq:YMH} in the null frame (see Lemma 2 of \cite{yangMKG} for the similar computation). Hence,
\begin{align*}
D^{e_j} \widetilde{\ab}_j=J[\phi]_{\widetilde{\Lb}}+\widetilde{r}^{-2}D_{\widetilde{\Lb}}(\widetilde{r}^2\widetilde{\rho}).
\end{align*}
This leads to the transport equation
\begin{align*}
D_{\widetilde{\Lb}}(\widetilde{r}^2\widetilde{\lap}_A h- \widetilde{r}^2\widetilde{\rho} \cdot h)=2 \widetilde{r}^2\widetilde{\ab}_i \cdot D^{\widetilde{e}_i}h+\widetilde{r}^2  J[\phi]_{\widetilde{\Lb}}\cdot h.
\end{align*}
By integrating from the vertex $q$ to the sphere $S_{(t_0-\widetilde{r}, x_0)}(\widetilde{r})$ which will be shorted as $S(\widetilde{r})$ in the sequel, we derive that
\begin{align*}
\int_{S(\widetilde{r})}|\widetilde{r}^2\widetilde{\lap}_A h- \widetilde{r}^2\widetilde{\rho}\cdot  h|d\widetilde{\om}\les \int_{0}^{\widetilde{r}}\int_{S(s)}(|\widetilde{\D} h||\widetilde{\ab}|+|J[\phi]_{\widetilde{\Lb}}|) s^2dsd\widetilde{\om},
\end{align*}
where $\widetilde{\D}=(D_{\widetilde{e}_1}, D_{\widetilde{e}_2})$. From the transport equation for $D_{\widetilde{e}_i}h$, we see that the structure of $|D_{\widetilde{e}_i}h||\widetilde{\ab}|$ is similar to {that} of $|J[\phi]_{\widetilde{\Lb}}|$.

We first derive a bound on $D_{\widetilde{e}_i}h$. The transport equation for $D_{\widetilde{e}_i}h$ together with the improved weighted energy flux bound \eqref{eq:comp:v:EF} for $\widetilde{\ab}_i$ implies that
\begin{align*}
\int_{S(\widetilde{r})}|\widetilde{r} D_{\widetilde{e}_j}h|^2 d\widetilde{\om}&\leq \int_{\widetilde{\om}}(\int_{0}^{\widetilde{r}}| s\widetilde{\ab}_j \cdot h|ds )^2 d\widetilde{\om}\\
 &\les \int_{\widetilde{\om}}\left(\int_{0}^{\widetilde{r}} |s\widetilde{\ab}|^2(R-(t_0-s)-r_0)^{\ga} ds\right)\left( \int_{0}^{\widetilde{r}}(R-(t_0-s)-r_0)^{-\ga}ds\right)d\widetilde{\om}\\
 &\les \int_{\mathcal{N}^{-}(q)} |\widetilde{\ab}|^2(R-t-r_0)^{\ga} s^2ds d\widetilde{\om} \cdot \int_{0}^{\widetilde{r}}s^{-\ga}ds d\widetilde{\om} \les  \mathcal{E}_{0, \ga}^R\widetilde{r}^{1-\ga},
\end{align*}
where we used $t=t_0-s$, $t_0+r_0\leq R$ and the uniform bound for $h$. Therefore,
\begin{align*}
\int_{\mathcal{N}^{-}(q)}|D_{\widetilde{e}_j}h|^2 \widetilde{r}d\widetilde{r}d\widetilde{\om}\les  \mathcal{E}_{0, \ga}^R\int_{0}^{t_0} \widetilde{r}^{-\ga}d\widetilde{r}\les \mathcal{E}_{0, \ga}^R.
\end{align*}
We remark that this bound is the same as the bound \eqref{eq:bd4phi:need} for $\phi$.

We turn to the quadratic nonlinear term $J[\phi]$. Since the estimate in Proposition \ref{prop:EF:cone:gamma} is for the field $r\phi$ rather than $\phi$, we will write the $1$-form $J[\phi]$ in terms of $r\phi$. This procedure reveals  a hidden null structure. Since $d\beta: \lg \rightarrow \mathfrak{u}(V)$, for all $a\in \lg$ and $\varphi\in V$, we have
$\langle a\cdot \varphi,\varphi \rangle=-\langle\varphi, a\cdot \varphi \rangle$. Hence, for the linear map $\cT$ and for all $a\in \lg$, we conclude that
 \begin{align*}
    \langle  \langle\cT\phi, \phi \rangle+ \langle\phi, \cT\phi \rangle, a \rangle= \langle a\cdot \phi, \phi \rangle+ \langle\phi, a\cdot \phi \rangle=0.
 \end{align*}
 Hence, $\langle \cT\phi, \phi\rangle +\langle \phi, \cT\phi\rangle=0$. As a consequence, we derive that
 \begin{align*}
   r^2 J[\phi]=J[r\phi]-r \pa r(\langle \cT\phi, \phi\rangle +\langle \phi, \cT\phi\rangle)=J[r\phi].
 \end{align*}
We then use \eqref{eq:comp:v:EF} to bound $D_{\widetilde{\Lb}}(r\phi)$ and use \eqref{eq:bd4phi:need} to bound $\phi$. This leads to
\begin{equation*}
\begin{split}
\int_{S(\widetilde{r})}|\widetilde{r}^2\widetilde{\lap}_A h-\widetilde{r}^2\widetilde{\rho}\cdot h|d\widetilde{\om}&\les \int_{0}^{\widetilde{r}}
\int_{S(s)}(|\widetilde{\D} h||\widetilde{\ab}|+|J[\phi]_{\widetilde{\Lb}}|) s^2dsd\widetilde{\om}\\
&\les\left(\int_{0}^{\widetilde{r}}\int_{S(s)}(|\widetilde{\ab}|^2+r^{-2}|D_{\widetilde{\Lb}}(r\phi)|^2)(R-(t_0-s)-r_0)^{\ga} s^2 dsd\widetilde{\om}\right)^{\f12}\\
&\quad \cdot
\left(\int_{0}^{\widetilde{r}}\int_{S(s)}(R-(t_0-s)-r_0)^{-\ga}(|\phi|^2+|\widetilde{\D} h|^2) s^2 ds d\widetilde{\om}\right)^{\f12}.
\end{split}
\end{equation*}
Hence,
\begin{equation}
\label{eq:bd4h}
\begin{split}
\int_{S(\widetilde{r})}|\widetilde{r}^2\widetilde{\lap}_A h-\widetilde{r}^2\widetilde{\rho}\cdot h|d\widetilde{\om}&\les \sqrt{\mathcal{E}_{0, \ga}^R} \left(\int_{0}^{\widetilde{r}}\int_{S(s)}(|\phi|^2+|\widetilde{\D} h|^2) s^{2-\ga} ds d\widetilde{\om}\right)^{\f12}\\
&\les  \sqrt{\mathcal{E}_{0, \ga}^R} \widetilde{r}^{\frac{1-\ga}{2}} \left(\int_{\mathcal{N}^{-}(q)} s(|\phi|^2+{\sum_{i=1}^2}|D_{\widetilde{e}_i}h|^2) ds d\widetilde{\om}\right)^{\f12}\les  \mathcal{E}_{0, \ga}^R \widetilde{r}^{\frac{1-\ga}{2}}.
\end{split}
\end{equation}
Since $\ga<1$, the function $\widetilde{r}^{\frac{1-\ga}{2}-1}$ is integrable. The above estimate is sufficient to bound the integral on the cone $\mathcal{N}^{-}(q)$ in \eqref{eq:rep4phi} for $\phi$. It remains to control the integral on $\B_{R}$ in \eqref{eq:rep4phi}. We can integrate by parts:
\begin{align*}
&\left|\int_{\B_{R}}\langle\widetilde{r}^{-1}h\delta(\widetilde{v}), D_0 \phi\rangle-\langle D_0(\widetilde{r}^{-1}h\delta(\widetilde{v})), \phi \rangle dx\right|= \left|\int_{\B_{R}} \langle h\delta(\widetilde{v}), \widetilde{r} D_0 \phi \rangle -\langle D_0(h)\delta(\widetilde{v})+h \delta(\widetilde{v})', \widetilde{r}\phi\rangle d\widetilde{r}d\widetilde{\om}\right|\\
&=\left|\int_{\B_{R}} \langle h\delta(\widetilde{v}), \widetilde{r} D_0 \phi\rangle - \langle D_0(h)\delta(\widetilde{v}), \widetilde{r}\phi\rangle+\delta(\widetilde{v})(\langle D_{\widetilde{r}}(h), \widetilde{r}\phi\rangle +\langle h, D_{\widetilde{r}}(\widetilde{r}\phi)\rangle) d\widetilde{r}d\widetilde{\om}\right|\\
&\leq \int_{\widetilde{\om}}t_0|D\phi(0, x_0+t_0\widetilde{\om})|+|\phi|(0, x_0+t_0\widetilde{\om}) d\widetilde{\om}.
\end{align*}
We have used the uniform bound for $h$ and the fact that $D_{\widetilde{r}}h-D_0 h=-D_{\widetilde{\Lb}}h=0$. We remark that the above bound holds for all $V$-valued functions $\phi$ in particular for $D\phi$ in the place of $\phi$. By  Lemma \ref{lem:decay:lin:com} and the representation formula \eqref{eq:rep4phi} for $\phi$ with arbitrary $\mathbf{J}_q\in V$, $|\mathbf{J}_q|=1$, we conclude that
\begin{align*}
|\phi|(t_0, x_0)&\les \sqrt{\mathcal{E}_{1, \ga}^R(1+\mathcal{E}_{0, \ga}^R)}(R-t_0)^{-\frac{1+\ga}{2}}+\int_{\mathcal{N}^{-}(q)}|\widetilde{r}\widetilde{\lap}_A h{-} \widetilde{r}\widetilde{\rho} \cdot h||\phi| d\widetilde{r}d\widetilde{\om}\\
&\les \sqrt{\mathcal{E}_{1, \ga}^R(1+\mathcal{E}_{0, \ga}^R)}(R-t_0)^{-\frac{1+\ga}{2}}+\int_{0}^{t_0}\sup\limits_{S(\widetilde{r})}|\phi| \cdot \int_{S(\widetilde{r})}|\widetilde{r}\widetilde{\lap}_Ah-\widetilde{r}\widetilde{\rho}\cdot  h|
 d\widetilde{\om} d\widetilde{r} \\
 &\les \sqrt{\mathcal{E}_{1, \ga}^R(1+\mathcal{E}_{0, \ga}^R)}(R-t_0)^{-\frac{1+\ga}{2}}+\mathcal{E}_{0, \ga}^R \int_{0}^{t_0}s^{\frac{1-\ga}{2}-1}
 \sup\limits_{x}|\phi|(t_0-s, x)  ds,
\end{align*}
where $v_*=R-t-r$. \if0
For the last integral, we bound $|\phi|$ in $L^\infty$ on $S(\widetilde{r})$. In view of Lemma \ref{lem:bd:vga}, we have
\begin{align*}
  \int_{\mathcal{N}^{-}(q)}  |\phi|^2 v_*^{-\gamma}   d\widetilde{r}d\widetilde{\om}& =\int_0^{t_0}\sup\limits_{x} |\phi|^2(t_0-\widetilde{r}, x)\int_{S(\widetilde{r})}(R-t_0+\widetilde{r}-|x_0+\widetilde{r}\widetilde{\om}|)^{-\gamma}d\widetilde{\om}d\widetilde{r}\\
  & \les \int_0^{t_0}\sup\limits_{x} |\phi|^2(t_0-\widetilde{r}, x) (R-t_0+\widetilde{r})^{\ga}(R-t_0+r_0)^{-\ga}\widetilde{r}^{-\ga}d\widetilde{r}.
\end{align*}
By virtue of \eqref{eq:comp:v:EF} for $\widetilde{\rho}$, the previous \fi
This inequality leads to
\begin{align*}
  (R-t_0)^{\frac{1+\ga}{2}}|\phi|(t_0, x_0)
 &\les \sqrt{\mathcal{E}_{1, \ga}^R(1+\mathcal{E}_{0, \ga}^R)} +(\mathcal{E}_{0, \ga}^R) (R-t_0)^{\frac{1+\ga}{2}}\left(\int_{0}^{t_0}s^{\frac{1-\ga}{2}-1}
 \sup\limits_{x}|\phi|(t_0-s, x)  ds\right).
\end{align*}
We define $M_0(t)=(R-t)^{1+\ga}\sup\limits_{x} |\phi|^2(t, x)$. Therefore,
\begin{align*}
  M_0(t_0)&\les \sqrt{\mathcal{E}_{1, \ga}^R(1+\mathcal{E}_{0, \ga}^R)} +(\mathcal{E}_{0, \ga}^R)  \int_{0}^{t_0}s^{\frac{1-\ga}{2}-1}M_0(t_0-s) ds.
\end{align*}
By Lemma \ref{lem:Gronwall:V},  there exists a constant $C$ depending on $\mathcal{E}_{0, \ga}^R$, $\ga$, $\epsilon$ and $R$,  so that
\[
 M_0(t_0) \leq  C \sqrt{\mathcal{E}_{1, \ga}^R}.
\]
This proves the bound for the Higgs field $\phi$ in Theorem \ref{thm:EM}.

\medskip

We move to the estimate of $D\phi$. First of all, we have the following representation formula for $D\phi$:
\begin{equation}\label{eq: rep for Dphi}
\begin{split}
4\pi\langle \mathbf{J}_{q},  D\phi(t_0, x_0)\rangle =&\int_{\B_{R}} \langle \widetilde{r}^{-1}h\delta(\widetilde{v}), D_0 D\phi\rangle -\langle D_0(\widetilde{r}^{-1}h\delta(\widetilde{v})), D\phi\rangle  dx \\
&+\iint_{\mathcal{N}^{-}(q)}\langle \widetilde{\lap}_A h{-}\widetilde{\rho}\cdot h, D\phi\rangle \widetilde{r}^2d\widetilde{r}d\widetilde{\om}-\iint_{\mathcal{N}^{-}(q)}\langle h, [\Box_A, D]\phi\rangle \widetilde{r}d\widetilde{r}d\widetilde{\om},
\end{split}
\end{equation}
where $h$ is given at the beginning of this subsection. We can control the first two integrals as above: we simply repeat the previous estimate but replacing $\phi$ with $D\phi$, see  \eqref{eq:rep4phi}. More precisely, by  \eqref{eq:bd4DphiF:lin:com} of Lemma \ref{lem:decay:lin:com}, we have
\begin{align*}
|\int_{\B_{R}}\langle \widetilde{r}^{-1}h\delta(\widetilde{v}), D_0 D\phi\rangle -\langle D_0(\widetilde{r}^{-1}h\delta(\widetilde{v})), D\phi\rangle  dx|
&\leq \int_{\widetilde{\om}}t_0|DD\phi(0, x_0+t_0\widetilde{\om})|+|D\phi|(0, x_0+t_0\widetilde{\om}) d\widetilde{\om}\\
&\les \sqrt{\mathcal{E}_{2, \ga}^R}(1+\mathcal{E}_{0, \ga}^R)(R-t_0-r_0)^{-\frac{1+\ga}{2}-\ep}(R-t_0)^{-1+\ep}.
\end{align*}
We define the function $$M_1(t)=\sup\limits_{|x|\leq R-t}\left(v_*^{\frac{1+\ga}{2}+\ep}u_*^{1-\ep}(|D\phi|(t, x)+|F|(t, x))\right)$$ on  $\mathcal{J}^+(\B_R)$.
Recall that  $v_*(t, x)=R-t-r$, $u_*(t, x)=R-t+r$. According to $r=|x|\leq R-t$, we have $R-t\leq u_*\leq 2(R-t)$. Thus, $R-t$ is equivalent to $u_*$ up to a universal constant. For a fixed $q=(t_0, x_0)$, we define
\[
\bar v_*=v_*(q)=R-t_0-r_0,\quad \bar u_*=u_*(q)=R-t_0+r_0.
\]
On the backward light cone $\mathcal{N}^{-}(q)$, we always have
\[
v_*(t_0-\widetilde r, x_0+\widetilde{r}\widetilde{\om})\geq R-t_0-r_0=\bar v_*,\quad u_* \geq R-t\geq R-t_0\geq \frac{1}{2}\bar u_*.
\]
To estimate the second integral in \eqref{eq: rep for Dphi}, we can use \eqref{eq:bd4h} for $\widetilde{r}\widetilde{\lap}_A h- \widetilde{r}\widetilde{\rho} \cdot h$ to derive
\begin{align*}
|\int_{\mathcal{N}^{-}(q)}\langle \widetilde{\lap}_A h-\widetilde{\rho}\cdot h, D\phi\rangle \widetilde{r}^2d\widetilde{r}d\widetilde{\om}|
&\les \int_{0}^{t_0}M_1(t_0-\widetilde{r}) \int_{S(\widetilde{r})}v_*^{-\frac{1+\ga}{2}-\ep}u_*^{\ep-1}|\widetilde{r}\widetilde{\lap}_A h-\widetilde{r}\widetilde{\rho}\cdot h| d\widetilde{\om}d\widetilde{r}\\
&\les \mathcal{E}_{0, \ga}^R \bar v_*^{-\frac{1+\ga}{2}-\ep}\bar u_*^{\ep-1}\int_{0}^{t_0}\widetilde{r}^{-\frac{1+\ga}{2}}M_1(t_0-\widetilde{r}) d\widetilde{r}.
\end{align*}
\if0 For the term $\widetilde{\rho}\cdot h$ in \eqref{eq: rep for Dphi}, in view of Lemma \ref{lem:bd:vga} and the bound\eqref{eq:comp:v:EF} for $\widetilde{\rho}$, we have
\begin{align*}
|\int_{\mathcal{N}^{-}(q)}\langle \widetilde{\rho}\cdot h, D\phi\rangle \widetilde{r}d\widetilde{r}d\widetilde{\om}| &\les (\int_{\mathcal{N}^{-}(q)}|\widetilde{\rho} |^2 v_*^{\gamma}\widetilde{r}^2 d\widetilde{r}d\widetilde{\om} )^{\f12} (\int_{\mathcal{N}^{-}(q)}  |D\phi|^2 v_*^{-\gamma}   d\widetilde{r}d\widetilde{\om} )^{\f12}\\
&\les \sqrt{\mathcal{E}_{0, \ga}^R}(\int_0^{t_0} M_1^2(t_0-\widetilde{r})\int_{S(\widetilde{r})}v_*^{-1-\ga-2\ep}u_*^{2\ep-2} v_*^{-\ga}d\widetilde{\om} d\widetilde{r})^{\f12}\\
&\les \sqrt{\mathcal{E}_{0, \ga}^R}\bar v_*^{-\frac{1+\ga}{2}-\ep}(\int_0^{t_0} M_1^2(t_0-\widetilde{r})(R-t_0+\widetilde{r})^{2\ep-2+\ga} \bar u_*^{-\ga}\widetilde{r}^{-\ga} d\widetilde{r})^{\f12}\\
&\les \sqrt{\mathcal{E}_{0, \ga}^R}\bar v_*^{-\frac{1+\ga}{2}-\ep}\bar u_*^{\ep-1} (\int_0^{t_0} M_1^2(t_0-\widetilde{r})\widetilde{r}^{-\ga} d\widetilde{r})^{\f12}.
\end{align*}
We have used $|h|\leq 1$ since $|\mathbf{J}_{q}|=1$.\fi
Similar to  the $L^\infty$ estimate for $|\phi|$, this term will be absorbed by the Gronwall type inequality in Lemma \ref{lem:Gronwall:V}.

We move to the nonlinear term $[\Box_A, D]\phi$ coming from the commutation of the equation with $D$. Indeed, we have
\begin{align*}
[\Box_A, D_{\mu}]\phi=2 F_{\nu \mu}\cdot D^\nu\phi+D^\nu F_{\nu\mu}\cdot \phi=2 F_{\nu \mu}\cdot D^\nu\phi{+}J[\phi]_{\mu}\cdot \phi.
\end{align*}
Given the index $\mu$, the quadratic term $F_{\nu \mu}\cdot D^\nu\phi$ satisfies the null structure. We can bound it as follows:
\begin{align*}
|F_{\nu \mu}D^\nu\phi|\les (|F|+|D\phi|)(r^{-1}|D_{\widetilde{\Lb}}(r\phi)|+r^{-1}|\widetilde{\D}(r\phi)|+|\widetilde{\ab}|+|\widetilde{\rho}|+|\widetilde{\si}|+r^{-1}|\phi|).
\end{align*}
The norms $|F|$ and $|D\phi|$ are the same for both coordinates $(t, x)$ and $(\widetilde{t}, \widetilde{x})$. To bound the lower order term $r^{-1}\phi$, we use Proposition \ref{prop:bd4phi:phir}. The other terms can be bounded by the weighted energy estimate \eqref{eq:comp:v:EF}. By Proposition \ref{prop:EF:cone:gamma} and Lemma \ref{lem:bd:vga}, we have
\begin{equation}
\label{eq:NL:pf1}
\begin{split}
&\int_{\mathcal{N}^{-}(q)}(|F|+|D\phi|)(r^{-1}|D_{\widetilde{\Lb}}(r\phi)|+r^{-1}|\widetilde{\D}(r\phi)|+|\widetilde{\ab}|+|\widetilde{\rho}|+
|\widetilde{\si}|)\widetilde{r}d\widetilde{r}
d\widetilde{\om}\\
\les& \sqrt{\mathcal{E}_{0, \ga}^R}\left(\int_{0}^{t_0}\int_{S(\widetilde{r})} (|D\phi|^2+|F|^2) v_*^{-\ga}d\widetilde{r}d\widetilde{\om}\right)^\f12\\
\les& \sqrt{\mathcal{E}_{0, \ga}^R} \left(\int_{0}^{t_0}M_1^2(t_0-\widetilde{r}) \int_{S(\widetilde{r})} \bar v_*^{-\ga-1-2\ep}(R-t_0+\widetilde{r})^{-2+2\ep}v_*^{-\ga}d\widetilde{\om} d\widetilde{r} \right)^\f12\\
\les& \sqrt{\mathcal{E}_{0, \ga}^R}\bar v_*^{-\frac{1+\ga}{2}-\ep}\bar u_*^{\ep-1} (\int_{0}^{t_0}M_1^2(t_0-\widetilde{r})  \widetilde{r}^{-\ga}d\widetilde{r} )^\f12.
\end{split}
\end{equation}
For $r^{-1}|\phi|$, by Proposition \ref{prop:bd4phi:phir}, we have
\begin{equation}
\label{eq:NL:pf2}
\begin{split}
\int_{\mathcal{N}^{-}(q)}(|D\phi|+|F|)r^{-1}|\phi|\widetilde{r}d\widetilde{r}d \widetilde{\om}&\les \bar v_*^{-\frac{1+\ga}{2}-\ep}\bar u_*^{\ep-1}\int_{0}^{t_0}M_1(t_0-\widetilde{r}) \cdot \int_{S_{(t_0-\widetilde{r}, x_0)}(\widetilde{r})}r^{-1}|\phi| \widetilde{r}d \widetilde{\om} d\widetilde{r}\\
&\les \sqrt{\mathcal{E}_{0, \ga}^R} \bar v_*^{-\frac{1+\ga}{2}-\ep}\bar u_*^{\ep-1}\int_{0}^{t_0}M_1(t_0-\widetilde{r})  \widetilde{r}^{-\frac{1+\ga+\ep}{2}} d\widetilde{r} .
\end{split}
\end{equation}
For $J[\phi]\cdot \phi$, since $
|J[\phi]\cdot \phi|\les |J[\phi]||\phi|\les |D\phi||\phi|^2$, in view of Proposition \ref{prop:bd4phi:need}, we have
\begin{equation}
\label{eq:NL:pf3}
\begin{split}
\int_{\mathcal{N}^{-}(q)}(|F|+|D\phi|)|\phi|^2\widetilde{r}d\widetilde{r}d \widetilde{\om} &\les \bar v_*^{-\frac{1+\ga}{2}-\ep}\bar u_*^{\ep-1}\int_{0}^{t_0}M_1(t_0-\widetilde{r})  \int_{S_{(t_0-\widetilde{r}, x_0)}(\widetilde{r})}|\phi|^2 \widetilde{r}d \widetilde{\om} d\widetilde{r}\\
& \les \mathcal{E}_{0, \ga}^R\bar v_*^{-\frac{1+\ga}{2}-\ep}\bar u_*^{\ep-1}\int_{0}^{t_0}M_1(t_0-\widetilde{r})  \widetilde{r}^{-\ga-\ep}  d\widetilde{r}.
\end{split}
\end{equation}
Combining the above estimates and the fact that $|h|\leq 1$, we conclude that
\begin{align*}
&\bar v_*^{\frac{1+\ga}{2}+\ep}\bar u_*^{1-\ep}|\int_{\mathcal{N}^{-}(q)}{ \langle}h, [\Box_A, D]\phi{ \rangle}\widetilde{r}d\widetilde{r}d\widetilde{\om}|\\
&\les \sqrt{\mathcal{E}_{0, \ga}^R} \left(\int_{0}^{t_0}M_1^2(t_0-\widetilde{r}) \widetilde{r}^{-\ga} d\widetilde{r}\right)^\f12
+\int_{0}^{t_0}M_1(t_0-\widetilde{r}) ( \sqrt{\mathcal{E}_{0, \ga}^R} +\mathcal{E}_{0, \ga}^R)  \widetilde{r}^{-\frac{1+\ga+\ep}{2}}   d\widetilde{r}\\
&\les \sqrt{\mathcal{E}_{0, \ga}^R} \left(\int_{0}^{t_0}M_1^2(t_0-\widetilde{r}) \widetilde{r}^{-\ga} d\widetilde{r}\right)^\f12
+( 1 +\mathcal{E}_{0, \ga}^R) \left(\int_{0}^{t_0}M_1^2(t_0-\widetilde{r})  \widetilde{r}^{-\ga -2\ep }  d\widetilde{r}\right)^{\f12 } \\
&\les ( 1 +\mathcal{E}_{0, \ga}^R) \left(\int_{0}^{t_0}M_1^2(t_0-\widetilde{r})  \widetilde{r}^{-\ga -2\ep }  d\widetilde{r}\right)^{\f12 }.
\end{align*}
 Now combining all the above estimates for the right hand side of the representation formula for $D\phi$, we have shown that
\begin{equation}
\label{eq:bd4Dphi:0}
\begin{split}
|\bar v_*^{\frac{1+\ga}{2}+\ep}\bar u_*^{1-\ep} D\phi|^2(t_0, x_0)&\les  \mathcal{E}_{2, \ga}^R(1+\mathcal{E}_{0, \ga}^R)^2 + ( 1 +\mathcal{E}_{0, \ga}^R)^2  \int_{0}^{t_0}M_1^2(t_0-\widetilde{r})  \widetilde{r}^{-\ga -2\ep }  d\widetilde{r} .
\end{split}
\end{equation}
The implicit constant depends only on $\ep$, $\ga$ and $R$. In particular, \eqref{eq:bd4Dphi:0} is uniform in $x_0$.

In view of the presence of $F$ in $M_1(t)$, we need a similar bound for the Yang-Mills field $F$. Since $F_{\mu\nu}$ verifies a covariant wave equation \eqref{eq:EQDphiF}, we will apply the similar representation formula for $F_{\mu\nu}$. Since $F$ takes values in $\lg$, we need to construct a $\lg$-valued function $h$ instead of  $V$-valued function. Let $\mathbf{J}_{q} \in \lg$ so that $|\mathbf{J}_{q}|=1$. Given $\mathbf{J}_{q} \in \lg$, let $h$ be $\lg$-valued function so that, on $\mathcal{N}^{-}(q)$, we have
\[D_{\widetilde{\Lb}} h=0,\quad h(q)=\mathbf{J}_{q}.\]
Compared to the previous $V$-valued function,  the action of $a\in \lg$ on $b \in \lg$ becomes $[a, b]$. The representation formula for $F_{\mu\nu}$ is
\begin{equation}\label{eq: repre for F 4}
\begin{split}
4\pi\langle \mathbf{J}_{q},  (F_{\mu\nu})|_{q}\rangle=&\int_{\B_{R}}\langle\widetilde{r}^{-1}h\delta(\widetilde{v}), D_0 F_{\mu\nu}\rangle-\langle D_0(\widetilde{r}^{-1}h\delta(\widetilde{v})), F_{\mu\nu}\rangle dx \\ &+\iint_{\mathcal{N}^{-}(q)}\langle\widetilde{\lap}_A h{-}[\widetilde{\rho}, h], F_{\mu\nu}\rangle\widetilde{r} d\widetilde{r}d\widetilde{\om}-\iint_{\mathcal{N}^{-}(q)}\langle h,  \Box_A F_{\mu\nu}\rangle\widetilde{r}d\widetilde{r}d\widetilde{\om}.
\end{split}
\end{equation}
The first integral in \eqref{eq: repre for F 4} depends only on the initial data. Similar to the case for $\phi$,  we use  \eqref{eq:bd4DphiF:lin:com} and integration by parts to bound it. Indeed,
\begin{align*}
&|\int_{\B_{R}}\langle \widetilde{r}^{-1}h\delta(\widetilde{v}), D_0 F_{\mu\nu}\rangle-\langle D_0(\widetilde{r}^{-1}h\delta(\widetilde{v})), F_{\mu\nu}\rangle dx|\\
=& |\int_{\B_{R}} \langle h\delta(\widetilde{v}), \widetilde{r} D_0 F_{\mu\nu}\rangle-\langle D_0(h)\delta(\widetilde{v})+h \delta(\widetilde{v})', \widetilde{r}F_{\mu\nu}\rangle d\widetilde{r}d\widetilde{\om}|\\
=&|\int_{\B_{R}}\langle h\delta(\widetilde{v}), \widetilde{r} D_0F_{\mu\nu}\rangle-\langle D_0(h)\delta(\widetilde{v}), \widetilde{r}F_{\mu\nu}\rangle +\delta(\widetilde{v})(\langle D_{\widetilde{r}}(h), \widetilde{r}F_{\mu\nu}\rangle +\langle h, D_{\widetilde{r}}(\widetilde{r}F_{\mu\nu})\rangle) d\widetilde{r}d\widetilde{\om}|.
\end{align*}
Hence,
\begin{align*}
|\int_{\B_{R}}\langle \widetilde{r}^{-1}h\delta(\widetilde{v}), D_0 F_{\mu\nu}\rangle-\langle D_0(\widetilde{r}^{-1}h\delta(\widetilde{v})), F_{\mu\nu}\rangle dx|
\leq& \int_{\widetilde{\om}}t_0|DF_{\mu\nu}(0, x_0+t_0\widetilde{\om})|+|F_{\mu\nu}|(0, x_0+t_0\widetilde{\om}) d\widetilde{\om}\\
\les& \sqrt{\mathcal{E}_{2, \ga}^R }(1+\mathcal{E}_{0, \ga}^R)(R-t_0-r_0)^{-\frac{1+\ga}{2}-\ep}(R-t_0)^{-1+\ep},
\end{align*}
where we used that $\delta (\widetilde{v}) (D_{\widetilde{r}}h-D_{0}h)=-\delta(\widetilde{v})D_{\widetilde{\Lb}}(h)=0$ and $|h|\leq 1$.

For the second integral in \eqref{eq: repre for F 4}, it has the same bound as $D\phi$:
\begin{align*}
&|\int_{\mathcal{N}^{-}(q)}\langle \widetilde{\lap}_A h{-}[\widetilde{\rho}, h], F_{\mu\nu}\rangle\widetilde{r} d\widetilde{r}d\widetilde{\om}|\les
\mathcal{E}_{0, \ga}^R \bar v_*^{-\frac{1+\ga}{2}-\ep}\bar u_*^{\ep-1}\int_{0}^{t_0}\widetilde{r}^{-\frac{1+\ga}{2}}M_1(t_0-\widetilde{r}) d\widetilde{r},
\end{align*}
where we used $M_1(t)$ to bound $F$.

Finally, we consider the third integral in \eqref{eq: repre for F 4} involving the nonlinear term $\Box_A F_{\mu\nu}$. It can be computed by \eqref{eq:EQDphiF} and it consists of $[F_{\mu\ga}, F_{\nu}^{\ \ga}]$ and $D_{\mu }J[\phi]_{\nu}-D_{\nu}J[\phi]_{\mu}$. The term $[F_{\mu\ga}, F_{\nu}^{\ \ga}]$ satisfies the null structure and we can bound it by \begin{align*}
  |[F_{\mu\ga}, F_{\nu}^{\ \ga}]|\les |F|(|\widetilde{\rho}|+|\widetilde{\si}|+|\widetilde{\ab}|).
\end{align*}
Since $J[\phi]_{\mu}=\langle \cT\phi, D_{\mu}\phi\rangle +\langle D_{\mu}\phi, \cT \phi\rangle$, we compute that
\begin{align*}
  D_{\mu }J[\phi]_{\nu}=\langle D_{\mu}\cT\phi, D_{\nu}\phi\rangle+\langle\cT\phi, D_{\mu}D_{\nu}\phi\rangle+\langle D_{\nu}\phi , D_{\mu}\cT\phi \rangle+\langle D_{\mu}D_{\nu}\phi, \cT\phi\rangle.
\end{align*}
It is straightforward to check that  $\cT$ commutes with the covariant derivative $D$, i.e., $\langle D_{\mu}\cT\phi, D_{\nu}\phi\rangle=\langle\cT D_{\mu}\phi, D_{\nu}\phi\rangle$. By writing $\pa_\mu$ in the null frame centered at the point $q$, we obtain that
\begin{align*}
|\langle \cT D_{\mu}\phi, D_{\nu}\phi\rangle|\les |D\phi|(|D_{\widetilde{\Lb}}\phi|+|\widetilde{\D}\phi|)\les |D\phi|r^{-1}(|D_{\widetilde{\Lb}}(r\phi)|+|\widetilde{\D}(r\phi)|+|\phi|).
\end{align*}
Therefore, we have
\begin{align*}
  |D_{\mu }J[\phi]_{\nu}-D_{\nu}J[\phi]_{\mu}|&\les |D\phi|r^{-1}(|D_{\widetilde{\Lb}}(r\phi)|+|\widetilde{\D}(r\phi)|+|\phi|)+|\phi|| [D_{\mu}, D_{\nu}]\phi|\\
  &\les |D\phi|r^{-1}(|D_{\widetilde{\Lb}}(r\phi)|+|\widetilde{\D}(r\phi)|+|\phi|)+|\phi|^2 | F|.
\end{align*}
In particular, we derive that
\begin{align*}
  |\Box_A F_{\mu\nu}|\les (|F|+|D\phi|)(r^{-1}|D_{\widetilde{\Lb}}(r\phi)|+r^{-1}|\widetilde{\D}(r\phi)|+|\widetilde{\ab}|+|\widetilde{\rho}|+|\widetilde{\si}|+r^{-1}|\phi|+|\phi|^2).
\end{align*}
In view of estimates \eqref{eq:NL:pf1} to \eqref{eq:NL:pf3} and the fact that $|h|\leq 1$,  we then have
\begin{align*}
  \bar v_*^{\frac{1+\ga}{2}+\ep}\bar u_*^{1-\ep}\left|\iint_{\mathcal{N}^{-}(q)}\langle h,  \Box_A F_{\mu\nu}\rangle\widetilde{r}d\widetilde{r}d\widetilde{\om}\right|&\les  { \bar v_*^{\frac{1+\ga}{2}+\ep}\bar u_*^{1-\ep}}\iint_{\mathcal{N}^{-}(q)}| \Box_A F_{\mu\nu}|\widetilde{r}d\widetilde{r}d\widetilde{\om}\\
  &\les (1+ \mathcal{E}_{0, \ga}^R)  \big(\int_{0}^{t_0}M_1^2(t_0-\widetilde{r}) \widetilde{r}^{-\ga-2\epsilon } d\widetilde{r}\big)^\f12 .
\end{align*}
Combining the above estimates for the Yang-Mills field, we have shown that
\begin{equation*}
\begin{split}
|\bar v_*^{\frac{1+\ga}{2}+\ep}\bar u_*^{1-\ep} F|^2(t_0, x_0)&\les  \mathcal{E}_{2, \ga}^R(1+\mathcal{E}_{0, \ga}^R)^{ 2}+(1+\mathcal{E}_{0, \ga}^R)^2 \int_{0}^{t_0}M_1^2(t_0-\widetilde{r}) \cdot \widetilde{r}^{-\ga-2\ep }d\widetilde{r} .
\end{split}
\end{equation*}
In view of the estimate \eqref{eq:bd4Dphi:0} for $D\phi$ and the definition for $M_1(t)$, by taking the supreme over all $x_0$, we derive that
\begin{equation*}
M_1^2 (t_0)\les  \mathcal{E}_{2, \ga}^R(1+\mathcal{E}_{0, \ga}^R)^{ 2}+ (1+\mathcal{E}_{0, \ga}^R)^{ 2}  \int_{0}^{t_0}M_1^2(t_0-\widetilde{r}) \cdot \widetilde{r}^{-\ga-2\ep }d\widetilde{r},
\end{equation*}
which, in view of Lemma \ref{lem:Gronwall:V}, leads to
   \begin{align*}
    M_1^2(t_0)\leq C \mathcal{E}_{2, \ga}^R
 \end{align*}
 for some constant $C$ depending only on $\ep$, $\ga$, $R$ and $\mathcal{E}_{0, \ga}^R$. Theorem \ref{thm:EM} then follows by the definition of the function $M_1(t_0)$.

\section{Proof for Theorem \ref{thm:YMH:hyperboloid}}
\label{sec:conformal}
The proof for Theorem \ref{thm:YMH:hyperboloid} relies on the main Theorem \ref{thm:EM}. It suffices to check that under the assumption that the solution is bounded in the weighted energy space $\mathcal{E}_{2, \ga_1}^{\mathcal{H}}$, the associated conformal transformation verifies the assumption in Theorem \ref{thm:EM}. Then the decay estimates of Theorem \ref{thm:YMH:hyperboloid} follow from the reversion of the transformation.

\bigskip

Define the map
\begin{align*}
\mathbf{\Phi}:(t, x)\longmapsto \left(\widetilde{t}=-\frac{t^*}{(t^*)^2-|x|^2}+R,\quad \widetilde{x}= \frac{x}{(t^*)^2-|x|^2}\right)
\end{align*}
from the future $\mathbf{D}$ of the hyperboloid $\mathcal{H}$ to the Minkowski space. Here recall that $t^*=t+R+1$.
It is easy to check that the image of $\mathbf{D}$ under the above map $\mathbf{\Phi}$ is a truncated backward light cone in Minkowski space
\begin{align*}
\mathbf{\Phi}(\mathbf{D})=\left\{(\widetilde{t}, \widetilde{x})|\quad \widetilde{t}+|\widetilde{x}|<R ,\quad \widetilde{t}\geq 0\right\}.
\end{align*}
Here we use $(\widetilde t, \widetilde{x})$ as coordinates on the image of the map $\mathbf{\Phi}$.
Define the conformal factor
\begin{equation*}
\Lambda(t, x)=(t^*)^2-|x|^2.
\end{equation*}
By a theorem of Choquet-Bruhat-Christodoulou, a solution $(A, \phi)$ of the same YMH equations on $\mathbf{D}$ is equivalent to that $((\mathbf{\Phi}^{-1})^* A, (\Lambda \phi)\circ \mathbf{\Phi}^{-1})$ solves the YMH equations on $\mathbf{\Phi}(\mathbf{D})$, see the paper \cite{Christ81:YM} or the book \cite{Christodoulou:Book:GR1} for more details. In another word, to obtain decay estimates for solutions of YMH equations on $\mathbf{D}$, it suffices to control the solution on $\mathbf{\Phi}(\mathbf{D})$, which is a backward light cone in Minkowski space as depicted below.
 \begin{center}

 \begin{tikzpicture}

  \filldraw [fill=gray!30] plot[smooth] coordinates {(-8,2.25) (-6.8,0) (-6, -0.6) (-5.2, 0) (-4, 2.25)};
  \node at (-4.1, 1.5) {$\mathcal{H}$};
  \node at (-6.25, 1) {$\mathbf{D}$};
    \draw[->] (-8.5,0) -- (-3.4,0) coordinate[label = {below:$x$}] (xmax);
  \draw[->] (-6,-1) -- (-6,2.5) coordinate[label = {right:$t$}] (ymax);
\filldraw [fill=gray!30]  (3, 2)-- (1,0)-- (5, 0)--cycle;
\draw[->] (0.5,0) -- (5.6,0) coordinate[label = {below:$\widetilde{x}$}] (xtmax);
\filldraw [fill=white] ( 5, 0) circle (2pt);
\filldraw [fill=white] ( 1, 0) circle (2pt);
\draw[->] (3,-1) -- (3,2.5) coordinate[label = {right:$\widetilde{t}$}] (ytmax);
\filldraw [fill=white] ( 3, 2) circle (2pt);
  \node at (5, -0.3) {$R$};
  \node at (3.5, 0.5) {$\mathbf{\Phi}(\mathbf{D})$};
  \draw (-2.5, 0.8)--(-0.1, 0.8);
  \draw (-2.5, 0.7)--(-2.5, 0.9);
   \draw (-0.2, 0.9 )--(-0.1, 0.8)--(-0.2, 0.7);
      \node at (-1.4, 1.2) {$\mathbf{\Phi}$};
 \end{tikzpicture}

Conformal transformation
  \end{center}
The initial hypersurface for this backward light cone is
$$\B_{R}=\{(0, \widetilde{x})||\widetilde{x}|\leq R\}=\mathbf{\Phi}(\mathcal{H}),$$
which is the image of the hyperboloid $\mathcal{H}$ under the map $\mathbf{\Phi}$.
For simplicity, we identify $(\widetilde{A}, \Lambda \phi)$ with $((\mathbf{\Phi}^{-1})^* A, (\Lambda \phi)\circ \mathbf{\Phi}^{-1})$ on $\mathbf{\Phi}(\mathbf{D})$ and denote $\widetilde{F}=d\widetilde{A}+\f12[\widetilde{A}\wedge \widetilde{A}]$ as the curvature 2-form associated to $\widetilde{A}$. To apply Theorem \ref{thm:EM}, it remains to understand $(\widetilde{A}, \Lambda \phi)$ on the initial hypersurface $\B_{R}$. Since the information of $(\widetilde{A}, \Lambda \phi)$ is determined by $(F, \phi)$ on the hyperboloid $\mathcal{H}$ via the map $\mathbf{\Phi}$, it then suffices to control the weighted norm $\mathcal{E}_{2, \ga_1}^R$ on $\B_{R}$ in terms of the norm $\mathcal{E}_{2, \ga_1}^{\mathcal{H}}$ on the hyperboloid $\mathcal{H}$.

Define the null coordinates $$\widetilde{u}=\frac{\widetilde{t}-\widetilde{r}}{2},\quad  \widetilde{v}=\frac{\widetilde{t}+\widetilde{r}}{2}, \quad \widetilde{\om}=\frac{\widetilde{x}}{|\widetilde{x}|}$$ on $\mathbf{\Phi}(\mathbf{D})$. Since $\frac{x}{|x|}=\frac{\widetilde{x}}{|\widetilde{x}|}$, we may choose the associated null frame
$$\{\widetilde{L}=\pa_{\widetilde{t}}+\pa_{\widetilde{r}}, \quad \widetilde{\Lb}=\pa_{\widetilde{t}}-\pa_{\widetilde{r}}, \quad \La e_1,\quad  \La e_2\},$$
 where we recall that $\{L, \Lb, e_1, e_2 \}$ is the null frame on $\mathbf{D}$. Recall the weighted energy norm $\mathcal{E}_{2, \ga}^R$ associated to $(\Lambda \phi, \widetilde{F})$ on $\mathbf{\Phi}(\mathcal{H})$
\begin{align*}
  \mathcal{E}_{0, \ga}^R& =\int_{B_{R}}(R-|\widetilde{x}|)^{\ga}(|\widetilde{\a}|^2+|\widetilde{D}_{\widetilde{L}}(\La\phi)|^2)+|\widetilde{\rho}|^2+|\widetilde{\si}|^2+|\widetilde{\a}|^2
  +|\widetilde{D}_{\widetilde{\Lb}}(\La\phi)|^2+|\La \phi|^2d\widetilde{x},\\
\mathcal{E}_{k, \ga}^R&=\sum\limits_{l_1+l_2\leq k}\sum\limits_{i, j}\int_{B_{R}}(|\widetilde{D}^{l_1}\widetilde{D}_{\widetilde{\Om}_{ij}}^{l_2}\widetilde{F}|^2+|\widetilde{D}^{l_1+1} \widetilde{D}_{\widetilde{\Om}_{ij}}^{l_2} (\La\phi)|^2)(R-|\widetilde{x}|)^{\ga+2l_1}d\widetilde{x}+\mathcal{E}_{0, \ga}^R, \quad 1\leq k\leq 2,
\end{align*}
where
\[
\widetilde{\Om}_{ij}=\widetilde{x}_i\widetilde{\pa}_{j}-\widetilde{x}_j\widetilde{\pa}_{i}=x_i\pa_j-x_j\pa_i=\Om_{ij}.
\]
Here the following calculations may be of help
\begin{align*}
&d\widetilde{t}=\La^{-2}((t^*)^2+r^2)dt-2 \La^{-2}t^* x dx,\quad d\widetilde{x}=\La^{-1}dx-2\La^{-2}t^* x dt+2\La^{-2}rx dr,\\
&\pa_t=\La^{-2}((t^*)^2+r^2)\pa_{\widetilde{t}}-2\La^{-2}t^* x\pa_{\widetilde{x}},\quad \pa_{x}=-2\La^{-2}t^* x \pa_{\widetilde{t}}+\La^{-1}\pa_{\widetilde{x}}+2\La^{-2}rx \pa_{\widetilde{r}},\\&\pa_t=((R-\tilde{t})^2+|\tilde{x}|^2)\pa_{\tilde{t}}-2(R-\tilde{t}) \tilde{x}\pa_{\tilde{x}},\quad S+(R+1)\pa_{t}=t_*\pa_{t}+x\pa_{x}=(R-\tilde{t})\pa_{\tilde{t}}- \tilde{x}\pa_{\tilde{x}}.
\end{align*}More precisely we can write  $\Phi_*\Om_{ij}=\Om_{ij} $ and\begin{align*}
&\Phi_*\pa_t=((R-{t})^2+|{x}|^2)\pa_{{t}}-2(R-{t}) {x}\pa_{{x}}:=\tilde{K},\quad \Phi_*(S+(R+1)\pa_{t})=(R-{t})\pa_{{t}}- {x}\pa_{{x}}:=\tilde{S}.
\end{align*}
In particular the null frame obeys the following transformation rules
\begin{align*}
&L=\pa_t+\pa_r=\La^{-2}((t^*)^2+r^2-2t^* r)(\pa_{\widetilde{t}}+\pa_{\widetilde{r}})=(t^*+r)^{-2}\widetilde{L},\\
&\Lb=\pa_t-\pa_r=\La^{-2}((t^*)^2+r^2+2t^* r)(\pa_{\widetilde{t}}-\pa_{\widetilde{r}})=(t^*-r)^{-2}\widetilde{\Lb},\\
&\pa_i-\om_i \pa_r=\La^{-1}(\pa_{\widetilde{x}_i}-\widetilde{\om}_i\pa_{\widetilde{r}})=((t^*)^2-r^2)^{-1}(\pa_{\widetilde{x}_i}-\widetilde{\om}_i\pa_{\widetilde{r}}).
\end{align*}
As $\widetilde{A}=(\mathbf{\Phi}^{-1})^*A$, we have $\widetilde{F}=(\mathbf{\Phi}^{-1})^* F$. Then the curvature components can be written as
\begin{align*}
\widetilde{\a}_i &=\widetilde{F}(\widetilde{L}, \widetilde{e}_i)=\La (t^*+r)^2 \a_i,\quad \widetilde{\ab}_i=\widetilde{F}(\widetilde{\Lb}, \widetilde{e}_i)=\La (t^*-r)^2 \ab_i,\quad i=1, 2,\\
\widetilde{\si}&=\widetilde{F}(\widetilde{e}_1,\widetilde{e}_2)=\La^2 \si,\quad \widetilde{\rho}=\widetilde{F}(\widetilde{L},\widetilde{\Lb})=(t^*+r)^2(t^*-r)^2 \rho=\La^2 \rho.
\end{align*}
The transformation formula for $\widetilde{G}=(\mathbf{\Phi}^{-1})^* G$ is similar (here $G$ is a $\lg$-valued 2-form). Let $$
\tilde{\Gamma}=\{\tilde{K},\Om_{ij},\tilde{S}-(R+1)\tilde{K}\} =\{\Phi_*X:X\in \Gamma\}.$$
 We can estimate $D\varphi$ in terms of $D_X\varphi$, $X\in \tilde{\Gamma}$.
\begin{Lem}
\label{lem1}For a $V$-valued or $\lg$-valued function $ \varphi$,  $0\leq k\leq 2$, $t=0,$ $R/2\leq |x|\leq R$ we have
\begin{align*}
&(R-|x|)|D^{k+1}\varphi|\leq C\sum_{Z\in \tilde{\Gamma}}|D^kD_Z\varphi|+C\sum_{j=1}^k|D^{j}\varphi|.\end{align*}\end{Lem}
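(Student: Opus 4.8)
The plan is to prove the estimate in two stages: first the base case $k=0$, which is a purely algebraic fact about reconstructing the full covariant gradient from the fields of $\tilde{\Gamma}$ on the annular region $R/2\le|x|\le R$, and then the cases $k=1,2$ by induction, commuting a field $Z\in\tilde{\Gamma}$ through the $k$ covariant derivatives.

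For the base case I would work on the slice $t=0$ and write $r=|x|$, so that $x\cdot\partial_x=r\partial_r$ and the two fields read $\tilde{K}=(R^2+r^2)\partial_t-2Rr\partial_r$ and $\tilde{S}=R\partial_t-r\partial_r$. These are the image of $(\partial_t,\partial_r)$ under the matrix $M=\begin{pmatrix}R^2+r^2 & -2Rr\\ R & -r\end{pmatrix}$, whose determinant is $\det M=r(R^2-r^2)=r(R-r)(R+r)$. Inverting $M$ writes $\partial_t,\partial_r$ as combinations of $\tilde{K},\tilde{S}$ whose coefficients are $O((R-r)^{-1})$ on $R/2\le r\le R$, because there $r$ and $R+r$ stay bounded away from $0$ and $\infty$; multiplying by $R-r$ then makes the coefficients of $(R-r)\partial_t$ and $(R-r)\partial_r$ smooth and uniformly bounded on the region. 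Together with the standard decomposition $\partial_i=\om_i\partial_r+r^{-1}\sum_j\om_j\Om_{ji}$ and the bound $(R-r)/r\le1$, this shows that each $(R-r)\partial_\mu$ is a bounded–coefficient combination of $\tilde{K}$, $\tilde{S}$ and the $\Om_{ij}$. Finally $\tilde{S}=(\tilde{S}-(R+1)\tilde{K})+(R+1)\tilde{K}$ rewrites $\tilde{S}$ in terms of $\tilde{\Gamma}$, so $(R-r)\delta_\mu^\nu=\sum_{Z\in\tilde{\Gamma}}c_\mu^Z Z^\nu$ with $|c_\mu^Z|\le C$; since covariant differentiation is tensorial in its direction, this gives $(R-r)D_\mu\varphi=\sum_Z c_\mu^Z D_Z\varphi$, hence the claim for $k=0$.

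For the inductive step I would apply the same identity $(R-r)D_\mu T=\sum_Z c_\mu^Z D_Z T$, now to the tensor $T=D^k\varphi$ (the coefficients $c_\mu^Z$ do not depend on the object differentiated), obtaining $(R-r)|D^{k+1}\varphi|\le C\sum_{Z\in\tilde{\Gamma}}|D_Z D^k\varphi|$. It then remains to trade $D_Z D^k\varphi=Z^\nu D_\nu D^k\varphi$ for $D^k D_Z\varphi$. Commuting $Z^\nu$ inward through the $k$ derivatives by the Leibniz rule and the curvature identity $[D_\mu,D_\nu]=F_{\mu\nu}$ produces the top term $D^k D_Z\varphi$ together with two kinds of remainders: terms in which a derivative lands on a coefficient $Z^\nu$, of the shape $(\partial^a Z)\,D^{k-a+1}\varphi$ with $1\le a\le k$, and commutator terms of schematic form $F\cdot D^{\le k-1}\varphi$. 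As the $Z^\nu$ are polynomials in $(t,x)$ with derivatives bounded on the compact region, the first family is dominated by $C\sum_{j=1}^k|D^j\varphi|$, which is exactly the lower order sum in the statement.

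The main obstacle I anticipate is the treatment of the curvature commutators $F\cdot D^{\le k-1}\varphi$: these are the only genuinely nonlinear contribution and, being of strictly lower differential order, they are not literally among the displayed terms. I expect to dispose of them exactly as the nonlinearities are handled elsewhere in the paper, bounding them by the weighted products already controlled when this lemma is fed into the comparison of $\mathcal{E}_{2,\ga}^R$ with $\mathcal{E}_{2,\ga_1}^{\mathcal{H}}$ in Section \ref{sec:conformal}. The clean structural point that makes everything fit is that inverting $M$ costs exactly one power of $(R-r)$, precisely the weight carried on the left–hand side: anything stronger than $(R-|x|)$ would be lossy, while anything weaker would fail at $|x|=R$, where the frame $\{\tilde{K},\tilde{S},\Om_{ij}\}$ degenerates.
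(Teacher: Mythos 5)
Your base case is sound and is essentially the paper's own linear algebra: inverting the pair $(\tilde{K},\tilde{S})$ against $(\partial_t,\partial_r)$ costs one factor of $\det M=r(R^2-r^2)$, which on $\{t=0,\ R/2\le|x|\le R\}$ is comparable to $R-|x|$; the paper phrases the same inversion through the explicit identities $\tilde{\Lambda}\partial_t=2(R-t)\tilde{S}-\tilde{K}$ and $-|x|^2\partial_i=x_i\tilde{S}+x_j\Omega_{ij}-x_i(R-t)\partial_t$ with $\tilde{\Lambda}=(R-t)^2-|x|^2$. The genuine gap is in your inductive step. By applying the frame identity to $T=D^k\varphi$ you are then forced to convert $D_ZD^k\varphi$ into $D^kD_Z\varphi$, and since the connection is not flat this reordering produces curvature remainders of the schematic form $F\cdot D^{\le k-1}\varphi$ (already for $k=1$ you pick up $Z^\nu F_{\mu\nu}\cdot\varphi$). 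These terms are not admissible: the lemma is a pointwise inequality whose constant $C$ depends only on $R$, and it is applied in Lemma \ref{lem2} with $\varphi=F$, $\varphi=D_Z^lF$, $\varphi=D_Z^l\phi$ precisely in order to dominate $\mathcal{E}_{k,\ga}^{R}$ by the norm $\mathcal{E}_{k,\ga}^{R*}$, which by design contains only the ordering $D^kD_Z$ with $D_Z$ innermost. Your plan to dispose of the commutators ``as the nonlinearities are handled elsewhere'' does not close: absorbing $|F|\,|D^{\le k-1}\varphi|$ pointwise would require an a priori $L^\infty$ bound on $F$ on the initial slice, which is part of what the main theorem is trying to establish, and any such absorption would make $C$ solution-dependent and destroy the clean comparison $\mathcal{E}_{k,\ga}^{R}\leq C\,\mathcal{E}_{k,\ga}^{R*}$.

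The repair is to never commute, and it is exactly what the paper does: apply $D^k$ to the \emph{first-order} identity $\tilde{\Lambda}\,D_\mu\varphi=\sum_{Z\in\tilde{\Gamma}}a_{\mu Z}D_Z\varphi$ (your $k=0$ case, applied to $\varphi$ itself rather than to $D^k\varphi$). On the left, derivatives falling on the smooth weight produce only $\sum_{j=1}^k|D^j\varphi|$; on the right, Leibniz gives $\sum_{j\le k}|D^jD_Z\varphi|$, and for $j<k$ one has $|D^jD_Z\varphi|=|D^j(Z^\nu D_\nu\varphi)|\le C\sum_{l=1}^{j+1}|D^l\varphi|$ with no curvature appearing, because no reordering of covariant derivatives is needed: each $D^{j-a}D_\nu\varphi$ arising from the Leibniz expansion is literally a component of the tensor $D^{j-a+1}\varphi$, so one simply takes absolute values. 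This also explains why the statement carries the specific ordering $D^kD_Z\varphi$ — it is the ordering that arises naturally without ever invoking $[D_\mu,D_\nu]=F_{\mu\nu}$ — whereas your induction, by starting from $D_ZD^k\varphi$, works against that ordering and pays for it in curvature.
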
\begin{proof}
Notice that (here $i\in\{1,2,3\}$ and we omitted the summation {sign} for $j\in\{1,2,3\}$)\begin{align*}
&\tilde{\Lambda}\pa_t=2(R-{t})\tilde{S}-\tilde{K},\quad -|{x}|^2\pa_i=x_i\tilde{S}+x_j\Om_{ij}-x_i(R-t)\partial_t,\quad \tilde{\Lambda}:=(R-{t})^2-|{x}|^2.
\end{align*}Then there exist functions $a_{\mu Z},\ b_{\mu Z}\in C^{\infty}(\{R/2\leq |x|\leq R,\ |t|\leq R/4\})$ for $\mu\in\{0,1,2,3\}$, $Z\in \tilde{\Gamma}\cup\{\partial_t\}$ such that (for $i\in\{1,2,3\}$)\begin{align*}
&\tilde{\Lambda}\pa_t=\sum_{Z\in \tilde{\Gamma}}a_{0Z}Z,\quad \pa_i=\sum_{Z\in \tilde{\Gamma}\cup\{\partial_t\}}b_{iZ}Z,\quad \tilde{\Lambda}\pa_i=\sum_{Z\in \tilde{\Gamma}}a_{iZ}Z.
\end{align*}As $\tilde{\Lambda}=R^2-|{x}|^2 $, $ R-|x|\leq C\tilde{\Lambda}$ on $t=0,$ $R/2\leq |x|\leq R$; the functions $\tilde{\Lambda},$ $a_{\mu Z}$ are $C^{\infty}$; we have\begin{align*}
(R-|x|)|D^{k+1}\varphi|\les& \tilde{\Lambda}|D^{k+1}\varphi|\les |D^k(\tilde{\Lambda}D\varphi)|+\sum_{j=1}^k|D^{j}\varphi|\les\sum_{Z\in \tilde{\Gamma}}\sum_{\mu=0}^3|D^k(a_{\mu Z}D_Z\varphi)|+\sum_{j=1}^k|D^{j}\varphi|\\ \les&\sum_{Z\in \tilde{\Gamma}}\sum_{j=0}^k|D^jD_Z\varphi|+\sum_{j=1}^k|D^{j}\varphi| \les\sum_{Z\in \tilde{\Gamma}}|D^kD_Z\varphi|+\sum_{j=1}^k|D^{j}\varphi|.
\end{align*}
Here we still assume $t=0,$ $R/2\leq |x|\leq R$, and we used that the vector field $Z\in \tilde{\Gamma} $ is $C^{\infty}$ to deduce that $|D^jD_Z\varphi|\les\sum_{l=1}^{j+1}|D^{l}\varphi|$ for $0\leq j<k$. This completes the proof.
\end{proof}Note that
\[
\B_{R}=\mathbf{\Phi}(\mathcal{H}_+)\cup \mathbf{\Phi}(\mathcal{H}_{-})=\{(0, \tilde{x})| R_*\leq |\tilde{x}|<R\}\cup\{(0, \tilde{x})| |\tilde{x}|< R_*\},\quad R_*=\sqrt{R^2-R/(R+1)}.
\]
We  define an auxiliary  energy norms associated to $(\phi,F)$ as (here $k=1,2$)
\begin{align*}
&\mathcal{E}_{k, \ga}^{R*}=\int_{R_*\leq |x|<R}(R-|x|)^{\ga}\sum_{l\leq k}\sum_{Z\in \tilde{\Gamma}}(|\mathcal{L}_Z^lF|^2+|DD_Z^l\phi|^2)(0,x)dx\\&+\int_{ |x|< R_*}\sum_{l\leq k}(|D^lF|^2+|D^{l+1}\phi|^2)(0,x)dx+\mathcal{E}_{0, \ga}^{R},\quad \mathcal{E}_{0, \ga}^{R*}=\mathcal{E}_{0, \ga}^{R}.
\end{align*}
\begin{Lem}
\label{lem2}The solution $({F}, \phi)$ verifies the following bound on $\B_{R}$:
$$\mathcal{E}_{k, \ga}^{R}\leq C\mathcal{E}_{k, \ga}^{R*} , \quad k=1,2.$$
\end{Lem}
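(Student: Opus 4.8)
The plan is to prove the inequality region by region, exploiting the decomposition
\[
\B_{R}=\mathbf{\Phi}(\mathcal{H}_+)\cup \mathbf{\Phi}(\mathcal{H}_{-})=\{R_*\leq |x|<R\}\cup\{ |x|< R_*\},
\]
which is matched exactly by the two pieces of $\mathcal{E}_{k, \ga}^{R*}$. Since $\mathcal{E}_{k, \ga}^{R}$ and $\mathcal{E}_{k, \ga}^{R*}$ are both integrals over the same slice in the same coordinate, no Jacobian intervenes and the task reduces to a pointwise comparison of integrands: I must bound the full covariant derivatives $\widetilde{D}^{l_1}\widetilde{D}_{\widetilde{\Om}_{ij}}^{l_2}$ occurring in $\mathcal{E}_{k, \ga}^{R}$ by the covariant Lie and directional derivatives along the conformal Killing fields $\tilde\Gamma=\{\tilde K,\Om_{ij},\tilde S-(R+1)\tilde K\}$ that occur in $\mathcal{E}_{k, \ga}^{R*}$.

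On the \emph{inner} region $\{|x|<R_*\}$ the degenerate weight is harmless: since $R_*<R$ one has $0<R-R_*\leq R-|x|\leq R$, so $(R-|x|)^{\ga+2l_1}\les 1$ with constant depending only on $R$. I would then commute the angular operators $\widetilde{D}_{\widetilde{\Om}_{ij}}$ through $\widetilde{D}$, each commutator producing terms of strictly lower differential order (curvature $\widetilde F$ times a field, and terms with the bounded coefficients $\pa\,\widetilde\Om_{ij}$), to bound the integrand by $\sum_{l\leq k}(|\widetilde{D}^l \widetilde F|^2+|\widetilde{D}^{l+1}(\La\phi)|^2)$, which is precisely the inner part of $\mathcal{E}_{k, \ga}^{R*}$.

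On the \emph{outer} region $\{R_*\leq |x|<R\}$, where the weight degenerates as $|x|\to R$, the key tool is Lemma \ref{lem1}, which applies because $R>1$ forces $R_*=\sqrt{R^2-R/(R+1)}>R/2$. Iterating Lemma \ref{lem1} trades one factor $(R-|x|)$ for one covariant derivative along a field of $\tilde\Gamma$, so that schematically $(R-|x|)^{l_1}|\widetilde{D}^{l_1}G|\les \sum_{Z_1,\dots,Z_{l_1}\in\tilde\Gamma}|\widetilde{D}_{Z_1}\cdots\widetilde{D}_{Z_{l_1}}G|+(\text{lower order})$, with all generated coefficients being derivatives of the smooth fields $\tilde\Gamma$, hence bounded on the compact slice. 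Together with $\widetilde{D}_{\widetilde{\Om}_{ij}}=\widetilde{D}_{\Om_{ij}}$ and $\Om_{ij}\in\tilde\Gamma$, this absorbs the full weight $(R-|x|)^{2l_1}$ and leaves $(R-|x|)^{\ga}$ times iterated directional derivatives along $\tilde\Gamma$. I would finally pass from $\widetilde{D}_{Z_1}\cdots\widetilde{D}_{Z_l}\widetilde F$ to the Lie derivatives $\mathcal{L}_{Z_1}\cdots\mathcal{L}_{Z_l}\widetilde F$ via $(\mathcal{L}_ZG)_{\mu\nu}=\widetilde{D}_{Z}G_{\mu\nu}+\pa_{\mu}Z^{\de}G_{\de\nu}+\pa_{\nu}Z^{\de}G_{\mu\de}$, the discrepancy being zeroth order in the field with smooth bounded coefficients; for the Higgs field the same scheme leaves one plain covariant derivative, yielding the $|DD_Z^l\phi|^2$ structure. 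This reproduces the outer part of $\mathcal{E}_{k, \ga}^{R*}$.

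The main obstacle I anticipate is the bookkeeping in the outer region. Each application of Lemma \ref{lem1} reinserts the non-constant fields of $\tilde\Gamma$ underneath further full derivatives, producing a growing collection of commutator and coefficient terms, and one must verify that every such term stays of lower differential order and is bounded uniformly up to the degenerate boundary $|x|=R$ — in particular that no negative power of $(R-|x|)$ is ever created. This is a finite algebraic induction on $k\leq 2$, but it is where the care lies; the inner region and the conversion between $\widetilde{D}_Z$ and $\mathcal{L}_Z$ are routine by comparison.
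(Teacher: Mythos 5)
Your proposal coincides essentially verbatim with the paper's own proof: the same decomposition $\B_{R}=\{R_*\leq |x|<R\}\cup\{|x|<R_*\}$ with $R/2<R_*<R$ (using $R>1$), the trivial bound of the weight on the inner ball against the Sobolev-type piece of $\mathcal{E}_{k,\ga}^{R*}$, the iterated application of Lemma \ref{lem1} on the outer annulus to trade each factor of $(R-|x|)$ for one derivative along $\tilde{\Gamma}$ (with all coefficient and lower-order terms bounded since the fields in $\tilde{\Gamma}$ are smooth up to $|x|=R$), and the final conversion from $D_Z$ to $\mathcal{L}_Z$ for $\widetilde{F}$ --- keeping one plain covariant derivative on the Higgs field --- via the zeroth-order identity $(\mathcal{L}_ZG)_{\mu\nu}=D_ZG_{\mu\nu}+\pa_\mu Z^\de G_{\de\nu}+\pa_\nu Z^\de G_{\mu\de}$. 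One small caution on the inner region: instead of commuting $\widetilde{D}_{\widetilde{\Om}_{ij}}$ through $\widetilde{D}$, which would genuinely produce curvature terms like $|\widetilde{F}|^2|\La\phi|^2$ and hence a constant depending on the solution, simply expand $D_{\Om_{ij}}=x_iD_j-x_jD_i$ and apply Leibniz to the smooth coefficients (as the paper does implicitly), so that no reordering of covariant derivatives occurs and the bound remains linear with $C$ depending only on $R$.
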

\begin{proof}
As $R>1$ we have $R/2<R_*<R$. Then for $t=0,$ $R_*\leq |x|< R,$ by Lemma \ref{lem1} (for $k=0,$ $ \varphi=D_{\Omega_{ij}}^lF$; $k=1,$ $ \varphi=F$; $k=0,$ $ \varphi=D_{Z}^lF$, $l=0,1$) we have
\begin{align*}
&(R-|x|)|DD_{\Omega_{ij}}^lF|\leq C\sum_{Z\in \tilde{\Gamma}}|D_ZD_{\Omega_{ij}}^lF|,\quad l=0,1,\\& (R-|x|)^2|D^2F|\leq C(R-|x|)\sum_{Z\in \tilde{\Gamma}}(|DD_ZF|+|DF|)\leq C\sum_{X,Z\in \tilde{\Gamma}}(|D_XD_ZF|+|D_ZF|),\end{align*} which along with $ \Omega_{ij}\in \tilde{\Gamma}$ gives (for $k=1,2$)\begin{align}\label{eq:F1}
&\sum_{l_1+l_2\leq k}(R-|x|)^{l_1}|D^{l_1}D_{\Omega_{ij}}^{l_2}F|\leq C\sum_{l\leq k}\sum_{Z\in \tilde{\Gamma}}|D_Z^lF|.\end{align}
Now we show that $D_Z$ can be bounded by $\mathcal{L}_Z$. Recall that
\begin{align*} &(\mathcal{L}_ZG)_{\mu\nu}=D_{Z}G_{\mu\nu}+\partial_{\mu}Z^{\delta}G_{\delta\nu}+
\partial_{\nu}Z^{\delta}G_{\mu\delta}. \end{align*}
Then for $ X,Z\in \tilde{\Gamma}$ and a $\lg$-valued 2-form $G$ we have
\begin{align*}
&|\mathcal{L}_ZG-D_{Z}G|\leq C|DZ||G|\leq C|G|,
\\&|D_X(\mathcal{L}_ZG-D_{Z}G)|\leq C|X||D^2Z||G|+|DZ||D_XG|\leq C(|G|+|D_XG|). \end{align*}
Thus for $ X,Z\in \tilde{\Gamma}$ we have
\begin{align*} &|D_ZF|\leq |\mathcal{L}_ZF |+C|F|,\\&|D_XD_ZF|\leq |D_X\mathcal{L}_ZF |+C(|F|+|D_XF|)\leq |\mathcal{L}_X\mathcal{L}_ZF |+C|\mathcal{L}_ZF |+C(|F|+|\mathcal{L}_XF|), \end{align*} and
\begin{align}\label{eq:F2} &\sum_{l\leq k}\sum_{Z\in \tilde{\Gamma}}|D_Z^lF|\leq C\sum_{l\leq k}\sum_{Z\in \tilde{\Gamma}}|\mathcal{L}_Z^lF|,\quad k=1,2. \end{align}
Similarly by Lemma \ref{lem1} (for $k=1,$ $ \varphi=D_{\Omega_{ij}}^l\phi$; $k=2,$ $ \varphi=\phi$; $k=1,$ $ \varphi=D_{Z}^l\phi$, $l=0,1$) we have\begin{align*}
&(R-|x|)|D^2D_{\Omega_{ij}}^l\phi|\leq C\sum_{Z\in \tilde{\Gamma}}(|DD_ZD_{\Omega_{ij}}^l\phi|+|DD_{\Omega_{ij}}^l\phi|),\quad l=0,1,\\& (R-|x|)^2|D^3\phi|\leq C(R-|x|)\sum_{Z\in \tilde{\Gamma}}(|D^2D_Z\phi|+|D^2\phi|+|D\phi|),\\&(R-|x|)|D^2D_Z\phi|\leq C\sum_{X\in \tilde{\Gamma}}(|DD_XD_Z\phi|+|DD_Z\phi|),\\&(R-|x|)|D^2\phi|\leq C\sum_{Z\in \tilde{\Gamma}}(|DD_Z\phi|+|D\phi|),\quad (R-|x|)|D\phi|\leq C|D\phi|,\\&(R-|x|)^2|D^3\phi|\leq C\sum_{X,Z\in \tilde{\Gamma}}(|DD_XD_Z\phi|+|DD_Z\phi|+|D\phi|),\end{align*}
(here we still assume $t=0,$ $R_*\leq |x|< R$) which along with $ \Omega_{ij}\in \tilde{\Gamma}$ gives (for $k=1,2$)\begin{align}\label{eq:F3}
&\sum_{l_1+l_2\leq k}(R-|x|)^{l_1}|D^{l_1+1}D_{\Omega_{ij}}^{l_2}\phi|\leq C\sum_{l\leq k}\sum_{Z\in \tilde{\Gamma}}|DD_Z^l\phi|.\end{align}
Summing up the estimate of $F$ and $ \phi$ (i.e. \eqref{eq:F1}, \eqref{eq:F2}, \eqref{eq:F3}) we have
\begin{align*}
&\sum\limits_{l_1+l_2\leq k}\sum\limits_{i, j}\int_{R_*\leq |x|<R}(|D^{l_1} D_{\Om_{ij}}^{l_2}F|^2+|D^{l_1+1} D_{\Om_{ij}}^{l_2}\phi|^2)(0,x)(R-|x|)^{\ga+2l_1}dx\\ \leq& C\int_{R_*\leq |x|<R}(R-|x|)^{\ga}\sum_{l\leq k}\sum_{Z\in \tilde{\Gamma}}(|\mathcal{L}_Z^lF|^2+|DD_Z^l\phi|^2)(0,x)dx\leq C\mathcal{E}_{k, \ga}^{R*}.
\end{align*}
Then by the definition of $\mathcal{E}_{k, \ga}^{R}$ we have (also using that $ \Omega_{ij}$ is $C^{\infty}$)
\begin{align*}
\mathcal{E}_{k, \ga}^{R}&=\sum\limits_{l_1+l_2\leq k}\sum\limits_{i, j}\int_{\B_{R}}(|D^{l_1} D_{\Om_{ij}}^{l_2}F|^2+|D^{l_1+1} D_{\Om_{ij}}^{l_2}\phi|^2)
(R-|x|)^{\ga+2l_1}dx+\mathcal{E}_{0, \ga}^{R}\\
&\leq\sum\limits_{l_1+l_2\leq k}\sum\limits_{i, j}\int_{R_*\leq |x|<R}(|D^{l_1} D_{\Om_{ij}}^{l_2}F|^2
+|D^{l_1+1} D_{\Om_{ij}}^{l_2}\phi|^2)(0,x)
(R-|x|)^{\ga+2l_1}dx\\
&\quad +\sum\limits_{l_1+l_2\leq k}\sum\limits_{i, j}\int_{ |x|<R_*}(|D^{l_1}
D_{\Om_{ij}}^{l_2}F|^2+|D^{l_1+1} D_{\Om_{ij}}^{l_2}\phi|^2)(0,x)
(R-|x|)^{\ga+2l_1}dx+\mathcal{E}_{0, \ga}^{R}\\
&\leq C\mathcal{E}_{k, \ga}^{R*}+C\int_{ |x|<R_*}\sum_{l\leq k}(|D^lF|^2+|D^{l+1}\phi|^2)(0,x)dx+\mathcal{E}_{0, \ga}^{R}\leq C\mathcal{E}_{k, \ga}^{R*}.
\end{align*}This completes the proof.\end{proof}
Now we use $ \mathcal{E}_{k, \ga}^{R*}$ to denote the energy norms associated to $(\Lambda \phi, \tilde{F})$ on $\mathbf{\Phi}(\mathcal{H})$. Then
(recall that $\mathbf{\Phi}(\mathcal{H}_+)=\{(0, \tilde{x})| R_*\leq |\tilde{x}|<R\}=\B_{R}\setminus\B_{R_*}$, $\mathbf{\Phi}(\mathcal{H}_{-})=\{(0, \tilde{x})| |\tilde{x}|< R_*\}= \B_{R_*}$)
\begin{align*}
\mathcal{E}_{k, \ga}^{R*}&=\int_{\B_{R}\setminus\B_{R_*}}(R-|\tilde{x}|)^{\ga}\sum_{l\leq k}\sum_{Z\in \tilde{\Gamma}}(|\mathcal{L}_Z^l\tilde{F}|^2+|\tilde{D}\tilde{D}_Z^l(\La\phi)|^2)d\tilde{x}\\&+\int_{ \B_{R_*}}\sum_{l\leq k}(|\tilde{D}^l\tilde{F}|^2+|\tilde{D}^{l+1}(\La\phi)|^2)d\tilde{x}+\mathcal{E}_{0, \ga}^{R},
\end{align*}
We prove the following estimate.
\begin{Prop}
  \label{prop:bd:ID:comp}
  Let $\ga=2-\ga_1$. The solution $(\widetilde{F}, \La\phi)$ verifies the following bound on $\mathbf{\Phi}(\mathcal{H})$
  \begin{equation}
    \label{eq:bd:ID:comp}
    \mathcal{E}_{k,\ga}^{R*}\leq C \mathcal{E}_{k, \ga_1}^{\mathcal{H}},\quad k=0, 1,  2
  \end{equation}
  for some constant C depending only on $R$ and $\ga_1$.
\end{Prop}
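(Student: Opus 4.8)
The plan is to prove \eqref{eq:bd:ID:comp} by splitting the initial ball along $\B_R=\mathbf{\Phi}(\mathcal{H}_+)\cup\mathbf{\Phi}(\mathcal{H}_-)$ into the outer annulus $\mathbf{\Phi}(\mathcal{H}_+)=\B_R\setminus\B_{R_*}$ and the inner ball $\mathbf{\Phi}(\mathcal{H}_-)=\B_{R_*}$, and treating the two contributions to $\mathcal{E}_{k,\ga}^{R*}$ separately. Throughout I set $\ga=2-\ga_1$, so that $\ga\in(0,1)$ exactly when $\ga_1\in(1,2)$. The geometric backbone is the pair of identities obtained by restricting the defining relations of $\mathbf{\Phi}$ to $\{\widetilde t=0\}$, namely $\La=(t^*)^2-r^2=t^*/R$ and $|\widetilde x|=Rr/t^*$, whence
\begin{equation*}
R-|\widetilde x|=\frac{R(t^*-r)}{t^*}=\frac{1}{t^*+r},\qquad d\widetilde x\simeq \La^{-2}(t^*+r)^{-2}\,dx,
\end{equation*}
where $dx=r^2\,dr\,d\om$ is the parametrizing measure on $\mathcal{H}$ and $\simeq$ denotes comparability with constants depending only on $R$. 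First I would record these, together with the observation that on $\mathcal{H}_+$ one has $r\geq R_*$ and $t^*=r+O(r^{-1})$, so that $R-|\widetilde x|\simeq r^{-1}$ and $\La\simeq r$; on $\mathcal{H}_-$ all of $\La$, $t^*\pm r$ and $R-|\widetilde x|$ are bounded above and below by positive constants.

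For the inner region the map $\mathbf{\Phi}$ is a diffeomorphism onto the compact piece $\mathcal{H}_-$ with uniformly nondegenerate conformal factor, so the weights are irrelevant. Using that $(\widetilde A,\La\phi\circ\mathbf{\Phi}^{-1})$ solves the same system and that all Jacobians and their derivatives are bounded on the compact set, a plain change of variables gives
\begin{equation*}
\int_{\B_{R_*}}\sum_{l\leq k}\big(|\widetilde D^l\widetilde F|^2+|\widetilde D^{l+1}(\La\phi)|^2\big)\,d\widetilde x\les \int_{\mathcal{H}_-}\sum_{l\leq k}\big(|D^lF|^2+|D^{l+1}\phi|^2+|\phi|^2\big)\,dx,
\end{equation*}
which is exactly the last term of $\mathcal{E}_{k,\ga_1}^{\mathcal{H}}$, the $|\phi|^2$ absorbing the error terms produced when derivatives fall on $\La$.

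The heart of the matter is the outer region. For $k=0$ I would substitute the transformation rules $\widetilde\a=\La(t^*+r)^2\a$, $\widetilde{\ab}=\La(t^*-r)^2\ab$, $\widetilde\rho=\La^2\rho$, $\widetilde\si=\La^2\si$, and $\widetilde D_{\widetilde L}(\La\phi)=(t^*+r)^2\big(2(t^*-r)\phi+\La D_L\phi\big)\simeq R^{-1}(t^*+r)^2 D_L(r\phi)$ into the integrand of $\mathcal{E}_{0,\ga}^R$ and combine with $R-|\widetilde x|=(t^*+r)^{-1}$ and the Jacobian. The exponent relation $\ga=2-\ga_1$ is precisely what makes the $(R-|\widetilde x|)^\ga$-weighted ``good'' components reproduce the $r^{\ga_1}$-weighted terms,
\begin{equation*}
(R-|\widetilde x|)^{\ga}|\widetilde\a|^2\,d\widetilde x\simeq r^{\ga_1}|\a|^2\,dx,\qquad (R-|\widetilde x|)^{\ga}|\widetilde D_{\widetilde L}(\La\phi)|^2\,d\widetilde x\simeq r^{\ga_1}r^{-2}|D_L(r\phi)|^2\,dx,
\end{equation*}
whereas the weight-$1$ components $\widetilde\rho,\widetilde\si,\widetilde{\ab},\widetilde D_{\widetilde{\Lb}}(\La\phi),\widetilde{\D}(\La\phi)$ reproduce the normal coefficients $\tfrac{2R(t^*\mp r)-1}{2Rt^*-1}$ and hence the energy flux $E[\phi,F](\mathcal{H})$, while $|\La\phi|^2$ is dominated by the $r^{\ga_1-2}|\phi|^2$ term since $\ga_1>0$. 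Summing these identifications yields $\mathcal{E}_{0,\ga}^{R}\les\mathcal{E}_{0,\ga_1}^{\mathcal{H}}$; since $\mathcal{E}_{0,\ga}^R$ is the last summand in every $\mathcal{E}_{k,\ga}^{R*}$, this also feeds the higher-order estimates. For $k=1,2$ I would reduce to the $k=0$ computation applied to commuted fields: because $\widetilde F=(\mathbf{\Phi}^{-1})^*F$ and $\widetilde\Gamma=\{\mathbf{\Phi}_*X:X\in\Ga\}$, naturality of the gauge-covariant Lie derivative gives $\mathcal{L}_Z^l\widetilde F=(\mathbf{\Phi}^{-1})^*\mathcal{L}_X^lF$ for $Z=\mathbf{\Phi}_*X$, so each $\mathcal{L}_Z^l\widetilde F$ is the conformal transform of the $\lg$-valued $2$-form $\mathcal{L}_X^lF$, and the $k=0$ identifications convert the outer integral into the $r^{\ga_1}$-weighted $\a(\mathcal{L}_X^lF)$ terms and the flux $E[D_X^l\phi,\mathcal{L}_X^lF](\mathcal{H})$; the Higgs contribution $\widetilde D\,\widetilde D_Z^l(\La\phi)$ is handled identically, now tracking the terms generated when covariant derivatives fall on $\La$.

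The main obstacle will be this last bookkeeping for the higher-order Higgs derivatives: one must respect the asymmetric weighting of $\mathcal{E}_{k,\ga}^{R*}$ and show that every error produced by differentiating $\La$ is genuinely of lower order in the number of $Z$-derivatives, hence controlled by the lower-$k$ norms, by $\mathcal{E}_{0,\ga_1}^{\mathcal{H}}$, and by $\int_{\mathcal{H}_+}r^{\ga_1-2}|\phi|^2$, rather than by the top-order weighted flux. The exact algebraic cancellations that produce the normal coefficients of $E[\cdot,\cdot](\mathcal{H})$, the measure asymptotics $d\widetilde x\simeq r^{-4}\,dx$, and the relation $\ga=2-\ga_1$ are the points requiring the most care; combining the inner and outer bounds then gives \eqref{eq:bd:ID:comp}, and together with Lemma \ref{lem2} this verifies the hypothesis $\mathcal{E}_{2,\ga}^R<\infty$ needed to invoke Theorem \ref{thm:EM}.
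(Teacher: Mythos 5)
Your proposal is correct and takes essentially the same route as the paper's proof: the same splitting of $\B_R$ along $\mathbf{\Phi}(\mathcal{H}_+)\cup\mathbf{\Phi}(\mathcal{H}_-)$ with the compact inner piece handled by nondegeneracy of the conformal factor, the same pointwise transformation rules for the null components combined with $R-|\widetilde{x}|=(t^*+r)^{-1}$, $d\widetilde{x}\simeq \La^{-4}dx$ and the exponent relation $\ga=2-\ga_1$, the same domination of the $\widetilde{\ab},\widetilde{\rho},\widetilde{\si}$ and $\widetilde{D}_{\widetilde{\Lb}}$ terms by the flux $E[D_Z^l\phi,\mathcal{L}_Z^lF](\mathcal{H})$, and the same use of naturality $\mathcal{L}_Z^{l}\widetilde{F}=(\mathbf{\Phi}^{-1})^*\mathcal{L}_{Z'}^{l}F$ for $Z=\mathbf{\Phi}_*Z'$ together with the bookkeeping of lower-order terms from derivatives of $\La$. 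One cosmetic slip: on $\mathcal{H}_+$ one has $t^*-r\to (2R)^{-1}$ rather than $t^*=r+O(r^{-1})$, but since your argument only uses the boundedness $0\leq t^*-r\leq R^{-1}$ together with $\La\simeq r$ and $R-|\widetilde{x}|\simeq r^{-1}$, nothing downstream is affected.
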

\begin{proof}
 For simplicity, in the following during the proof, the implicit constant in $A\les B$ relies only on $R$ and $\ga_1$.

We need to estimate the size of the weighted energies $\mathcal{E}_{k, \ga}^{R*}$ defined on $\bf{\Phi}(\mathcal{H})$ in terms of the associated weighted energies on the pre-image $\mathcal{H}$ of the conformal mapping $\bf{\Phi}$. First we recall the properties of the coordinates function on the hyperboloid $\mathcal{H}$.
We have the relation
\begin{align*}
  (t^*-(2R)^{-1})dt=rdr,\quad (t^*)^2-r^2=R^{-1} t^*,
\end{align*}
which in particular implies that
\begin{align*}
d\widetilde{r}=\La^{-2}((t^*)^2+r^2)dr-2 \La^{-2}t^* r dt=\f12 (t^*)^{-1}(t^*-(2R)^{-1})^{-1}dr.
\end{align*}
Since $\widetilde{\om}=\om$, $\widetilde{r}=\Lambda^{-1}r$, the surface measure obeys
\begin{align*}
  d\widetilde{x}=\widetilde{r}^2d\widetilde{r}d\widetilde{\om}=\f12 \widetilde{r}^2 (t^*)^{-1}(t^*-(2R)^{-1})^{-1}drd\om=\f12 \Lambda^{-2}(t^*)^{-1}(t^*-(2R)^{-1})^{-1} dx.
\end{align*}
We also note that
on the hyperboloid $\mathcal{H}$
\[
0\leq t^*-r=\frac{R^{-1}t^*}{t^*+r}\leq R^{-1},\quad  \La=R^{-1}t^*,\quad t^*\geq R^{-1}.
\]
This  means that $t^*$ is close to $r$ and we in particular can conclude that
\begin{align*}
  d\widetilde{x}\les \La^{-4}dx,\quad t^*+r\les \La\les t^*+r.
\end{align*}
Here and in the sequel we always restrict the analysis to the hyperboloid $\mathcal{H}$ or the image $\bf{\Phi}(\mathcal{H})$.

For the zeroth order weighted energy $\mathcal{E}_{0, \ga}^{R*}=\mathcal{E}_{0, \ga}^R$, by definition we can estimate that
\begin{align*}
  &(R-|\widetilde{x}|)^{\ga}(|\widetilde{\a}|^2+|\widetilde{D}_{\widetilde{L}}(\La\phi)|^2)+|\widetilde{\rho}|^2+|\widetilde{\si}|^2+
  |\widetilde{\D}(\Lambda\phi)|^2+|\widetilde{\ab}|^2
  +|\widetilde{D}_{\widetilde{\Lb}}(\La\phi)|^2+|\La \phi|^2\\
  &=(t^*+r)^{4-\ga}(\Lambda^2|\a|^2+|D_{L}(\La\phi)|^2)+\Lambda^4(|\rho|^2+|\si|^2+|\D\phi|^2)\\
  &\quad+(t^*-r)^{4}(\La^2|\ab|^2
  +|D_{\Lb}(\La\phi)|^2)+|\La \phi|^2\\
   &\les \La^{2+\ga_1}(\Lambda^2|\a|^2+|D_{L}(r\phi)|^2+|D_{L}\phi|^2)+\Lambda^4(|\rho|^2+|\si|^2+|\D \phi|^2)+ \La^2(|\ab|^2
  +|D_{\Lb}\phi|^2+ |\phi|^2).
\end{align*}
Here ${ R}-|\widetilde{x}|=\La^{-1}({ R} \La-r)=(t^*+r)^{-1}$.
On the other hand, for the expression of $E[\phi, F](\mathcal{H})$, we have
\begin{align*}
  E[\phi, F](\mathcal{H}) \geq  \int_{\mathcal{H}} & { R^{-4}} \La^{-2}(|\ab |^2+|D_{\Lb} \phi|^2)+|D_L \phi|^2+|\a|^2+|\rho|^2+|\si|^2+|\D \phi|^2
  dx.
\end{align*}
Therefore we can bound that
\begin{align*}
  \mathcal{E}_{0, \ga}^{R*}=\mathcal{E}_{0, \ga}^R &\les \int_{\mathcal{H}}\big(\La^{2+\ga_1}(\Lambda^2|\a|^2+|D_{L}(r\phi)|^2+ |D_{L}\phi|^2)+\Lambda^4(|\rho|^2+|\si|^2+|\D \phi|^2)\\
   &\qquad+ \La^2(|\ab|^2
  +|D_{\Lb}\phi|^2+|\phi|^2)\big) \Lambda^{-4}dx\\
  &\les E[\phi, F](\mathcal{H})+ \int_{\mathcal{H}} \La^{\ga_1-2}(\Lambda^2|\a|^2+|D_{L}(r\phi)|^2)+\La^{-2}| \phi|^2 dx\\
  &\les E[\phi, F](\mathcal{H})+\int_{\mathcal{H}_+} r^{\ga_1}(|\a|^2+r^{-2}|D_{L}(r\phi)|^2)+r^{-2}| \phi|^2 dx\\
  &\les \mathcal{E}_{0, \ga_1}^{\mathcal{H}}.
\end{align*}
Next for the higher order weighted energy $\mathcal{E}_{k, \ga}^{R*}$, $1\leq k\leq 2$. The integral on $\mathbf{\Phi}(\mathcal{H}_{-})= \B_{R_*}$
can be bounded by the corresponding Sobolev norm as $|\tilde{x}|$ is strictly less than $R^*<R$, i.e.\begin{align*}
\int_{ \B_{R_*}}\sum_{l\leq k}(|\tilde{D}^l\tilde{F}|^2+|\tilde{D}^{l+1}(\La\phi)|^2)d\tilde{x}\les\int_{\mathcal{H}_{-}}\sum_{l\leq k}(|{D}^l{F}|^2+|{D}^{l+1}\phi|^2+|\phi|^2)d{x} \les\mathcal{E}_{k, \ga_1}^{\mathcal{H}}.
\end{align*}
Therefore it suffices to estimate the integral on $\mathbf{\Phi}(\mathcal{H}_+)$, by the definition, we need to estimate
$|\mathcal{L}_{Z}^{l}\tilde{F}|$, $|\tilde{D} \tilde{D}_{Z}^{l} (\La\phi)|$
in $(\tilde{t}, \tilde{x})$ coordinates in terms of the associated quantities in $(t, x)$ coordinates for all $l\leq k\leq 2$.

We first estimate the Yang-Mills field. As $ \tilde{\Ga}=\{\Phi_*X:X\in \Gamma\}$, for $Z\in \tilde{\Ga},$ there exists $Z'\in \Ga$ such that $Z=\Phi_*Z'$. Then $\mathcal{L}_{Z}^{l}\tilde{F}=(\Phi^{-1})^*(\mathcal{L}_{Z'}^{l}{F})$ and
\begin{align*}
  &|\mathcal{L}_{Z}^{l}\tilde{F}|^2=|\tilde{\a}(\mathcal{L}_{Z}^{l}\tilde{F})|^2+|\tilde{\rho}(\mathcal{L}_{Z}^{l}\tilde{F})|^2
  +|\tilde{\si}(\mathcal{L}_{Z}^{l}\tilde{F})|^2
  +|\tilde{\ab}(\mathcal{L}_{Z}^{l}\tilde{F})|^2\\
  =&(t^*+r)^{4}\Lambda^2|\a(\mathcal{L}_{Z'}^{l}{F})|^2+\Lambda^4(|\rho(\mathcal{L}_{Z'}^{l}{F})|^2+|\si(\mathcal{L}_{Z'}^{l}{F})|^2)
  +(t^*-r)^{4}\La^2|\ab(\mathcal{L}_{Z'}^{l}{F})|^2\\
  \les&\Lambda^6|\a(\mathcal{L}_{Z'}^{l}{F})|^2+\Lambda^4(|\rho(\mathcal{L}_{Z'}^{l}{F})|^2+|\si(\mathcal{L}_{Z'}^{l}{F})|^2)
  +\La^2|\ab(\mathcal{L}_{Z'}^{l}{F})|^2.
\end{align*}Therefore for the part of Yang-Mills field, for all $l\leq k\leq 2$, we can estimate that (as $\mathbf{\Phi}(\mathcal{H}_+)=\B_{R}\setminus \B_{R_*}$)
\begin{align*}
   &\int_{\B_{R}\setminus \B_{R_*}}\sum\limits_{ Z\in \widetilde{{\Ga}}}|\mathcal{L}_{Z}^{l}\tilde{F}|^2(R-|\tilde{x}|)^{\ga}d\tilde{x}\\
   \les& \sum\limits_{Z\in \Ga}\int_{\mathcal{H}_+} \La^2 \left(\La^4 | \a(\mathcal{L}_{Z}^{l}{F})  |^2 +| \ab(\mathcal{L}_{Z}^{l}{F})  |^2+ \La^2 (| \rho(\mathcal{L}_{Z}^{l}{F})  |^2+| \si(\mathcal{L}_{Z}^{l}{F})  |^2)\right) (t^*+r)^{-\ga}\La^{-4}dx\\
   \les& \sum\limits_{ Z\in \Ga}\int_{\mathcal{H}_+}  (r^{\ga_1}| \a(\mathcal{L}_{Z}^{l}{F})  |^2 +\La^{-2}| \ab(\mathcal{L}_{Z}^{l}{F})  |^2+  | \rho(\mathcal{L}_{Z}^{l}{F})  |^2+| \si(\mathcal{L}_{Z}^{l}{F})  |^2) dx\\
   \les& \mathcal{E}_{l, \ga_1}^{\mathcal{H}}\les \mathcal{E}_{k, \ga_1}^{\mathcal{H}}.
\end{align*}
Next we estimate the part contributed by the scalar field $\tilde{D}\tilde{D}_Z^{l}(\La \phi)$.
As $ \tilde{\Ga}=\{\Phi_*X:X\in \Gamma\}$, we have
\begin{align*}
  \sum\limits_{Z\in \tilde{\Ga}}|\tilde{D}\tilde{D}_Z^{l}(\La\phi)|^2&\les \sum\limits_{Z\in \Ga}(|(t^*+r)^2 D_L D_Z^{l}(\La\phi)|^2+|(t^*-r)^2 D_{\Lb} D_Z^{l}(\La\phi)|^2+|\La \D D_Z^{l}(\La\phi)|^2)\\
  &\les \sum\limits_{j\leq l, Z\in \Ga}(|\La(t^*+r) D_L D_Z^{j}(r\phi)|^2+|\La^2 D_L D_Z^{j}\phi|^2+|\La (t^*-r)^2 D_{\Lb} D_Z^{j} \phi|^2\\
  &\qquad \qquad \qquad +|\La (t^*-r) D_Z^{j} \phi|^2+|\La^2 \D D_Z^{j}\phi|^2).
\end{align*}Therefore for all $l\leq k\leq 2$, we can estimate that (as $\mathbf{\Phi}(\mathcal{H}_+)=\B_{R}\setminus \B_{R_*}$)\begin{align*}
   &\int_{\B_{R}\setminus \B_{R_*}}\sum\limits_{ Z\in \widetilde{{\Ga}}}|\tilde{D}\tilde{D}_Z^{l}(\La\phi)|^2(R-|\tilde{x}|)^{\ga}d\tilde{x}\\
   \les& \sum\limits_{j\leq l,Z\in \Ga}\int_{\mathcal{H}_+}  \La^2\big(\La^2 | D_L D_{Z}^j (r\phi)  |^2 +\La^2 |D_L D_Z^{j}\phi|^2+| D_{\Lb} D_Z^{j} \phi|^2 +|   D_Z^{j} \phi|^2+\La^2| \D D_Z^{j}\phi|^2\big)\\
   &\qquad\qquad\qquad \cdot  (t^*+r)^{-\ga}\La^{-4}dx\\
   \les& \sum\limits_{j\leq l, Z\in \Ga}\int_{\mathcal{H}_+}  [r^{\ga_1-2}| D_L D_{Z}^j(r\phi)  |^2 +\La^{-2}(| D_{\Lb }D_{Z}^j \phi  |^2+| D_{Z}^j \phi  |^2)+  |D_L D_{Z}^j \phi |^2+  |\D D_{Z}^j \phi |^2] dx\\
   \les& \mathcal{E}_{l, \ga_1}^{\mathcal{H}}\les \mathcal{E}_{k, \ga_1}^{\mathcal{H}}.
\end{align*}
\if0, by the definition, we need to estimate $|\widetilde{D}^{l_1}\widetilde{D}_{\widetilde{\Om}_{ij}}^{l_2}\widetilde{F}|$, $|\widetilde{D}^{l_1+1} \widetilde{D}_{\widetilde{\Om}_{ij}}^{l_2} (\La\phi)|$ in $(\widetilde{t}, \widetilde{x})$ coordinates in terms of the associated quantities in $(t, x)$ coordinates for all $l_1+l_2=k$.
Since $$\widetilde{\Om}_{ij}=\Om_{ij}, \quad \Om_{ij}\La=\Om_{ij}(t^*+r)=\Om_{ij}(t^*-r)=0, $$ the above argument for $\mathcal{E}_{0, \ga}^R$ also holds for the case when $l_1=0$. In the sequel we deal with the situation when $l_1\geq 1$. First note that for any scalar field $f$ defined on $\bf{\Phi}(\mathcal{H})$, we can estimate that
\begin{equation}
\label{eq:Dbd:H}
\begin{split}
  |\widetilde{D}f|^2&=|\widetilde{D}_{\widetilde{\Lb}}f|^2+|\widetilde{D}_{\widetilde{L}}f|^2+|\widetilde{D}_{\La e_1}f|^2+|\widetilde{D}_{\La e_2}f|^2\\
  &=|(t^*+r)^2 D_L f(\mathbf{\Phi})|^2+|(t^*-r)^2 D_{\Lb} f(\mathbf{\Phi})|^2+|\La \D f(\mathbf{\Phi})|^2\\
  &\les (t^*+r)^2(|D_S f(\mathbf{\Phi})|^2+|D_{\Om} f(\mathbf{\Phi})|^2+|D f(\mathbf{\Phi})|^2)\\
  &\les \La^2 \sum\limits_{ Z\in \Ga}|D_Z f(\mathbf{\Phi})|^2.
  \end{split}
\end{equation}
Here we note that $S=t\pa_t+r\pa_r$ is the scaling vector field. In the above estimate the left hand side is valued on the image $\mathbf{\Phi}(\mathcal{H})$ of the conformal map while the right hand side is estimated on the fixed hyperboloid $\mathcal{H}$. In application $f$ will be taken to be any scalar field or the null components of the Yang-Mills field $F$.

We first estimate the Yang-Mills field. Recall that
$\widetilde{F}=F$, $\widetilde{\Om}_{ij}=\Om_{ij}$. In particular we derive that
\begin{align*}
  \widetilde{D}_{\widetilde{\Om}_{ij}}^{l_2}\widetilde{\a}=D_{\Om_{ij}}^{l_2}(\La (t^*+r)^2 \a)= \La (t^*+r)^2 D_{\Om_{ij}}^{l_2} \a & ,\quad \widetilde{D}_{\widetilde{\Om}_{ij}}^{l_2}\widetilde{\ab}=D_{\Om_{ij}}^{l_2}(\La (t^*-r)^2 \ab)= \La (t^*-r)^2 D_{\Om_{ij}}^{l_2} \ab,\\
  \widetilde{D}_{\widetilde{\Om}_{ij}}^{l_2}\widetilde{\rho}=D_{\Om_{ij}}^{l_2}(\La^2  \rho)= \La^2 D_{\Om_{ij}}^{l_2} \rho &,\quad \widetilde{D}_{\widetilde{\Om}_{ij}}^{l_2}\widetilde{\si}=D_{\Om_{ij}}^{l_2}(\La^2  \rho)= \La^2 D_{\Om_{ij}}^{l_2} \si.
\end{align*}
Apply the above estimate \eqref{eq:Dbd:H} to $ \widetilde{D}_{\widetilde{\Om}_{ij}}^{l_2}\widetilde{\a}$, we derive that
\begin{align*}
  |\widetilde{D}\widetilde{D}_{\widetilde{\Om}_{ij}}^{l_2}\widetilde{\a}|^2&\les \La^2 \sum\limits_{ Z\in \Ga}|D_Z D_{\Om_{ij}}^{l_2}(\La (t^*+r)^2\a)|^2\\
   &\les \sum\limits_{k\leq 1, Z\in \Ga} \La^8 | D_{Z}^k D_{\Om_{ij}}^{l_2}\a  |^2.
\end{align*}
Similarly for other components, we can show that
\begin{align*}
  |\widetilde{D}\widetilde{D}_{\widetilde{\Om}_{ij}}^{l_2}\widetilde{\ab}|^2
  &\les \sum\limits_{k\leq 1, Z\in \Ga} \La^4 | D_{Z}^k D_{\Om_{ij}}^{l_2}\ab  |^2,\\
  |\widetilde{D}\widetilde{D}_{\widetilde{\Om}_{ij}}^{l_2}\widetilde{\rho}|^2+|\widetilde{D}\widetilde{D}_{\widetilde{\Om}_{ij}}^{l_2}\widetilde{\si}|^2&\les  \sum\limits_{k\leq 1, Z\in \Ga} \La^6 (| D_{Z}^k D_{\Om_{ij}}^{l_2}\rho  |^2+| D_{Z}^k D_{\Om_{ij}}^{l_2}\si  |^2).
\end{align*}
Therefore for the part of Yang-Mills field, for all $l_2\leq 1$, we can estimate that
\begin{align*}
   &\int_{\B_{R}}|\widetilde{D}\widetilde{D}_{\widetilde{\Om}_{ij}}^{l_2}\widetilde{F}|^2(R-|\widetilde{x}|)^{\ga+2}d\widetilde{x}\\
   &\les \sum\limits_{k\leq 1, Z\in \Ga}\int_{\mathcal{H}} \La^4 \left(\La^4 | D_{Z}^k D_{\Om_{ij}}^{l_2}\a  |^2 +| D_{Z}^k D_{\Om_{ij}}^{l_2}\ab  |^2+ \La^2 (| D_{Z}^k D_{\Om_{ij}}^{l_2}\rho  |^2+| D_{Z}^k D_{\Om_{ij}}^{l_2}\si  |^2)\right)\\
   &\qquad\qquad\qquad \cdot (t^*+r)^{-\ga-2}\La^{-4}dx\\
   &\les \sum\limits_{k\leq 1, Z\in \Ga}\int_{\mathcal{H}}  (1+r)^{\ga_1}| D_{Z}^k D_{\Om_{ij}}^{l_2}\a  |^2 +\La^{-2}| D_{Z}^k D_{\Om_{ij}}^{l_2}\ab  |^2+  | D_{Z}^k D_{\Om_{ij}}^{l_2}\rho  |^2+| D_{Z}^k D_{\Om_{ij}}^{l_2}\si  |^2 dx\\
   &\les \mathcal{E}_{1+l_2, \ga_1}^{\mathcal{H}}.
\end{align*}
Here we note that $\ga_1\leq 2$.

Since $l_1+l_2\leq 2$, the remaining case for the Yang-Mills field is when $l_1=2$, $l_2=0$. For simplicity we only estimate the $\widetilde{\a}$ components. The other components can be estimated in a similar manner. From estimate \eqref{eq:Dbd:H}, we have
\begin{align*}
  |\widetilde{D}\widetilde{D}\widetilde{\a}|^2&\les \La^2 \sum\limits_{Z\in \Ga} |D_Z (\widetilde{D}\widetilde{\a}(\mathbf{\Phi}))|^2\\
  &\les \La^2 \sum\limits_{Z\in \Ga} |D_Z ((t^*+r)^2{D}_{L}\widetilde{\a}(\mathbf{\Phi}))|^2+|D_Z ((t^*-r)^2{D}_{\Lb}\widetilde{\a}(\mathbf{\Phi}))|^2+|D_Z (\La {\D}\widetilde{\a}(\mathbf{\Phi}))|^2.
\end{align*}
Recall that $\widetilde{\a}(\mathbf{\Phi})=\La (t^*+r)^2 \a$. By expressing the vector fields $L$, $\Lb$ in terms of $S$ and $\pa_\mu$, we then can conclude that
\begin{align*}
    |\widetilde{D}\widetilde{D}\widetilde{\a}|^2
  &\les \La^4 (t^*+r)^6 \sum\limits_{k\leq 2, Z\in \Ga} |D_Z^k \a |^2\les \La^{10} \sum\limits_{k\leq 2, Z\in \Ga} |D_Z^k \a |^2.
\end{align*}
Here we keep in mind that all the quantities are analyzed on the hyperboloid $\mathcal{H}$. Similarly for other components, since
\begin{align*}
  \widetilde{\ab}(\mathbf{\Phi})=\La (t^*-r)^2 \ab,\quad \widetilde{\rho}(\mathbf{\Phi})=\La^2 \rho,\quad \widetilde{\si}(\mathbf{\Phi})=\La^2  \si,
\end{align*}
we then can obtain that
\begin{align*}
  |\widetilde{D}\widetilde{D}\widetilde{\ab}|^2
  & \les \La^{6} \sum\limits_{k\leq 2, Z\in \Ga} |D_Z^k \ab |^2,\quad |\widetilde{D}\widetilde{D}\widetilde{\rho}|^2+|\widetilde{D}\widetilde{D}\widetilde{\si}|^2
   \les \La^{8} \sum\limits_{k\leq 2, Z\in \Ga} |D_Z^k \rho |^2+|D_Z^k\si|^2.
\end{align*}
 Therefore we can show that
 \begin{align*}
   &\int_{\B_{R}}|\widetilde{D} \widetilde{D}\widetilde{F}|^2(R-|\widetilde{x}|)^{\ga+4}d\widetilde{x}\\
   &\les \sum\limits_{k\leq 2, Z\in \Ga}\int_{\mathcal{H}} \La^6 \left(\La^4 | D_{Z}^k  \a  |^2 +| D_{Z}^k  \ab  |^2+ \La^2 (| D_{Z}^k  \rho  |^2+| D_{Z}^k  \si  |^2)\right) (t^*+r)^{-\ga-4}\La^{-4}dx\\
   &\les \sum\limits_{k\leq 2, Z\in \Ga}\int_{\mathcal{H}}  (1+r)^{\ga_1}| D_{Z}^k  \a  |^2 +\La^{-2}| D_{Z}^k  \ab  |^2+  | D_{Z}^k  \rho  |^2+| D_{Z}^k  \si  |^2 dx\\
   &\les \mathcal{E}_{ 2, \ga_1}^{\mathcal{H}}.
\end{align*}
This means that the higher order weighted energy estimates hold for the Yang-Mills field $\widetilde{F}$.

Next we estimate the part contributed by the scalar field $\widetilde{D}(\La \phi)$.
When $l_1=1$, for all $l_2\leq 1$, by definition, we can show that
\begin{align*}
  |\widetilde{D}\widetilde{D}\widetilde{D}_{\Om_{ij}}^{l_2}(\La\phi)|^2&=|\widetilde{D}((t^*+r)^2 D_{L}D_{\Om_{ij}}^{l_2}(\La\phi))|^2+|\widetilde{D}((t^*-r)^2 D_{\Lb}D_{\Om_{ij}}^{l_2}(\La\phi))|^2+|\widetilde{D}(\La \D D_{\Om_{ij}}^{l_2}(\La\phi))|^2\\
  &\les \La^2\sum\limits_{k\leq 1+l_2, Z\in \Ga}|(t^*+r)^2 D_L D_Z^{k}(\La\phi)|^2+|(t^*-r)^2 D_{\Lb} D_Z^{k}(\La\phi)|^2+|\La \D D_Z^{k}(\La\phi)|^2\\
  &\les \La^2\sum\limits_{k\leq 1+l_2, Z\in \Ga}|\La(t^*+r) D_L D_Z^{k}(r\phi)|^2+|\La^2 D_L D_Z^{k}\phi|^2+|\La (t^*-r)^2 D_{\Lb} D_Z^{k} \phi|^2\\
  &\qquad \qquad \qquad +|\La (t^*-r) D_Z^{k} \phi|^2+|\La^2 \D D_Z^{k}\phi|^2.
\end{align*}
Therefore we can estimate that
\begin{align*}
   &\int_{\B_{R}}|\widetilde{D}\widetilde{D}\widetilde{D}_{\widetilde{\Om}_{ij}}^{l_2}(\La\phi)|^2(R-|\widetilde{x}|)^{\ga+2}d\widetilde{x}\\
   &\les \sum\limits_{k\leq 1+l_2, Z\in \Ga}\int_{\mathcal{H}} \La^4 \big(\La^2 | D_L D_{Z}^k (r\phi)  |^2 +\La^2 |D_L D_Z^{k}\phi|^2+| D_{\Lb} D_Z^{k} \phi|^2 +|   D_Z^{k} \phi|^2+\La^2| \D D_Z^{k}\phi|^2\big)\\
   &\qquad\qquad\qquad \cdot (t^*+r)^{-\ga-2}\La^{-4}dx\\
   &\les \sum\limits_{k\leq 1+l_2, Z\in \Ga}\int_{\mathcal{H}}  (1+r)^{\ga_1-2}| D_L D_{Z}^k(r\phi)  |^2 +\La^{-2}(| D_{\Lb }D_{Z}^k \phi  |^2+| D_{Z}^k \phi  |^2)+  |D_L D_{Z}^k \phi |^2+  |\D D_{Z}^k \phi |^2dx\\
   &\les \mathcal{E}_{1+l_2, \ga_1}^{\mathcal{H}}.
\end{align*}
Finally for the case when $l_1=2$, $l_2=0$, repeating the above argument for $\widetilde{D}\widetilde{D}(\La\phi)$, we first can derive that
\begin{align*}
   |\widetilde{D}\widetilde{D}\widetilde{D}(\La\phi)|^2
  &\les \La^2\sum\limits_{k\leq 1, Z\in \Ga} |\widetilde{D}\widetilde{D}D_Z^k(\La\phi)|^2\\
  &\les \La^4 \sum\limits_{k\leq 2, Z\in \Ga} |\widetilde{D}D_Z^k(\La\phi)|^2\\
  &\les \La^6\sum\limits_{k\leq 2, Z\in \Ga}|(t^*+r) D_L D_Z^{k}(r\phi)|^2+|\La D_L D_Z^{k}\phi|^2+| D_{\Lb} D_Z^{k} \phi|^2+| D_Z^{k} \phi|^2+|\La \D D_Z^{k}\phi|^2.
\end{align*}
This leads to the conclusion that
\begin{align*}
   &\int_{\B_{R}}|\widetilde{D}\widetilde{D}\widetilde{D}(\La\phi)|^2(R-|\widetilde{x}|)^{\ga+4}d\widetilde{x}\\
   &\les \sum\limits_{k\leq 2, Z\in \Ga}\int_{\mathcal{H}} \La^6 \big(\La^2 | D_L D_{Z}^k (r\phi)  |^2 +\La^2 |D_L D_Z^{k}\phi|^2+| D_{\Lb} D_Z^{k} \phi|^2 +|   D_Z^{k} \phi|^2+\La^2| \D D_Z^{k}\phi|^2\big)\\
   &\qquad\qquad\qquad \cdot (t^*+r)^{-\ga-4}\La^{-4}dx\\
   &\les \mathcal{E}_{2, \ga_1}^{\mathcal{H}}.
\end{align*}\fi
We thus complete the proof and hence the Proposition holds.
\end{proof}

Theorem \ref{thm:YMH:hyperboloid} then follows from the above Proposition together with Theorem \ref{thm:EM}.

\textsl{Proof for Theorem \ref{thm:YMH:hyperboloid}:}
For the associated solution $(\widetilde{F}, \La\phi)$ on the compact region $\mathbf{\Phi}(\mathbf{D})$, from Theorem \ref{thm:EM} and the previous Lemma \ref{lem2}, Proposition \ref{prop:bd:ID:comp}, we conclude that
\begin{align*}
  |\La\phi|(\widetilde{t}, \widetilde{x})&\leq C_0 \sqrt{\mathcal{E}_{1, \ga_1}^{\mathcal{H}}} ({ R}-\widetilde{t})^{-\frac{1+2-\ga_1}{2}},\\
  |\widetilde{D}(\La\phi)|(\widetilde{t}, \widetilde{x})+|\widetilde{F}|(\widetilde{t}, \widetilde{x})&\leq C_0 \sqrt{\mathcal{E}_{2, \ga_1}^{\mathcal{H}}} ({ R}-\widetilde{t}-|\widetilde{x}|)^{-\frac{1+2-\ga_1}{2}-\ep}({ R}-\widetilde{t})^{\ep-1},\quad \forall (\widetilde{t}, \widetilde{x})\in\mathbf{\Phi}(\mathbf{D})
\end{align*}
for some constant $C_0$ depending only $\mathcal{E}_{0, \ga_1}^{\mathcal{H}}$, $R$, $\ga_1$ and $\ep$ (for simplicity we may use the same notation $C_0$ to denote such a constant).
Recall that
\[
\widetilde{t}={ R}-\La^{-1}(t+R+1),\quad |\widetilde{x}|=\La^{-1}r, \quad \La =(t+R+1-r)(t+R+1+r).
\]
And in the future of  the hyperboloid $\mathcal{H}$, we have $v_+\les t+R+1$. In particular there exists a constant $ K_0$ depending only on $R$ such that
\begin{align*}
 K_0^{-1}u_+^{-1}&\leq  R-\widetilde{t}=\La^{-1}(t+R+1)\leq K_0 u_+^{-1},\\
 K_0^{-1}v_+^{-1}&\leq  R-\widetilde{t}-|\widetilde{x}|=\La^{-1}(t+R+1-r)\leq K_0 v_+^{-1}.
\end{align*}
Then the above decay estimate for $\La\phi$ implies that
\begin{align*}
  |\phi(t, x)|\leq C_0 \sqrt{\mathcal{E}_{1, \ga_1}^{\mathcal{H}}} \La^{-1} (\La^{-1}(t+R+1))^{-\frac{1+2-\ga_1}{2}}\leq C_0 \sqrt{\mathcal{E}_{1, \ga_1}^{\mathcal{H}}} u_+^{\frac{1-\ga_1}{2}}v_+^{-1}.
\end{align*}
Here as we have pointed out, the two constants $C_0$ may be different.

For the derivative of the scalar field and the Yang-Mills field, first we can write that
\begin{align*}
  |\widetilde{D}(\La\phi)|(\widetilde{t}, \widetilde{x})+|\widetilde{F}|(\widetilde{t}, \widetilde{x})&=(t^*+r)^2|D_L(\La\phi)|+(t^*-r)^2 |D_{\Lb}(\La\phi)|+\La |\D(\La\phi)|+\La (t^*+r)^2 |\a|\\
  &+ \La (t^*-r)^2 |\ab|+\La^2 (|\rho|+|\si|).
\end{align*}
In particular, we conclude that
\begin{align*}
|\D\phi|+|\rho|+|\si|& \leq C_0 \sqrt{\mathcal{E}_{2, \ga_1}^{\mathcal{H}}} \La^{-2} ({ R}-\widetilde{t}-|\widetilde{x}|)^{-\frac{1+2-\ga_1}{2}-\ep}({ R}-\widetilde{t})^{\ep-1}\\
&\leq C_0 \sqrt{\mathcal{E}_{2, \ga_1}^{\mathcal{H}}} u_+^{-1-\ep}v_+^{-\frac{1+\ga_1}{2}+\ep}.
\end{align*}
Similarly we have
\begin{align*}
&|\a|\leq C_0 \sqrt{\mathcal{E}_{2, \ga_1}^{\mathcal{H}}} u_+^{-\ep}v_+^{-\frac{3+\ga_1}{2}+\ep},\\
&|\ab|\leq C_0 \sqrt{\mathcal{E}_{2, \ga_1}^{\mathcal{H}}} u_+^{-\ep-2}v_+^{-\frac{\ga_1-1}{2}+\ep}.
\end{align*}
Same argument also applies to obtain bounds for $D_{L}(\La\phi)$, $D_{\Lb}(\La\phi)$, which together with the estimate for $\phi$ lead to the desired estimates for $D_{L}\phi$ and $D_{\Lb}\phi$. Indeed by using the above bound for $\phi$, we first control that
\[
\La|D_{\Lb}\phi|\leq |D_{\Lb}(\La\phi)|+ v_+|\phi|\leq  |D_{\Lb}(\La\phi)|+ { C_0 \sqrt{\mathcal{E}_{2, \ga_1}^{\mathcal{H}}}}u_+^{\frac{1-\ga_1}{2}}.
\]
We therefore can show that
\begin{align*}
|D_{\Lb}\phi|\leq \La^{-1}|D_{\Lb}(\La\phi)|+{ C_0 \sqrt{\mathcal{E}_{2, \ga_1}^{\mathcal{H}}}}\La^{-1}u_+^{\frac{1-\ga_1}{2}}\leq C_0 \sqrt{\mathcal{E}_{2, \ga_1}^{\mathcal{H}}} u_+^{-\ep-2}v_+^{-\frac{\ga_1-1}{2}+\ep}.
\end{align*}
Similarly we have
\begin{align*}
|D_{L}\phi|\leq \La^{-1}|D_{ L}(\La\phi)|+{ C_0 \sqrt{\mathcal{E}_{2, \ga_1}^{\mathcal{H}}}}\La^{-1}u_+^{\frac{1-\ga_1}{2}-1}v_+^{-1}\leq C_0 \sqrt{\mathcal{E}_{2, \ga_1}^{\mathcal{H}}} u_+^{\frac{1-\ga_1}{2}}v_+^{-2}.
\end{align*}
As $\ep<\frac{1-\ga_1}{2}$, the above estimate for $D_L\phi$ is weaker than that of $\a$. However we can use this weaker bound to obtain a stronger estimate for $D_{L}(r\phi)$. In fact, we can show that
\begin{align*}
|D_{L}(2r\phi)|&\leq |D_{L}((t^*+r)\phi)|+|D_{L}((t^*-r)\phi)|\\
&\leq u_+^{-1}|D_L(\La\phi)|+u_+ |D_L\phi|\\
&\leq C_0 \sqrt{\mathcal{E}_{2, \ga_1}^{\mathcal{H}}} u_+^{-\ep}v_+^{-\frac{1+\ga_1}{2}+\ep}.
\end{align*}
We thus complete the proof for Theorem \ref{thm:YMH:hyperboloid}.

\end{document}